\allowdisplaybreaks \numberwithin{equation}{section}
\numberwithin{equation}{section}
\newtheorem{theorem}{Theorem}[section]
\newtheorem{proposition}[theorem]{Proposition}
\newtheorem{corollary}[theorem]{Corollary}
\newtheorem{lemma}[theorem]{Lemma}
\theoremstyle{definition}
\theoremstyle{remark}
\newtheorem{remark}[theorem]{Remark}
\newcommand{\ep}{\varepsilon}
\newcommand{\Om}{\Omega}
\newcommand{\la}{\lambda}
\newcommand{\vertiii}[1]{{\left\vert\kern-0.25ex\left\vert\kern-0.25ex\left\vert #1
		\right\vert\kern-0.25ex\right\vert\kern-0.25ex\right\vert}}
\begin{document}

	\title	[Uniqueness and stability of vortex rings]{Uniqueness and stability of  steady vortex rings for 3D incompressible Euler equation}
	
	\author{Daomin Cao, Shanfa Lai, Guolin Qin, Weicheng Zhan, Changjun Zou}
	
	\address{Institute of Applied Mathematics, Chinese Academy of Sciences, Beijing 100190, and University of Chinese Academy of Sciences, Beijing 100049,  P.R. China}
	\email{dmcao@amt.ac.cn}
	
	\address{Institute of Applied Mathematics, Chinese Academy of Sciences, Beijing 100190, and University of Chinese Academy of Sciences, Beijing 100049,  P.R. China}
	\email{laishanfa@amss.ac.cn}
	
	\address{Institute of Applied Mathematics, Chinese Academy of Sciences, Beijing 100190, and University of Chinese Academy of Sciences, Beijing 100049,  P.R. China}
	\email{qinguolin18@mails.ucas.ac.cn}

	\address{School of Mathematical Sciences, Xiamen University, Xiamen, Fujian, 361005, P.R. China }
	\email{zhanweicheng@amss.ac.cn}	
	
	\address{Department of Mathematics, Sichuan University, Chengdu, Sichuan, 610064, P.R. China}
	\email{zouchangjun@amss.ac.cn}

	\begin{abstract}
		In this paper, we are concerned with  the uniqueness and nonlinear stability of vortex rings for the 3D Euler equation. By utilizing Arnold 's variational principle for steady states of Euler equations  and concentrated compactness method  introduced by  P. L. Lions, we first establish a  general stability criteria for vortex rings in rearrangement classes, which allows us to reduce  the stability  analysis  of certain vortex rings to the problem of their uniqueness. Subsequently, we prove the uniqueness of  a special  family of vortex rings with a small cross-section and polynomial type distribution function.  These vortex rings correspond to global  classical  solutions to the 3D Euler equation and have been shown to exist by many celebrate works.  The proof is achieved by studying carefully asymptotic behaviors of vortex rings as they tend to a circular filament and applying local Pohozaev identities.  Consequently, we provide the first family of nonlinear stable classical vortex ring solutions  to the 3D Euler equation.
	\end{abstract}
	
	\maketitle{\small{\bf Keywords:} The 3D Euler equation; Steady vortex rings;  Uniqueness; Nonlinear stability; Variational method; Semilinear elliptic equation.  \\
		
		{\bf 2020 MSC} Primary: 35Q35; Secondary: 35A02,  76B47, 76E09.}
	\setcounter{tocdepth}{3}

	   	
\section{Introduction and main results}\label{sec1}
The motion of an inviscid incompressible flow in $\mathbb{R}^3$ is described by the 	Euler equations
\begin{align}\label{1-1}
	\begin{cases}
		\partial_t\mathbf v+(\mathbf v\cdot \nabla)\mathbf v=-\nabla P,
		\\
		\nabla\cdot\mathbf v=0,
	\end{cases}
\end{align}
where  $\mathbf{v}(x,t)=(v_1, v_2, v_3)$ is the velocity field	and $P(  x,t)$ is the scalar pressure. Note that the incompressibility of the flow enables us to introduce a velocity vector potential  $\pmb\psi:\mathbb{R}^3\to \mathbb{R}^3$ such that $\mathbf{v}= \nabla\times \pmb\psi$. Let  $\pmb{\omega}:=\nabla\times\mathbf{v}$ be the  vorticity. Taking curl of the first equation in Euler equations \eqref{1-1}, we obtain the following equations for vorticity
\begin{align}\label{1-2}
	\begin{cases}
		\partial_t \pmb{\omega}+(\mathbf{v}\cdot\nabla)\pmb{\omega}=(\pmb{\omega}\cdot\nabla)\mathbf{v},
		\\
		\mathbf{v}=\nabla\times \pmb\psi,  \quad  -\Delta \pmb\psi=\pmb{\omega}.
	\end{cases}
\end{align}

	We are interested in  a special family of global solutions to \eqref{1-1}, the  vortex rings, which means axi-symmetric flows with   vorticity $\pmb{\omega}$  supported in toroidal regions. By choosing $x_3$-axis as the axis of symmetry, in the usual cylindrical coordinate frame $\{\mathbf{e}_r, \mathbf{e}_\theta, \mathbf{e}_z\}$ with $\mathbf{e}_z=\mathbf{e}_3$, the axi-symmetric velocity field $\mathbf{v}$  does not depend on the $\theta$ coordinate and can be expressed by
\begin{equation*}
	\mathbf{v}=v^r(r,z)\mathbf{e}_r+v^\theta(r,z)\mathbf{e}_\theta+v^z(r,z)\mathbf{e}_z.
\end{equation*}
The component $v^\theta$ in the $\mathbf{e}_\theta$ direction is  called the swirl velocity. If an axi-symmetric flow is non-swirling (i.e., $v^\theta \equiv 0$), then the vorticity admits its angular component $\omega^\theta$ only, namely, $\pmb{\omega}=\omega^\theta \mathbf{e}_\theta$.  Let $\zeta=\omega^\theta/r$ be the potential vorticity and $\psi=r\pmb\psi\cdot \mathbf{e}_\theta$. Then the vorticity equation \eqref{1-2} is reduced to an active scalar equation for $\zeta$ (see e.g. \cite{MB})
\begin{equation}\label{1-3}
	\begin{cases}
		\partial_t \zeta+\mathbf{v}\cdot\nabla\zeta=0,\\
		\mathbf{v}=\frac{1}{r}\left(-\frac{\partial\psi}{\partial z}\mathbf{e}_r+\frac{\partial\psi}{\partial r}\mathbf{e}_z\right)\\
		\mathcal{L}\psi=\zeta,
	\end{cases}
\end{equation}
where $\nabla\zeta :=\frac{\partial\zeta}{\partial r}\mathbf{e}_r+\frac{\partial\zeta}{\partial z}\mathbf{e}_z$ and
\begin{equation*}
	\mathcal{L}:=-\frac{1}{r}\frac{\partial}{\partial r}\Big(\frac{1}{r}\frac{\partial}{\partial r}\Big)-\frac{1}{r^2}\frac{\partial^2}{\partial z^2}.
\end{equation*}

\subsection{Vortex rings for the 3D Euler equation}
By a \emph{steady vortex ring} we mean a vortex ring that moves vertically at a constant speed forever without changing its shape or size. In other words, a steady vortex ring is of the form
\begin{equation}\label{1-4}
	\zeta(  x,t)=\zeta(  x+t\mathbf{v}_\infty),
\end{equation}
where $\mathbf{v}_\infty=-\mathcal W\mathbf{e}_z$ is a constant propagation speed. Substituting \eqref{1-4} into \eqref{1-3}, we arrive at a stationary equation
\begin{equation}\label{1-5}
	\begin{cases}
		\nabla^\perp \left(\psi -\frac{\mathcal W}{2} r^2\right) \cdot \nabla \zeta=0,\\
		\mathcal{L}\psi=\zeta,
	\end{cases}
\end{equation}
where $\nabla^\perp h:= -\frac{\partial h}{\partial z}\mathbf{e}_r+\frac{\partial h}{\partial r}\mathbf{e}_z$.
The key observation is that if $\zeta=\lambda f\left(\psi -\frac{\mathcal W}{2} r^2-\mu\right)$ for some $C^1$ function $f: \mathbb R \to \mathbb R$ and constants $\lambda, \mu\in \mathbb R$, then the first equation holds automatically. In this case, \eqref{1-5} is thus reduced to a study of the \emph{semilinear elliptic problem} on the half plane $\Pi:=\{(r,z)\mid r>0,\,z\in \mathbb R \}$
 \begin{equation}\label{1-6}
 		\mathcal{L}\psi=\lambda f\left(\psi -\frac{\mathcal W}{2} r^2-\mu\right).
 \end{equation}
The function $f$ in \eqref{1-6} is   referred to as the vorticity function or profile function in this paper.

Mathematical analysis for the vortex rings has been a subject of active studies for a long time and  traces back to the works of Helmholtz \cite{Hel} in 1858 and Lord Kelvin \cite{Tho} in 1867.  Kelvin and Hicks  found a formula to show that if the vortex ring with circulation $\kappa$ has radius $r_*$ and its cross-section $\varepsilon$ is small, then the vortex ring translates approximately with the velocity (see \cite{Lam, Tho})
\begin{equation}\label{1-7}
	\frac{\kappa}{4\pi r_*}\Big(\log \frac{8r_*}{\varepsilon}-\frac{1}{4}\Big).
\end{equation}
More generally, in addition to the circular vortex filaments, the law of motion of the general
evolved curve has also been extensively studied, which was formally shown to be binormal curvature flow (see e.g. \cite{DR, LC}).  Mathematical verification of the validity of describing the   motion of  general evolved curve by binormal curvature flow is  closely connected  to the well-known \emph{vortex filament conjecture}, for which we refer to \cite{JS}.

In 1894, Hill \cite{Hil} first constructed an explicit particular translating flow of the Euler equation\,(called Hill's spherical vortex) whose support is a ball.  Fraenkel \cite{Fra1, Fra2}  provided a first constructive proof for the existence of a vortex ring concentrated around a torus   with a small  cross-section $\ep>0$.  In 1972, Norbury \cite{Nor72}  constructed a family of  steady vortex rings with constant $\zeta$ that are close to Hill's vortex, which are usually referred to as Norbury's nearly spherical vortex rings. General  existence theory of vortex rings with a given vorticity function was first established by Fraenkel and Berger \cite{BF1}. Existence of steady vortex
rings are also studied in \cite{AS,Ni} by using the mountain pass theorem.  For the construction of vortex rings in Beltrami flows, we refer to \cite{Abe22}.

For special nonlinearities $f$, lots of existence results have been obtained, for which we refer to \cite{CWZ, CWWZ, FT,  VS, YJ} and references therein.  We would like to highlight the work \cite{VS}, in which de Valeriola and Van Schaftingen  constructed steady vortex rings with $f(s)=s_+^p$ for $p>1$ and $\lambda=\frac{1}{\ep^2}$ for every $\ep>0$  by studying the elliptic problem \eqref{1-6}, where $s_+:=\max\{0, s\}$. We remark that the vortex rings obtained in \cite{VS} are classical solutions to the 3D Euler equation since the velocity field is of $C^{1,\alpha}$ for any $\alpha\in(0,1)$. Recently, Cao et al. \cite{CWWZ} established the existence of steady vortex rings with very general $f$ when $\lambda$ is sufficiently large, including the special cases $f(s)=s_+^p$.

The method used in Cao et al. \cite{CWWZ} is an improved vorticity method, which is totally different from the method of de Valeriola and Van Schaftingen \cite{VS}. Therefore, it is natural to ask whether the solutions obtained by different methods are actually the same, which was put forward by Friedman and Turkington in \cite{FT}.  Moreover, as we will see later, uniqueness results are also essential in the study of nonlinear stability, which is another motivation for our investigation on the problem of uniqueness.  One of our main results in this paper is to give a positive answer to the question of uniqueness for a family of vortex rings.

\subsection{Stability  for steady flows}

The stability of steady flows is a classical object in fluid dynamics. In this paper we are interested in the stability of vortex rings for the 3D Euler equation. Since vortex rings can be easily observed experimentally, e.g. when smoke is ejected from a tube, a bubble rises in a liquid, or an ink is dropped in another fluid, and so on,  it is  reasonable to expect their stability in suitable settings.  We refer the reader to \cite{Akh, MGT} for some  historical reviews of the achievements in experimental, analytical, and numerical studies of vortex rings.

Numerous contributions  to stability of steady solutions to 2D Euler equations have been achieved, for which we refer the interested reader to \cite{Abe, Bu5,  Bu6, CQZZ2, CW,  CL,   CJ2,  Wan} and references therein. In recent years, significant breakthroughs have been made in the asymptotic stability of inviscid fluids, see e.g. \cite{BCV, BM, IJ, WZZ} and references therein.
On the other hand,  works on  stability of vortex rings seem to be relatively rare. Recently, Choi  \cite{Choi20} 
established the orbital stability of Hill's vortex by a combination of the variational framework due to \cite{FT}, the uniqueness result \cite{AF} and the concentrated compactness lemma of Lions \cite{Lions}.  Later, nonlinear stability of Norbury's nearly spherical vortex was obtained in \cite{CQZZ, Wan1} by using the local uniqueness result in \cite{AF88}. Recently, uniqueness and stability of vortex rings with small cross-section and uniform density were  proved in \cite {CGPY} and \cite{CQYZZ}. We  would  also  like to mention the work of Gallay and Smets \cite{GSm} on  spectral stability of columnar vortices for the 3D Euler equations.

We note that all the aforementioned works \cite{AF, AF88, CGPY, CQYZZ, CQZZ, Choi20, Wan1} are concerned with uniqueness or stability of vortex rings with the vorticity function $f$ in \eqref{1-6} being of the form $f(s)=1_{s>0}$. Therefore velocity fields of these vortex rings are of $C^\alpha$ with $\alpha\in (0,1)$ and hence are merely weak solutions to the Euler equations  \eqref{1-1}. Therefore, a natural open problem arises as to whether there exist stable classical vortex ring solutions to the 3D Euler equation.

\emph{In this paper, we will establish the first result on the uniqueness and nonlinear stability of a family of vortex rings, whose velocity fields are of  $C^{3}$ and hence   the vorticity fields belong to $C^2$ and are classical solutions to \eqref{1-2}}.

\subsection{Main results and ideas of proof}
Main results of the present paper can be divided into three parts: a general stability criteria for vortex rings in a variational framework (Theorem \ref{thmSs}), which reduces the stability of a given vortex ring to the problem of its uniqueness; the uniqueness of a family of vortex rings with small cross-section (Theorem \ref{thmU}); nonlinear stability of these vortex rings  (Theorem \ref{thmS}).

\subsubsection{A general stability theorem for vortex rings}
 To investigate the nonlinear stability of vortex rings, we first prove a general stability theorem.

  We follow the well-known strategy of  variational method on vorticity exploiting the ideas of Kelvin \cite{Tho} and Arnold \cite{ Ar2,   Ar4}, which was used and further developed by many authors (see e.g. \cite{Ben,  Bu,   Bu1,  BNL13, CWZ, CWWZ,   BF1, FT, GS} and references therein). We use the adaption of Arnold's variational principle due to Benjamin \cite{Ben} and Badiani-Burton \cite{Bad} for vortex rings.  To be precise, let $G_1(x,y)$ be the Green function of $\mathcal{L}$ given by \eqref{Green} below and  define the kinetic energy of the axi-symmetric fluid
\begin{equation*}
	E[\zeta] =\pi\int_{\Pi}\int_{\Pi} \zeta(r,z)G_1(r,z , r',z')\zeta(r',z') rr'dr'dz'drdz,
\end{equation*}
and the impulse
\begin{equation*}
	P[\zeta]=\frac{1}{2}\int_{\mathbb{R}^3}r^2\zeta(  x)d   x=\pi\int_\Pi r^3 \zeta drdz.
\end{equation*}
The works \cite{Bad, Ben} suggested that maximization of $E-\mathcal W P$ over all $\zeta$ in the set of rearrangements  of a given function $\zeta_0$ will give a steady vortex ring with traveling speed $\mathcal W$.  Recall that for a given nonnegative axi-symmetric function $\zeta_0\in L^1(\mathbb{R}^3)$, the set of axi-symmetric rearrangements  of $\zeta_0$ is defined by
	\begin{equation}\label{1-8}\begin{split}
			\mathcal{R}(\zeta_0)=\left\{\right.0\leq \zeta\in L^1(\mathbb{R}^3)\,\mid \,   &\zeta(x)=\zeta(r,z)   \\   &\text{and}\    |\{x: \zeta(x)>\tau\}|_{\mathbb{R}^3} =|\{x: \zeta_0(x)>\tau\}|_{\mathbb{R}^3} , \forall\, \tau>0 \left. \right\},
		\end{split}	
\end{equation}
where $|\cdot|_{\mathbb{R}^3} $ stands for the Lebesgue measure in $\mathbb{R}^3$. For $\zeta_0\in L^1\cap L^2(\mathbb{R}^3)$, let  $\overline{\mathcal{R}(\zeta_0)^w}$ be the closure of $\mathcal{R}(\zeta_0)$ in the weak topology of $L^2_{as}(\mathbb R^3)$, the axi-symmetric subspace of $L^2(\mathbb R^3)$. A key point proved by Douglas \cite{Dou} is that the set  $\overline{\mathcal{R}(\zeta_0)^w}$ is convex and weakly compact.

Let $\zeta_0\in L^1\cap L^q(\mathbb{R}^3)$ for some $q>2$ and define $\mathcal{E}_{\mathcal W}:=\frac{1}{2\pi}(E-\mathcal W P)$. We consider the following maximization problem
\begin{equation}\label{1-9}
	\sup_{\zeta\in \overline{\mathcal{R}(\zeta_0)^w}} \mathcal{E}_{\mathcal W}(\zeta).
\end{equation}
According to ideas in \cite{Bad, Ben}, each maximizer  of the above problem are  expected to correspond to a steady vortex ring.

To state our stability theorems, we need the following existence result for the Cauchy problem for the 3D Euler equation due to Lemma 3.4 in \cite{Choi20}.
\begin{proposition}\label{Pro1}
	Define the weighted space $L^1_\text{w}(\mathbb R^3)$ by
	$$L^1_\text{w}(\mathbb R^3)=\{\vartheta: \mathbb R^3 \to \mathbb R\ \text{measurable} \mid r^2\vartheta\in L^1(\mathbb R^3)\}.$$
	For any non-negative axi-symmetric function $\zeta_0\in L^1\cap L^\infty\cap L^1_\mathrm{w}(\mathbb R^3)$ satisfying $r\zeta_0\in L^\infty(\mathbb R^3)$, there exists a unique weak solution $\zeta\in BC([0,\infty);L^1\cap L^\infty\cap L^1_\mathrm{w}(\mathbb R^3))$ of \eqref{1-3} for the initial data $\zeta_0$ such that $\zeta(\cdot,t)\ge 0$ and is axi-symmetric,
	\begin{equation*}
		\begin{array}{ll}
			\|\zeta(\cdot,t)\|_{L^p(\mathbb{R}^3)} =\|\zeta_0\|_{L^p(\mathbb{R}^3)},\ \ 1\le p\le \infty, &\\
			\mathcal{P}[\zeta(\cdot,t)] = \mathcal{P}[\zeta_0], \,\,\,E[\zeta(\cdot,t)] =E[\zeta_0],\ \ \ \text{for all}\ t>0,&
		\end{array}
	\end{equation*}
	and, for any $0<\upsilon_1<\upsilon_2<\infty$ and for all $t>0$,
	\begin{equation*}
		\int_{\{ x\in\mathbb{R}^3\,\,\mid\,\, \upsilon_1<\zeta(x,t)<\upsilon_2\}}\zeta(x,t)d x=\int_{\{ x\in\mathbb{R}^3\,\,\mid\,\, \upsilon_1<\zeta_0(x)<\upsilon_2\}}\zeta_0(x)d x.
	\end{equation*}
\end{proposition}

Let  $\|\cdot\|_{L^1\cap L^2(\mathbb R^3)}:=\|\cdot\|_{L^1(\mathbb R^3)}+\|\cdot\|_{L^2(\mathbb R^3)}$. Our first main result is  the following  stability criteria on the set of maximizers.
\begin{theorem}\label{thmSs}
	Suppose that $0\leq \zeta_0\in L^q(\mathbb{R}^3)$ for some $q>2$ is a non-negative axi-symmetric function with a   support of finite volume $ |\mathrm{supp}(\zeta_0) |_{\mathbb{R}^3}<\infty $. Let $\mathcal W>0$ be a positive constant. Suppose that $\Sigma_{\zeta_0,\mathcal W} $,  the set of maximizers of $\mathcal{E}_{\mathcal W}$ over $\overline{\mathcal{R}(\zeta_0)^w}$, satisfies $$\emptyset\not= \Sigma_{\zeta_0,\mathcal W}\subset \mathcal{R}(\zeta_0).$$  Then $\Sigma_{\zeta_0,\mathcal W}$ is orbitally stable in the sense that for arbitrary $\epsilon>0$, there exists $\delta>0$ such that if the nonnegative axi-symmetric function $\omega_0  \in L^1\cap L^\infty\cap L^1_\mathrm{w}(\mathbb R^3)$ satisfies $r\omega_0\in L^\infty(\mathbb R^3)$  and
	\begin{equation*}
		\inf_{\zeta\in \Sigma_{\zeta_0,\mathcal W}}   \{\|  \omega_0-\zeta\|_{L^1\cap L^2(\mathbb{R}^3)}+|P(\omega_0-\zeta)| \} \leq \delta,
	\end{equation*}
	then for all $t>0$, we have
	\begin{equation*}
		\inf_{\zeta\in \Sigma_{\zeta_0,\mathcal W}}    \|  \omega(t)-\zeta\|_{L^1\cap L^2(\mathbb{R}^3)}  \leq \epsilon.
	\end{equation*}
    If in addition $$P(\zeta_1)=P(\zeta_2),\quad \forall\  \zeta_1, \zeta_2\in \Sigma_{\zeta_0,\mathcal W}, $$
   then for all $t>0$, we have
\begin{equation*}
	\inf_{\zeta\in \Sigma_{\zeta_0,\mathcal W}} \left\{\|  \omega(t)-\zeta\|_{L^1\cap L^2(\mathbb{R}^3)} +P(|\omega(t)-\zeta|)\right\}\leq \epsilon.
\end{equation*}
Here $\omega(t)$   means the corresponding solution of \eqref{1-3} with initial data $\omega_0$ given by Proposition \ref{Pro1}.
\end{theorem}	
\begin{remark}\label{rem1-7}
	Compared with the stability result in \cite{BNL13} for vortex pairs,  Theorem \ref{thmSs} admits perturbations with non-compact supports. This is achieved by   introducing    the $L^1$-norm in our theorem.
\end{remark}

The proof of Theorem \ref{thmSs} is a combination of the variational method and the concentrated compactness lemma of Lions \cite{Lions}. 

It can be seen that Theorem \ref{thmSs} is a very general result due to the weak requirements on $\zeta_0$. However, for a given vortex ring solution to the Euler equation, it is still a challenging problem to apply Theorem \ref{thmSs} to verify its stability, since the structure of the set of maximizers is in general hard to determine. Note that the problem is invariant under translations in   $z$-direction. The best situation we can expect, of course, is that   the set of maximizers $\Sigma_{\zeta_0,\mathcal W}$ consists of exactly one single orbit under translation, which apparently gives nonlinear orbital stability of the vortex ring. This can be reduced to the problem of uniqueness of vortex ring in suitable setting, which will be our next  topic.

\subsubsection{Uniqueness of a family of vortex rings}
 In comparison to the abound results on the existence of vortex rings as mentioned above, there are relatively fewer results on the uniqueness. One of the initial results  was presented by Amick and Fraenkel  \cite{AF}, who demonstrated that Hill's vortex is the unique solution when viewed in a natural weak formulation by using the method of moving planes.  Subsequently,  Amick and Fraenkel \cite{AF88} also established local uniqueness for Norbury's nearly spherical vortex. More recently, Cao et al. \cite{CQYZZ}  showed the uniqueness of steady vortex rings of small cross-section and uniform density.

 It can be seen that all the aforementioned works \cite{AF, AF88, CQYZZ} are concerned with uniqueness of vortex rings of ``patch" type. We will provide the first uniqueness result of vortex rings of non-patch type in the literature.

 To state our main results, we need to introduce some notations. We shall say that a scalar function $\vartheta:\mathbb R^3\to \mathbb R$ is axi-symmetric if it has the form of $\vartheta( x,y,z)=\vartheta(r,z)$, and a subset $\Omega\subset \mathbb R^3$ is axi-symmetric if its characteristic function $ 1_\Omega$ is axi-symmetric. The cross-section parameter $\sigma$ of an axi-symmetric set $\Omega\subset \mathbb R^3$ is defined by
\begin{equation*}
	\sigma(\Omega):=\frac{1}{2} \sup\left\{  \delta_{z}(  x, y)\,\,|\,\,  x,  y\in \Omega \right\},
\end{equation*}
where the axi-symmetric distance $ \delta_z$ is given by
\begin{equation*}
 \delta_z(  x,  y):=\inf\left\{| x-Q( y)|\,\,\,\,| \,\, \ Q \ \text{is a rotation around}\ \mathbf{e}_z\right\}.
\end{equation*}
Let $\mathcal{C}_{r,z}=\{  x=(x_1,x_2,x_3)\in\mathbb{R}^3~|x_1^2+x_2^2=r^2,x_3=z\}$ be a circle of radius $r$ on the plane perpendicular to $\mathbf{e}_z$. For an axi-symmetric set $\Omega\subset \mathbb{R}^3$, we define the axi-symmetric distance between $\Omega$ and $\mathcal C_{r,z}$ as follows
\begin{equation*}
	\text{dist}_{\mathcal{C}_{r,z}}(\Omega)=\sup_{ x\in \Omega}\inf_{  y\in{\mathcal{C}_{r,z}}}|  x-y|.
\end{equation*}
The circulation of a steady vortex ring $\zeta$ is given by
\begin{equation*}
	\frac{1}{2\pi}\int_{\mathbb{R}^3}\zeta(  x)d  x=\int_\Pi \zeta(r,z) r dr dz.
\end{equation*}
Let $G_1(r,z, r',z')$ be the Green function for the operator $\mathcal L$ defined below \eqref{1-3} and denote $\mathcal{G}_1 \zeta:=G_1\ast \zeta$. We first state   the following existence result due to \cite{CWWZ, VS}.
\begin{proposition}[Existence, \cite{CWWZ, VS}]\label{thmE}
	Let $\kappa>0$, $W>0$ and $p\geq 2$ be three positive numbers. Then there exists $\ep_1>0$ such that there exist a family of  steady vortex rings $\{\zeta_\varepsilon\}_{0<\ep<\ep_1}$  with the same circulation $\kappa$ and traveling speed $W\ln \frac{1}{\ep}$. Moreover $\zeta_\varepsilon$ satisfies the following properties:
	\begin{itemize}
		\item [(i)]$\zeta_\varepsilon=\varepsilon^{-2}\left(\mathcal{G}_1\zeta_\ep-\frac{W}{2}x_1^2\ln \frac{1}{\ep}-\mu_\ep \right)_+^p$ for some constant  $\mu_\ep $;
		\item [(ii)]As $\varepsilon\to 0$, $\limsup_{\ep\to0}\sigma\left(\Omega_\varepsilon\right)\ep^{-1}\leq C  $ for a constant $C >0$, where $\Om_\ep:=\{x\in \mathbb{R}^3\mid \zeta_\ep(x)>0\}$.
		\item [(iii)] It holds $\mathrm{dist}_{\mathcal{C}_{\frac{\kappa}{4\pi W}, 0}}(\Om_\ep)\to 0$ as $\ep\to 0$.
	\end{itemize}
\end{proposition}
\begin{remark}\label{rem1}
 Although  the circulation of the vortex rings obtained in \cite{VS} is  $\kappa_\varepsilon\to \kappa$, which may be  not fixed as $\ep$ varies, one may obtain a family of vortex rings with fixed circulation $\kappa$ by  a scaling argument.
\end{remark}

Our second main result shows that the vortex rings constructed in \cite{CWWZ, VS} are actually the same (after some scaling) when   $\ep$ is sufficiently small.

\begin{theorem}\label{thmU}
	Let $\kappa>0$, $W>0$ and $p\geq 2$ be three positive numbers. Suppose that two families of steady vortex rings $\{\zeta^{(1)}_\varepsilon\}_{\ep>0}$ and $\{\zeta^{(2)}_\varepsilon\}_{\ep>0}$ with the same circulation $\kappa$ satisfy the following conditions:
	\begin{itemize}
		\item [(i)]$\zeta^{(i)}_\varepsilon=\varepsilon^{-2}\left(\mathcal{G}_1\zeta_\ep^{(i)}-\frac{W}{2}x_1^2\ln \frac{1}{\ep}-\mu_\ep^{(i)}\right)_+^p$ for some constants $\mu_\ep^{(i)}$, $i=1,2$;
		\item [(ii)]As $\varepsilon\to 0$, $\limsup_{\ep\to0}\sigma\left(\Omega^{(i)}_\varepsilon\right)\ep^{-1}\leq C_0 $ for a constant $C_0>0$, where $\Om_\ep^{(i)}:=\{x=(x_1,x_2,x_3)\in \mathbb{R}^3\mid \zeta_\ep^{(i)}(x)>0\}$ for $i=1,2$.
		\item [(iii)]There exist   $(r_*^{(i)},z_*^{(i)})\in \Pi$ such that $\mathrm{dist}_{\mathcal{C}_{r_*^{(i)},z_*^{(i)}}}(\Om_\ep^{(i)})\to 0$ as $\ep\to 0$, $i=1,2$.
	\end{itemize}
Then there exists $\ep_0>0$ such that for any $  \varepsilon \in (0,\varepsilon_0) $ there is a constant $c_\ep\in \mathbb{R}$ so that $$\zeta^{(1)}_\varepsilon(r,z)\equiv\zeta^{(2)}_\varepsilon(r,z+c_\ep).$$

\end{theorem}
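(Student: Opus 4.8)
The plan is to combine a sharp asymptotic description of any vortex ring satisfying (i)--(iii) with a blow-up/Lyapunov--Schmidt argument applied to the difference of two such rings, the decisive input being the nondegeneracy of the limiting planar profile.

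\textbf{Step 1 (asymptotics and normalization).} First I would record precise asymptotics for each family $\{\zeta^{(i)}_\ep\}$. From (iii), the fixed circulation $\kappa$ and the speed $W\ln\frac1\ep$, the core radius $r_\ep^{(i)}$ must converge to $r_*:=\frac{\kappa}{4\pi W}$: matching the Kelvin--Hicks self-induced speed \eqref{1-7} to $W\ln\frac1\ep$ forces $\frac{\kappa}{4\pi r_*}=W$ at leading order, so in particular $r_*^{(1)}=r_*^{(2)}$. Rescaling the stream-function defect $\mathcal G_1\zeta^{(i)}_\ep-\frac W2 x_1^2\ln\frac1\ep-\mu_\ep^{(i)}$ by $\ep$ about $(r_\ep^{(i)},z_\ep^{(i)})$, the rescaled profiles converge to the unique radially symmetric solution $U$ of $-\Delta U=U_+^p$ on $\mathbb R^2$ with $\int_{\mathbb R^2}U_+^p$ equal to the prescribed (normalized) mass. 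Replacing $\zeta^{(2)}_\ep(r,z)$ by $\zeta^{(2)}_\ep(r,z+c_\ep)$ with $c_\ep:=z_\ep^{(1)}-z_\ep^{(2)}$, I may assume both rings share the same vertical center $z_\ep$; it then suffices to prove $\zeta^{(1)}_\ep\equiv\zeta^{(2)}_\ep$ for all small $\ep$.

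\textbf{Step 2 (the linearized difference equation).} Suppose, for contradiction, that $\xi_\ep:=\zeta^{(1)}_\ep-\zeta^{(2)}_\ep\not\equiv0$ along a sequence $\ep\to0$. Subtracting the two equations in (i) --- the term $\frac W2 x_1^2\ln\frac1\ep$ cancels --- and using the mean value theorem yields
\[
\xi_\ep=\ep^{-2}\,p\,b_\ep\,\bigl(\mathcal G_1\xi_\ep-(\mu_\ep^{(1)}-\mu_\ep^{(2)})\bigr),
\]
where $b_\ep\ge0$ is an explicit average of $(\cdot)_+^{p-1}$ supported in an $O(\ep)$-neighborhood of the cores. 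Normalizing $\eta_\ep:=\xi_\ep/\|\xi_\ep\|_{*}$ in a suitable weighted norm, controlling the Lagrange-multiplier difference $\mu_\ep^{(1)}-\mu_\ep^{(2)}$ by $\|\xi_\ep\|_*$ (using that the equal circulation gives $\int_\Pi\xi_\ep\, r\,dr\,dz=0$), and rescaling about the core, I would show the rescaled $\widetilde\eta_\ep$ are uniformly bounded and, by elliptic estimates for $\mathcal L^{-1}=\mathcal G_1$ away from the degenerate axis, converge in $C^1_{loc}$ to a bounded $\eta$ with
\[
-\Delta\eta=p\,U_+^{p-1}\eta\quad\text{in }\mathbb R^2,\qquad \int_{\mathbb R^2}p\,U_+^{p-1}\eta=0 .
\]

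\textbf{Step 3 (nondegeneracy and elimination of the kernel).} The key ingredient is that the bounded kernel of $-\Delta-pU_+^{p-1}$ on $\mathbb R^2$ is exactly $\mathrm{span}\{\partial_{y_1}U,\partial_{y_2}U\}$, the dilation mode being excluded precisely by the orthogonality $\int_{\mathbb R^2}pU_+^{p-1}\eta=0$ inherited from the common circulation; hence $\eta=a\,\partial_{y_1}U+b\,\partial_{y_2}U$. The vertical-translation coefficient $b$ vanishes because the choice of $c_\ep$ in Step 1 makes the vertical centroids of $\zeta^{(1)}_\ep$ and $\zeta^{(2)}_\ep$ coincide, and this identity passes to the limit as $b\int_{\mathbb R^2}y_2\,\partial_{y_2}U\,dy=0$. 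The radial coefficient $a$ is the delicate one, since radial translation is \emph{not} a symmetry of $\mathcal L$: testing the equation for $\xi_\ep$ against a suitable localized radial-derivative function and integrating produces a Pohozaev-type identity in which the $r$-dependent coefficients of $\mathcal L$ contribute a nonzero term proportional to $a$ times the $r$-derivative of the Kelvin--Hicks speed balance at $r_*$, which is $\asymp\frac{\kappa}{r_*^2}\ln\frac1\ep\ne0$; therefore $a=0$.

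\textbf{Step 4 (conclusion and main obstacle).} Thus $\eta\equiv0$. On the other hand, the equation for $\widetilde\eta_\ep$ realizes it as a convolution against $pU_+^{p-1}\widetilde\eta_\ep+o(1)$, a compact operation, so $\widetilde\eta_\ep\to\eta$ strongly and $\|\eta\|_*=1$, a contradiction. Hence $\zeta^{(1)}_\ep\equiv\zeta^{(2)}_\ep$ for small $\ep$, i.e. the two families differ by the vertical shift $c_\ep$. I expect the principal obstacles to be (a) making the asymptotics sharp enough that every remainder in the linearized equation is genuinely lower order, which requires careful estimates on $\mathcal G_1$ near the axis $r=0$ and on $\mu_\ep^{(i)}$, and (b) establishing the nondegeneracy of $U$ together with the precise Pohozaev identity that controls the radial mode; it is here that the assumption $p\ge2$ and the exact balance between $\lambda=\ep^{-2}$ and the speed $W\ln\frac1\ep$ are used.
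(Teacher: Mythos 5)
Your outline reproduces the architecture of the paper's proof: blow-up to the radial profile $U$, linearization of the difference of the two solutions, reduction of the limit to the translation kernel $\mathrm{span}\{\partial_{y_1}U,\partial_{y_2}U\}$ via the Flucher--Wei nondegeneracy, elimination of the vertical mode by a symmetry normalization, elimination of the radial mode by a local Pohozaev identity whose leading coefficient carries the factor $\ln\frac1\ep$, and a final compactness contradiction. Two cosmetic remarks: the paper kills the $\partial_{y_2}U$ mode by proving (via moving planes in integral form, Appendix A) that each $\psi_\ep^{(i)}$ is exactly even in $x_2$ about some line, rather than by matching centroids; and your invocation of the orthogonality $\int pU_+^{p-1}\eta=0$ to "exclude the dilation mode" is a red herring, since the dilation mode is unbounded at infinity and the nondegeneracy lemma already restricts the \emph{bounded} kernel to the two translations.

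The genuine gap is quantitative, and it is exactly the point you defer to "obstacle (a)": a single blow-up plus a first-order approximation of $\psi_\ep^{(i)}$ by a rescaled $U$ only controls the error $\phi_\ep^{(i)}$ to $O(\ep)$, and only pins down the core parameters $(x_{\ep,1}^{(i)},s_\ep^{(i)},\mu_\ep^{(i)})$ of the two solutions to within $O(\ep/|\ln\ep|)$ of each other. That is the \emph{same order} as the leading term $c_\ep\, s_\ep\ln\frac{1}{s_\ep}$ you are trying to isolate in the Pohozaev identity, so the bookkeeping does not close: you cannot expand both solutions around a common approximate profile (the paper's Lemma \ref{lem2-14} needs $|x_{\ep,1}^{(1)}-x_{\ep,1}^{(2)}|=O(\ep^2)$, $|\mu_\ep^{(1)}-\mu_\ep^{(2)}|=O(\ep^2|\ln\ep|^2)$), and the $O(\ep)$ remainders swamp the term proportional to $a$. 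The missing idea is the paper's "second approximation": the $O(\ep)$ part of the error is an explicit function $\mathcal F$ that is \emph{odd} in $x_1-x_{\ep,1}$, and one must solve an auxiliary linearized problem (Appendix \ref{appD}) to subtract off an odd corrector $\phi_\ep^o$, after which the remaining error is $O(\ep^2|\ln\ep|)$ and the odd terms cancel identically in every Pohozaev integral. Without this step (or an equivalent device), Step 3 of your plan does not produce $a=o(1)$; it produces $a=O(1)$, which is no contradiction. Relatedly, note that the correct conclusion at finite $\ep$ is $a=O(1/|\ln\ep|)$, not $a=0$, and this decay rate is precisely what feeds back through the integral representation to force $\|\xi_{1,\ep}\|_{L^\infty}=o(1)$ and contradict $\|\xi_{1,\ep}\|_{L^\infty}=1-o_\ep(1)$.
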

\begin{remark}\label{rem2}
	The velocity fields of $\zeta_\ep$ in Proposition \ref{thmE} and Theorem \ref{thmU} are  obviously of  $C^{3}$  due to standard regularity theory of elliptic equations and $p\geq2$,   hence the corresponding vorticity fields are classical solutions to the Euler equations \eqref{1-2}.
\end{remark}

The proof of Theorem \ref{thmU} is highly technical and delicate. To obtain the uniqueness, we first establish some rough estimates for the Stokes stream function by direct  blow-up analysis. Then we improve those estimates by approximating the Stokes stream function with careful choice of rescaling of ground state solution  to a corresponding elliptic equation. We will study the error function between the stream function and the approximate solution via  the linearized equation, where a combination of the non-degeneracy of the ground state solution and method of moving planes is used. Unlike the case of vortex rings with uniform density considered in \cite{CQYZZ},  approximation by rescaling of ground state solution is not accurate enough. Thus, we need to introduce \emph{ the second approximation procure} in this paper to obtain necessary estimates, which was done by a existence theory of the linearized equation with odd nonlinearities given in Appendix \ref{appD}.  With a delicate estimate in hand, we are able to apply a local Pohozaev identity  to derive a contradiction if there are two different vortex rings satisfying all the assumptions in Theorem \ref{thmU}. In addition to proving Theorem \ref{thmU},  our approach may be applied to the problem of  uniqueness  of `thin' vortex rings with other vorticity function $f$.

Theorem \ref{thmU} is essential to  the study of nonlinear stability, which will be our last main result.

\subsubsection{Stability  of a family of vortex rings}
Steady vortex rings $\zeta_\ep$ in Proposition \ref{thmE} were constructed in \cite{CWWZ} as a special case via the vorticity method, which are maximizers of the following problem
\begin{equation}\label{1-11}
	\sup_{\zeta \in \mathcal{A}}\left\{\frac{1}{2\pi}(E-W\ln\frac{1}{\ep} P)-\frac{1}{\varepsilon^2}\int_{D_0} G(\varepsilon^2\zeta)d\nu\right\},
\end{equation}
where
\begin{equation*}
	\mathcal{A}:=\left\{\zeta\in L^\infty(\Pi)~\bigg|~\int_{\Pi}r\zeta dr dz\le \kappa,\ \text{supp}(\zeta)\subseteq D_0\right\},
\end{equation*}
  $$D_0=\{(r,z)\in \Pi\mid \kappa/(8\pi W)\le r\le \kappa/(4\pi W)+1, -1\le z\le 1\},$$
  and
\begin{equation*}
	G(s)=\sup_{s'\in\mathbb{R}}\left[ss'-F(s')\right]
\end{equation*}
is the conjugate function to $F(s):=\int_{0}^{s}f(t)dt$. Here, $f$ is the vorticity function in \eqref{1-6}.

The maximization problem \eqref{1-11} is quite different from \eqref{1-9}. To  make a bridge between $\zeta_\ep$ and Theorem \ref{thmSs}, we first study the maximization problem \eqref{1-9} with $\zeta_0=\zeta_\ep$ and $\mathcal W=W\ln \frac{1}{\ep}$. We show that the set of maximizers of this problem, denoted by $\Sigma_\ep$,   satisfies the condition $\emptyset\not= \Sigma_\ep\subset \mathcal{R}(\zeta_\ep)$. Thus, we immediately obtain the stability of  $\Sigma_\ep$ by applying Theorem \ref{thmSs}.

In order to obtain the orbital stability  of $\zeta_\ep$, we need to verify
\begin{equation}\label{1-12}
	\Sigma_\ep=\{\zeta_\ep(\cdot+c\mathbf{e}_z)\mid c\in \mathbb R\},
\end{equation}
which can be reduced to a problem of uniqueness after we show that each element  in $\Sigma_\ep$ is a steady vortex ring with cross section of order $\ep$ and concentrate  to a circle in $\mathbb{R}^3$. These asymptotic behaviors of maximizers in $\Sigma_\ep$ are proved by an adapted vorticity method of \cite{CWWZ}. Then, we can reduce the stability of $\zeta_\ep$ to a problem of uniqueness (see Theorem \ref{U-S}) and hence by Theorem \ref{thmU}, in the special case $f(s)=s_+^p$ with $p\geq 2$, we obtain the nonlinear stability of the steady vortex rings $\zeta_\ep$ in Proposition \ref{thmE}. Our last main result reads as follows.
\begin{theorem}\label{thmS}
	For given $\kappa>0$, $W>0$ and $p\geq 2$, let $\zeta_\ep$ be the vortex ring obtained in Proposition \ref{thmE}.  Then there exists $\ep_0>0$ small such that for $0<\ep<\ep_0$,  $\zeta_\ep$ is orbitally stable in the following sense:
	
		For every $\epsilon>0$, there exists $\delta>0$ such that if   $\omega_0 \in L^1\cap L^\infty\cap L^1_\mathrm{w}(\mathbb R^3)$ satisfies $r\omega_0\in L^\infty(\mathbb R^3)$  and
		\begin{equation*}
			\inf_{c\in \mathbb{R}}\Bigg{\{}\|\omega_0-\zeta_{\ep}(\cdot+c\mathbf{e}_2)\|_{L^1\cap L^2(\mathbb{R}^3)}+\left|P(\omega_0-\zeta_{\ep}(\cdot+c\mathbf{e}_2))\right|\Bigg{\}} \leq \delta,
		\end{equation*}
		then  corresponding  solution $\omega(t)$  of \eqref{1-3}  with initial data $\omega_0$ satisfies
		\begin{equation*}
			\inf_{c\in \mathbb{R}} \Big{\{}\|\omega(t)-\zeta_{\ep}(\cdot+c\mathbf{e}_2)\|_{L^1\cap L^2(\mathbb{R}^3)}+P(|\omega(t)-\zeta_{\ep}(\cdot+c\mathbf{e}_2)|) \Big{\}}\leq \epsilon,\ \ \text{for all}\,\, t>0.
		\end{equation*}
\end{theorem}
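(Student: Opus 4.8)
The plan is to obtain Theorem \ref{thmS} by applying the general stability criterion Theorem \ref{thmSs} with $\zeta_0=\zeta_\ep$ and $\mathcal W=W\ln\frac{1}{\ep}$, and then invoking the uniqueness Theorem \ref{thmU} to pin down the relevant set of maximizers. First I would record that the structural hypotheses of Theorem \ref{thmSs} are met: by Theorem \ref{thmE}(ii)--(iii) and Remark \ref{rem2} the ring $\zeta_\ep$ is continuous with bounded support, so $\zeta_\ep\in L^q(\R^3)$ for every $q>2$ and $|\mathrm{supp}(\zeta_\ep)|_{\R^3}<\infty$, while $\mathcal W=W\ln\frac{1}{\ep}>0$ whenever $\ep<1$. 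Everything then reduces to the maximization problem \eqref{1-9} with this $\zeta_0$ and $\mathcal W$: writing $\Sigma_\ep:=\Sigma_{\zeta_\ep,\mathcal W}$ for its set of maximizers, it suffices to establish (a) $\emptyset\neq\Sigma_\ep\subset\mathcal R(\zeta_\ep)$; (b) every element of $\Sigma_\ep$ is admissible; and (c) the identity $\Sigma_\ep=\{\zeta_\ep(\cdot+c\mathbf{e}_z)\mid c\in\R\}$ of \eqref{1-12}. Given (a)--(c), Theorem \ref{thmSs} yields orbital stability of $\Sigma_\ep$, which by (c) is precisely the orbital stability up to vertical translation of the single ring $\zeta_\ep$.

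For (a) and (b): the set $\overline{\mathcal R(\zeta_\ep)^w}$ is convex and weakly compact in $L^2_{as}(\R^3)$ by Douglas \cite{Dou}; the energy $E$ is weakly continuous on bounded subsets of $L^2$ because $\mathcal G_1$ is locally compact, and the impulse contribution is handled as in \cite{Bad, Choi20}, so $\mathcal E_{\mathcal W}$ attains its supremum over $\overline{\mathcal R(\zeta_\ep)^w}$ and $\Sigma_\ep\neq\emptyset$. To place the maximizers inside $\mathcal R(\zeta_\ep)$ I would use the Burton rearrangement theory (cf. \cite{Bad, BNL13}): any $\tilde\zeta_\ep\in\Sigma_\ep$ is an increasing function of $\tilde\psi_\ep-\frac{\mathcal W}{2}r^2$ with $\tilde\psi_\ep=\mathcal G_1\tilde\zeta_\ep$, and once one verifies that this potential has no level set of positive measure one obtains $\tilde\zeta_\ep\in\mathcal R(\zeta_\ep)$; in particular $\tilde\zeta_\ep$ solves \eqref{1-5} and hence is a steady vortex ring, with support $\{\tilde\psi_\ep-\frac{\mathcal W}{2}r^2>\tilde\mu_\ep\}$, which is bounded (the constant $\tilde\mu_\ep>0$ comes out of the analysis in the next step). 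Being a bounded function of bounded support, $\tilde\zeta_\ep$ generates the global traveling-wave solution $\tilde\zeta_\ep(x+t\mathbf v_\infty)$, which conserves $E$ and $P$ by $z$-translation invariance and remains in $\mathcal R(\tilde\zeta_\ep)$; thus $\tilde\zeta_\ep$ is admissible, which is (b).

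The heart of the matter is (c). Here I would re-run the vorticity method of \cite{CWWZ}, adapted from the penalized problem \eqref{1-11} to the rearrangement-constrained problem \eqref{1-9}, in order to show that for $\ep$ small every $\tilde\zeta_\ep\in\Sigma_\ep$ has cross-section $\sigma(\tilde\Omega_\ep)=O(\ep)$ and that $\tilde\Omega_\ep:=\{\tilde\zeta_\ep>0\}$ concentrates to some circle $\mathcal C_{r_*^{(\ep)},z_*^{(\ep)}}$. The delicate point is to identify the vorticity function: combining $\tilde\zeta_\ep\in\mathcal R(\zeta_\ep)$ with the concentration profile, one shows $\tilde\zeta_\ep=\ep^{-2}\big(\mathcal G_1\tilde\zeta_\ep-\frac{W}{2}x_1^2\ln\frac{1}{\ep}-\tilde\mu_\ep\big)_+^p$, so that $\tilde\zeta_\ep$ satisfies hypotheses (i)--(iii) of Theorem \ref{thmU} with the same $\kappa,W,p$ as $\zeta_\ep$. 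Theorem \ref{thmU} then forces $\zeta_\ep(r,z)\equiv\tilde\zeta_\ep(r,z+c_\ep)$ for some $c_\ep\in\R$; since $E$, $P$ and $\mathcal R(\zeta_\ep)$ are all invariant under translation in $z$, the set $\Sigma_\ep$ is translation invariant, hence $\zeta_\ep\in\Sigma_\ep$ and every element of $\Sigma_\ep$ is a vertical translate of $\zeta_\ep$; this is \eqref{1-12} (and is the content of the reduction in Theorem \ref{U-S}). By Theorem \ref{thmSs} the set $\Sigma_\ep=\{\zeta_\ep(\cdot+c\mathbf{e}_z)\mid c\in\R\}$ is then orbitally stable in the $\|\cdot\|_{L^1\cap L^2}$-plus-$|P(\cdot)|$ sense.

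Finally, to upgrade the conclusion to the $P(|\omega(t)-\zeta_\ep(\cdot+c\mathbf{e}_z)|)$ form appearing in Theorem \ref{thmS}, I would use that $P$ is invariant under $z$-translations and conserved along the flow, so $P(\omega(t)-\zeta_\ep(\cdot+c\mathbf{e}_z))=P(\omega_0-\zeta_\ep(\cdot+c\mathbf{e}_z))$ stays controlled by the initial smallness, together with the elementary identity $P(|g-h|)=P(g-h)+2P((h-g)_+)$ for $g,h\ge0$ and the bound $P((h-g)_+)\le\frac{R^2}{2}\|g-h\|_{L^1(\R^3)}$ valid when $h$ has radial support in $\{r\le R\}$; taking $R$ to bound the radial extent of $\mathrm{supp}(\zeta_\ep)$ and choosing $\delta$ and the $L^1$-tolerance in Theorem \ref{thmSs} small enough closes the estimate. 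I expect the main obstacle to be the asymptotic analysis of \emph{every} maximizer in $\Sigma_\ep$ carried out in the third step: Burton's Euler--Lagrange condition only produces $\tilde\zeta_\ep$ as an a priori unknown increasing function of its own stream function, and identifying this function with $s\mapsto\ep^{-2}s_+^p$ demands coupling the fixed-rearrangement constraint with a careful blow-up analysis near an a priori unknown concentration circle, in a variational setting genuinely different from the one treated in \cite{CWWZ}.
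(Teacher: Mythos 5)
Your overall architecture matches the paper's: apply the general criterion of Theorem \ref{thmSs} with $\zeta_0=\zeta_\ep$, $\mathcal W=W\ln\frac{1}{\ep}$, show $\emptyset\neq\Sigma_\ep\subset\mathcal R(\zeta_\ep)$ with all maximizers admissible, identify $\Sigma_\ep$ with the orbit $\{\zeta_\ep(\cdot+c\mathbf e_z)\}$ via Theorem \ref{thmU}, and then upgrade the impulse term using conservation of $P$ and translation invariance. Steps (a), (b) and the final upgrade are in the right spirit and correspond to Lemmas \ref{lem3-14}--\ref{lem3-16} and the end of the proof of Theorem \ref{Sset}.

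However, there is a genuine gap at exactly the point you flag as the ``main obstacle'': the claim that every maximizer $\tilde\zeta_\ep\in\Sigma_\ep$ satisfies $\tilde\zeta_\ep=\ep^{-2}\bigl(\mathcal G_1\tilde\zeta_\ep-\frac{W}{2}x_1^2\ln\frac{1}{\ep}-\tilde\mu_\ep\bigr)_+^p$. Burton's Euler--Lagrange theory (Lemma \ref{lem3-16}) only yields $\tilde\zeta_\ep=g_\ep\bigl(\mathcal G_1\tilde\zeta_\ep-\frac{W}{2}x_1^2\ln\frac{1}{\ep}-\tilde\mu_\ep\bigr)$ for an \emph{a priori unknown} nondecreasing $g_\ep$, and your proposed route --- identifying $g_\ep$ with $s\mapsto\ep^{-2}s_+^p$ by coupling the rearrangement constraint with a blow-up analysis --- is circular as stated: the rearrangement constraint fixes the distribution of $g_\ep\circ(\text{potential})$, not of $g_\ep$, since the distribution of the potential itself depends on the unknown solution. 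The paper avoids this entirely by a comparison of the two variational problems (proof of Theorem \ref{U-S}): the penalization $\ep^{-2}\int_{D_0}G(\ep^2\zeta)\,d\nu$ in \eqref{1-11} is a rearrangement-invariant functional, hence \emph{constant} on $\mathcal R(\zeta_\ep)$; since Theorem \ref{thm3-12} places every maximizer of \eqref{1-9} inside $\mathcal R(\zeta_\ep)$ with support in $D_0$ (hence inside $\mathcal A$), every such maximizer is automatically a maximizer of the penalized functional \eqref{1-11}, and those are known from \cite{CWWZ} (Proposition \ref{prop3-11}) to satisfy the Euler--Lagrange equation with the \emph{prescribed} nonlinearity $f(s)=s_+^p$. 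Only then does Theorem \ref{thmU} apply. Without this transfer argument (or a genuine substitute for it), your step (c) does not close.
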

\begin{remark}
We remark that in the proof of existence and the study of asymptotic behaviors of maximizers in \cite{CWWZ}, the restriction  $\text{supp}(\zeta)\subseteq D_0$ in the definition of $\mathcal A$ is not essential and can be relaxed to $\text{supp}(\zeta)\subseteq \Pi$. Moreover, by virtue of the uniqueness result in Theorem \ref{thmU},   one may also obtain the nonlinear stability of $\zeta_\ep$ using an argument similar to \cite{Abe, CQZZ1}. However, we prefer the approach used in the present paper, which  
has a stronger physical motivation and more extensive applications without prescribing the vorticity function $f$.
\end{remark}
	
	The paper is organized as follows. In Subsection \ref{sec2}, we  prove a general   orbital stability for vortex rings (i.e. Theorem \ref{thmSs}) based on a combination of the variational method and the concentrated compactness lemma due to Lions \cite{Lions}. Section \ref{sec4} is devoted to prove the uniqueness of a family of vortex rings, i.e. Theorem \ref{thmU}. We first study the asymptotic behavior of the steady vortex rings carefully by several steps in Subsections \ref{sec2-1} and \ref{sec2-2} and then prove the uniqueness result Theorem \ref{thmU} via reduction to absurdity and the local Pohozaev identity in Subsection \ref{sec2-3}. In Section \ref{sec3}, we study the  orbital stability of vortex rings in Proposition \ref{thmE}. By considering variational problem in rearrangement class and studying asymptotic behavior of maximizers, we reduce the stability of general concentrated steady vortex ring  into the  problem  of their uniqueness  and  obtain Theorem \ref{thmS} by using Theorem \ref{thmU} as corollary.    Some auxiliary lemmas and complicated calculations are collected in   Appendixes \ref{appB}-\ref{appD}.

In what follows, the symbol $C$ denotes a general positive constant that may change from line to line.

\section{A general stability theorem}\label{sec2}
This section is devoted to investigate the nonlinear stability of vortex rings in general setting and give a proof of Theorem \ref{thmSs}.

In order to simplify notations, we will use  $\mathbb R^2_+=\{  x=(x_1,x_2) \ | \ x_1>0\}$ to substitute the meridional half plane $\Pi$ by writing $(r,z)$ as $(x_1, x_2)$ hereafter. Then  the elliptic operator $\mathcal L:=-\frac{1}{r}\frac{\partial}{\partial r}\Big(\frac{1}{r}\frac{\partial}{\partial r}\Big)-\frac{1}{r^2}\frac{\partial^2}{\partial z^2}$ can be  abbreviated as
\begin{equation}\label{2-1}
	\mathcal L=-\frac{1}{x_1}\text{div}\left(\frac{1}{x_1}\nabla\right).
\end{equation}
 Let ${G_1}( x, y)$ be the Green  function for $\mathcal{L}$ in $\mathbb R^2_+$ given by
\begin{equation}\label{Green}
	{G_1}( x, y):=\frac{x_1 y_1}{4\pi}\int_{-\pi}^\pi\frac{\cos\theta d\theta}{\left[(x_2-y_2)^2+x_1^2+y_1^2-2x_1 y_1\cos\theta\right]^{\frac{1}{2}}}.
\end{equation}
Denoting
\begin{equation}\label{2-6}
	\rho( x, y)=\frac{(x_1-y_1)^2+(x_2-y_2)^2}{x_1 y_1},
\end{equation}
we have the following asymptotic estimates
\begin{equation}\label{2-7}
	{G_1}(x, y)=
	\frac{x_1^{1/2}y_1^{  1/2}}{4\pi}\left(\ln\left(\frac{1}{\rho}\right)
	+2\ln 8-4+O\left(\rho\ln\frac{1}{\rho}\right)\right),\quad \text{as} \ \rho\to 0,
\end{equation}
and
\begin{equation}\label{2-8}
	{G_1}( x, y) =\frac{x_1^{1/2} y_1^{ 1/2}}{4}\left(\frac{1}{\rho^{3/2}}+O(\rho^{-5/2})\right), \quad \text{as} \  \rho\to \infty,
\end{equation}
which can be found in e.g. \cite{Fra2,Lam,Sve}. Thus, by \eqref{2-7} and \eqref{2-8}, the following inequality holds
\begin{equation}\label{2-9}
	{G_1}(x, y)\leq C_\delta
	\frac{(x_1y_1)^{\delta+\frac{1}{2}}}{\left(|x_1-y_1|^2+|x_2-y_2|^2\right)^\delta},\quad \forall \ \delta\in \bigg(0,\frac{3}{2}\bigg], \quad \forall \ x,y\in \mathbb R^2_+.
\end{equation}

 For convenience, we denote the weighted measure with density $x_1$ on $\mathbb{R}^2_+$ by $\nu$. That is, $$d\nu=x_1 dx.$$ We also use $\vertiii{f}_p:=\left(\int_{\mathbb R^2_+}   |f(x)|^pd\nu\right)^{\frac{1}{p}} $ to denote the weighted $L^p$ norm.

Let $\xi$ be a non-negative Lebesgue integrable function on $\mathbb{R}^2_+$, we denote by $\mathcal{R}(\xi)$ the set of (equimeasurable) rearrangements of $\xi$ with respect to $\nu$ on $\mathbb{R}^2_+$ defined by
\begin{equation*}
	\mathcal{R}(\xi)=\Big\{0\leq \zeta\in L^1(\mathbb{R}^2_+,\nu)\Big| \nu (\{x: \zeta(x)>\tau\}) =\nu (\{x: \xi(x)>\tau\}) , \forall\, \tau>0  \Big\}.
\end{equation*}
Note that all functions in $\mathcal{R}(\xi)$ has the same $L^q$ norms with measure $\nu$. Following \cite{Bad, BNL13}, we also define
\begin{equation*}
	\mathcal{R}_+(\xi)=\Big\{\zeta 1_S \big|	\zeta\in\mathcal{R}(\xi),\  \   S\subset \mathbb{R}^2_+ \ \text{measurable}  \Big\},
\end{equation*}
and
$$\overline{\mathcal{R}(\xi)^w}=\left\{ \zeta\geq 0 \  \text{measurable} \Bigg | \  \int_{\mathbb{R}_+^2}(\zeta-\alpha)_+d\nu \leq \int_{\mathbb{R}_+^2}(\xi-\alpha)_+d\nu, \ \ \forall \alpha>0\right\}.   $$
It is easy to see that the inclusions $ 	\mathcal{R}(\xi)\subset \mathcal{R}_+(\xi)\subset \overline{\mathcal{R}(\xi)^w}$ hold. As pointed out by Badiani and Burton  \cite{Bad}, by the results in  \cite{Dou}, one can see that the following properties are true.
\begin{lemma}[Proposition 1.1 and Lemma 2.9 in \cite{Bad}]\label{lem3-1}
	It hold
	\begin{itemize}
		\item[(a).]If $0\leq \zeta_0\in  L^q(\mathbb{R}^2_+,\nu)$ for some $1<q<\infty$, then $\overline{\mathcal{R}(\zeta_0)^w}$ is a weakly compact convex set in $L^q(\mathbb{R}^2_+,\nu)$;
		\item[(b).]If $0\leq \zeta_0\in  L^q(\mathbb{R}^2_+,\nu)$ for some $1<q<\infty$, then $ \mathcal{R}(\zeta_0)$ is dense in $\overline{\mathcal{R}(\zeta_0)^w}$ in the weak topology of $L^q(\mathbb{R}^2_+,\nu)$;
		\item[(c).] If $0\leq \zeta_0\in  L^q(\mathbb{R}^2_+,\nu)$ for some $1\leq q<\infty$, then $$\vertiii{\zeta}_q\leq \vertiii{\zeta_0}_q, \quad \forall \ \  \zeta\in \overline{\mathcal{R}(\zeta_0)^w};$$
		\item[(d).] If $0\leq \zeta_0\in  L^1\cap L^q(\mathbb{R}^2_+,\nu)$ for some $1\leq q<\infty$, then $$\vertiii{\zeta}_s\leq \vertiii{\zeta_0}_s, \quad \forall \ \ 1\leq s\leq q,\quad\forall\ \ \zeta\in \overline{\mathcal{R}(\zeta_0)^w}.$$
	\end{itemize}
\end{lemma}

Denote $\mathcal{G}_1\xi(x)=\int_{\mathbb R^2_+} G_1(x,y)\xi(x) d\nu$.  Recall the definitions (after dividing by $2\pi$) of the  kinetic energy	 
\begin{equation*}
	E(\xi)=\frac{1}{2}\int_{\mathbb R^2_+}\xi(x)\mathcal{G}_1\xi(x) d\nu,
\end{equation*}	
and the impulse
\begin{equation*}
	P(\xi)=\frac{1}{2}\int_{\mathbb R^2_+}\xi(x) x_1^2 d\nu.
\end{equation*}	
For a constant $\mathcal W$, set the energy as $$\mathcal E_{\mathcal W}:=E-\mathcal W P.$$
For a given function $\zeta_0$ and constant $\mathcal W>0$, we will consider the maximization problem
$$\sup_{\zeta\in\overline{\mathcal{R}(\zeta_0)^w}}  \mathcal E_{\mathcal W}(\zeta).$$

The first key step of proving Theorem \ref{thmSs} is to verify the compactness of  maximizing sequences.

\subsection{Compactness of maximizing sequences}
We collect some basic estimates that will be used later.
\begin{lemma}[Lemma 2.8 \cite{Bad}]\label{lem3-2}
	Suppose 	$ \zeta \in L^1\cap L^q(\mathbb{R}_+^2, \nu)$ for some $2< q<+\infty$, then there exists a constant $C_q$ such that
	\begin{equation}\label{3-1}
		|\mathcal{G}_1\zeta(x)|\leq C_q\left(\vertiii{\zeta}_1+\vertiii{\zeta}_q\right)\times
		\begin{cases}
			x_1^2, \quad &\text{if}\ \ 0<x_1\leq 2,\\
			x_1(\ln x_1)^2,&\text{if}\ \  x_1\geq 2.
		\end{cases}
	\end{equation}
\end{lemma}

\begin{lemma}[Lemmas 2.2 and 2.3 in \cite{Choi20}]\label{lem3-3}
	For $ \zeta \in L^1\cap L^2(\mathbb{R}_+^2, \nu)$ with $P(|\zeta|)<+\infty$, it holds
	\begin{equation}\label{3-2}
		|\mathcal{G}_1\zeta(x)|\leq C \left(\vertiii{\zeta}_1+\vertiii{\zeta}_2+P(|\zeta|)\right)\min\{x_1, x_1^{-\frac{1}{2}}\}.
	\end{equation}
	For $ \zeta_i \in L^1\cap L^2(\mathbb{R}_+^2, \nu)$ with $P(|\zeta_i|)<+\infty$ and $i=1,2$, it holds
	\begin{equation}\label{3-3}
		|E(\zeta_1)-E(\zeta_2)|\leq \left( \vertiii{\zeta_1+\zeta_2}_1+\vertiii{\zeta_1+\zeta_2}_2+P(|\zeta_1+\zeta_2|)\right)P( |\zeta_1-\zeta_2| )^{\frac{1}{2}}\vertiii{\zeta_1-\zeta_2}_1^{\frac{1}{2}}.
	\end{equation}
	In particular, taking $\zeta_2\equiv0$ in \eqref{3-3}, one finds that  if $\zeta\in  L^1\cap L^q(\mathbb{R}_+^2, \nu)$ for some $q\geq 2$ and $P(|\zeta|)<+\infty$ then $$E(\zeta)\leq \left( \vertiii{\zeta}_1+\vertiii{\zeta }_2+P(|\zeta |)\right)P( |\zeta | )^{\frac{1}{2}}\vertiii{\zeta }_1^{\frac{1}{2}}<+\infty.$$
\end{lemma}

We shall say that $\zeta$ is Steiner symmetric about the $x_1$-axis  if for any fixed $x_1$, $\zeta$  is the unique even function of $x_2$  such that
\begin{equation*}
	\zeta(x_1, x_2)>\tau\ \ \ \text{if and only if}\ \ \ |x_2|<\frac{1}{2}\,|\left\{y_2\in \mathbb{R} \mid\ \zeta(x_1,y_2)>\tau \right\}|_{\mathbb R},
\end{equation*}
where $|\cdot|_{\mathbb R}$ denotes the Lebesgue measure on $\mathbb R$.

For a function $0\leq \zeta\in L^1\cap L^q(\mathbb{R}_+^2, \nu)$ with $q>2$, we denote by $\zeta^\star$ the Steiner symmetrization of $\zeta$, which is the unique  function in $\mathcal{R}(\zeta)$ that is Steiner symmetric about the $x_1$-axis. A key fact  is the rearrangement inequality (see e.g. Theorems 3.7 and 3.9 in  \cite{Lieb})
\begin{equation}\label{rieq}
	E(\zeta^\star)\geq E(\zeta),
\end{equation}
with strict inequality unless $\zeta(\cdot)\equiv\zeta^\star(\cdot+c\mathbf{e}_2)$ for some $c\in\mathbb{R}$.

\begin{lemma}\label{lem3-4}
	Let $q>2$ be a positive number. If	$0\leq \zeta\in L^1\cap L^q(\mathbb{R}_+^2, \nu)$ is Steiner symmetric about the $x_1$-axis and $P(|\zeta|)<+\infty$, then
	\begin{equation}\label{3-4}
		|\mathcal{G}_1\zeta(x)|\leq C_q \left(|x_2|^{-\frac{1}{2}}\vertiii{\zeta}_1+|x_2|^{-\frac{1}{2q}}\vertiii{\zeta}_q+|x_2|^{-\frac{3}{2}} P(|\zeta|)\right)x_1^2, \quad \text{if}\ \ |x_2|>1,
	\end{equation}
	for some constant $C_q$ depending only on $q$.
	
\end{lemma}		
\begin{proof}
	For function $0\leq w\in L^1{(\mathbb{R}_+^2)}$ that is Steiner symmetric about the $x_1$-axis, the following inequality holds trivially
	\begin{equation}\label{3-5}
		\int_{|y_2-x_2|<l} w(y) d\nu\leq \frac{l}{|x_2|}\int_{\mathbb{R}_+^2} w(y)d\nu,
	\end{equation}
	for $0<l\leq |x_2|$.

	For any fixed $x\in \mathbb{R}_+^2$, let
	\begin{equation*}
		\zeta_1(y):=\left\{
		\begin{array}{lll}
			\zeta(y), \ \  & \text{if} \ \ |y_2-x_2|<\sqrt{|x_2|},\\
			0, & \text{if} \ \ |y_2-x_2|\geq\sqrt{|x_2|}.
		\end{array}
		\right.
	\end{equation*}
	Using  \eqref{3-5} with $w(y)= \zeta^s$ and $l=|x_2|^{\frac{1}{2}}$ for $|x_2|\geq 1$, it is easy to see that
	\begin{equation*}
		\vertiii{\zeta_1}_s\leq \left(\frac{|x_2|^{\frac{1}{2}}}{|x_2|}\right)^{\frac{1}{s}}\vertiii{\zeta}_s=|x_2|^{-\frac{1}{2s}}\vertiii{\zeta}_s, \quad \forall\ s\in[1,q].
	\end{equation*}
	Hence, by \eqref{3-1}, we have
	\begin{equation}\label{3-6}
		\mathcal{G}_1\zeta_1(x)\leq C_q\left(\vertiii{\zeta_1}_1+\vertiii{\zeta}_q\right)x_1^2\leq C_q \left(|x_2|^{-\frac{1}{2}}\vertiii{\zeta}_1+ |x_2|^{-\frac{1}{2q}}\vertiii{\zeta}_q\right)x_1^2.
	\end{equation}
	
	Letting $\zeta_2=\zeta-\zeta_1$, by the inequality \eqref{2-9} with $\delta=\frac{3}{2}$, we find
	\begin{equation}\label{3-7}
		\begin{split}
			\mathcal{G}_1\zeta_2(x)&=\int_{|y_2-x_2|\geq \sqrt{|x_2|}} G_1(x,y) \zeta(y) d \nu\\
			&\leq C \int_{|y_2-x_2|\geq \sqrt{|x_2|}} \frac{x_1^2 y_1^2}{|x_2|^{ \frac{3}{2}}} \zeta(y) d \nu\leq C|x_2|^{-\frac{3}{2}}P(\zeta) x_1^2.
		\end{split}
	\end{equation}
	Then   \eqref{3-4} follows by combining \eqref{3-6} and \eqref{3-7} and hence the proof is complete.
\end{proof}

\begin{lemma}\label{lem3-5}
	Suppose that $\zeta_0\in L^1\cap L^q(\mathbb{R}_+^2, \nu)$ with some $2<q<\infty$   and $\mathcal W>0$ is a given constant. Then the following properties hold.
	\begin{itemize}
		\item[($\mathrm i$).] There exists  $R_0>0$ depending on $\vertiii{\zeta_0}_1+\vertiii{\zeta_0}_q $ and $\mathcal W $ such that for all $\zeta\in\overline{\mathcal{R}(\zeta_0)^w}$, $$\mathcal{G}_1\zeta(x)-\frac{\mathcal W}{2} x_1^2<0,\quad \forall x=(x_1,x_2)\ \text {with}\  x_1>R_0.$$
		\item[($\mathrm{ii}$)] Let $h=\zeta 1_V$ for some function $\zeta$ and some set $V\subset \{\mathcal{G}_1\zeta-\frac{\mathcal W}{2} x_1^2\leq 0\}$, then $\mathcal E_{\mathcal W}(\zeta-h)\geq \mathcal E_{\mathcal W}(\zeta)$ with strict inequality unless $h\equiv0$.
	\end{itemize}
\end{lemma}	
\begin{proof}
	The	assertion ($\mathrm i$) follows directly from \eqref{3-1}. As for ($\mathrm{ii}$), one sees that
	\begin{equation*}
		\begin{split}
			\mathcal E_{\mathcal W}(\zeta-h)&=\frac{1}{2}\int_{\mathbb R^2_+}(\zeta-h)(x)\mathcal{G}_1(\zeta-h)(x) d\nu-\frac{\mathcal W}{2}\int_{\mathbb R^2_+}(\zeta-h)(x) x_1^2 d\nu\\
			&=\mathcal E_{\mathcal W}(\zeta)+E(h)-\int_{\mathbb R^2_+}h(x)\left(\mathcal{G}_1\zeta(x) -\frac{\mathcal W}{2} x_1^2\right)d\nu\\
			&\geq \mathcal E_{\mathcal W}(\zeta)+E(h),
		\end{split}
	\end{equation*}
	which implies ($\mathrm{ii}$) since $E(h)\geq 0$ and $E(h)=0$ if and only if $h\equiv0$. The proof is therefore complete.
\end{proof}

\begin{lemma}\label{lem3-6}
	Let  $0\leq \zeta_0\in L^1\cap L^q(\mathbb{R}_+^2, \nu)$ with some $2<q<\infty$   and $\mathcal W>0$ is a given constant. Then $$\sup_{\zeta\in\overline{\mathcal{R}(\zeta_0)^w}}  \mathcal E_{\mathcal W}(\zeta)<+\infty,$$ and any maximizer (if exists) is supported in $[0, R_0]\times \mathbb R$, where $R_0$ is the  constant given by Lemma \ref{lem3-5}.
\end{lemma}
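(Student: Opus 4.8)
The plan is to obtain both statements as direct consequences of Lemma \ref{lem3-2}, Lemma \ref{lem3-5} and part c) of Lemma \ref{lem3-1}, the mechanism being a truncation that discards the part of $\zeta$ lying in $\{x_1>R_0\}$ while not decreasing $\mathcal E_{\mathcal W}$. Fix any $\zeta\in\overline{\mathcal R(\zeta_0)^w}$ and, with $R_0$ the constant from Lemma \ref{lem3-5}, set $h:=\zeta\,1_{\{x_1>R_0\}}$ and $\tilde\zeta:=\zeta-h=\zeta\,1_{\{x_1\le R_0\}}$. First I would check that $\tilde\zeta\in\overline{\mathcal R(\zeta_0)^w}$: since $0\le\tilde\zeta\le\zeta$ pointwise and $t\mapsto(t-\alpha)_+$ is nondecreasing, $\int_{\mathbb R^2_+}(\tilde\zeta-\alpha)_+\,d\nu\le\int_{\mathbb R^2_+}(\zeta-\alpha)_+\,d\nu\le\int_{\mathbb R^2_+}(\zeta_0-\alpha)_+\,d\nu$ for every $\alpha>0$; in particular Lemma \ref{lem3-1} c) gives $\vertiii{\tilde\zeta}_1\le\vertiii{\zeta_0}_1$ and $\vertiii{\tilde\zeta}_q\le\vertiii{\zeta_0}_q$. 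Next, by Lemma \ref{lem3-5} i) we have $\{x_1>R_0\}\subset\{\mathcal G_1\zeta-\frac{\mathcal W}{2}x_1^2\le 0\}$, so Lemma \ref{lem3-5} ii) applies with $V=\{x_1>R_0\}$ and yields
\begin{equation*}
	\mathcal E_{\mathcal W}(\zeta)\le\mathcal E_{\mathcal W}(\tilde\zeta),
\end{equation*}
with strict inequality unless $h\equiv0$.

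The second step is to bound $\mathcal E_{\mathcal W}(\tilde\zeta)$ uniformly. Because $\tilde\zeta$ is supported in $[0,R_0]\times\mathbb R$, one has $P(\tilde\zeta)\le\frac12 R_0^2\,\vertiii{\zeta_0}_1<\infty$ (so in particular $E(\tilde\zeta)<\infty$ by Lemma \ref{lem3-3}, which incidentally removes any worry about $\mathcal E_{\mathcal W}$ being ill-defined), and $P(\tilde\zeta)\ge 0$ gives $\mathcal E_{\mathcal W}(\tilde\zeta)\le E(\tilde\zeta)$. On the support of $\tilde\zeta$, Lemma \ref{lem3-2} (using the two regimes $0<x_1\le 2$ and $2\le x_1\le R_0$) furnishes a constant $C_0=C_0(R_0)$ with $0\le\mathcal G_1\tilde\zeta(x)\le C_0(\vertiii{\zeta_0}_1+\vertiii{\zeta_0}_q)$ there. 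Hence
\begin{equation*}
	\mathcal E_{\mathcal W}(\zeta)\le\mathcal E_{\mathcal W}(\tilde\zeta)\le E(\tilde\zeta)=\frac12\int_{\mathbb R^2_+}\tilde\zeta\,\mathcal G_1\tilde\zeta\,d\nu\le\frac{C_0}{2}\big(\vertiii{\zeta_0}_1+\vertiii{\zeta_0}_q\big)\vertiii{\tilde\zeta}_1\le\frac{C_0}{2}\big(\vertiii{\zeta_0}_1+\vertiii{\zeta_0}_q\big)\vertiii{\zeta_0}_1,
\end{equation*}
a bound independent of $\zeta$; taking the supremum over $\overline{\mathcal R(\zeta_0)^w}$ proves finiteness.

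For the assertion about maximizers, suppose $\zeta$ attains the supremum and $h=\zeta\,1_{\{x_1>R_0\}}\not\equiv0$. Then $\tilde\zeta\in\overline{\mathcal R(\zeta_0)^w}$ and the strict case of Lemma \ref{lem3-5} ii) gives $\mathcal E_{\mathcal W}(\tilde\zeta)>\mathcal E_{\mathcal W}(\zeta)$, contradicting maximality; therefore $\zeta=0$ a.e. on $\{x_1>R_0\}$, i.e. $\text{supp}(\zeta)\subset[0,R_0]\times\mathbb R$. I do not foresee a genuine obstacle here: the two points that deserve a little care are verifying that the truncated function stays in $\overline{\mathcal R(\zeta_0)^w}$ (immediate from the level-set description) and controlling $\mathcal G_1$ uniformly on $[0,R_0]\times\mathbb R$ via the $L^1\cap L^q$ bounds of Lemma \ref{lem3-1} c) together with Lemma \ref{lem3-2}.
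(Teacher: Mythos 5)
Your proof is correct and follows essentially the same route as the paper: truncate any competitor to the strip $[0,R_0]\times\mathbb{R}$ via Lemma \ref{lem3-5}, bound the energy of the truncated function uniformly using the $L^1\cap L^q$ control from Lemma \ref{lem3-1}, and deduce the support property of a maximizer from the strict case of Lemma \ref{lem3-5} ii). The only (harmless) difference is that you bound $E(\tilde\zeta)$ through the pointwise estimate of Lemma \ref{lem3-2}, while the paper invokes the energy estimate \eqref{3-3} of Lemma \ref{lem3-3} after first bounding the impulse; your explicit check that the truncation remains in $\overline{\mathcal{R}(\zeta_0)^w}$ is a welcome detail the paper leaves implicit.
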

\begin{proof}
	Let $\{\zeta_{k}\}_{k=1}^\infty\subset \overline{\mathcal{R}(\zeta_0)^w}$ be  a maximizing sequence. In view of  Lemma \ref{lem3-5}, we may assume that $\zeta_{k}$ is supported in $[0, R_0]\times \mathbb R$ by replacing $\zeta_{k}$ with $\zeta_{k} 1_{(0, R_0) \times \mathbb R}$ if otherwise. Then, we obtain $P(\zeta_k)\leq R_0^2 \vertiii{\zeta_k}_1\leq R_0^2 \vertiii{\zeta_0}_1 $ by Lemma \ref{lem3-1} $(d)$.  Thus we obtain $  \mathcal E_{\mathcal W}(\zeta_k)  $ is uniformly bounded by a constant depending only on $\zeta_0$ by Lemma \ref{lem3-1} $(d)$ and Lemma \ref{lem3-3}. Suppose that $\zeta^0 \in \overline{\mathcal{R}(\zeta_0)^w}$ is a maximizer, let $h:=\zeta^0 1_{(R_0, \infty)\times \mathbb R}$, then we infer from Lemma \ref{lem3-5} ($\mathrm{ii}$) that $h\equiv0$ since $(R_0, \infty)\times \mathbb R\subset\{\mathcal{G}_1\zeta^0-\frac{\mathcal W}{2} x_1^2\leq 0\}$ by Lemma \ref{lem3-5} ($\mathrm i$). The proof of this lemma is thus finished.
\end{proof}	
\begin{lemma}\label{lem3-7}
	For  $2<q<\infty$, let  $0\leq \zeta_0\in  L^1 \cap L^q(\mathbb{R}_+^2, \nu)$ be a function  with $\nu(\mathrm{supp}(\zeta_0))<\infty$ and $\mathcal W>0$ is a given constant. Then we have $$\sup_{\zeta\in \mathcal{R}(\zeta_0) }  \mathcal E_{\mathcal W}(\zeta)=\sup_{\zeta\in \mathcal{R}_+(\zeta_0) }  \mathcal E_{\mathcal W}(\zeta)=\sup_{\zeta\in\overline{\mathcal{R}(\zeta_0)^w}}  \mathcal E_{\mathcal W}(\zeta)<+\infty.$$
\end{lemma}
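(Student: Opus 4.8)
The plan is to establish the chain of equalities
$$\sup_{\zeta\in \mathcal{R}(\zeta_0)} \mathcal E_{\mathcal W}(\zeta)=\sup_{\zeta\in \mathcal{R}_+(\zeta_0)} \mathcal E_{\mathcal W}(\zeta)=\sup_{\zeta\in\overline{\mathcal{R}(\zeta_0)^w}} \mathcal E_{\mathcal W}(\zeta)$$
by proving the two inequalities $\le$ (which are immediate from the inclusions $\mathcal{R}(\zeta_0)\subset \mathcal{R}_+(\zeta_0)\subset \overline{\mathcal{R}(\zeta_0)^w}$) together with the reverse inequality $\sup_{\overline{\mathcal{R}(\zeta_0)^w}}\mathcal E_{\mathcal W}\le \sup_{\mathcal{R}(\zeta_0)}\mathcal E_{\mathcal W}$. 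The finiteness of the rightmost supremum is already Lemma \ref{lem3-6}. The heart of the matter is thus: given $\zeta\in\overline{\mathcal{R}(\zeta_0)^w}$, produce an element of $\mathcal{R}(\zeta_0)$ with energy at least $\mathcal E_{\mathcal W}(\zeta)$, up to an arbitrarily small error.

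\emph{Reduction via weak density and upper semicontinuity.} First I would show that $\mathcal E_{\mathcal W}$ is weakly upper semicontinuous (indeed weakly continuous) on the set $\overline{\mathcal{R}(\zeta_0)^w}$. By Lemma \ref{lem3-6} every maximizing sequence, and more generally we may always cut off to, functions supported in the fixed bounded strip $[0,R_0]\times\mathbb R$; on such a strip $P$ is weakly continuous (it is a bounded linear functional against $x_1^2\in L^{q'}$ on the strip, using $\nu(\mathrm{supp})<\infty$ so $L^q\subset L^{q'}$ there), and $E$ is weakly continuous by the compactness of the operator $\mathcal G_1$ restricted to the strip — this follows from the kernel bound \eqref{2-9} with $\delta\in(0,1)$, which makes $\mathcal G_1$ map $L^2$-bounded-on-the-strip sets into a precompact subset of $L^2$ via a Rellich-type argument, exactly as in \cite{Bad, BNL13}. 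Combined with Lemma \ref{lem3-1} b), that $\mathcal{R}(\zeta_0)$ is weakly dense in $\overline{\mathcal{R}(\zeta_0)^w}$, weak continuity of $\mathcal E_{\mathcal W}$ gives at once that the supremum over $\overline{\mathcal{R}(\zeta_0)^w}$ equals the supremum over $\mathcal{R}(\zeta_0)$: pick $\zeta\in\overline{\mathcal{R}(\zeta_0)^w}$ nearly attaining the sup, approximate it weakly by $\zeta_k\in\mathcal{R}(\zeta_0)$, and pass to the limit in $\mathcal E_{\mathcal W}$. The $\mathcal{R}_+$ supremum is squeezed between the other two.

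\emph{Handling the cutoff consistency.} A technical point to be careful about is that truncating $\zeta\in\overline{\mathcal{R}(\zeta_0)^w}$ to the strip keeps it in $\overline{\mathcal{R}(\zeta_0)^w}$ and, by Lemma \ref{lem3-5} ii) applied with $h=\zeta 1_{(R_0,\infty)\times\mathbb R}$ and $V=(R_0,\infty)\times\mathbb R\subset\{\mathcal G_1\zeta-\tfrac{\mathcal W}{2}x_1^2\le 0\}$ (Lemma \ref{lem3-5} i)), does not decrease $\mathcal E_{\mathcal W}$; so restricting attention to strip-supported functions is harmless throughout. One must also verify that the truncation of an element of $\mathcal{R}(\zeta_0)$ lands in $\mathcal{R}_+(\zeta_0)$, which is immediate from its definition. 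Finally, $P$ is finite on the whole class because supports lie in the strip, so all quantities in $\mathcal E_{\mathcal W}$ are well defined and the estimates of Lemmas \ref{lem3-2}–\ref{lem3-3} apply.

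\emph{Main obstacle.} The only genuinely nontrivial step is the weak (upper semi-)continuity of $E$ on strip-supported, $L^q$-bounded rearrangement sets, i.e.\ the compactness of the Green operator $\mathcal G_1$ there; everything else is soft (inclusions, density, linear-functional continuity of $P$, and the sign lemma for cutting off mass at large $x_1$). I expect this compactness to be proved exactly by the argument in \cite{Bad}: split $G_1(x,y)$ into a near-diagonal singular piece, controlled in $L^1$ uniformly by \eqref{2-9} with small $\delta$, and a far piece which is a smooth bounded kernel on the strip, and conclude strong $L^2$-convergence of $\mathcal G_1\zeta_k$ from weak $L^2$-convergence of $\zeta_k$ via the Kolmogorov–Riesz / Rellich criterion, whence $E(\zeta_k)=\tfrac12\langle\zeta_k,\mathcal G_1\zeta_k\rangle_\nu\to\tfrac12\langle\zeta,\mathcal G_1\zeta\rangle_\nu=E(\zeta)$.
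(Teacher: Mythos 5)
There is a genuine gap at the decisive step. Your reduction to the bounded box and the weak continuity of the \emph{truncated} functional $\zeta\mapsto \mathcal E_{\mathcal W}(\zeta 1_{(0,R_0)\times(-Z,Z)})$ are fine (and match the paper: on the box $G_1\in L^s$ for all $s<\infty$ by \eqref{2-9}, and $x_1^2 1_{\text{box}}\in L^{q'}(\nu)$, so both $E$ and $P$ restricted to the box pass to weak limits). But when you ``approximate $\zeta$ weakly by $\zeta_k\in\mathcal{R}(\zeta_0)$ and pass to the limit in $\mathcal E_{\mathcal W}$,'' the approximants $\zeta_k$ are genuine rearrangements whose supports are \emph{not} confined to the box: weak convergence gives you $\mathcal E_{\mathcal W}(\zeta_k 1_{\text{box}})\to\mathcal E_{\mathcal W}(\zeta 1_{\text{box}})$, but $\zeta_k 1_{\text{box}}$ lies only in $\mathcal{R}_+(\zeta_0)$, not in $\mathcal{R}(\zeta_0)$, and you cannot infer anything about $\mathcal E_{\mathcal W}(\zeta_k)$ itself (the mass of $\zeta_k$ outside the box raises both $E$ and $\mathcal W P$, so $E-\mathcal W P$ can drop badly if that mass sits at large $x_1$). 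Your closing remark that ``the $\mathcal{R}_+$ supremum is squeezed between the other two'' is circular: the inclusions only give $\sup_{\mathcal{R}}\le\sup_{\mathcal{R}_+}\le\sup_{\overline{\mathcal{R}^w}}$, and the missing inequality $\sup_{\overline{\mathcal{R}^w}}\le\sup_{\mathcal{R}}$ is exactly the one your argument stops short of.

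The paper closes this gap with a repair step you omit: after producing $\zeta^{***}\in\mathcal{R}(\zeta_0)$ whose box-truncation has nearly optimal energy, the leftover part $h=\zeta^{***}1_{\text{complement of the box}}$ is rearranged onto a thin strip $(0,\delta)\times(-\gamma_\delta,\gamma_\delta)$ adjacent to the axis $\{x_1=0\}$, with $\gamma_\delta=2\delta^{-1}\nu(\mathrm{supp}(\zeta_0))$. The resulting $\zeta_\delta=\zeta^{***}1_{\text{box}}+h_\delta$ is a genuine element of $\mathcal{R}(\zeta_0)$, and by the bound $|\mathcal{G}_1\zeta|\le C x_1^2$ for $x_1$ small (estimate \eqref{3-1}) together with the factor $x_1^2$ in $P$, the added contributions to $E$ and $P$ vanish as $\delta\to0$. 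This is where the hypothesis $\nu(\mathrm{supp}(\zeta_0))<\infty$ is actually needed — note your proposal never uses it in an essential way, which is a symptom of the missing step. With this parking argument added, your proof would be complete and essentially identical to the paper's.
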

\begin{proof}
	It is sufficient to show $\sup_{\zeta\in \mathcal{R}(\zeta_0) }  \mathcal E_{\mathcal W}(\zeta) =\sup_{\zeta\in\overline{\mathcal{R}(\zeta_0)^w}}  \mathcal E_{\mathcal W}(\zeta) .$ Take arbitrary $\zeta\in \overline{\mathcal{R}(\zeta_0)^w}$, 
	set $\zeta^*:=\zeta 1_{(0,R_0)\times \mathbb R}$ with $R_0$ the constant given by Lemma \ref{lem3-5} $(\mathrm{i})$ for $\zeta_0$. Then we find $$\mathcal E_{\mathcal W}(\zeta^*)\geq \mathcal E_{\mathcal W}(\zeta)$$ by Lemma \ref{lem3-5} ($\mathrm{ii}$). By the monotone convergence theorem, for arbitrary $\ep>0$, we can choose a constant $Z_\ep>0$ such that the function $\zeta^{**}:=\zeta 1_{(0,R_0)\times (-Z_\ep, Z_\ep)}\in \overline{\mathcal{R}(\zeta_0)^w}$ satisfies
	$$\mathcal E_{\mathcal W}(\zeta^{**})\geq \mathcal E_{\mathcal W}(\zeta^*)-\ep.$$
	By the weak density Lemma \ref{lem3-1} $(b)$ and the fact $G_1(x,y)\in L^s((0,R_0)\times (-Z_\ep, Z_\ep))$ for any $1\leq s<\infty $ due to \eqref{2-9}, we can find a function $\zeta^{***}\in \mathcal{R}(\zeta_0)$ such that
	$$\mathcal E_{\mathcal W}(\zeta^{***}1_{(0,R_0)\times (-Z_\ep, Z_\ep)})\geq \mathcal E_{\mathcal W}(\zeta^{**})-\ep.$$
	Let $h:=\zeta^{***} 1_{(R_0, \infty)\times ((-\infty, Z_\ep)\cup (Z_\ep, \infty))}$. Since $\zeta_0$ is assumed to have a support with finite measure, we can rearrange $h$ on the thin strip $(0,\delta)\times (-\gamma_\delta, \gamma_\delta)$ with $\gamma_\delta=2\delta^{-1}\nu(\text{supp}(\zeta_0))<\infty$ into a function $h_\delta$. Then by the estimate \eqref{3-1}, one can easily verify that $\zeta_\delta:=\zeta^{***}1_{(0,R_0)\times (-Z_\ep, Z_\ep)}+h_\delta\in \mathcal{R}(\zeta_0)$ and $$\mathcal E_{\mathcal W}(\zeta_\delta)\to \mathcal E_{\mathcal W}(\zeta^{***}1_{(0,R_0)\times (-Z_\ep, Z_\ep)}), \quad \text{as}\ \  \delta\to 0.$$
	Thus, we obtain a function $\zeta_\delta\in \mathcal{R}(\zeta_0)$ such that
	$$\mathcal E_{\mathcal W}(\zeta_\delta)\geq \mathcal E_{\mathcal W}(\zeta)-3\ep,$$
	by taking $\delta$ sufficiently small, which implies $\sup_{\zeta\in \mathcal{R}(\zeta_0) }  \mathcal E_{\mathcal W}(\zeta) =\sup_{\zeta\in\overline{\mathcal{R}(\zeta_0)^w}}  \mathcal E_{\mathcal W}(\zeta)  $ by the arbitrariness of $\ep$ and completes the proof.
\end{proof}

To obtain the compactness, we need the following concentration compactness lemma  due to Lions \cite{Lions}.
\begin{lemma}\label{ccp}
	Let $\{u_n\}_{n=1}^\infty$ be a sequence of nonnegative functions in $L^1(\mathbb{R}_+^2)$ satisfying
	$$\limsup_{n\rightarrow \infty} \int_{\mathbb{R}_+^2} u_n dx\rightarrow \mu,$$ for some $0\leq \mu<\infty$.
	Then, after passing to a subsequence, one of the following holds:\\
	(i) (Compactness) There exists a sequence $\{y_n\}_{n=1}^\infty$ in $\overline{\mathbb{R}_+^2}$ such that for arbitrary $\varepsilon>0$, there exists $R>0$ satisfying
	\begin{equation*}
		\int_{\mathbb{R}_+^2\cap B_R(y_n)}u_n dx\geq \mu-\varepsilon, \quad \forall n\geq 1.
	\end{equation*}\\
	(ii) (Vanishing) For each $R>0$,
	\begin{equation*}
		\lim_{n\rightarrow \infty}\sup_{y\in \mathbb{R}_+^2}  \int_{\mathbb{R}_+^2\cap B_R(y)} u_n dx =0.
	\end{equation*} \\
	(iii) (Dichotomy) There exists a constant $0<\alpha<\mu$ such that for any $\varepsilon>0$, there exist $N=N(\varepsilon)\geq 1$ and $0\leq u_{i,n}\leq u_n, \,i=1,2$ satisfying
	\begin{equation*}
		\begin{cases}
			\|u_n-u_{1,n}-u_{2,n}\|_{L^1(\mathbb{R}_+^2)}+|\alpha-\int_{\mathbb{R}_+^2} u_{1,n} dx|+|\mu-\alpha-\int_{\mathbb{R}_+^2} u_{2,n} dx|<\varepsilon,\quad \text{for}\,\,n\geq N,\\
			d_n:=dist(supp(u_{1,n}), supp(u_{2,n}))\rightarrow \infty, \quad \text{as}\,\,n\rightarrow \infty.
		\end{cases}	
	\end{equation*}
	Moreover, if $\mu=0$ then  only vanishing will occur.
\end{lemma}
\begin{proof}
	This lemma is a slight reformulation of Lemma 1.1 in \cite{Lions}, so we omit the proof.
\end{proof}

For a function $0\leq \zeta_0\in L^1\cap L^q(\mathbb{R}_+^2, \nu)$ with some $q>2$ and a constant $\mathcal{W}>0$, we denote $S_{\zeta_0,\mathcal{W}}:=\sup_{\zeta\in\overline{\mathcal{R}(\zeta_0)^w}}  \mathcal E_{\mathcal W}(\zeta)$ as the maximum value and $\Sigma_{\zeta_0,\mathcal{W}}:=\{\zeta\in\overline{\mathcal{R}(\zeta_0)^w}\mid \mathcal E_{\mathcal W}(\zeta)=  S_{\zeta_0,\mathcal{W}}\}$ as the set of all the maximizers.

Now, we are in a position to show the following compactness theorem of a  maximizing sequence up to translations for the $x_2$-variable by using a concentration compactness principle, which is a essential tool in the  proof  of   stability.
\begin{theorem}\label{cp}
	For  $2<q<\infty$, let  $0\leq \zeta_0\in   L^q(\mathbb{R}_+^2, \nu)$ be a function  with $0<\nu(\mathrm{supp}(\zeta_0))<\infty$ and $\mathcal W>0$ be a given constant. Assume that the set of maximizers satisfies $$\emptyset\not= \Sigma_{\zeta_0,\mathcal W}\subset \mathcal{R}(\zeta_0).$$ Suppose that $\{\zeta_n\}_{n=1}^\infty\subset \mathcal{R}_+(\zeta_0)$ is a maximizing sequence in the sense that
	\begin{equation}\label{3-8}
		\mathcal{E}_{\mathcal{W}} (\zeta_n)\rightarrow S_{\zeta_0,\mathcal{W}},\quad \text{as}\ \ n\rightarrow \infty.
	\end{equation}
	Then, there exist $\zeta^0\in \Sigma_{\zeta_0,\mathcal{W}}$, a subsequence $\{\zeta_{n_k}\}_{k=1}^\infty$ and a sequence of real numbers $\{c_k\}_{k=1}^\infty$ such that as $k\rightarrow \infty$,
	\begin{equation}\label{4-4}
		\vertiii{\zeta_{n_k}(\cdot+c_k\mathbf{e}_2)- \zeta^0}_2\to 0.
	\end{equation}
\end{theorem}
\begin{proof}
	We first remark that by Lemma \ref{lem3-7}, the condition $$\emptyset\not= \Sigma_{\zeta_0,\mathcal W}\subset \mathcal{R}(\zeta_0) $$ implies $\sup_{\zeta\in \mathcal{R}(\zeta_0) }  \mathcal E_{\mathcal W}(\zeta)=\sup_{\zeta\in \mathcal{R}_+(\zeta_0) }  \mathcal E_{\mathcal W}(\zeta)=\sup_{\zeta\in\overline{\mathcal{R}(\zeta_0)^w}}  \mathcal E_{\mathcal W}(\zeta)=S_{\zeta_0,\mathcal{W}}>0$ since $0\in \overline{\mathcal{R}(\zeta_0)^w}\setminus  \mathcal{R}(\zeta_0)$.
	
	Let $u_n=x_1\zeta_n^2$.  Then, by Lemma \ref{lem3-1} $(d)$, we find $0\leq \int_{\mathbb{R}_+^2} u_n dx\leq \vertiii{\zeta_0}_2^2<\infty$. Therefore, up to a subsequence (still denoted by $\{u_n\}_{n=1}^\infty$), we may assume that $$\int_{\mathbb{R}_+^2} u_n dx\to \mu$$ for some $0\leq \mu\leq  \vertiii{\zeta_0}_2^2$.  Applying Lemma \ref{ccp},  we find that for a certain subsequence, still denoted by $\{u_n\}_{n=1}^\infty$, one of the three cases in Lemma \ref{ccp} should occur. To deal with the three cases, we divide the proof into three steps.
	
	\emph{Step 1. Vanishing excluded:} By Lemma \ref{lem3-5}, we may assume that $\zeta_n$ is supported in $[0, R_0]\times \mathbb{R}$.
	Suppose that for each fixed $R>0$,
	\begin{equation}\label{3-10}
		\lim_{n\rightarrow \infty}\sup_{y\in \mathbb{R}^2_+}  \int_{B_R(y)\cap\mathbb{R}^2_+}  \zeta^2_n d\nu =0.
	\end{equation}
	We will show $\lim_{n\rightarrow \infty} E(\zeta_n)=0$, which contradicts the fact $S_{\zeta_0, \mathcal W}>0$.  By the property of rearrangement and H\"older's inequality, we have for any $R>0$ and $1\leq s \leq 2$
	$$\int_{B_R(y)\cap\mathbb{R}^2_+}  \zeta^s_n d\nu \to 0$$ as $n\to+\infty$ uniformly over $y\in \mathbb{R}_+^2$. Since $supp(\zeta_n)\subset [0, R_0]\times \mathbb{R}$, it holds $$\int_{B_R(y)\cap\mathbb{R}^2_+}  x_1^2\zeta_n d\nu \leq R_0^2 \int_{B_R(y)\cap\mathbb{R}^2_+}   \zeta_n d\nu\to 0$$ as $n\to+\infty$ uniformly over $y\in \mathbb{R}_+^2$.
	Thus, $\mathcal{G}_1(\zeta_n 1_{B_R(y)})(y)=o_n(1)$ uniformly over $  y\in (0, R_0)\times \mathbb R$ by \eqref{3-2}.
	By the inequality \eqref{2-9} with $\delta=1$, we find $\mathcal{G}_1(\zeta_n (1-1_{B_R(y)}))(y)\leq C\frac{R_0^3}{R^2}\vertiii{\zeta_n}_1$.
	Therefore, $$E(\zeta_n)\leq \vertiii{\zeta_n}_1 \left( C\frac{R_0^3}{R^2}\vertiii{\zeta_n}_1+o_n(1)\right),$$ for any $R>0$ and hence $\lim_{n\to\infty} E(\zeta_n)=0$. This is a contradiction to $S_{\zeta_0,\mathcal{W}}>0$ and thus vanishing can not occur.

	\emph{Step 2. Dichotomy excluded:}
	We may also assume that $\zeta_n$ is supported in $[0, R_0]\times \mathbb{R}$. Suppose that there is a constant $\alpha\in (0, \mu)$ such that for any $\varepsilon>0$, there exist $N(\varepsilon)\geq 1$ and $0\leq \zeta_{i,n}\leq \zeta_n, \,i=1,2,3$ satisfying
	\begin{equation*}
		\begin{cases}
			\zeta_n=\zeta_{1,n}+\zeta_{2,n}+\zeta_{3,n},\\
			\int_{\mathbb{R}^2_+}  \zeta^2_{3,n}d\nu +|\alpha-\alpha_n|+|\mu-\alpha-\beta_n|<\varepsilon,\quad \text{for}\,\,n\geq N(\varepsilon),\\
			d_n:=dist(supp(\zeta_{1,n}), supp(\zeta_{2,n}))\rightarrow \infty, \quad \text{as}\,\,n\rightarrow \infty,
		\end{cases}	
	\end{equation*}
	where $\alpha_n=\int_{\mathbb{R}^2_+}  \zeta^2_{1,n}d\nu$ and $ \beta_n=\int_{\mathbb{R}^2_+}  \zeta^2_{2,n}d\nu$. Using a diagonal argument, we obtain that there exists a subsequence, still denoted by $\{\zeta_n\}_{n=1}^\infty$, such that
	\begin{equation*}
		\begin{cases}
			\zeta_n=\zeta_{1,n}+\zeta_{2,n}+\zeta_{3,n}, \quad 0\leq \zeta_{i,n}\leq \zeta_n, \,i=1,2,3\\
			\int_{\mathbb{R}^2_+}  \zeta^2_{3,n}d\nu +|\alpha-\alpha_n|+|\mu-\alpha-\beta_n|\rightarrow 0,\quad \text{as}\,\,n\rightarrow \infty,\\
			d_n:=dist(supp(\zeta_{1,n}), supp(\zeta_{2,n}))\rightarrow \infty, \quad \text{as}\,\,n\rightarrow \infty.
		\end{cases}	
	\end{equation*}
	
	By the symmetry of $E$, we have
		\begin{align*}
			&2E(\zeta_n)=E(\zeta_{1,n}+\zeta_{2,n}+\zeta_{3,n})\\
			&=\int_{\mathbb{R}^2_+}\int_{\mathbb{R}^2_+} \zeta_{1,n}(x)G_1(x,y)\zeta_{1,n}(y)x_1 dxdy+\int_{\mathbb{R}^2_+}\int_{\mathbb{R}^2_+} \zeta_{2,n}(x)G_1(x,y)\zeta_{2,n}(y)  d\nu(x)d\nu(y)\\
			& +2\int_{\mathbb{R}^2_+}\int_{\mathbb{R}^2_+} \zeta_{1,n}(x)G_1(x,y)\zeta_{2,n}(y)d\nu(x)d\nu(y)+\int_{\mathbb{R}^2_+}\int_{\mathbb{R}^2_+} (2\zeta_n-\zeta_{3,n}(x))G_1(x,y)\zeta_{3,n}(y) d\nu(x)d\nu(y).
	\end{align*}
	By \eqref{3-1} and H\"older's inequality, we derive
	\begin{equation*}
		\begin{split}
			&\int_{\mathbb{R}^2_+}\int_{\mathbb{R}^2_+} (2\zeta_n-\zeta_{3,n}(x))G_1(x,y)\zeta_{3,n}(y) d\nu(x)d\nu(y)\\
			\leq &C  \left(\vertiii{\zeta_0}_1+\vertiii{\zeta_0}_q\right) \int_{\mathbb{R}^2_+}y_1^2\zeta_{3,n}(y)  d\nu(y)\leq C R_0^2 \nu(\text{supp}(\zeta_0)) \left(\int_{\mathbb{R}^2_+}  \zeta^2_{3,n}d\nu\right)^{\frac{1}{2}}\to 0.
		\end{split}
	\end{equation*}
	By using \eqref{2-9} with $\delta=\frac{3}{2}$, it is obvious that
	\begin{align*}
		\int_{\mathbb{R}^2_+}\int_{\mathbb{R}^2_+} \zeta_{1,n}(x)G_1(x,y)\zeta_{2,n}(y)d\nu(x)d\nu(y)\leq \frac{C R_0^4 \vertiii{\zeta_0}_1^2}{d_n^{3}}.
	\end{align*}
	Hence, we arrive at
	$$\mathcal{E}_{\mathcal W}(\zeta_n)=E(\zeta_n)-\frac{\mathcal W}{2}\int_{\mathbb{R}^2_+} x_1^2\zeta_n d\nu(x) \leq \mathcal{E}_{\mathcal W}(\zeta_{1,n})+\mathcal{E}_{\mathcal W}(\zeta_{2,n})+o_n(1).$$
	
	Taking Steiner symmetrization about the $x_1$-axis $\zeta^*_{i,n}$ of $\zeta_{i,n}$ for $i=1,2$, we have
	\begin{equation*}
		\mathcal{E}_{\mathcal W}(\zeta_n)\leq \mathcal{E}_{\mathcal W}(\zeta^*_{1,n})+\mathcal{E}_{\mathcal W}(\zeta^*_{2,n})+o_n(1).
	\end{equation*}
	By Lemma \ref{lem3-4}, there exists $Z_0>0$ depending on $\vertiii{\zeta_0}_1+ \vertiii{\zeta_0}_q$, $\mathcal W $ and $R_0$ such that for all $\zeta\in\mathcal{R}_+(\zeta_0)$ with $supp(\zeta)\subset [0,R_0]\times \mathbb{R}$, $$\mathcal{G}_1\zeta(x)-\frac{\mathcal W}{2} x_1^2<0,\quad \forall x\ \text {with}\  |x_2|>Z_0$$
	Let $$\zeta^{**}_{i,n}(x)=\zeta^*_{i,n}1_{[0,R_0]\times[-Z_0,Z_0]} (x+(-1)^i Z_0 \mathbf{e}_2),\quad i=1,2.$$
	Then, we infer from Lemma \ref{lem3-5} that
	\begin{equation*}
		\mathcal{E}(\zeta_n)\leq \mathcal{E}(\zeta^{**}_{1,n})+\mathcal{E}(\zeta^{**}_{2,n})+o_n(1).
	\end{equation*}
	and
	$$\text{supp}(\zeta^{**}_{1,n})\subset [0,R_0]\times[0, 2Z_0],\  \  \ \text{supp}(\zeta^{**}_{2,n})\subset [0,R_0]\times[- 2Z_0,0].$$
	We may assume that $\zeta^{**}_{i,n}\rightarrow \zeta^{**}_{i}$   weakly in $L^2(\mathbb{R}_+^2, \nu)$ for $i=1,2$. Then, $\zeta^{**}:=\zeta^{**}_{1}+\zeta^{**}_{2}\in \overline{\mathcal{R}(\zeta_0)^w}$. Moreover, we have the convergence of the kinetic energy and impulse by the weak continuity
	$$\lim_{n\rightarrow \infty} E(\zeta^{**}_{i,n})=E(\zeta^{**}_{i}),\quad \lim_{n\rightarrow \infty} P(\zeta^{**}_{i,n})=P(\zeta^{**}_{i}),\,\,\text{for}\, i=1,2,$$
	and therefore we arrive at
	$$\mathcal{E}_{\mathcal W}(\zeta^{**}_{1})+\mathcal{E}_{\mathcal W}(\zeta^{**}_{2})\geq \limsup_{n\to\infty}\mathcal{E}_{\mathcal W}(\zeta_n)=S_{\zeta_0, \mathcal W}.$$
	It can be seen that
	\begin{equation*}
		\begin{split}
			&\quad S_{\zeta_0, \mathcal W}\geq \mathcal{E}_{\mathcal W}(\zeta^{**})=\mathcal{E}_{\mathcal W}(\zeta^{**}_{1}+\zeta^{**}_{2})\\
			&=\mathcal{E}_{\mathcal W}(\zeta_1^{**})+\mathcal{E}_{\mathcal W}(\zeta_2^{**})+\int_{\mathbb{R}^2_+}\int_{\mathbb{R}^2_+} \zeta_1^{**}(x)G_2(x,y)\zeta_1^{**}(y)d\nu(x)d\nu(y)\\
			&\geq S_{\zeta_0, \mathcal W}+\int_{\mathbb{R}^2_+}\int_{\mathbb{R}^2_+} \zeta_1^{**}(x)G_1(x,y)\zeta_2^{**}(y)d\nu(x)d\nu(y),
		\end{split}
	\end{equation*}
	from which we must have
	\begin{equation}\label{3-11}
		\mathcal{E}_{\mathcal W}(\zeta^{**})=S_{\zeta_0, \mathcal W}\ \ \  \text{and}\ \ \  \int_{\mathbb{R}^2_+}\int_{\mathbb{R}^2_+} \zeta_1^{**}(x)G_1(x,y)\zeta_2^{**}(y)d\nu(x)d\nu(y)=0.
	\end{equation}
	Since $\Sigma_{\zeta_0,\mathcal W}\subset \mathcal{R}(\zeta_0)$ by the assumption,  we conclude that  $\zeta^{**}\in  \mathcal{R}(\zeta_0)$ and $\mu=\vertiii{\zeta^{**}}^2_2=\vertiii{\zeta_1^{**}}^2_2+\vertiii{\zeta_2^{**}}^2_2$.
	
	On the other hand, since  $\vertiii{\zeta_1^{**}}^2_2\leq \alpha$ and $\vertiii{\zeta_2^{**}}^2_2\leq \mu-\alpha$, we deduce $\vertiii{\zeta_1^{**}}^2_2= \alpha>0$ and $\vertiii{\zeta_2^{**}}^2_2=\mu-\alpha>0$. This implies that both $\zeta_1^{**}$ and $\zeta_2^{**}$ are non-zero and hence $\int_{\mathbb{R}^2_+}\int_{\mathbb{R}^2_+} \zeta_1^{**}(x)G_1(x,y)\zeta_2^{**}(y)d\nu(x)d\nu(y)>0$, which is a contradiction to \eqref{3-11}.

	\emph{Step 3. Compactness:} Assume that there is a sequence $\{y_n\}_{n=1}^\infty$ in $\overline{\mathbb{R}_+^2}$ such that for arbitrary $\varepsilon>0$, there exists $R>0$ satisfying
	\begin{equation}\label{3-12}
		\int_{\mathbb{R}_+^2\cap B_R(y_n)}  \zeta_n^2 d\nu\geq \mu-\varepsilon, \quad \forall\, n\geq 1.
	\end{equation}
	We may assume that $y_n=(  y_{n,1}, 0)$ after a suitable translation in $x_2$-variable.
	Define $\zeta_n^0:=\zeta_n 1_{(0, R_0)\times \mathbb R}$ and $\zeta_n^R:=\zeta_n 1_{(0, R_0)\times (-R, R)}$. Then $\{\zeta_n^0\}_{n=1}^\infty$ is also a maximizing sequence in $\mathcal{R}_+(\zeta_0)$ by Lemma \ref{lem3-5}. Moreover, we infer from \eqref{3-12} that for arbitrary $\ep>0$, there exists $R>0$ such that $$\vertiii{\zeta_n^0-\zeta_n^R}_2^2\leq \ep,\quad \forall\  n\geq 1.$$
	That is,
	\begin{equation}\label{3-13}
		\vertiii{\zeta_n^0-\zeta_n^R}_2\to 0,\quad \text{as}\ R\to \infty, \quad \text{uniformly over} \  n.
	\end{equation}
	
	We may assume that $\zeta_n^0\to \zeta^0$ weakly in $L^2(\mathbb{R}_+^2, \nu)$ and hence $\zeta_n^R\to \zeta^01_{(0, R_0)\times (-R, R)}$ weakly in $L^2(\mathbb{R}_+^2, \nu)$.
	By the weak convergence and $\zeta_n\in \mathcal{R}_+(\zeta_0)$, we find
	\begin{equation}\label{3-14}
		\vertiii{\zeta^0}_2\leq \liminf_{n\to\infty}\vertiii{\zeta_n^0}_2\leq \vertiii{\zeta_0}_2.
	\end{equation}
	
	Set $\zeta^R:=\zeta^0 1_{(0, R_0)\times (-R, R)}$. Using \eqref{3-3} and H\"older's inequality, we conclude
	$|E(\zeta_n^0)-E(\zeta_n^R)|\leq C\vertiii{\zeta_n^0-\zeta_n^R}_2^{\frac{1}{2}}$. So, $E(\zeta_n^R)\to E(\zeta_n^0)$ as $R\to \infty$ uniformly over $n$ by \eqref{3-12}. On the other hand $E(\zeta_n^R)\to E(\zeta^R)$ as $n\to \infty$ for fixed $R$ by weak continuity of $E$ in functions supported on  bounded domains and $E(\zeta^R)\to E(\zeta^0)$  as $R\to+\infty$ by the monotone convergence theorem. Therefore, we obtain $$E(\zeta_n^0)\to E(\zeta^0).$$
	
	As for the impulse, we split
	\begin{equation*}
		|P(\zeta_n^0)-P(\zeta^0)|\leq |P(\zeta_n^0)-P(\zeta_n^R)|+|P(\zeta_n^R)-P(\zeta^R)|+|P(\zeta^R)-P(\zeta^0)|.
	\end{equation*}
	For the first term, by H\"older's inequality, we deduce
	$$|P(\zeta_n^0)-P(\zeta_n^R)|\leq R_0^{2}|\nu(\text{supp}(\zeta_n^0))|^{\frac{1}{2}}\vertiii{\zeta_n^0-\zeta_n^R}_2\to 0, $$
	as $R\to \infty$ uniformly over $n$.
	For fixed $R$, we have $|P(\zeta_n^R)-P(\zeta^R)|\to 0$ as $n\to \infty$ by the weak convergence. Since $|P(\zeta^R)-P(\zeta^0)|\to0$ as $R\to \infty$ by  the monotone convergence theorem, we have $|P(\zeta_n^0)-P(\zeta^0)|\to 0$ by first letting $R\to \infty$ and then $n\to\infty$.
	
	Therefore, we have proved $\mathcal{E}_{\mathcal W}(\zeta_n^0)\to \mathcal{E}_{\mathcal W}(\zeta^0)$ and hence $\mathcal{E}_{\mathcal W}(\zeta^0)=S_{\zeta_0,\mathcal W}$ and $\zeta^0\in \mathcal{R}(\zeta_0)$ by our assumption $\Sigma_{\zeta_0,\mathcal{W}}\subset \mathcal{R}(\zeta_0)$. Then we deduce that  $	\vertiii{\zeta^0}_2=\vertiii{\zeta_0}_2$ by the property of rearrangement, which implies $\lim_{n\to\infty}\vertiii{\zeta_n^0}_2=\vertiii{\zeta^0}_2$. So, we obtain the strong convergence $\vertiii{\zeta_n^0-\zeta^0}_2\to 0$ by the uniform convexity of $L^2(\mathbb{R}_+^2, \nu)$.
	
	Now we want to show that $\zeta_n\to \zeta^0$ strongly. Indeed, since the supports of $\zeta_n^0$ and $\zeta_n- \zeta_n^0$ are disjoint and $\zeta^0\in \mathbb{R}(\zeta_0)$, we conclude
	$$\vertiii{\zeta_n-\zeta_n^0}_2^2=\vertiii{\zeta_n}_2^2-\vertiii{ \zeta_n^0}_2^2\leq \vertiii{\zeta_0}_2^2-\vertiii{ \zeta_n^0}_2^2\to \vertiii{\zeta_0}_2^2-\vertiii{ \zeta^0}_2^2=0\ \    \text{as}\, n\to+\infty.$$
	Therefore, we obtain $\vertiii{\zeta_n-\zeta^0}_2\to 0$ and finish the proof.
	
\end{proof}

\subsection{Proof of the stability}
With Theorem \ref{cp} in hand, we are able to obtain the following stability theorem on the set of maximizers via  reduction to absurdity.

\begin{theorem}\label{Sset}
	For  $2<q<\infty$, let  $0\leq \zeta_0\in L^q(\mathbb{R}_+^2, \nu)$ be a function  with $0<\nu(\mathrm{supp}(\zeta_0)) <\infty$ and $\mathcal W>0$ is a given constant.
	Suppose that $\Sigma_{\zeta_0,\mathcal W}$,  the set of maximizers of $\mathcal{E}_{\mathcal W}:=E- \mathcal W  P$ over $\overline{\mathcal{R}(\zeta_0)^w}$, satisfies $$\emptyset\not= \Sigma_{\zeta_0,\mathcal W}\subset \mathcal{R}(\zeta_0).$$   Then $\Sigma_{\zeta_0,\mathcal W}$ is orbitally stable in the following sense:
	
	For arbitrary $\ep>0$, there exists $\delta>0$ such that   if  $\omega_0 $ is nonnegative and satisfies $\omega_0, x_1\omega_0\in L^\infty(\mathbb R^2_+)$,  $P(|\omega_0|)<+\infty$  and
	\begin{equation*}
		\inf_{\zeta\in \Sigma_{\zeta_0,\mathcal W}}\{ \vertiii{\omega_0-\zeta}_1+ \vertiii{\omega_0-\zeta}_2+|P(\omega_0-\zeta)| \} \leq \delta,
	\end{equation*}
	then for all $t\in \mathbb{R}$, we have
	\begin{equation}\label{s-1}
		\inf_{\zeta\in \Sigma_{\zeta_0,\mathcal W}} \left\{  \vertiii{\omega(t)-\zeta}_1+\vertiii{\omega(t)-\zeta}_2\right\}\leq \ep.
	\end{equation}
	If in addition $$P(\zeta_1)=P(\zeta_2),\quad \forall\  \zeta_1, \zeta_2\in \Sigma_{\zeta_0,\mathcal W}, $$
	then for all $t\in \mathbb{R}$, we have
	\begin{equation}\label{s-2}
		\inf_{\zeta\in \Sigma_{\zeta_0,\mathcal W}} \left\{\vertiii{\omega(t)-\zeta}_1+\vertiii{\omega(t)-\zeta}_2+P(|\omega(t)-\zeta|)\right\}\leq \ep.
	\end{equation}
	Here $\omega(t)$   means the corresponding solution of \eqref{1-3} with initial data $\omega_0$ given by Proposition \ref{Pro1}.
	
\end{theorem}	
\begin{proof}
	By Lemma \ref{lem3-6}, any maximizer (if exists) is supported in $[0, R_0]\times \mathbb R$.  So, we deduce that $\sup_{\zeta_1\in \Sigma_{\zeta_0,\mathcal W}} P(\zeta_1)\leq R_0^2 \vertiii{\zeta_0}_1<\infty.$
	By \eqref{3-2}, through an  argument similar to the proof of Lemma \ref{lem3-5}, for $\zeta$ with the following properties:
	\begin{equation*}
		\zeta\geq 0,\ \ \ \vertiii{\zeta}_1 \leq  \vertiii{\zeta_0}_1+1, \ \ \  \vertiii{\zeta}_2\leq  \vertiii{\zeta_0}_2+1,\ \ \  P(\zeta)\leq \sup_{\zeta_1\in \Sigma_{\zeta_0,\mathcal W}} P(\zeta_1) +1,
	\end{equation*}
	there exists a constant $R_1\geq R_0$ independent of $\zeta$ such that
	$$\mathcal{G}_1\zeta(x)-\frac{\mathcal W}{2} x_1^2<0,\quad \forall x\in\mathbb R^2_+\ \text {with}\  x_1>R_1.$$
	
	Now we prove this theorem by  contradiction. Suppose on the contrary that there exist a sequence of non-negative functions $\{\omega_0^n\}_{n=1}^\infty$ satisfying the assumptions of this theorem  and	as $n\to \infty$,
	\begin{equation*}
		\inf_{\zeta\in \Sigma_{\zeta_0,\mathcal W}} \{\vertiii{\omega_0^n-\zeta}_1+\vertiii{\omega_0^n-\zeta}_2 +|P(\omega_0^n-\zeta)|\}\to 0,
	\end{equation*}
	while
	\begin{equation*}
		\sup_{t\geq 0}\inf_{\zeta\in \Sigma_{\zeta_0,\mathcal W}} \left\{  \vertiii{\omega^n(t)-\zeta}_1+\vertiii{\omega^n(t)-\zeta}_2\right\}\geq c_0>0,
	\end{equation*}
	for some positive constant $c_0$, where $\omega^n(t)$ is the corresponding solution of \eqref{1-3} with initial data $\omega_0^n$ provided by Proposition \ref{Pro1}. By the choose of $\omega_0^n$, we infer from \eqref{3-3}  that	as $n\to \infty$,
	\begin{equation}\label{3-14-1}
		\mathcal{E}_{\mathcal W}(\omega_0^n)\to S_{\zeta_0,\mathcal W}.
	\end{equation}
	We  choose $t_n>0$ such that for each $n$,
	\begin{equation}\label{3-15}
		\inf_{\zeta\in \Sigma_{\zeta_0,\mathcal W}} \left\{ \vertiii{\omega^n(t_n)-\zeta}_1+\vertiii{\omega^n(t_n)-\zeta}_2 \right\}\geq \frac{c_0}{2}>0.
	\end{equation}

	Take a sequence of functions $\zeta_0^n \in \Sigma_{\zeta_0,\mathcal W} \subset \mathcal{R}(\zeta_0)$ for $n\geq 1$ such that 	as $n\to \infty$,
	\begin{equation}\label{3-20-0}
		\vertiii{\omega_0^n-\zeta_0^n}_1+\vertiii{\omega_0^n-\zeta_0^n}_2+|P(\omega_0^n-\zeta_0^n)|\to 0.
	\end{equation}
	
	For a function $\omega$, we use $\bar{\omega}:=\omega 1_{(0,R_1)\times \mathbb{R}}$ to denote the restriction of $\omega$. By Lemma \ref{lem3-5}, the conservation of energy and impulse, we conclude
	\begin{equation}\label{3-17}
		\mathcal{E}_{\mathcal W}(\overline{\omega}^n(t_n))\geq 	\mathcal{E}_{\mathcal W}( \omega^n(t_n))=	\mathcal{E}_{\mathcal W}( \omega^n_0)\to S_{\zeta_0,\mathcal W}.
	\end{equation}
	Recall that we use the notation $(x_1,x_2)=(r,z)$. Recall the notation $(x_1,x_2)=(r,z)$. Let $\zeta^n(t)$ be the  solution of the following linear transport equation with initial data $\zeta_0^n$:
	\begin{equation*}
		\begin{cases}
			\partial_t \zeta+\mathbf{u}_n\cdot\nabla\zeta=0,\\		
			\zeta( 0)=\zeta_0^n,
		\end{cases}
	\end{equation*}
	where $\nabla\zeta :=\frac{\partial\zeta}{\partial r}\mathbf{e}_r+\frac{\partial\zeta}{\partial z}\mathbf{e}_z$, $\mathbf{u}_n=\frac{1}{r}\left(-\frac{\partial\psi_n}{\partial z}\mathbf{e}_r+\frac{\partial\psi_n}{\partial r}\mathbf{e}_z\right)$ and $\psi_n=\mathcal{L}^{-1} \omega^n$.  The existence and some basic properties of $\zeta^n(t)$ are proved later in Lemma \ref{trp}. Then, it can be seen that $\zeta^n-\omega^n$ satisfies
	\begin{equation}\label{diff}
		\begin{cases}
			\partial_t (\zeta^n-\omega^n)+\mathbf{u}_n\cdot\nabla(\zeta^n-\omega^n)=0,\\		
			\zeta^n(0)-\omega^n(0)=\zeta_0^n-\omega_0^n.
		\end{cases}
	\end{equation}

	Then, we infer from \eqref{3-3},  the conservation of  $L^1$-norm due to \eqref{diff} and H\"older's inequality that
	\begin{equation}\label{3-18}
		\begin{split}
			|P(\bar\zeta^n(t_n) )-P(\bar\omega^n(t_n))|&\leq R_1^2 \vertiii{\bar\zeta^n(t_n)-\bar\omega^n(t_n)}_1\leq R_1^2 \vertiii{ \zeta^n(t_n)- \omega^n(t_n)}_1\\
			&\leq  R_1^2 \vertiii{ \zeta^n_0- \omega^n_0}_1,
		\end{split}
	\end{equation}
	and
	\begin{equation}\label{3-19}
		\begin{split}
			|E(\bar\zeta^n(t_n) )-E(\bar\omega^n(t_n))|&\leq C P( |\bar\zeta^n(t_n)-\bar\omega^n(t_n)| )^{\frac{1}{2}}\vertiii{\bar\zeta^n(t_n)-\bar\omega^n(t_n)}_1^{\frac{1}{2}}  \\
			&\leq C\vertiii{ \zeta^n_0- \omega^n_0}_1.
		\end{split}
	\end{equation}
	So, we deduce from \eqref{3-20-0}--\eqref{3-19} that $\{\bar\zeta^n(t_n)\}_{n=1}^\infty\subset \mathcal{R}_+(\zeta_0)$ is a maximizing sequence and hence by Theorem \ref{cp}, after extracting subsequences and translating properly in $x_2$,  we have
	\begin{equation}\label{3-20}
		\bar\zeta^n(t_n)\to \zeta_{**} \ \  \text{strongly in}\ \  L^2({\mathbb{R}_+^2, \nu}),
	\end{equation}
	as $n\to \infty$ for some function $ \zeta_{**} \in \Sigma_{\zeta_0,\mathcal W}$, which implies $\bar\zeta^n(t_n)\to \zeta^n(t_n)$ strongly. Indeed, since the supports of $\bar\zeta^n(t_n)$ and $\zeta^n(t_n)- \bar\zeta^n(t_n)$ are disjoint and $\zeta_{**}\in \mathcal{R}(\zeta_0)$, we conclude
	$$\vertiii{\zeta^n(t_n)-\bar\zeta^n(t_n)}_2^2=\vertiii{\zeta^n(t_n)}_2^2-\vertiii{\bar\zeta^n(t_n)}_2^2\leq \vertiii{\zeta_0}_2^2-\vertiii{ \bar\zeta^n(t_n)}_2^2\to \vertiii{\zeta_0}_2^2-\vertiii{\zeta_{**}}_2^2=0,$$
	from which we have
	$$\vertiii{\zeta^n(t_n)-\zeta_{**}}_2\leq \vertiii{\zeta^n(t_n)-\bar\zeta^n(t_n)}_2+\vertiii{\zeta_{**}-\bar\zeta^n(t_n)}_2\to 0.$$
	Hence, by  \eqref{3-20-0} and the  conservation of   the $L^2$ norm due to \eqref{diff} and Lemma \ref{trp}, we deduce
	\begin{equation*}
		\begin{split}
			\quad\vertiii{\omega^n(t_n)-\zeta_{**}}_2 &\leq  \vertiii{\zeta^n(t_n)-\omega^n(t_n)}_2+\vertiii{\zeta^n(t_n)-\zeta_{**}}_2\\
			&=\vertiii{\zeta^n_0-\omega^n_0}_2+\vertiii{\zeta^n(t_n)-\zeta_{**}}_2=o_n(1).\end{split}
	\end{equation*}
	
	Next, we will estimate $\vertiii{\omega^n(t_n)-\zeta_{**}}_1$.
	By the conservation of the $L^1$-norm and \eqref{3-20-0}, we have
	\begin{align*}
		&\ \ \  \vertiii{\omega^n(t_n)-\zeta_{**}}_1 \leq  \int_{\text{supp}(\zeta_{**})}  |\omega^n(t_n)-\zeta_{**}|d\nu+\int_{\mathbb{R}_+^2\setminus\text{supp}(\zeta_{**})}   \omega^n(t_n) d\nu\\
		&\leq \int_{\text{supp}(\zeta_{**})}  |\omega^n(t_n)-\zeta_{**}|d\nu+\int_{\mathbb{R}_+^2 }   \omega^n(t_n) d\nu-\int_{\mathbb{R}_+^2}  \zeta_0^n d\nu+\int_{\mathbb{R}_+^2}   \zeta_{**} d\nu-\int_{\text{supp}(\zeta_{**})  }  \omega^n(t_n) d\nu\\
		&\leq 2\int_{\text{supp}(\zeta_{**})}  |\omega^n(t_n)-\zeta_{**}|d\nu+\int_{\mathbb{R}_+^2 }|\omega_0^n-\zeta_0^n| d\nu \\
		&\leq 2  \nu(\text{supp}(\zeta_0) )^{\frac{1}{2}}  \vertiii{ \omega^n(t_n)-\zeta_{**}}_2+ \int_{\mathbb{R}_+^2 }|\omega_0^n-\zeta_0^n| d\nu=o_n(1),
	\end{align*}
	which contradicts the choice of $t_n$ in \eqref{3-15} and completes the proof of \eqref{s-1}. Here we have used $\int_{\mathbb{R}_+^2}  \zeta_0^n d\nu=\int_{\mathbb{R}_+^2}   \zeta_{**} d\nu$ and $ \nu(\text{supp}(\zeta_0) )= \nu(\text{supp}(\zeta_{**}) )$ due to the properties of rearrangement.
	
	If in addition $$P(\zeta_1)=P(\zeta_2),\quad \forall\  \zeta_1, \zeta_2\in \Sigma_{\zeta_0,\mathcal W}, $$ then by the conservation of the impulse, we find
		\begin{align*}
			&P( |\omega(t)-\zeta_1|) \leq  \int_{\text{supp}(\zeta_1)} x_1^2|\omega(t)-\zeta_1|d\nu+\int_{\mathbb{R}_+^2\setminus\text{supp}(\zeta_1)}  x_1^2\omega(t) d\nu\\
			&\leq \int_{\text{supp}(\zeta_1)} x_1^2 |\zeta_1  -  \omega(t)| d\nu+\int_{\mathbb{R}_+^2}x_1^2  \omega(t) d\nu-\int_{\mathbb{R}_+^2} x_1^2 \zeta_2 d\nu+\int_{\mathbb{R}_+^2} x_1^2 \zeta_1 d\nu-\int_{ \text{supp}(\zeta_1)}x_1^2  \omega(t) d\nu\\
			&\leq 2\int_{\text{supp}(\zeta_1)} x_1^2 |\zeta_1  -  \omega(t)| d\nu+\int_{\mathbb{R}_+^2}x_1^2  \omega_0 d\nu-\int_{\mathbb{R}_+^2} x_1^2 \zeta_2 d\nu\\
			&\leq 2 R_0^2\nu(\text{supp}(\zeta_0))^{\frac{1}{2}} 	\vertiii{ \omega(t)-\zeta_1}_2+ |P( \omega_0-\zeta_2)|, \quad \forall \ \zeta_1, \zeta_2 \in \Sigma_{\zeta_0,\mathcal W},
		\end{align*}
	which combined with  \eqref{s-1} implies \eqref{s-2} by  taking $\delta$ smaller if necessary. Therefore the proof is complete.

\end{proof}

\begin{lemma}\label{trp}
	Let $\omega\in L^1\cap L^\infty(\mathbb R^3)$ be a axi-symmetric function and let $\mathbf{u}=\frac{1}{r}\left(-\frac{\partial\psi}{\partial z}\mathbf{e}_r+\frac{\partial\psi}{\partial r}\mathbf{e}_z\right)$ with $\psi=\mathcal{L}^{-1} \omega$ be the velocity field introduced by $\omega$. Suppose the axi-symmetric function $\zeta_0\in L^q(\mathbb R^3)$ for some $2<q<+\infty$. Then the following initial value problem for the linear transport equation
	\begin{equation*}
		\begin{cases}
			\partial_t \zeta+\mathbf{u}\cdot\nabla\zeta=0,\ \ \ \   x\in \mathbb R^3,\ \ \  t>0,\\		
			\zeta( 0)=\zeta_0,
		\end{cases}
	\end{equation*}
	has a unique axi-symmetric weak solution $\zeta\in L^\infty_{loc}([0, +\infty), L^q(\mathbb R^3))$. Here $\nabla\zeta :=\frac{\partial\zeta}{\partial r}\mathbf{e}_r+\frac{\partial\zeta}{\partial z}\mathbf{e}_z$. Moreover the renormalisation property in the form $\zeta(t)\in \mathcal{R}(\zeta_0)$ holds for almost all $t>0$. As a corollary, one has the conservation of $L^r$ norm provided that $\zeta_0\in L^r(\mathbb R^3)$ for some $r\geq 1$. That is,
	$$\|\zeta(t)\|_{L^r(\mathbb R^3)}\equiv \|\zeta_0\|_{L^r(\mathbb R^3)}.$$
\end{lemma}
\begin{proof}
	The proof is almost the same as that of  Lemma 11 in \cite{BNL13}. Indeed, using Lemma 2 in \cite{Nobi} and Lemma 2.5 in \cite{Bad} and the assumption $\omega\in L^1\cap L^\infty(\mathbb R^3)$, we find that  $\mathbf{u}$ satisfies the requirement of   DiPerna-Lions theory of transport equations in \cite{DL} (see also Theorem 4 in \cite{Nobi}). The rest proof of this lemma follows the same procedure as the proof of Lemma  11 in \cite{BNL13} and  hence  we omit it.
\end{proof}

  Theorem  \ref{thmSs} is an immediate corollary of  Theorem \ref{Sset}. So, we have also finished the proof Theorem  \ref{thmSs}.

\section{Uniqueness of vortex rings}\label{sec4}
As explained in introduction, in order to deduce the stability of a given vortex ring solution by using the general result Theorem \ref{thmSs}, one need to verify \eqref{1-12}, which can be reduced into the problem of uniqueness up to translations.
In this section, we will show the uniqueness of a family of  vortex rings obtained in \cite{CWWZ, VS} (see Proposition \ref{thmE}) and prove Theorem \ref{thmU}.

Let $\zeta_\ep$ be a steady vortex ring that satisfies all the assumptions in Theorem \ref{thmU} and let $\psi_\ep=\mathcal{G}_1\zeta_\ep$ be the corresponding stream function. Recall that the steady vortex rings considered in Theorem \ref{thmU} are assumed to be of the form
$$\zeta_\ep=\frac{1}{\ep^2}\left(\psi_\ep-\frac{W}{2}x_1^2\ln\frac{1}{\ep}-\mu_\ep\right)_+^p.$$
By the discussions in Section \ref{sec1}, the proof of Theorem \ref{thmU} can be converted into the uniqueness of function $\psi_\ep$ satisfying the following equations
\begin{equation}\label{2-2}
	\begin{cases}
		\mathcal{L}\psi_\ep=\frac{1}{\ep^2}\left(\psi_\ep-\frac{W}{2}x_1^2\ln\frac{1}{\ep}-\mu_\ep\right)_+^p, & \text{in} \ \mathbb R^2_+,
		\\
		\psi_\varepsilon=0, & \text{on} \ \{x_1=0\},
		\\
		\psi_\varepsilon  \to0, &\text{as} \ |  x |\to \infty,
	\end{cases}
\end{equation}
combined with  the following conditions
\begin{align}
	\label{2-3} &\frac{1}{\varepsilon^2}\int_{\mathbb R^2_+}x_1 \left(\psi_\ep-\frac{W}{2}x_1^2\ln\frac{1}{\ep}-\mu_\ep\right)_+^p dx=\kappa,  &  \\
	\label{2-4}  &\text{diam}(  \Om_\ep )\leq C_0 \ep \ \ \text{for some constant}\  \  C_0>0, &\\
	\label{2-5} &\sup_{x\in \Om_\ep } |x-x_0| \to 0\ \ \text{for some}\  \ x_0\in \mathbb R^2_+, \ \ \text{as}\ \ \ep\to0.
\end{align}
Here $\Om_\ep$ is the support of the vorticity $\Om_\ep=\{x\in \mathbb R^2_+ \mid \zeta_\ep(x)>0 \}$ and $\text{diam}(  \Om_\ep ):= \sup_{x,y\in \Om_\ep} |x-y|$ defines the diameter of the set $ \Om_\ep$.

The equations \eqref{2-2} can be rewritten as the following integral equation
\begin{equation*}
	\psi_\varepsilon(  x)=\frac{1}{\varepsilon^2}\int_{\mathbb R^2_+}{G_1}(  x, y) \left(\psi_\ep(y)-\frac{W}{2}y_1^2\ln\frac{1}{\ep}-\mu_\ep\right)_+^p y_1dy,
\end{equation*}
where ${G_1}( x, y)$ is the Green  function for $\mathcal{L}$ defined by \eqref{2-1}.

Using the method of moving planes in integral form by Chen, Li and Ou \cite{CLO} (see also the appendix in \cite{CQZZ1}), one can show that $\psi_\ep$ is even symmetric with respect to some line $\{x\mid x_2=c\}$. The proof is standard since the nonlinearity is Lipschitz and the kernel $G_1(x,y)$ is decreasing in $|x_2-y_2|$, so we omit it. Therefore, after suitable translations in the $x_2$-direction, we may assume that $\psi_\ep$  is even symmetric and decreasing with respect to the $x_1$-axis hereafter. That is, $\psi_\ep$  is even in the variable $x_2$ in the sense $\psi_\ep(x_1,-x_2)=\psi_\ep(x_1,x_2)$.

In the next several subsections, we will show that   $\psi_\ep$ satisfying system \eqref{2-2}-\eqref{2-5} is unique for small $\ep$. The main result of this section is as follows, which is  equivalent to Theorem \ref{thmU}.
\begin{theorem}\label{thm2-1}
	The function $\psi_\ep$ that satisfies \eqref{2-2}-\eqref{2-5} and $\psi_\ep(x_1,-x_2)=\psi_\ep(x_1,x_2)$  is unique as long as $\ep$ is sufficiently small.
\end{theorem}

The proof of Theorem \ref{thm2-1} is extremely delicate and can be divided into three steps:
\begin{itemize}
	\item [Step 1. ] We first study the limiting shape of the vortex ring  using scaling and the technique of blow-up analysis in PDEs. The obtained limiting shape inspires us to construct suitable approximate solutions.
	\item [Step 2. ] By constructing suitable  approximate solutions and estimating the error of these approximations, we obtain a more refined understanding of the asymptotic behavior of the vortex ring solution.
	\item [Step 3. ] In the last step, with sufficiently detailed estimates on  asymptotic behaviors,  we prove uniqueness through a proof by contradiction. Assuming the existence of two distinct vortex ring solutions, we utilize local Pohozaev identities and asymptotic behavior estimates obtained in Step 2 to derive a contradiction, thus establishing uniqueness.
\end{itemize}
Therefore, we divide the rest proof into three subsections accordingly.

\subsection{Rough estimate by blowing up}\label{sec2-1}
In this subsection, we first study the asymptotic behavior of the solutions by  blow-up argument.

Let $r_{\ep}:=\frac{1}{2}\text{diam} (\Om_{\ep})$  be the radius of $\Om_\ep$. To determine the location of  $\Om_\ep$, we choose $z_{\ep}\in \Om_{\ep}$ with
$$
\psi_\ep(z_{\ep})=
\max_{x\in \Om_{\ep}}\left(\psi_\ep(x)-\frac{W}{2}x_1^2\ln\frac{1}{\ep}\right),
$$
whose existence is ensured by the maximum principle (see e.g. Theorem 3.5 in \cite{GT} and the claim below it).
Then $r_{\ep}\leq C_0 \ep$ and $z_\ep \to x_0\in \mathbb{R}_+^2$ as $\ep\to 0$ by our assumptions \eqref{2-4} and \eqref{2-5}. Moreover, it holds $z_{\ep,2}=x_{0,2}=0$ by the assumption that $\psi_\ep$  is even and decreasing in the variable $x_2$.

Let $U$ be the unique radial solution of
\begin{equation}\label{2-13}
	\begin{cases}
		-\Delta u=u^p,\,\,\, u>0\,  \, \, &\text{in}\, B_1(0),\\
		u=0,&\text{on}\, \partial B_1(0).
	\end{cases}
\end{equation}
Denote  $\Lambda_p:=\int_{B_1(0)} U^p(x) dx $. By integration by parts and the Pohozaev identity, we have the following identities
\begin{equation}\label{2-14}
	\Lambda_p=-2\pi U'(1),\ \ \ \int_{B_1(0)} U^{p+1}(x) dx= \frac{\pi (p+1)}{2}|U'(1)|^2 .
\end{equation}
We define the harmonic extension of $U$ in $\mathbb{R}^2$ (still denoted by $U$ with abuse of notation) as
\begin{equation}\label{2-15}
	U(x):= \begin{cases} U(x), \quad &|x|\leq 1,\\  -U'(1) \ln \frac{1}{|x|}, &|x|>1.\end{cases}
\end{equation}

Throughout this section, we use the notation $o_\ep(1)$ to represent a term that approaches to 0 as $\ep\to0$, and $O_\ep(1)$ to represent a term that is  bounded by a constant independent of $\ep$ for $\ep$ sufficiently small.

Using blowing up analysis, we will show the following asymptotic behavior for $\psi_\ep$ and $\mu_\ep$.
\begin{proposition}\label{prop2-5}
	Let $L>0$ be an arbitrary large constant. The following asymptotic behaviors hold  as $\varepsilon\to 0$,
	\begin{equation}\label{2-16}
		\psi_{\varepsilon}( x)=z_{\varepsilon,1}^{-\frac{2}{p-1}} \left(\frac{\varepsilon }{r_\varepsilon }\right)^{\frac{2}{p-1}}\left(U\left(\frac{ x-  z_\varepsilon}{r_\varepsilon}\right)+o_\varepsilon(1)\right)+\frac{W}{2}z_{\varepsilon,1}^2\ln\frac{1}{\varepsilon}+\mu_\varepsilon,\ \ \    x\in B_{L r_\varepsilon}(  z_\varepsilon),
	\end{equation}
	\begin{equation}\label{2-17}
		z_{\varepsilon,1}^{\frac{2}{p-1}} \left(\frac{r_{\ep} }{\ep}\right)^{\frac{2}{p-1}}\left(\frac{\kappa  z_{\ep,1} }{2\pi} \ln \frac{1}{r_\ep  }+ \frac{\kappa  z_{\ep,1} }{2\pi}\left(\ln( 8z_{\ep,1})  -2\right)-\frac{W}{2}z_{\ep,1}^2\ln\frac{1}{\varepsilon}-\mu_\ep\right)=o_\varepsilon(1),
	\end{equation}
	and
	\begin{equation}\label{2-18}
		\kappa\left( \frac{r_\ep}{\ep}\right)^{\frac{2}{p-1}} z_{\ep,1}^{\frac{p+1}{p-1}} \to  -  2\pi U'(1).
	\end{equation}
	Moreover, \eqref{2-16} holds in the topology of $C^1(B_{L r_\varepsilon}(  z_\varepsilon))$.
\end{proposition}

Take $\delta_0$ to be a small constant such that $0<\delta_0<\min\{1/100, x_{0,1}/2\}$, where $x_{0,1}$ is first   coordinate of the point $x_0$ in \eqref{2-5}. The proof of Proposition \ref{prop2-5} relies on the utilization of several preliminary lemmas.  We first give the estimates for the stream function $\psi_{\ep}$ away from the vorticity set $\Omega_{\ep}$.
	\begin{lemma}\label{lem2-1}
	For every $ x\in \mathbb R^2_+$ with $2r_\ep<\mathrm{dist}(x, \Om_\ep)<\delta_0$, we have
	\begin{equation*}
		\begin{split}
		\psi_{ \varepsilon}( x)&=\frac{\kappa x_1^{\frac{1}{2}}z_{\ep,1}^{\frac{1}{2}}}{2\pi} \ln \frac{1}{|  x-  z_\varepsilon|}+\frac{\kappa x_1^{\frac{1}{2}}z_{\ep,1}^{\frac{1}{2}}}{4\pi}\left(\ln(x_1 z_{\ep,1})+2\ln8 -4\right)\\
		&+O\left(\frac{ r_\varepsilon}{|  x-  z_\varepsilon|}+|x-z_\ep|^2\ln\frac{1}{|x-z_\ep|}\right).
		\end{split}
	\end{equation*}
\end{lemma}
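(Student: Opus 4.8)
The plan is to start from the integral representation of the stream function. Since $\zeta_\ep=\ep^{-2}(\psi_\ep-\frac W2 x_1^2\ln\frac1\ep-\mu_\ep)_+^p$ is supported in $\Om_\ep$ and satisfies $\int_{\Om_\ep}\zeta_\ep(y)\,y_1\,dy=\kappa$ by \eqref{2-3}, one has
\[
\psi_\ep(x)=\int_{\Om_\ep}{G_1}(x,y)\,\zeta_\ep(y)\,y_1\,dy .
\]
For the range of $x$ in the statement and $y\in\Om_\ep$, the reference point $z_\ep\in\Om_\ep$ obeys $|y-z_\ep|\le 2r_\ep$ and, for $\ep$ small, $2r_\ep<|x-z_\ep|\le\delta_0+2r_\ep<1$; since $\delta_0<x_{0,1}/2$ and both $\Om_\ep$ and $x$ stay near $x_0$, the coordinates $x_1,y_1$ are bounded above and below by fixed positive constants, so $\rho(x,y)=|x-y|^2/(x_1y_1)$ is uniformly small (after possibly shrinking $\delta_0$, which is harmless since the constraint on $\delta_0$ is only an upper bound). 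Hence the near-diagonal asymptotics \eqref{2-7} apply with a uniform remainder.

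First I would insert \eqref{2-7} into the integral, using $\ln\frac1{\rho(x,y)}=\ln(x_1y_1)-2\ln|x-y|$ and bounding the remainder by $O(|x-z_\ep|^2\ln\frac1{|x-z_\ep|})$. Then I would replace every occurrence of $y$ by $z_\ep$ and track the errors: $|y_1-z_{\ep,1}|\le 2r_\ep$ gives $y_1^{1/2}=z_{\ep,1}^{1/2}+O(r_\ep)$ and $\ln(x_1y_1)=\ln(x_1z_{\ep,1})+O(r_\ep)$; and $\big||x-y|-|x-z_\ep|\big|\le 2r_\ep$ together with $|x-z_\ep|>2r_\ep$ forces $|x-y|/|x-z_\ep|$ into a fixed compact subinterval of $(0,\infty)$, so the Lipschitz bound for $\ln$ yields $\ln|x-y|=\ln|x-z_\ep|+O(r_\ep/|x-z_\ep|)$. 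The only cross terms are $O(r_\ep)$ prefactor corrections multiplying the logarithmic main term $\sim\ln\frac1{|x-z_\ep|}$, and these are absorbed via the elementary inequality $r_\ep\ln\frac1{|x-z_\ep|}\le r_\ep/|x-z_\ep|$. This produces, uniformly in $y\in\Om_\ep$,
\[
{G_1}(x,y)=\frac{x_1^{1/2}z_{\ep,1}^{1/2}}{2\pi}\ln\frac1{|x-z_\ep|}+\frac{x_1^{1/2}z_{\ep,1}^{1/2}}{4\pi}\big(\ln(x_1z_{\ep,1})+2\ln8-4\big)+O\!\Big(\frac{r_\ep}{|x-z_\ep|}+|x-z_\ep|^2\ln\frac1{|x-z_\ep|}\Big).
\]
Integrating this against $\zeta_\ep(y)\,y_1\,dy$ over $\Om_\ep$ and using $\int_{\Om_\ep}\zeta_\ep(y)\,y_1\,dy=\kappa$ (the main term being $y$-independent after the substitution, and the remainder picking up at most a factor $\kappa$) yields the asserted formula.

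The genuinely delicate point is the passage from $\ln|x-y|$ to $\ln|x-z_\ep|$ near the inner boundary $|x-z_\ep|\approx 2r_\ep$, where the error $r_\ep/|x-z_\ep|$ is only $O(1)$, not $o(1)$: here it is essential that the hypothesis reads $\mathrm{dist}(x,\Om_\ep)>2r_\ep=\mathrm{diam}(\Om_\ep)$, which is precisely what keeps $|x-y|$ and $|x-z_\ep|$ comparable (both of size $\mathrm{dist}(x,\Om_\ep)$) and prevents the logarithm from diverging. I would therefore record as a preliminary step the two-sided comparison of $|x-y|$, $|x-z_\ep|$ and $\mathrm{dist}(x,\Om_\ep)$, after which the rest of the argument is routine bookkeeping.
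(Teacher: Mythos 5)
Your proposal is correct and follows essentially the same route as the paper: insert the near-diagonal expansion \eqref{2-7} of $G_1$, replace $y$ by $z_\ep$ with errors $O(r_\ep)$ for the algebraic factors and $O(r_\ep/|x-z_\ep|)$ for the logarithm (the paper gets the latter from the expansion of $|x-y|$ about $|x-z_\ep|$, you from the Lipschitz bound for $\ln$ on the comparable range — equivalent), and then integrate using the circulation constraint \eqref{2-3}. Your preliminary observation that the hypothesis $\mathrm{dist}(x,\Om_\ep)>2r_\ep=\mathrm{diam}(\Om_\ep)$ is exactly what keeps $|x-y|$ and $|x-z_\ep|$ comparable is the right justification for the only delicate step.
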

\begin{proof}
For every $ x\in \mathbb R^2_+$ with $2r_\ep<\text{dist}(x, \Om_\ep)<\delta_0$, the expansion  holds
	\begin{equation*}
		|x-y|=|x-z_{\ep}|-\langle\frac{x-z_{\ep}}{|x-z_{\ep}| },y-z_{\ep}\rangle+O\left(\frac{|y-z_{\ep}|^2}{|x-z_{\ep}| }\right) \ \ \ \forall y\in\Omega_{\ep}.
	\end{equation*}
	Then by the above expansion, \eqref{2-7} and  \eqref{2-3}, we get
	\begin{align*}
			&\psi_{ \ep}(x)=  \int_{\Omega_{\ep}}G_1(x,y)\zeta_\ep(y)y_1 dy  \\
			=&\int_{\Omega_{\ep}}\frac{x_1^{1/2}y_1^{  1/2}}{4\pi}\left(2\ln\left(\frac{1}{|x-y|}\right)+\ln(x_1y_1)
			+2\ln 8-4\right)\zeta_\ep(y)y_1 dy+O\left(|x-z_\ep|^2\ln\frac{1}{|x-z_\ep|}\right)\\
			=&\int_{\Omega_{\ep}}\frac{x_1^{1/2}y_1^{  1/2}}{2\pi}  \ln\left(\frac{1}{|x-y|}\right) \zeta_\ep(y)y_1 dy+\frac{\kappa x_1^{1/2}z_{\ep,1}^{  1/2}}{4\pi} \left(\ln(x_1 z_{\ep,1})
			+2\ln 8-4\right)\\
			&+O\left(r_\ep+|x-z_\ep|^2\ln\frac{1}{|x-z_\ep|}\right)\\
			=&\int_{\Omega_{\ep}}\frac{x_1^{1/2}y_1^{  1/2}}{2\pi}  \ln\left(\frac{1}{|x-z_\ep|}\right) \zeta_\ep(y)y_1 dy+\int_{\Omega_{\ep}}\frac{x_1^{1/2}y_1^{  1/2}}{2\pi}  \ln\left(\frac{|x-z_\ep|}{|x-y|}\right) \zeta_\ep(y)y_1 dy\\
			&+\frac{\kappa x_1^{1/2}z_{\ep,1}^{  1/2}}{4\pi} \left(\ln(x_1 z_{\ep,1})
			+2\ln 8-4\right)+O\left(r_\ep+|x-z_\ep|^2\ln\frac{1}{|x-z_\ep|}\right)\\
			=&\frac{\kappa x_1^{\frac{1}{2}}z_{\ep,1}^{\frac{1}{2}}}{2\pi} \ln \frac{1}{|  x-  z_\varepsilon|}+\frac{\kappa x_1^{\frac{1}{2}}z_{\ep,1}^{\frac{1}{2}}}{4\pi}\left(\ln(x_1 z_{\ep,1})+2\ln8 -4\right)+O\left(\frac{ r_\varepsilon}{|  x-  z_\varepsilon|}+|x-z_\ep|^2\ln\frac{1}{|x-z_\ep|}\right),
		\end{align*}
	which completes the proof.
\end{proof}

Next we turn to study the local behavior of $\psi_{ \varepsilon}$ near $  z_\varepsilon$ by blow-up argument. We first prove the following lemma, which means the kinetic energy of the flow in vortex core is bounded.
\begin{lemma}\label{lem2-2}
	As $\varepsilon\to 0$, it holds
	\begin{equation*}
		\frac{1}{\varepsilon^2}\int_{\mathbb{R}_+^2} \left(\psi_\varepsilon(x)-\frac{W}{2}\ln\frac{1}{\varepsilon}x_1^2-\mu_\varepsilon\right)_+^{p+1}d  x=O_\varepsilon(1).
	\end{equation*}
\end{lemma}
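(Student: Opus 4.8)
The plan is to bypass the blow-up analysis and deduce the lemma directly from an elementary elliptic mean-value argument together with the circulation constraint \eqref{2-3} and the concentration hypotheses \eqref{2-4}--\eqref{2-5}. Write $w_\ep:=\psi_\ep-\frac{W}{2}x_1^2\ln\frac1\ep-\mu_\ep$, let $M_\ep:=\max_{\mathbb R^2_+}w_\ep=\max_{\overline{\Om_\ep}}w_\ep>0$ (attained at an interior point $\bar z_\ep\in\Om_\ep$, since $w_\ep\le 0$ on $\Om_\ep^c$ and $w_\ep=0$ on $\partial\Om_\ep$), and set $z_{0,1}:=x_{0,1}>0$, so that by \eqref{2-5} one has $\tfrac12z_{0,1}\le x_1\le 2z_{0,1}$ on the $C_0\ep$-neighbourhood $\mathcal N_\ep:=\{x:\mathrm{dist}(x,\Om_\ep)\le C_0\ep\}$ for $\ep$ small. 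The first observation is that the whole statement reduces to a single bound $M_\ep\le C$: since $(w_\ep)_+^{p+1}\le M_\ep(w_\ep)_+^{p}$, the circulation identity \eqref{2-3} gives $\frac1{\ep^2}\int(w_\ep)_+^{p+1}x_1\,dx\le\kappa M_\ep$, whence $\frac1{\ep^2}\int(w_\ep)_+^{p+1}\,dx\le\frac{2\kappa}{z_{0,1}}M_\ep$. I would also record the easy lower bound $M_\ep\ge c_0>0$, by combining $M_\ep^{p}|\Om_\ep|\ge\int(w_\ep)_+^{p}\,dx$ with $\kappa\le\frac{2z_{0,1}}{\ep^2}\int(w_\ep)_+^{p}\,dx$ and $|\Om_\ep|\le\pi C_0^2\ep^2$ from \eqref{2-4}.

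Next I would establish a one-sided Laplacian bound near the core. Because $\mathcal L(x_1^2)=\mathcal L(1)=0$, the function $w_\ep$ satisfies the same equation as $\psi_\ep$, namely $\mathcal L w_\ep=\frac1{\ep^2}(w_\ep)_+^{p}$; in flat coordinates (cf. \eqref{2-1}) this reads $-\Delta w_\ep=\frac{x_1^2}{\ep^2}(w_\ep)_+^{p}-\frac1{x_1}\partial_{x_1}\psi_\ep+W\ln\frac1\ep$. For the middle term I would use the integral representation $\psi_\ep(x)=\frac1{\ep^2}\int_{\Om_\ep}G_1(x,y)(w_\ep(y))_+^{p}y_1\,dy$ together with the estimate $|\nabla_xG_1(x,y)|\le C|x-y|^{-1}$, valid for $x,y$ in a fixed bounded subset of $\{x_1>0\}$ by \eqref{2-7}; since $(w_\ep)_+\le M_\ep$, $\Om_\ep$ has diameter $\le C_0\ep$ by \eqref{2-4}, and $y_1\le 2z_{0,1}$, this produces $\|\nabla\psi_\ep\|_{L^\infty(\mathcal N_\ep)}\le C\ep^{-1}M_\ep^{p}$. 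Using $M_\ep\ge c_0$ to absorb the last two terms, it follows that $-\Delta w_\ep\le\mathcal K_\ep:=C_1\ep^{-2}M_\ep^{p}$ on $\mathcal N_\ep$ for $\ep$ small and a suitable constant $C_1$.

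Then I would exploit this at the maximum point $\bar z_\ep$. The function $v:=(M_\ep-w_\ep)-\frac{\mathcal K_\ep}{4}|x-\bar z_\ep|^2$ is superharmonic on $B_{r_\ep^*}(\bar z_\ep)$ with $v(\bar z_\ep)=0$, where $r_\ep^*:=2(M_\ep/\mathcal K_\ep)^{1/2}=c\,\ep M_\ep^{(1-p)/2}\le C_0\ep$ (after enlarging $C_1$ if needed, using $M_\ep\ge c_0$ and $p\ge1$, so that $B_{r_\ep^*}(\bar z_\ep)\subset\mathcal N_\ep$). The super-mean-value property gives $\frac{1}{2\pi s}\int_{\partial B_s(\bar z_\ep)}(M_\ep-w_\ep)\le\frac{\mathcal K_\ep}{4}s^2$ for $0<s\le r_\ep^*$, and integrating in $s$ yields $\frac{1}{|B_{r_\ep^*}|}\int_{B_{r_\ep^*}(\bar z_\ep)}w_\ep\,dx\ge M_\ep-\frac{\mathcal K_\ep(r_\ep^*)^2}{8}=\frac{M_\ep}{2}$. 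Since $w_\ep\le M_\ep$, a Chebyshev argument then gives $\big|\{x\in B_{r_\ep^*}(\bar z_\ep):w_\ep>M_\ep/4\}\big|\ge\frac13|B_{r_\ep^*}|\ge c\,\ep^2M_\ep^{1-p}$; on this set $(w_\ep)_+=w_\ep>M_\ep/4$ and $x_1\ge\frac12z_{0,1}$, so
\[
\kappa M_\ep\ \ge\ \frac1{\ep^2}\int(w_\ep)_+^{p+1}x_1\,dx\ \ge\ \frac{z_{0,1}}{2\ep^2}\Big(\frac{M_\ep}{4}\Big)^{p+1}c\,\ep^2M_\ep^{1-p}\ =\ c'\,M_\ep^2,
\]
hence $M_\ep\le\kappa/c'$. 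Together with the first paragraph this proves $\frac1{\ep^2}\int_{\mathbb R^2_+}(w_\ep)_+^{p+1}\,dx=O_\ep(1)$.

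The step I expect to be the main obstacle is the two-sided control $c'M_\ep^2\le\frac1{\ep^2}\int(w_\ep)_+^{p+1}x_1\,dx\le\kappa M_\ep$, and in particular its lower half, which hinges on the one-sided bound $-\Delta w_\ep\le C\ep^{-2}M_\ep^{p}$ near the core; the only genuinely delicate input there is the gradient estimate $\|\nabla\psi_\ep\|_{L^\infty(\mathcal N_\ep)}\lesssim\ep^{-1}M_\ep^{p}$ coming from the singular kernel $\nabla_xG_1$. The remaining verifications — the superharmonicity of $v$, the mean-value inequality, and the Chebyshev step — are routine, and all constants are tracked through \eqref{2-3}--\eqref{2-5} without circularity, since $C_1$ is fixed once and for all before $M_\ep$ is bounded.
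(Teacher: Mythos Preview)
Your argument is correct, but it follows a genuinely different route from the paper's. The paper multiplies the equation $\mathcal L\psi_+=\ep^{-2}\psi_+^{\,p}$ by $x_1\psi_+$, integrates by parts to obtain the energy identity $\int_{\Om_\ep}x_1^{-1}|\nabla\psi_+|^2 = \ep^{-2}\int_{\Om_\ep}x_1\psi_+^{\,p+1}$, and then closes via the Sobolev embedding $\|\psi_+\|_{L^{p+1}}\lesssim\|\nabla\psi_+\|_{L^{2(p+1)/(p+3)}}$ together with H\"older on the small set $|\Om_\ep|\lesssim\ep^2$, which produces the bootstrap inequality $\|\nabla\psi_+\|_{L^2}^2\lesssim\|\nabla\psi_+\|_{L^2}^{p+1}$ and hence the result since $p>1$. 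No pointwise gradient control of $\psi_\ep$ is used.

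Your proof instead reduces the statement to the $L^\infty$ bound $M_\ep=O(1)$ via the circulation constraint, and obtains that bound by a maximum-principle/mean-value argument: a one-sided Laplacian bound $-\Delta w_\ep\le C\ep^{-2}M_\ep^{\,p}$ near the core (using the kernel estimate $|\nabla_xG_1|\lesssim|x-y|^{-1}$ to control $\partial_{x_1}\psi_\ep$), then super-mean-value at the maximum point and a Chebyshev step to find a set of measure $\gtrsim\ep^2M_\ep^{1-p}$ where $w_\ep\ge M_\ep/4$, and finally the two-sided estimate $c'M_\ep^2\le\kappa M_\ep$. All constants are fixed through \eqref{2-3}--\eqref{2-5} before $M_\ep$ is bounded, so there is no circularity. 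The trade-off: the paper's energy argument is shorter and avoids pointwise gradient bounds on $\psi_\ep$, whereas your argument is more elementary (no Sobolev embedding/Moser iteration) and in fact yields directly $\|(w_\ep)_+\|_{L^\infty}=O(1)$, which the paper only establishes afterwards in Lemma~\ref{lem2-3} via Moser iteration applied to the rescaled equation.
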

\begin{proof}
	Take $\psi_+=\left(\psi_\varepsilon-\frac{W}{2}x_1^2\ln\frac{1}{\varepsilon}-\mu_\varepsilon\right)_+$ as the upper truncation of $\psi_\varepsilon$. From equation \eqref{2-2}, we conclude that $\psi_+$ satisfies
	\begin{equation*}
		\begin{cases}
			\mathcal{L}\psi_+=\frac{1}{\ep^2}\left(\psi_\ep-\frac{W}{2}x_1^2\ln\frac{1}{\ep}-\mu_\ep\right)_+^p,\  \ \  \text{in}\ \mathbb R^2_+,\\
			\psi_+(  x)=0, \ \ \ \text{on} \ \partial \Om_\varepsilon.
		\end{cases}
	\end{equation*}
	Multiplying the above equation by $x_1\psi_+$ and  integrating by part, we  obtain
	\begin{equation*}
		\int_{\Om_\varepsilon}\frac{1}{x_1}|\nabla\psi_+|^2d  x=\frac{1}{\varepsilon^2}\int_{\Om_\varepsilon}x_1\psi_+^{p+1}d x\le C \ep^{-2}  \int_{\Om_\varepsilon}|\psi_+|^{p+1}d   x,
	\end{equation*}
	where we have used the fact  $0<x_{0,1}/2<x_1<2x_{0,1}$ for $   x\in \Om_\ep$ when $\ep$ is small due to \eqref{2-5}. By the Sobolev embedding and the H\"older inequality, we find
	\begin{equation*}
		\int_{\Om_\varepsilon}|\psi_+|^{p+1}d   x\le C\|\nabla\psi_+\|_{L^{\frac{2(p+1)}{p+3}}(\Om_\ep)}^{p+1}\leq C \ep^2 \|\nabla\psi_+\|_{L^{2}(\Om_\ep)}^{p+1},
	\end{equation*}
where we have used $|\Om_\ep|\leq C\ep^2$ due to \eqref{2-4}.
	Hence we deduce
	\begin{equation*}
		\|\nabla\psi_+\|_{L^{2}(\Om_\ep)}^2\leq C\int_{\Om_\varepsilon}\frac{1}{x_1}|\nabla\psi_+|^2d  x \leq C\ep^{-2}\int_{\Om_\varepsilon}|\psi_+|^{p+1}d   x \le C\|\nabla\psi_+\|_{L^{2}(\Om_\ep)}^{p+1}.
	\end{equation*}
	Since $p>1$, we   obtain
	\begin{equation*}
		\frac{1}{\varepsilon^2}\int_{\Om_\varepsilon}x_1\psi_+^{p+1}d  x=\int_{\Om_\varepsilon}\frac{1}{x_1}|\nabla\psi_+|^2d x=O_\varepsilon(1),
	\end{equation*}
	which proves the desired estimate   by the definition of $\psi_+$.
\end{proof}

To study the local behaviors of $\psi_\ep$ near $z_{\ep}$,   we introduce the  scaling  of $\psi_{ \varepsilon}$
\begin{equation}\label{2-10}
	w_\varepsilon(  y)=z_{\varepsilon,1}^{\frac{2}{p-1}} \left(\frac{r_\varepsilon }{\varepsilon }\right)^{\frac{2}{p-1}}\left(\psi_{ \varepsilon}(r_\varepsilon   y+  z_\varepsilon)  -\frac{W}{2}(r_\ep y_1+z_{\varepsilon,1})^2\ln\frac{1}{\varepsilon}-\mu_\varepsilon\right).
\end{equation}
Then we derive from the equation of $\psi_\ep$ \eqref{2-2}  that $w_\varepsilon$ satisfies
\begin{equation}\label{2-11}
	-\frac{1}{r_\ep y_1+z_{\ep,1}}\text{div}\left(\frac{1}{r_\ep y_1+z_{\ep,1}}\nabla w_\ep(y)\right) =z_{\varepsilon,1}^{-2} \left(w_\ep \right)_+^p  (y).
\end{equation}

 To show the convergence of $w_\ep$ to some limiting function, we first prove that $w_\ep$ is locally bounded independent of $\ep$.
\begin{lemma}\label{lem2-3}
	For any $R>0$, there exists a constant $C_R>0$  independent of $\varepsilon$ such that $$||w_\varepsilon||_{L^\infty(B_R( 0))}\leq C_R.$$
\end{lemma}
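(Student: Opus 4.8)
The plan is to work with the rescaled equation \eqref{2-11} and to prove a uniform upper bound and a uniform lower bound for $w_\ep$ on each ball $B_R(0)$ separately. Write $\psi_{+}:=\big(\psi_\ep-\tfrac W2 x_1^2\ln\tfrac1\ep-\mu_\ep\big)_+$ and $\mathcal L_\ep:=-\tfrac{1}{r_\ep y_1+z_{\ep,1}}\text{div}\big(\tfrac{1}{r_\ep y_1+z_{\ep,1}}\nabla\big)$. The preliminary uniform facts are: by \eqref{2-4}--\eqref{2-5} one has $r_\ep\le C_0\ep$ and $z_{\ep,1}\to x_{0,1}>0$, so the prefactor $z_{\ep,1}^{2/(p-1)}(r_\ep/\ep)^{2/(p-1)}$ in \eqref{2-10} is bounded above; the coefficient $r_\ep y_1+z_{\ep,1}$ of $\mathcal L_\ep$ lies between two positive constants on every fixed ball and converges there to $x_{0,1}$, so $\mathcal L_\ep$ is uniformly elliptic on each $B_R(0)$; since $z_\ep\in\overline{\Omega_\ep}$ and $\text{diam}(\Omega_\ep)=2r_\ep$ we have $\Omega_\ep\subset\overline{B_{2r_\ep}(z_\ep)}$, hence $(w_\ep)_+$ is supported in $\overline{B_2(0)}$; and a change of variables in Lemma \ref{lem2-2} together with $r_\ep\le C_0\ep$ gives $\int_{\mathbb R^2}(w_\ep)_+^{p+1}\,dy\le C$.

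For the upper bound I would observe that $w_\ep^+$ is a nonnegative subsolution of $\mathcal L_\ep u\le V_\ep u$ on every ball, with zeroth-order potential $V_\ep:=z_{\ep,1}^{-2}(w_\ep^+)^{p-1}\ge0$ satisfying $\|V_\ep\|_{L^{(p+1)/(p-1)}(B_{2R})}=z_{\ep,1}^{-2}\|w_\ep^+\|_{L^{p+1}(B_{2R})}^{p-1}\le C_R$, where the exponent $\tfrac{p+1}{p-1}>1$ exceeds the borderline $\tfrac n2=1$ in dimension two. The local boundedness estimate for subsolutions of such equations (a De Giorgi--Nash--Moser, resp.\ Gilbarg--Trudinger, type result, whose constant depends only on the ellipticity bounds, $\|V_\ep\|_{L^{(p+1)/(p-1)}(B_{2R})}$ and $R$) then yields $\|w_\ep^+\|_{L^\infty(B_R)}\le C_R$, using also the bound on $\|w_\ep^+\|_{L^{p+1}(B_{2R})}$ from the first step. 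In particular $w_\ep\le C_R$ on $B_R(0)$, and the right-hand side $g_\ep:=z_{\ep,1}^{-2}(w_\ep^+)^p$ of \eqref{2-11} is bounded in $L^\infty(B_R)$.

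For the lower bound I would use Lemma \ref{lem2-1}; the key point is that $\mu_\ep$ and $\tfrac W2 x_1^2\ln\tfrac1\ep$, both of size $\sim\ln\tfrac1\ep$, cancel to leading order in $w_\ep$. First, expanding $\psi_\ep(z_\ep)=\int_{\Omega_\ep}G_1(z_\ep,y)\zeta_\ep(y)y_1\,dy$ via \eqref{2-7}, using the circulation \eqref{2-3} for the main logarithmic term, and estimating the remainder $\int\ln\tfrac{r_\ep}{|z_\ep-y|}\,\zeta_\ep\,y_1\,dy$ by H\"older's inequality (it is $O(1)$ because $\|\ln\tfrac{r_\ep}{|z_\ep-\cdot|}\|_{L^{p+1}(B_{2r_\ep}(z_\ep))}\le Cr_\ep^{2/(p+1)}$ while $\|\zeta_\ep\|_{L^{(p+1)/p}}\le C\ep^{-2/(p+1)}$ by Lemma \ref{lem2-2}), yields $\psi_\ep(z_\ep)=\tfrac{\kappa z_{\ep,1}}{2\pi}\ln\tfrac1{r_\ep}+O(1)$; since $\psi_{+}(z_\ep)\ge0$ this forces $\mu_\ep+\tfrac W2 z_{\ep,1}^2\ln\tfrac1\ep\le\tfrac{\kappa z_{\ep,1}}{2\pi}\ln\tfrac1{r_\ep}+O(1)$. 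Second, for $x$ on a sphere $\{|x-z_\ep|=R_0 r_\ep\}$ with $R_0=R_0(R)$ a fixed large multiple of $1$, one has $2r_\ep<\text{dist}(x,\Omega_\ep)<\delta_0$ for $\ep$ small, so Lemma \ref{lem2-1} gives $\psi_\ep(x)=\tfrac{\kappa z_{\ep,1}}{2\pi}\ln\tfrac1{r_\ep}+O_R(1)$. Subtracting the last two displays and using $\tfrac W2(x_1^2-z_{\ep,1}^2)\ln\tfrac1\ep=O(r_\ep\ln\tfrac1\ep)\to0$ together with the boundedness of the prefactor in \eqref{2-10}, I obtain $w_\ep\ge-C_R$ on $\partial B_{R_0}(0)$. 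Finally, since $\mathcal L_\ep(w_\ep+C_R)=g_\ep\ge0$ in $B_{R_0}(0)$ and $w_\ep+C_R\ge0$ on $\partial B_{R_0}(0)$, the minimum principle propagates $w_\ep\ge-C_R$ to $B_{R_0}(0)\supset B_R(0)$, which completes the proof.

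The step I expect to be the main obstacle is this lower bound, and inside it the precise expansion $\psi_\ep(z_\ep)=\tfrac{\kappa z_{\ep,1}}{2\pi}\ln\tfrac1{r_\ep}+O(1)$: the two large quantities $\psi_\ep$ and $\mu_\ep+\tfrac W2 x_1^2\ln\tfrac1\ep$ are each of size $\ln\tfrac1{r_\ep}$, and only their difference, multiplied by the bounded factor $z_{\ep,1}^{2/(p-1)}(r_\ep/\ep)^{2/(p-1)}$, is $O(1)$; for this cancellation one needs the \emph{sharp} coefficient $\tfrac{\kappa z_{\ep,1}}{2\pi}$, so the crude pointwise bound \eqref{2-9} does not suffice and the logarithmic Green's-function asymptotics \eqref{2-7} must be combined carefully with the energy estimate of Lemma \ref{lem2-2}. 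By contrast, the Moser-type argument for the upper bound is routine.
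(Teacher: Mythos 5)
Your proof is correct, and the upper bound is obtained exactly as in the paper (the change of variables in Lemma \ref{lem2-2} gives $\int(w_\ep)_+^{p+1}\le C$, and then a Moser/De Giorgi--Nash--Moser local boundedness estimate for the uniformly elliptic operator in \eqref{2-11} yields $\|(w_\ep)_+\|_{L^\infty(B_{2R})}\le C$). The lower bound, however, is where you genuinely diverge. The paper's route is purely local and qualitative: it solves the auxiliary Dirichlet problem in $B_{2R}(0)$ with the (now bounded) right-hand side $z_{\ep,1}^{-2}(w_\ep)_+^p$ to produce a bounded $w_1$, observes that $w_2:=w_\ep-w_1$ is $\mathcal L_\ep$-harmonic with $-C\le\sup w_2\le M$ (the lower bound coming from $\sup w_\ep\ge w_\ep(0)\ge 0$ since $z_\ep\in\Om_\ep$), and then applies the Harnack inequality to the nonnegative harmonic function $M-w_2$ to push the one-point information down to a uniform lower bound on $B_R(0)$. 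Your route is global and quantitative: you expand $\psi_\ep(z_\ep)$ via \eqref{2-7}, \eqref{2-3} and the H\"older/energy bound from Lemma \ref{lem2-2} to get $\psi_\ep(z_\ep)=\frac{\kappa z_{\ep,1}}{2\pi}\ln\frac{1}{r_\ep}+O(1)$, use the sign $\psi_\ep(z_\ep)-\frac W2 z_{\ep,1}^2\ln\frac1\ep-\mu_\ep>0$ to cap $\mu_\ep$, match against Lemma \ref{lem2-1} on $\partial B_{R_0 r_\ep}(z_\ep)$, and finish with the minimum principle for the supersolution $w_\ep$. Both arguments are sound; the paper's buys economy (no Green's function asymptotics or circulation identity are needed at this stage), while yours buys more information -- the identification of the leading coefficient $\frac{\kappa z_{\ep,1}}{2\pi}\ln\frac1{r_\ep}$ and the boundedness of the constant the paper calls $\beta$ are exactly what Lemma \ref{lem2-4} and Proposition \ref{prop2-5} establish afterwards, so your proof effectively front-loads part of that analysis. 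Your diagnosis of the delicate point (the cancellation of the two $\ln\frac1\ep$-sized quantities, requiring the sharp coefficient from \eqref{2-7} rather than the crude bound \eqref{2-9}) is accurate for your route, but note that the paper's Harnack argument sidesteps this cancellation entirely at this stage.
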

\begin{proof}
	It follows from Lemma  \ref{lem2-2}  that
	\begin{align*}
		O_\varepsilon(1)&=\frac{1}{\varepsilon^2}\int_{\Om_\varepsilon} \left(\psi_\varepsilon-\frac{W}{2}\ln\frac{1}{\varepsilon}x_1^2-\mu_\varepsilon\right)_+^{p+1}d  x\\
		&=\left(z_{\ep,1}\right)^{-\frac{2(p+1)}{p-1}}\left(\frac{\ep}{r_\ep}\right)^{\frac{4}{p-1}}\int_{\mathbb R^2} \left(w_\varepsilon \right)_+^{p+1}(y) dy.
	\end{align*}
	Noticing that $r_\ep\le C_0\ep$ and $x_{0,1}/2<z_{\varepsilon,1}<2x_{0,1}$, we deduce
	$$ \int_{\mathbb R^2} (w_\varepsilon)_+^{p+1}d  y\le C.$$
	Applying Morse iteration on \eqref{2-11} (Theorem 4.1 in \cite{Han}), we  obtain
	$$||(w_\varepsilon)_+||_{L^\infty(B_{2R}(  0))}\leq C.$$
	To prove that the $L^\infty$ norm of $w_\varepsilon$ is bounded independent of $\ep$, we consider the following problem.
	\begin{equation*}
		\begin{cases}	-\frac{1}{r_\ep y_1+z_{\ep,1}}\text{div}\left(\frac{1}{r_\ep y_1+z_{\ep,1}}\nabla w_1(y)\right)   = z_{\varepsilon,1}^{-2} \left(w_\ep \right)_+^p  (y),\quad  &\text{in} \ B_{2R}(  0),\\
			w_1=0,&\text{on} \ \partial B_{2R}( 0).\end{cases}
	\end{equation*}
	It is obvious that $|w_1|\le C$ by the theory of regularity for elliptic equations and Sobolev embedding. Let $w_2:=w_\ep-w_1$, then $w_2$  satisfies
	$$\sup_{B_{2R}(0)} w_2\geq \sup_{B_{2R}(0)} w_\ep-C\geq -C,$$
	since $\sup_{B_{2R}(0)} w_\ep\geq 0$. On the other hand, we infer from $||(w_\ep)_+||_{L^\infty(B_{2R}(0))}\leq C$ that
	$$\sup_{B_{2R}(0)} w_2\leq \sup_{B_{2R}(0)} w_\ep+C\leq M,$$
	for some constant $M$. Thus,    by the Harnack inequality (Theorem 8.20 in \cite{GT}), we have
	$$\sup_{B_R(0)}( M- w_2)\leq C\inf_{B_R(0)}(M- w_2)\leq C(M+\sup_{B_R(0)} w_2)\leq C.$$
	Since $\sup_{B_R(0)}( M- w_2)=M-\inf_{B_R(0)} w_2$, we obtain $$\inf_{B_R(0)} w_2\geq C,$$ which implies the boundedness of $w_\ep$ and finishes the proof.
\end{proof}

Now, we are ready to prove Proposition \ref{prop2-5}.

\begin{proof}[\textbf{Proof of Proposition \ref{prop2-5}}]
	Let $0<L<R$ be two large fixed constants. For $  y\in B_R(  0)\setminus B_L(0)$, we infer from Lemma \ref{lem2-1} that
	\begin{align*}
		&\quad w_\varepsilon(  y)=z_{\varepsilon,1}^{\frac{2}{p-1}} \left(\frac{r_\varepsilon }{\varepsilon }\right)^{\frac{2}{p-1}}\left(\psi_{ \varepsilon}(r_\varepsilon   y +z_\ep) -\frac{W}{2}(r_\ep y_1+z_{\varepsilon,1})^2\ln\frac{1}{\varepsilon}-\mu_\varepsilon\right)\\
		&=z_{\varepsilon,1}^{\frac{2}{p-1}} \left(\frac{r_{\ep} }{\ep}\right)^{\frac{2}{p-1}}\left(\frac{\kappa  z_{\ep,1} }{2\pi} \ln \frac{1}{r_\ep |y|}+\frac{\kappa  z_{\ep,1} }{2\pi}\left(\ln( 8z_{\ep,1})  -2\right)-\frac{W}{2}z_{\ep,1}^2\ln\frac{1}{\varepsilon}-\mu_\ep+O\left(\frac{1}{L}+r_\ep \right)\right)\\
		&= z_{\varepsilon,1}^{\frac{p+1}{p-1}}\left(\frac{r_{\ep} }{\ep}\right)^{\frac{2}{p-1}}\frac{\kappa}{2\pi  }\ln \frac{1}{| y|}\\
		&\quad+ z_{\varepsilon,1}^{\frac{2}{p-1}} \left(\frac{r_{\ep} }{\ep}\right)^{\frac{2}{p-1}}\left(\frac{\kappa  z_{\ep,1} }{2\pi} \ln \frac{1}{r_\ep  }+ \frac{\kappa  z_{\ep,1} }{2\pi}\left(\ln( 8z_{\ep,1})  -2\right)-\frac{W}{2}z_{\ep,1}^2\ln\frac{1}{\varepsilon}-\mu_\ep\right)+O\left(\frac{1}{L}+r_\ep \right).
	\end{align*}
	Since $z_{\ep,1}\to x_{0,1}$, $|r_\varepsilon| \le C_0 \ep $ and  $||w_\varepsilon||_{L^\infty(B_R( 0))}\leq C_R$ by Lemma \ref{lem2-3}, we may assume that up to a subsequence, as $\ep\to0$,
	$$  \frac{r_{\ep} }{\ep} \to  \alpha\in [0, +\infty),$$
	and
	\begin{equation*}
		\begin{split}
			z_{\varepsilon,1}^{\frac{2}{p-1}} \left(\frac{r_{\ep} }{\ep}\right)^{\frac{2}{p-1}}\left(\frac{\kappa  z_{\ep,1} }{2\pi} \ln \frac{1}{r_\ep  }+ \frac{\kappa  z_{\ep,1} }{2\pi}\left(\ln( 8z_{\ep,1})  -2\right)-\frac{W}{2}z_{\ep,1}^2\ln\frac{1}{\varepsilon}-\mu_\ep\right)\to \beta\in [0, +\infty),
		\end{split}
	\end{equation*}
	for some constants $\alpha, \beta$.
	By \eqref{2-11} and theory of regularity for elliptic equations, we may further assume that up to a subsequence, $w_\varepsilon\to w$ in $C_{\text{loc}}^1(\mathbb{R}^2)$ for some function $w$ and $w$ satisfies
	\begin{equation}\label{2-12}
		\begin{cases}
			-\Delta w=(w)_+^p,\quad &\text{in}\,B_R( 0),\\
			w=\frac{\kappa \alpha^{\frac{2}{p-1}}  x_{0,1}^{\frac{p+1}{p-1}}}{2\pi }\ln \frac{1}{| y|}+\beta+O\left(\frac{1}{L}\right), &\text{in} \, B_R(  0)\setminus B_L(  0).
		\end{cases}	
	\end{equation}
	Moreover, $w$ will satisfy the integral equation, which can be verified by direct calculation,
	$$w( y)=\frac{1}{2\pi}\int_{\mathbb{R}^2} \ln\left(\frac{1}{|  y-  y'|}\right) \boldsymbol (w)_+^p(y')d  y'+\beta.$$
	Then the method of moving planes shows that $w$ is radially symmetric and decreasing (See e.g. \cite{CLO}).

	By the definition of  $r_\ep$, there exists $y_{\ep}$ with $|y_{\ep}|=1$ such that $r_{\ep} y_{\ep}+z_{\ep}\in \partial \Om_{\ep}$. Thus the radially symmetric function $w$   satisfies $\partial \{w>0\}= \partial B_1(0)$ and hence $w=U$ due to the uniqueness of solutions to \eqref{2-13}.
	Then, one can see from \eqref{2-12} and \eqref{2-15} that $\frac{ \kappa\alpha^{\frac{2}{p-1}}  x_{0,1}^{\frac{p+1}{p-1}}}{2\pi }= -  U'(1) $ and  $\beta =O\left(\frac{1}{L}\right)=0$ since $L$ is arbitrary.
	
	Note that by the above proof, each  sequence in $\{\omega_\ep\}$ has a subsequence that converges to the same limit $U$. A simple contradiction argument will show that $\{\omega_\ep\}$ itself must converge to $U$ as $\ep\to0$. The    proof is therefore finished.
\end{proof}

\subsection{Improvement for the estimates of asymptotic behavior}\label{sec2-2}
The estimates in Proposition \ref{prop2-5} are not accurate enough for the proof of uniqueness. In this subsection, we will  improve these  estimates by choosing properly approximate solutions according to the  asymptotic behavior obtained in previous subsection. By studying the error of our approximations, we are able to obtain refined asymptotic behaviors (i.e. Proposition \ref{prop2-13} below), which enables us to show the uniqueness of vortex rings.

\subsubsection{The first approximation}\label{sec2-2-1}
In view of \eqref{2-16}, we will first approximate $\psi_{\ep}$ by a proper scaling of the ground state $U$ near $z_\ep$.
For given $a>0$ and $z\in\mathbb{R}_+^2$, we define
\begin{equation}\label{2-19}
	U_{\ep, z, a}(x)=\begin{cases} a\ln \frac{1}{\ep}+z_{  1}^{-\frac{2}{p-1}}\left(\frac{\ep}{s  }\right)^{\frac{2}{p-1}}U\left(\frac{x-z}{s }\right),\quad &|x-z|\le s  ,\\ a\frac{\ln \ep}{\ln s  }\ln\left(\frac{s }{|x-z|}\right),&|x-z|\ge s  ,\end{cases}
\end{equation}
where   $s $ is a positive constant so chosen that $U_{\ep,z,  a}\in C^1(\mathbb{R}^2)$. Namely, we require
$$z_{ 1}^{-\frac{2}{p-1}}\left(\frac{\ep}{s  }\right)^{\frac{2}{p-1}}U'(1)=-\frac{a\ln \ep}{  \ln s  }.$$

We approximate $\psi_{\ep}$  by the function

\begin{equation}
	\Psi_{\ep, x_\ep,  a_{\ep}}(x):=\frac{1}{\ep^2}\int_{\mathbb{R}_+^2} G_1(x,y) \left(U_{\ep,x_\ep, a_\ep}(y)- a_\ep\ln \frac{1}{\ep}\right)_+^p y_1dy
\end{equation}
where  $ x_\ep$, $ a_{\ep} $ and $s_{\ep} $ are appropriately chosen such that $\Psi_{\ep, x_\ep,  a_{\ep}}$ becomes a batter approximation of $\psi_\ep$. In fact, we can choose $ x_\ep $, $  a_\ep$ and $s_{\ep} $ so that $(z, a,s)=(x_\ep,  a_{\ep}, s_{\ep})$  is a solution to the following system of equations
\begin{equation}\label{2-21}
	\begin{cases}
		\partial_{x_1} \Psi_{\ep, z,  a }(z_{\ep})=0,\\
		a \ln \frac{1}{\ep}=\frac{ W}{2}z_{ 1}^2\ln \frac{1}{\ep}+\mu_{\ep}-\frac{z_{ 1}^{-\frac{2}{p-1}}}{2\pi}\left(\frac{\ep}{s }\right)^{\frac{2}{p-1}}\Lambda_p  \left( \ln(8z_{ 1}) -2  \right),\\
		z_{ 1}^{-\frac{2}{p-1}}\left(\frac{\ep}{s  }\right)^{\frac{2}{p-1}}U'(1)=-\frac{a \ln \ep}{  \ln s  }.
	\end{cases}
\end{equation}
Choosing $z_2=z_{\ep,2}=0$ and using the expansion   for  $	\Psi_{\ep, x_\ep,  a_{\ep}}$  in Lemma \ref{lemB-1} in the Appendix \ref{appB}, we can see that the above system is uniquely solvable for $\ep$ sufficiently small by  contraction mapping  theorem.  Moreover,  by direct calculations,  one has the following estimates.
\begin{equation}\label{2-22}
	\begin{cases}
		|z_{\ep,1}-x_{\ep,1}|=O\left( \ep^2 \ln \frac{1}{\ep}\right),\\
		a_{\ep}\ln\frac{1}{  \ep }=\frac{ W}{2}x_{\ep,1}^2\ln \frac{1}{\ep}+\mu_{\ep}+O\left( 1\right),\\
		|r_{\ep}-s_{\ep}|=o\left( \ep \right).
	\end{cases}
\end{equation}

Denoting the difference between $\psi_\varepsilon$ and $\Psi_{\ep, x_\ep,  a_{\ep}}$ as the error term
\begin{equation}\label{2-23}
	\phi_\varepsilon( x):=\psi_\varepsilon( x)-\Psi_{\ep, x_\ep,  a_{\ep}}(x),
\end{equation}
then our task in this part is to  estimate  $\phi_\varepsilon$. A direct consequence of Proposition \ref{prop2-5} is that $\phi_\varepsilon$ tends to $0$ as $\ep\to0$ as the following lemma.
\begin{lemma}\label{lem2-6}
	As $\varepsilon\to 0$, it holds
	$$\|\phi_\varepsilon\|_{L^\infty(\mathbb{R}^2_+)}+\ep||\nabla \phi_\ep||_{L^\infty(\mathbb{R}^2_+)}=o_\varepsilon(1).$$
\end{lemma}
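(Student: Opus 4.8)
The plan is \emph{not} to solve for $\phi_\ep$ by a fixed-point scheme on the integral equation it satisfies: the relevant linearized integral operator has $L^\infty$ operator norm of order $\ln\frac1\ep$ on the vortex core, so it is not a contraction and such an approach cannot work at this level. Instead I would bound $\phi_\ep$ directly inside the core, using the expansions already established, and then propagate the smallness to the rest of $\mathbb R^2_+$ by the maximum principle --- which, unlike the integral representation, needs no rate on the discrepancy between the circulations carried by $\psi_\ep$ and by $\Psi_{\ep,x_\ep,a_\ep}$.

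First I would work on the core ball $B_{Lr_\ep}(z_\ep)$ for a fixed large $L$. Proposition \ref{prop2-5}, eq.~\eqref{2-16}, expresses $\psi_\ep$ there, in the $C^1$ topology, as a rescaled copy of the harmonically extended ground state $U$ plus the constant $\tfrac W2 z_{\ep,1}^2\ln\tfrac1\ep+\mu_\ep$; the definition \eqref{2-19}--\eqref{2-21} of $\Psi_{\ep,x_\ep,a_\ep}$ together with the elementary estimates of Lemma \ref{lemB-1} yields the corresponding $C^1$ expansion of $\Psi_{\ep,x_\ep,a_\ep}$ with the \emph{same} constant up to an $o_\ep(1)$ error --- which is exactly why the parameters $x_\ep,a_\ep,s_\ep$ were chosen as the solution of \eqref{2-21}. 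Subtracting, the constants cancel and I am left to control $z_{\ep,1}^{-\frac2{p-1}}\big[(\ep/r_\ep)^{\frac2{p-1}}U\big(\tfrac{x-z_\ep}{r_\ep}\big)-(\ep/s_\ep)^{\frac2{p-1}}U\big(\tfrac{x-x_\ep}{s_\ep}\big)\big]$. This is $o_\ep(1)$ in $L^\infty(B_{Lr_\ep}(z_\ep))$ for three reasons: by \eqref{2-18}, $r_\ep/\ep$ tends to a \emph{positive} constant, so the prefactor is bounded above and below and $r_\ep\asymp\ep$; by \eqref{2-22}, $r_\ep/s_\ep\to1$ and $|z_\ep-x_\ep|=o(r_\ep)$; and $U\in C^1$, so $U$ is uniformly continuous on the fixed compact set $\{|y|\le L+1\}$ containing both rescaled arguments. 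Hence $\|\phi_\ep\|_{L^\infty(B_{Lr_\ep}(z_\ep))}=o_\ep(1)$; running the same comparison on the gradients --- in which, since \eqref{2-16} is read in $C^1$, each rescaled ground state carries an extra $r_\ep^{-1}\asymp\ep^{-1}$ factor but the leading terms still cancel --- gives $\ep\|\nabla\phi_\ep\|_{L^\infty(B_{Lr_\ep}(z_\ep))}=o_\ep(1)$.

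Next I would choose $L$ large enough that both $\Om_\ep$ and the support of $\big(U_{\ep,x_\ep,a_\ep}-a_\ep\ln\tfrac1\ep\big)_+$ lie inside $B_{Lr_\ep}(z_\ep)$; then $\mathcal L\phi_\ep=0$, i.e.\ $\text{div}(x_1^{-1}\nabla\phi_\ep)=0$, on $\mathcal D_\ep:=\mathbb R^2_+\setminus\overline{B_{Lr_\ep}(z_\ep)}$. On $\partial\mathcal D_\ep$ one has $|\phi_\ep|=o_\ep(1)$ on $\partial B_{Lr_\ep}(z_\ep)$ by the previous step, $\phi_\ep=0$ on $\{x_1=0\}$ (since $G_1$ vanishes there), and $|\phi_\ep(x)|\le C|x-z_\ep|^{-1}$ for $|x-z_\ep|\ge1$, directly from the integral representation of $\phi_\ep$ and the bound \eqref{2-9}. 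Since $\text{div}(x_1^{-1}\nabla u)=0$ obeys the maximum principle on every subdomain of $\mathbb R^2_+$ bounded away from $\{x_1=0\}$, a standard exhaustion of $\mathcal D_\ep$ (with a comparison against $\eta+C|x-z_\ep|^{-1}$, $\eta\downarrow0$, near infinity) gives $\|\phi_\ep\|_{L^\infty(\mathcal D_\ep)}=o_\ep(1)$, which together with the core estimate is the asserted $L^\infty$ bound. For the gradient on $\mathcal D_\ep$ I would apply interior elliptic estimates for $\text{div}(x_1^{-1}\nabla\cdot)$ on balls of radius comparable to the distance to $\partial\mathcal D_\ep$: at points where that distance and $x_1$ are both $\gtrsim\ep$ one gets $\ep\|\nabla\phi_\ep(x)\|\lesssim\|\phi_\ep\|_{L^\infty(\mathcal D_\ep)}=o_\ep(1)$; the collar of width $\ep$ around $\partial B_{Lr_\ep}(z_\ep)$ is absorbed into the core step by slightly enlarging $L$, and on $\{x_1<\ep\}$ (which forces $x$ to lie far from the core) differentiating the integral representation and using $|\nabla_xG_1(x,y)|\le Cy_1$ there gives $|\nabla\phi_\ep(x)|=O(1)$, hence $\ep\|\nabla\phi_\ep(x)\|=O(\ep)$.

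The delicate step is the first one. It requires tracking \emph{every} discrepancy between the triple $(r_\ep,z_\ep,\mu_\ep)$ attached to $\psi_\ep$ and the triple $(s_\ep,x_\ep,a_\ep)$ defining the approximate solution, and checking that the $o_\ep(1)$ error supplied by Lemma \ref{lemB-1} genuinely sits below the amplitude $z_{\ep,1}^{-\frac2{p-1}}(\ep/r_\ep)^{\frac2{p-1}}$ of the profile on which it rides --- which is precisely what \eqref{2-18}, \eqref{2-22}, and the refined expansion \eqref{2-7} of $G_1$ are arranged to guarantee. Once that is in place, the exterior estimate and the gradient bounds are routine. (Note that this argument is crude enough that the hypothesis $p\ge2$ plays no role here; it enters only through the second-order Taylor expansions used in the refinement that follows, cf.~Remark \ref{rem3}.)
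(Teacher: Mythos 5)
Your $L^\infty$ argument is the paper's argument: the core estimate on $B_{Lr_\ep}(z_\ep)$ comes from combining \eqref{2-16}, \eqref{2-22} and \eqref{B-2}, and the exterior estimate follows because $\mathcal L\phi_\ep\equiv 0$ off the core together with the boundary condition on $\{x_1=0\}$, the uniform decay at infinity, and the maximum principle. Where you diverge is the gradient bound. The paper does not go through interior elliptic estimates at all: it differentiates the integral representation of $\phi_\ep$, uses $|\nabla_xG_1(x,y)|\le C/|x-y|$ on the core, bounds the pointwise difference of the two densities $(\psi_\ep-\cdots)_+^p-(U_{\ep,x_\ep,a_\ep}-\cdots)_+^p$ by $o_\ep(1)$ times $U_+^{p-1}$ via the $L^\infty$ smallness already proved, and closes with the rearrangement inequality and $\int_{B_{Ls_\ep}}|x-y|^{-1}dy=O(\ep)$. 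Your route (interior estimates of the form $|\nabla\phi_\ep(x)|\lesssim r^{-1}\|\phi_\ep\|_{L^\infty}$ with $r\gtrsim\ep$ outside an enlarged core, plus a $C^1$ comparison of the two rescaled ground states inside) is also viable and arguably cleaner in the exterior, where it needs no information about the densities at all. The one soft spot is inside the core: you assert that Lemma \ref{lemB-1} "yields the corresponding $C^1$ expansion" of $\Psi_{\ep,x_\ep,a_\ep}$, but \eqref{B-2} is stated and proved only in $L^\infty$; to cancel the two $O(\ep^{-1})$ gradients $\nabla U_{\ep,z_\ep,\cdot}$ and $\nabla U_{\ep,x_\ep,a_\ep}$ you must first establish $\nabla\Psi_{\ep,x_\ep,a_\ep}=\nabla U_{\ep,x_\ep,a_\ep}+o(\ep^{-1})$, which requires differentiating the integral representation of $\Psi_{\ep,x_\ep,a_\ep}$ under the convolution — essentially the same computation the paper performs directly for $\nabla\phi_\ep$. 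This is a fillable gap, not a wrong step, but as written it leans on a strengthening of Lemma \ref{lemB-1} that the paper does not provide. Your observation that $p\ge 2$ plays no role at this stage is correct.
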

\begin{proof}
	By \eqref{2-16}, \eqref{2-22} and \eqref{B-2} in the Appendix \ref{appB}, it can be verified by direct calculations that
	\begin{equation*}
		\phi_\ep\rightarrow 0,\, \text{as}\  \ep\to 0\ \text{uniformly in } B_{Lr_{\ep}}(z_{\ep}).
	\end{equation*}

	Since $r_\ep\leq C_0\ep$,  the functions $\left(\psi_\ep(x)-\frac{W}{2}x_1^2\ln\frac{1}{\ep}-\mu_\ep\right)_+^p$ and $\left(U_{\ep,x_\ep, a_\ep}(x)- a_\ep\ln \frac{1}{\ep}\right)_+^p$ are both supported in $B_{Ls_\ep}(x_\ep)$   when $L$ is sufficiently large.
	Therefore, $\mathcal L \phi_\ep\equiv 0$  in  $ \mathbb{R}_+^2\setminus B_{Lr_{\ep}}(z_{\ep})$. Since $\phi_\ep=0$ on $\partial \mathbb{R}^2_+$ and $\phi_\ep\rightarrow 0$ as $|x|\rightarrow \infty$ uniformly for $\ep$ small, we infer from the maximum principle that $||\phi_\ep||_{L^\infty(\mathbb{R}^2_+)} \rightarrow 0\,\,\, \text{as}\, \ep\rightarrow 0 $.
	
	To estimate $||\nabla \phi_\ep||_{L^\infty(\mathbb{R}^2_+)}$,  we use the integral equation.  Notice that by \eqref{2-7}, one has $|\nabla G_1(x,y)|\leq \frac{C}{|x-y|}$ for $x,y\in B_{Ls_\ep}(x_\ep)$. Then for $x\in \mathbb{R}^2_+$, by \eqref{2-16}, \eqref{2-19}, \eqref{2-22}, $\phi_\ep=o_\ep(1)$ and  rearrangement inequality, we have
	\begin{align*}
			&\quad| \ep \nabla \phi_\ep (x)| \leq  \frac{1}{2\pi\ep }\int_{\mathbb{R}^2_+}   \frac{C}{|x-y|}  \Bigg|(\psi_\ep-Wx_1-\mu_\ep)_+^p- \left(U_{\ep, x_\ep, a_{\ep} }-a_\ep \ln \frac{1}{\ep}\right)_+^p  \Bigg|y_1 dy\\
			&\leq C\ep^{-1}\int_{\mathbb{R}^2_+}  \frac{1}{|x-y|} \left|  \left(\left(\frac{\ep}{s_{\ep} }\right)^{\frac{2}{p-1}}U\left(\frac{x-x_\ep}{s_\ep}\right)+o_\ep(1)\right)_+^p- \left(\left(\frac{\ep}{s_{\ep} }\right)^{\frac{2}{p-1}}U\left(\frac{x-x_\ep}{s_\ep}\right)\right)_+^p\right|\\
			&= \frac{o_\ep(1)}{  \ep }\int_{B_{Ls_\ep(x_\ep)}}  \frac{p}{|x-y|} \left| \left(\left(\frac{\ep}{s_{\ep} }\right)^{\frac{2}{p-1}}U\left(\frac{x-x_\ep}{s_\ep}\right)\right)^{p-1}\right|\\
			&= \frac{o_\ep(1)}{  \ep }\int_{B_{Ls_\ep}(x_\ep)}  \frac{1}{| y|} dy =o_\ep(1).
		\end{align*}
The proof is therefore complete.
\end{proof}

The estimate  in Lemma \ref{lem2-6} enables us to prove the Kelvin--Hicks formula for the location $x_\ep$ via a local Pohozaev identity.

Let $\bar x=(-x_1,x_2)$ be the reflection of $x$ with respect to the $x_2$-axis. To continue, we shall decompose the Green function ${G_1}$ into a combination of singular   and regular parts according to $x_\ep$ as follows.
\begin{equation}\label{2-24}
	y_1{G_1}( x, y)=x_{\ep,1}^2G( x, y)+ H( x, y),
\end{equation}
where
\begin{equation*}
	G( x,  y)=\frac{1}{4\pi}\ln\frac{(x_1+y_1)^2+(x_2-y_2)^2}{(x_1-y_1)^2+(x_2-y_2)^2},
\end{equation*}
is the Green function for $-\Delta$ in right half plane $\mathbb{R}_+^2$.  By \eqref{2-7}, $H( x, y)$ is a relatively regular function given by
\begin{equation}\label{2-25}
	\begin{split}
		H( x,y)&= \frac{x_1^{1/2}y_1^{3/2}-x_{\ep,1}^2}{2\pi}  \ln\frac{1}{|  x-y|}+\frac{x_{\ep,1}^2}{2\pi}\ln\frac{1}{| x- {\bar y}|}\\
		&\quad+\frac{x_1^{1/2}y_1^{3/2}}{4\pi}\left(\ln(x_1 y_1)+2\ln 8-4+  O(|\rho\ln \rho|)\right),
	\end{split}
\end{equation}
for $\rho$ small. Note that we omit the dependence of $\ep$ in by denoting the right hand side of \eqref{2-25} as $H(x,y)$ for simplicity of notations.

According to the decomposition of ${G_1}(x, y)$, we can split the stream function $\psi_\varepsilon$ into two parts $\psi_\varepsilon=\psi_{1,\varepsilon}+\psi_{2,\varepsilon}$, where
\begin{equation*}
	\psi_{1,\varepsilon}( x)=\frac{x_{\ep,1}^2}{\varepsilon^2}\int_{\mathbb R^2_+}G(  x,y)\left(\psi_\ep(y)-\frac{W}{2}y_1^2\ln\frac{1}{\ep}-\mu_\ep\right)_+^p dy ,
\end{equation*}
\begin{equation*}
	\psi_{2,\varepsilon}( x)=\frac{1}{\varepsilon^2}\int_{\mathbb R^2_+}H(x, y)\left(\psi_\ep(y)-\frac{W}{2}y_1^2\ln\frac{1}{\ep}-\mu_\ep\right)_+^p dy.
\end{equation*}
Moreover, it can be seen that $\psi_{1,\varepsilon}(x)$ solves the following problem
\begin{equation}\label{2-26}
	\begin{cases}
		-\Delta \psi_{1,\varepsilon}(x)=\frac{x_{\ep,1}^2}{\varepsilon^2}  \left(\psi_{1,\varepsilon}(x)+\psi_{2,\varepsilon}(x)-\frac{W}{2}x_1^2\ln\frac{1}{\ep}-\mu_\ep\right)_+^p,  \ \ \ &\text{in} \ \mathbb R^2_+,\\
		\psi_{1,\varepsilon}=0, &\text{on} \ \{x_1=0\},\\
		\psi_{1,\varepsilon}\to 0, &\text{as} \ |x|\to \infty.
	\end{cases}
\end{equation}

The Pohozaev identity (see e.g. (6.2.3)  in \cite{CPYb}) for \eqref{2-26} in a small ball $B_\delta(x_\ep)$ reads as follows.
\begin{align}\label{2-27}
		&\quad-\int_{\partial B_\delta(x_\ep)} \frac{\partial\psi_{1,\varepsilon}}{\partial \nu}\frac{\partial\psi_{1,\varepsilon}}{\partial x_1}ds+ \frac{1}{2}\int_{\partial B_\delta(x_\ep)} |\nabla\psi_{1,\varepsilon}|^2 \nu_1ds\nonumber\\
		&=-\frac{x_{\ep,1}^2}{\varepsilon^2}\int_{B_\delta(x_\ep)} \left(\psi_\ep(x)-\frac{W}{2}x_1^2\ln\frac{1}{\ep}-\mu_\ep\right)_+^p\partial_1\psi_{2,\varepsilon}(  x)   d x\\
		&\quad+\frac{x_{\ep,1}^2}{\varepsilon^2}\int_{B_\delta(x_\ep)} Wx_1\ln\frac{1}{\varepsilon}\left(\psi_\ep(x)-\frac{W}{2}x_1^2\ln\frac{1}{\ep}-\mu_\ep\right)_+^pd x.\nonumber
	\end{align}

Using \eqref{2-3}, \eqref{2-19},  \eqref{D-6} and the odd symmetry, we can calculate each term in \eqref{2-27}. Since the calculations are extremely complicated and technical, we collect them in the Appendix \ref{appC}. As a result of Lemmas \ref{lem2-6} and \ref{lemC-4}, we obtain the estimate of the location.
\begin{corollary}\label{cor2-7}
	It holds
	\begin{equation*}
		\begin{split}
			Wx_{\ep,1}\ln\frac{1}{\varepsilon}-\frac{\kappa}{4\pi}\left(\ln\frac{8x_{\ep,1}}{s_\ep}+\frac{p-1 }{4}\right)= o_\ep(1).
		\end{split}
	\end{equation*}
\end{corollary}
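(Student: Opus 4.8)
The plan is to read off the location formula from the local Pohozaev identity \eqref{2-27}, using the fine approximation $\Psi_{\ep,x_\ep,a_\ep}$ and the smallness of the error $\phi_\ep$ supplied by Lemma \ref{lem2-6}. First I would fix a small radius $\delta\in(0,\delta_0)$ independent of $\ep$ (so that $B_\delta(x_\ep)$ contains $\Om_\ep$ but stays away from $\{x_1=0\}$ and from infinity), and evaluate the two surface integrals on $\partial B_\delta(x_\ep)$ on the left-hand side of \eqref{2-27}. On this sphere the nonlinearity has vanished, so $\psi_{1,\ep}$ is harmonic there; by the splitting $y_1G_1=x_{\ep,1}^2 G+H$ together with Lemma \ref{lem2-1}, Lemma \ref{lem2-6}, and the estimates \eqref{2-22}, one has $\psi_{1,\ep}(x)=\frac{\kappa x_{\ep,1}^2}{2\pi}\ln\frac{1}{|x-x_\ep|}+(\text{smooth, }O(1))$ near $\partial B_\delta(x_\ep)$, and differentiating this expansion the boundary terms reduce to an explicit $x_1$-derivative of a logarithm plus lower order; the net contribution of the left-hand side is $o_\ep(1)$ after dividing by the correct power of $\ep$ (the leading logarithmic singularities in the two surface integrals cancel by the structure of the Pohozaev identity, exactly as in the planar vortex-pair computations).

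Next I would handle the two volume integrals on the right of \eqref{2-27}. The second one is, up to the explicit factor $\frac{x_{\ep,1}^2}{\ep^2}$, essentially $W x_1\ln\frac1\ep$ integrated against the vorticity $\left(\psi_\ep-\frac W2 x_1^2\ln\frac1\ep-\mu_\ep\right)_+^p$; since $\Om_\ep$ shrinks to $x_0$ with diameter $O(\ep)$ and the total mass is normalized by \eqref{2-3}, this term equals $W x_{\ep,1}\ln\frac1\ep\cdot\frac{\kappa}{x_{\ep,1}^2}\cdot x_{\ep,1}^2 \cdot(\text{const})$ modulo $o_\ep(1)$ after the appropriate rescaling — i.e. it produces the $Wx_{\ep,1}\ln\frac1\ep$ term in the corollary. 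The first volume integral carries the regular part $\partial_1\psi_{2,\ep}$; using the explicit form \eqref{2-25} of $H$, the fine expansion $\psi_\ep=\Psi_{\ep,x_\ep,a_\ep}+\phi_\ep$ with $\phi_\ep$ controlled in $C^1$ by Lemma \ref{lem2-6}, the definition \eqref{2-19} of $U_{\ep,x_\ep,a_\ep}$, the rescaled profile $U$ with its Pohozaev relations \eqref{2-14}, and the odd-symmetry cancellations, this term produces precisely $-\frac{\kappa}{4\pi}\bigl(\ln\frac{8x_{\ep,1}}{s_\ep}+\frac{p-1}{4}\bigr)$ up to $o_\ep(1)$ — the constant $\frac{p-1}{4}$ coming from the second identity in \eqref{2-14} (this is exactly where the power $p$ enters, replacing the $-\frac14$ of the classical Kelvin--Hicks formula \eqref{1-7}). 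All of these computations are the content of the Appendix \ref{appC} lemmas (in particular Lemma \ref{lemC-4}), so I would simply invoke them.

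Finally, collecting the three pieces: the left-hand side of \eqref{2-27} is $o_\ep(1)$ (in the rescaled normalization), the $W$-term gives $Wx_{\ep,1}\ln\frac1\ep$, and the regular-part term gives $-\frac{\kappa}{4\pi}\bigl(\ln\frac{8x_{\ep,1}}{s_\ep}+\frac{p-1}{4}\bigr)$, so rearranging yields
\[
Wx_{\ep,1}\ln\frac{1}{\varepsilon}-\frac{\kappa}{4\pi}\left(\ln\frac{8x_{\ep,1}}{s_\ep}+\frac{p-1}{4}\right)=o_\ep(1),
\]
which is the claim. The main obstacle, as the authors flag, is the bookkeeping in the first volume integral: one must expand $H(x,y)$ and $\partial_1\psi_{2,\ep}$ accurately enough that the $O(|\rho\ln\rho|)$ remainders and the $\phi_\ep$-contributions are genuinely $o_\ep(1)$ after division by $\ep^2$ and multiplication by the large factors $\left(\frac{r_\ep}{\ep}\right)^{2/(p-1)}$ and $\ln\frac1\ep$; keeping track of which terms are absorbed into $o_\ep(1)$ versus which survive (and checking that the surviving logarithms assemble into $\ln\frac{8x_{\ep,1}}{s_\ep}$ rather than $\ln\frac{8x_{\ep,1}}{r_\ep}$, using $|r_\ep-s_\ep|=o(\ep)$ from \eqref{2-22}) is the delicate point, and is precisely what Appendix \ref{appC} is devoted to.
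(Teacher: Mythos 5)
Your proposal follows the same route as the paper: the paper's proof of Corollary \ref{cor2-7} is exactly ``take $\phi_\ep^{o}=0$ (so $\bar\phi_\ep=\phi_\ep$) in the decomposition \eqref{C-2}, then combine Lemma \ref{lem2-6} with the Pohozaev-identity computation packaged in Lemma \ref{lemC-4},'' and since you ultimately invoke Lemma \ref{lemC-4} and the $o_\ep(1)$ bound on $\phi_\ep$, your conclusion is correct. However, one concrete piece of your bookkeeping is wrong and would matter if you carried out the computation yourself rather than citing the appendix: the boundary terms on the left of \eqref{2-27} are \emph{not} $o_\ep(1)$. By Lemma \ref{lemC-2} they equal $\frac{x_{\ep,1}\kappa^2}{4\pi}+o_\ep(1)$; what cancels is only the $\delta$-dependence, not the whole term. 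Correspondingly, the volume integral involving $\partial_1\psi_{2,\ep}$ contributes $-\frac{x_{\ep,1}\kappa^2}{4\pi}\bigl(\ln\frac{8x_{\ep,1}}{s_\ep}+\frac{p-5}{4}\bigr)$ (Lemma \ref{lemC-3}), not the full constant $\frac{p-1}{4}$ as you assert; it is precisely the $O(1)$ boundary contribution $\frac{x_{\ep,1}\kappa^2}{4\pi}$ that shifts $\frac{p-5}{4}$ to $\frac{p-1}{4}$ after dividing by $\kappa x_{\ep,1}$. Since the corollary is a statement at $o_\ep(1)$ precision --- the $O(1)$ constants are the whole point --- discarding the boundary terms as negligible would produce the wrong constant. (Also, there is no additional division by powers of $\ep$ or multiplication by $(r_\ep/\ep)^{2/(p-1)}$ in \eqref{2-27}; those factors appear only inside the appendix computations and are converted into $\kappa$ via \eqref{D-6}.)
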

\begin{proof}
	By simply taking $\phi_\ep^{o}=0$ in the  decomposition \eqref{C-2}, we infer from the estimate of Lemma \ref{lem2-6} and the identity in Lemma \ref{lemC-4} that this corollary holds true.
\end{proof}

Next, we are going to improve the estimate in Lemma \ref{lem2-6} by studying the equation of $\phi_\ep$. By a linearization procedure, we see that $\phi_\varepsilon$  satisfies the equation
\begin{equation*}
	\mathbb L_\varepsilon\phi_\varepsilon=R_\varepsilon(\phi_\varepsilon),
\end{equation*}
where $\mathbb L_\varepsilon$ is the linear operator defined by
\begin{equation*}
	\mathbb L_\varepsilon\phi_\varepsilon= x_1\mathcal{L}\phi_\varepsilon-\frac{p x_{1}}{\ep^2}\left(U_{\ep, x_\ep, a_\ep}(x)- a_\ep\ln \frac{1}{\ep}\right)_+^{p-1} \phi_\varepsilon,
\end{equation*}
and the higher-order term
\begin{equation*}
	R_\varepsilon(\phi_\varepsilon)=\frac{x_1}{\varepsilon^2}\bigg(\left(\psi_\ep(x)-\frac{W}{2}x_1^2\ln\frac{1}{\ep}-\mu_\ep\right)_+^p -\left(U_{\ep, x_\ep, a_\ep}(x)- a_\ep\ln \frac{1}{\ep}\right)_+^{p}-p\left(U_{\ep, x_\ep, a_\ep}(x)- a_\ep\ln \frac{1}{\ep}\right)_+^{p-1} \phi_\varepsilon\bigg).
\end{equation*}
By \eqref{B-2}, \eqref{B-3}, Lemmas \ref{lem2-6} and \ref{lemB-2}, we obtain
\begin{equation*}
	R_\varepsilon(\phi_\varepsilon)\equiv0, \ \ \ \text{in} \ \mathbb R^2_+\setminus B_{2s_\ep}(x_\ep).
\end{equation*}

To derive a better estimate for $\phi_\varepsilon$, we need the following non-degeneracy of the linearized equation.
\begin{lemma}[Lemma 6 in \cite{FW}]\label{lem2-8}
	Let $U$ be the function   defined by \eqref{2-15}. Suppose that $v\in L^\infty(\mathbb{R}^2)\cap C(\mathbb{R}^2)$ solves
	\begin{equation*}
		-\Delta v-pU_+^{p-1}v=0,\quad \text{in}\,\,\,\mathbb{R}^2.
	\end{equation*}
	Then
	$$v\in span \left\{\frac{\partial U}{\partial x_1}, \frac{\partial U}{\partial x_2}\right\}.$$
\end{lemma}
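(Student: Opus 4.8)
The plan is to separate variables and analyze the resulting one–dimensional problems mode by mode. Since $U_+^{p-1}$ is bounded and supported in $\overline{B_1(0)}$, elliptic regularity first gives $v\in C^1_{\mathrm{loc}}(\mathbb{R}^2)$, so I would expand $v(r,\theta)=\sum_{k\ge 0}\big(a_k(r)\cos k\theta+b_k(r)\sin k\theta\big)$, with each coefficient $\phi_k\in\{a_k,b_k\}$ bounded on $(0,\infty)$, $C^1$ across $r=1$, and a solution of
\begin{equation*}
-\phi_k''-\tfrac1r\phi_k'+\tfrac{k^2}{r^2}\phi_k-pU_+^{p-1}(r)\,\phi_k=0,\qquad r\in(0,\infty).
\end{equation*}
On $(1,\infty)$ the potential vanishes and this is an Euler equation, so boundedness forces $\phi_0\equiv\mathrm{const}$ and $\phi_k=c\,r^{-k}$ for $k\ge1$; near $r=0$ boundedness selects the interior solution behaving like $1$ (for $k=0$) or like $r^k$ (for $k\ge1$), so the space of admissible interior solutions is one–dimensional in each mode. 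It then remains to treat the three regimes $k=1$, $k=0$, $k\ge 2$.

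For $k=1$ the claim falls out: differentiating the radial equation $-U''-\tfrac1rU'=U^p$ shows that $U'$, extended by $\widetilde U'(r):=U'(1)/r$ for $r>1$, is a bounded $C^1$ solution of the $k=1$ ODE of fixed sign on $(0,\infty)$, so the $k=1$ part of $v$ is a linear combination of $\widetilde U'\cos\theta=\partial_{x_1}U$ and $\widetilde U'\sin\theta=\partial_{x_2}U$. For $k=0$ I would use the scaling invariance $u\mapsto\lambda^{2/(p-1)}u(\lambda\,\cdot)$ of $-\Delta u=u^p$: it produces the interior solution $W:=\tfrac{2}{p-1}U+rU'$, which is bounded at the origin and hence spans the admissible interior $k=0$ solutions, and a short computation using $U''(1)=-U'(1)$ (the equation at $r=1$) gives $W'(1)=\tfrac{2}{p-1}U'(1)\neq0$. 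Since a bounded exterior solution is constant, the $C^1$ matching forces $a_0'(1)=0$, so from $a_0=cW$ we get $c=0$ and $a_0\equiv0$.

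For $k\ge 2$ the surviving candidate decays like $r^k$ at $0$ and like $r^{-k}$ at $\infty$; multiplying by $r\phi_k$ and integrating (all boundary terms vanishing by this decay and the $C^1$ matching at $r=1$) gives
\begin{equation*}
\int_0^\infty\Big(r|\phi_k'|^2+\tfrac{k^2}{r}|\phi_k|^2-pU_+^{p-1}r|\phi_k|^2\Big)\,dr=0,
\end{equation*}
which is the $k=1$ quadratic form plus $(k^2-1)\int_0^\infty|\phi_k|^2/r\,dr$. So I would reduce to showing the $k=1$ form is nonnegative, and prove that by the substitution $\phi_k=\widetilde U'\psi$ (valid since $\widetilde U'\neq0$ on $(0,\infty)$ and $\psi\sim r^{k-1}\to0$ at the origin), which, using that $\widetilde U'$ solves the $k=1$ equation, collapses the form to $\int_0^\infty r|\widetilde U'|^2|\psi'|^2\,dr\ge0$. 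The strictly positive extra term then forces $\phi_k\equiv0$, and the three cases combine to give $v\in\mathrm{span}\{\partial_{x_1}U,\partial_{x_2}U\}$.

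The hard part will be the $k\ge 2$ coercivity: the factorization trick is clean in principle, but the weights $r$ and $1/r$ together with the behaviour of $\widetilde U'$ at $r=0$ and $r=\infty$ require care to justify the integrations by parts (I would carry them out on $(\delta,R)$ and let $\delta\to0$, $R\to\infty$). I also regard the $k=0$ mode as a delicate point, since that is where the argument genuinely exploits that $U$ is the Dirichlet profile on $B_1$ rather than an entire solution, through the boundary relation $a_0'(1)=0$ and the nonvanishing of $W'(1)$; without these the radial linearization would, as usual, carry a negative direction.
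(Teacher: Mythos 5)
The paper does not prove this lemma; it is quoted verbatim as Lemma 6 of \cite{FW}, so there is no in-text argument to compare against. Your proof is correct and is, in substance, the standard proof of this non-degeneracy result: Fourier decomposition in $\theta$, the translation modes $\partial_{x_i}U$ exhausting $k=1$, the dilation field $W=\tfrac{2}{p-1}U+rU'$ killing $k=0$ via the matching condition $a_0'(1)=0$ together with $W'(1)=\tfrac{2}{p-1}U'(1)\neq 0$ (using $U''(1)=-U'(1)$), and the factorization $\phi_k=\widetilde U'\psi$ giving coercivity of the $k\ge 2$ quadratic forms. The supporting facts you rely on all check out: $U'<0$ on $(0,1]$ since $rU'$ is strictly decreasing, $U'(r)\sim U''(0)r$ with $U''(0)=-U(0)^p/2\neq 0$ so that $\psi\sim r^{k-1}\to 0$, and the boundary terms $rww'\psi^2$ vanish like $r^{2k}$ at $0$ and $r^{-2k}$ at $\infty$, so the limiting integrations by parts on $(\delta,R)$ go through. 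The only point worth stating explicitly in a write-up is the regularity $v\in C^{1,\alpha}_{\mathrm{loc}}$ (from $pU_+^{p-1}v\in L^\infty$) that justifies both the mode-by-mode reduction and the $C^1$ matching across $r=1$, which you have already noted.
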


With this non-degeneracy lemma, we are able to establish the following estimate for the linear  operator $\mathbb L_\varepsilon$ and $\phi_\ep$, which plays a central role in this section.
\begin{lemma}\label{lem2-9}
	Suppose that $\mathrm{supp} \,(\mathbb L_\varepsilon\phi_\varepsilon)\subset B_{2s_\ep}(  x_\ep)$. Then for any $q\in(2, +\infty]$, there exist  an $\epsilon_0>0$ small  and a constant $c_0>0$ such that for any $\varepsilon\in (0,\epsilon_0]$, it holds
	\begin{equation*}
		 \varepsilon^{1-\frac{2}{q}} ||\mathbb{L}_\varepsilon \phi_\varepsilon||_{W^{-1,q}(B_{Ls_\ep}(x_\ep))}+\varepsilon^2||\mathbb{L}_\varepsilon \phi_\varepsilon||_{L^\infty(B_{\frac{s_\ep}{2} }(x_\ep))}\ge c_0 \left(\varepsilon^{1-\frac{2}{q}} ||\nabla \phi_\varepsilon||_{L^{q}(B_{Ls_\ep}(x_\ep))}+||\phi_\varepsilon||_{L^\infty(\mathbb{R}_+^2)}\right)
	\end{equation*}
	with $L>0$ a large constant.
\end{lemma}
\begin{proof}
	We use the notation $o_n(1)$ to denote a term that approaches 0 as $n\to+\infty$. We will argue by contradiction. Suppose on the contrary that there exists $\varepsilon_n\to 0$ such that $\phi_n:=\phi_{\varepsilon_n}$ satisfies
	\begin{equation}\label{2-28}
		\varepsilon_n^{1-\frac{2}{q}} ||\mathbb{L}_{\varepsilon_n} \phi_n||_{W^{-1,q}(B_{Ls_{\ep_n}}(x_{\ep_n}))}+\varepsilon_n^2||\mathbb{L}_{\varepsilon_n} \phi_n||_{L^\infty(B_{s_{\ep_n}/2}(x_{\ep_n}))}\le \frac{1}{n},
	\end{equation}
	and
	\begin{equation}\label{2-29}
		\varepsilon_n^{1-\frac{2}{q}} ||\nabla \phi_n||_{L^{q}(B_{Ls_{\ep_n}}(B_{Ls_{\ep_n}}(x_{\ep_n}))}+||\phi_n||_{L^\infty(\mathbb{R}_+^2)}=1.
	\end{equation}
	By letting $f_n=\mathbb{L}_{\varepsilon_n}\phi_n$, we have
	\begin{equation*}
		-\text{div}\left(\frac{1}{x_1}\nabla\phi_n\right) =\frac{p x_{1}}{\ep^2}\left(U_{\ep_n,x_{\ep_n}, a_{\ep_n}} - a_{\ep_n}\ln \frac{1}{\ep_n}\right)_+^{p-1} \phi_n+f_n.
	\end{equation*}
	Henceforth we will denote $\tilde v( y):=v(s_{\ep_n} y+x_{\ep_n})$ for an arbitrary function $v$ and $\tilde{\mathbb R}^2_+:=\{y\in \mathbb{R}^2 \mid s_{\ep_n} y+x_{\ep_n}\in \mathbb R^2_+\}$. Then the above equation has a weak form
	\begin{equation*}
		\begin{split}
		& \int_{\tilde{\mathbb R}^2_+}\frac{1}{s_{\ep_n}y_1+x_{\ep_n,1}} \nabla\tilde\phi_n\cdot\nabla\varphi d  y=\frac{p s_{\ep_n}^2}{\ep_n^2}\int_{\tilde{\mathbb R}^2_+}(s_{\ep_n} y_1+x_{\ep_n,1})\left(\tilde U_{\ep_n, x_{\ep_n},a_{\ep_n}}(y) - a_{\ep_n}\ln \frac{1}{\ep_n}\right)_+^{p-1} \tilde\phi_n(y)\varphi(y)dy \\
		&+\langle   f_n(s_{\ep_n} y+x_{\ep_n}),\varphi((y-x_{\ep_n})/s_{\ep_n})\rangle, \ \ \ \ \ \ \forall \, \varphi\in C_0^\infty(\tilde{\mathbb R}^2_+).
		\end{split}
	\end{equation*}

	The right hand side of the above equation is bounded in $W_{\text{loc}}^{-1,q}(\mathbb{R}^2)$,  since the function \\$\left(\tilde U_{\ep_n, x_{\ep_n},a_{\ep_n}}(y) - a_{\ep_n}\ln \frac{1}{\ep_n}\right)_+^{p-1}$ is a bounded function and $f_n$ is bounded in $W^{-1,q}(B_{Ls_{\ep_n}}(x_{\ep_n}))$ and $\mathrm{supp} \,(f_n)\subset B_{2s_{\ep_n}}(  x_{\ep_n})$. Therefore $\tilde \phi_n$ is bounded in $W^{1,q}_{\text{loc}}(\mathbb{R}^2)$ by theory of regularity for elliptic equations and hence bounded in $C_{\text{loc}}^\alpha (\mathbb{R}^2)$ for some $\alpha>0$ by Sobolev embedding. Then we may assume that $\tilde \phi_n$ converges uniformly in any compact subset of $\mathbb{R}^2$ to $\phi^*\in L^\infty(\mathbb{R}^2)\cap C(\mathbb{R}^2)$. It can be seen that the limiting function $\phi^*$ satisfies
	\begin{equation*}
		-\Delta \phi^* =pU_+^{p-1}\phi^*, \quad \text{in} \ \mathbb{R}^2,
	\end{equation*}
which, by Lemma \ref{lem2-8} and the even symmetry in $x_2$, implies that
$$ \phi^*=c_1\frac{\partial U}{\partial x_1}.$$
	On the other hand, since $|f_n|\leq \frac{1}{n}$ in $B_{\frac{1}{2}s_{\ep}}(x_{\ep})$ by \eqref{2-28}, $|\tilde \phi_{n }|\leq 1$  and $\left(\tilde U_{\ep, x_\ep, a_\ep}(y) - a_\ep\ln \frac{1}{\ep}\right)_+  $ is bounded, we can deduce that $\tilde \phi_{n }$ is bounded in $W^{2,s}({B_{\frac{1}{4}}(0)})$ for any $s>1$. So, we may assume $\tilde \phi_{n } \to \phi^*$ in $C^1({B_{\frac{1}{4}}(0)})$. In view the choice of $z_\ep$ at the beginning of Section 2.1 and the first equation in \eqref{2-21}, one can verify that  $\partial_1 \tilde \phi_{n } (\frac{z_{\ep_n }-x_{\ep_n}}{s_{\ep_n}})=s_{\ep_n}\partial_1 \phi_n(z_{\ep_n})=s_{\ep_n}Wz_{\ep_n} \ln\frac{1}{\ep_n}\to 0$ and $\frac{z_{\ep_n}-x_{\ep_n}}{s_{\ep_n}}\to 0$ as $n\to+\infty$, which implies $\nabla \phi^*(0)=0$. Thus we have $c_1=0$ and hence $\phi^*\equiv 0$, which implies $\phi_n=o_n(1)$ in $B_{Ls_{\ep_n}}(x_{\ep_n})$ for any $L>0$ fixed. Note that $\mathcal{L}\phi_n\equiv 0$   in $\mathbb{R}^2_+\setminus B_{Ls_{\ep_n}}(x_{\ep_n})$, $\phi_n=0$ on $\partial\mathbb{R}^2_+$ and   $\phi_n(x)\to 0$ as $|x| \to \infty$ uniformly for $n$. By the maximum principle, we obtain $$||\phi_n||_{L^\infty(\mathbb{R}^2)}=o_n(1).$$	
	For any $\varphi\in C_0^\infty (B_{L}(0))$, we have
	\begin{align*}
		&  \int_{\mathbb{R}^2} \frac{1}{s_{\ep_n}y_1+x_{\ep_n,1}}\nabla \tilde \phi_{n}\nabla \varphi =\frac{p s_{\ep_n}^2}{\ep_n^2}\int_{\tilde{\mathbb R}^2_+}(s_{\ep_n} y_1+x_{\ep_n,1})\left(\tilde U_{\ep_n, x_{\ep_n},a_{\ep_n}}(y) - a_{\ep_n}\ln \frac{1}{\ep_n}\right)_+^{p-1} \tilde\phi_n(y)\varphi(y)dy\\
		&\quad +\int_{\mathbb{R}^2}  f_n(s_{\ep_n} y+x_{\ep_n})\varphi \left(\frac{y-x_{\ep_n}}{s_{\ep_n}}\right)dy\\
		&=o_n(1)\int_{\mathbb{R}^2}| \varphi|+o_n(1)||\varphi||_{W^{1,p'}(B_{ L}(0))}\\
		&=o_n(1)||\nabla\varphi||_{L^{q'}(B_{L}(0))},
	\end{align*}
	which implies $||\nabla \tilde \phi_{n}||_{L^{q}(B_{ L}(0))}=o_n(1).$
	Therefore, we obtain
	$$s_{\ep_n}^{1-\frac{2}{q}} ||\nabla \phi_n||_{L^{q}( B_{Ls_{\ep_n}}(x_{\ep_n}))}=o_n(1).$$
	This  is a contradiction with \eqref{2-29} since $s_{\ep_n}=O(\ep_n)$, and hence the proof of Lemma \ref{lem2-9} is finished.
\end{proof}

Now we are in the position to improve the estimate for error term $\phi_\varepsilon$ and obtain the main proposition in this subsection.
\begin{proposition}\label{prop2-10}
There exists $\ep_0>0$ such that	for $q\in (2,+\infty]$ and $\varepsilon\in (0,\varepsilon_0]$, it holds
	\begin{equation}\label{2-30}
		||\phi_\varepsilon||_{L^\infty(\mathbb{R}_+^2)}+\varepsilon^{1-\frac{2}{q}} ||\nabla \phi_\varepsilon||_{L^{q}(B_{Ls_\ep}(x_\ep))}=O \left(\ep\right).
	\end{equation}
\end{proposition}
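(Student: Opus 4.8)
The plan is to bootstrap the crude bound $\|\phi_\varepsilon\|_{L^\infty}+\varepsilon\|\nabla\phi_\varepsilon\|_{L^\infty}=o_\varepsilon(1)$ from Lemma \ref{lem2-6} up to the sharp rate $O(\varepsilon)$ by feeding the equation $\mathbb L_\varepsilon\phi_\varepsilon=R_\varepsilon(\phi_\varepsilon)$ into the coercivity estimate of Lemma \ref{lem2-9}. The first step is to control the nonlinear remainder $R_\varepsilon(\phi_\varepsilon)$ in the norms appearing on the left side of Lemma \ref{lem2-9}, namely $\varepsilon^{1-2/q}\|\cdot\|_{W^{-1,q}(B_{Ls_\varepsilon}(x_\varepsilon))}+\varepsilon^2\|\cdot\|_{L^\infty(B_{s_\varepsilon/2}(x_\varepsilon))}$. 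Since $\operatorname{supp}R_\varepsilon(\phi_\varepsilon)\subset B_{2s_\varepsilon}(x_\varepsilon)$ and $s_\varepsilon=O(\varepsilon)$, I would write $R_\varepsilon(\phi_\varepsilon)$ as the sum of two contributions: the quadratic Taylor error coming from $p\geq 2$, which by Remark \ref{rem3} is $t^p=t_0^p+pt_0^{p-1}(t-t_0)+O_M((t-t_0)^2)$ applied with $t_0=U_{\varepsilon,x_\varepsilon,a_\varepsilon}-a_\varepsilon\ln\frac1\varepsilon$ (of size $(\varepsilon/s_\varepsilon)^{2/(p-1)}$) and $t-t_0=\phi_\varepsilon$, giving a term of order $\varepsilon^{-2}(\varepsilon/s_\varepsilon)^{2(p-2)/(p-1)}\|\phi_\varepsilon\|_\infty^2$ on the support; and the error produced by replacing $\frac W2 x_1^2\ln\frac1\varepsilon+\mu_\varepsilon$ by $a_\varepsilon\ln\frac1\varepsilon$, which by \eqref{2-22} (the estimate $a_\varepsilon\ln\frac1\varepsilon=\frac W2 x_{\varepsilon,1}^2\ln\frac1\varepsilon+\mu_\varepsilon+O(1)$) and the Lipschitz behaviour of $x_1^2$ on $\Omega_\varepsilon$ is of size $O(1)$ pointwise, times $\varepsilon^{-2}t_0^{p-1}$. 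Using $x_{\varepsilon,1}\asymp 1$, $s_\varepsilon\asymp\varepsilon$ and the basic bounds for $U_{\varepsilon,x_\varepsilon,a_\varepsilon}$ collected in Lemma \ref{lemB-1}, both pieces reduce to a constant times $\varepsilon^{-2}\|\phi_\varepsilon\|_\infty$ plus a constant times $\varepsilon^{-2}(\cdot)$, where the inhomogeneous part contributes $O(1)$ pointwise on a ball of radius $O(\varepsilon)$.

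Next I would estimate the two norms of $R_\varepsilon(\phi_\varepsilon)$ separately. For the $L^\infty(B_{s_\varepsilon/2}(x_\varepsilon))$ norm one gets directly $\varepsilon^2\|R_\varepsilon(\phi_\varepsilon)\|_{L^\infty(B_{s_\varepsilon/2})}\leq C\|\phi_\varepsilon\|_{L^\infty}^2+C\varepsilon^2$ after absorbing the powers of $(\varepsilon/s_\varepsilon)$, using $p\geq 2$ so that the exponent $2(p-2)/(p-1)\geq 0$. For the negative Sobolev norm I would test against $\varphi\in W^{1,q'}_0(B_{Ls_\varepsilon}(x_\varepsilon))$, change variables $y\mapsto s_\varepsilon y+x_\varepsilon$, use $|B_{2s_\varepsilon}|\asymp\varepsilon^2$ together with Hölder and the Sobolev embedding $W^{1,q'}\hookrightarrow L^\infty$ (valid since $q'<2$), obtaining $\varepsilon^{1-2/q}\|R_\varepsilon(\phi_\varepsilon)\|_{W^{-1,q}(B_{Ls_\varepsilon})}\leq C\|\phi_\varepsilon\|_{L^\infty}^2+C\varepsilon$. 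Combining these with Lemma \ref{lem2-9} yields
\begin{equation*}
\|\phi_\varepsilon\|_{L^\infty(\mathbb R^2_+)}+\varepsilon^{1-\frac2q}\|\nabla\phi_\varepsilon\|_{L^q(B_{Ls_\varepsilon}(x_\varepsilon))}\leq C\|\phi_\varepsilon\|_{L^\infty(\mathbb R^2_+)}^2+C\varepsilon .
\end{equation*}
Since $\|\phi_\varepsilon\|_{L^\infty}=o_\varepsilon(1)$ by Lemma \ref{lem2-6}, for $\varepsilon$ small the term $C\|\phi_\varepsilon\|_{L^\infty}^2$ is absorbed into the left side, $C\|\phi_\varepsilon\|_{L^\infty}^2\leq\frac12\|\phi_\varepsilon\|_{L^\infty}$, and we conclude $\|\phi_\varepsilon\|_{L^\infty}+\varepsilon^{1-2/q}\|\nabla\phi_\varepsilon\|_{L^q(B_{Ls_\varepsilon})}\leq 2C\varepsilon=O(\varepsilon)$, which is \eqref{2-30}.

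The step I expect to require the most care is the estimate of $R_\varepsilon(\phi_\varepsilon)$ — specifically verifying that the $O(1)$ pointwise error arising from the mismatch between $\mu_\varepsilon+\frac W2 x_1^2\ln\frac1\varepsilon$ and $a_\varepsilon\ln\frac1\varepsilon$ (plus the variation of $x_1^2$ across the $O(\varepsilon)$-small core) truly contributes at the level $O(\varepsilon)$ and not worse after multiplication by $\varepsilon^{-2}t_0^{p-1}|\Omega_\varepsilon|$-type factors; here the precise exponents in \eqref{2-22}, the relation $|r_\varepsilon-s_\varepsilon|=o(\varepsilon)$, and the localization $\operatorname{supp}R_\varepsilon(\phi_\varepsilon)\subset B_{2s_\varepsilon}(x_\varepsilon)$ must all be used in concert, and the requirement $p\geq 2$ enters exactly to keep the self-interaction exponent nonnegative. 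Once $R_\varepsilon$ is controlled, the maximum-principle/Harnack machinery outside $B_{Ls_\varepsilon}$ that is already built into Lemma \ref{lem2-9} handles the far-field behaviour of $\phi_\varepsilon$ for free.
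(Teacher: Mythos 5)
Your overall architecture is the same as the paper's: bound $\varepsilon^{1-2/q}\|R_\varepsilon(\phi_\varepsilon)\|_{W^{-1,q}(B_{Ls_\varepsilon}(x_\varepsilon))}+\varepsilon^2\|R_\varepsilon(\phi_\varepsilon)\|_{L^\infty(B_{s_\varepsilon/2}(x_\varepsilon))}$, feed it into Lemma \ref{lem2-9}, and absorb the quadratic term using $\|\phi_\varepsilon\|_{L^\infty}=o_\varepsilon(1)$. The quadratic Taylor piece and the absorption step are handled correctly.

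There is, however, a genuine gap at exactly the point you flag as delicate, and it is not resolvable by more careful bookkeeping of an $O(1)$ quantity. You describe the inhomogeneous error as the ``mismatch between $\frac{W}{2}x_1^2\ln\frac1\varepsilon+\mu_\varepsilon$ and $a_\varepsilon\ln\frac1\varepsilon$,'' estimate it via \eqref{2-22} as $O(1)$ pointwise, and then assert the bounds $C\|\phi_\varepsilon\|_{L^\infty}^2+C\varepsilon^2$ and $C\|\phi_\varepsilon\|^2_{L^\infty}+C\varepsilon$. These are inconsistent: an $O(1)$ pointwise discrepancy enters $R_\varepsilon$ through the linear term $p\,\varepsilon^{-2}(U_{\varepsilon,x_\varepsilon,a_\varepsilon}-a_\varepsilon\ln\frac1\varepsilon)_+^{p-1}\cdot O(1)=O(\varepsilon^{-2})$ on a set of measure $O(\varepsilon^2)$, which by \eqref{2-38} gives $\varepsilon^{1-2/q}\|R_\varepsilon\|_{W^{-1,q}}=O(1)$ and $\varepsilon^2\|R_\varepsilon\|_{L^\infty}=O(1)$; Lemma \ref{lem2-9} would then only return $\|\phi_\varepsilon\|_{L^\infty}=O(1)$, which is useless. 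The correct input is the expansion \eqref{B-2}--\eqref{B-3} of Lemma \ref{lemB-1}: since $\phi_\varepsilon=\psi_\varepsilon-\Psi_{\varepsilon,x_\varepsilon,a_\varepsilon}$ and $\Psi_{\varepsilon,x_\varepsilon,a_\varepsilon}$ is the $G_1$-potential of $(U_{\varepsilon,x_\varepsilon,a_\varepsilon}-a_\varepsilon\ln\frac1\varepsilon)_+^p$ rather than $U_{\varepsilon,x_\varepsilon,a_\varepsilon}$ itself, the relevant discrepancy is $\Psi_{\varepsilon,x_\varepsilon,a_\varepsilon}-\frac{W}{2}x_1^2\ln\frac1\varepsilon-\mu_\varepsilon-(U_{\varepsilon,x_\varepsilon,a_\varepsilon}-a_\varepsilon\ln\frac1\varepsilon)=\mathcal F(x)+O(\varepsilon^2|\ln\varepsilon|)$, where the \emph{constant} part of the mismatch cancels exactly by the second equation of the defining system \eqref{2-21} for $(x_\varepsilon,a_\varepsilon,s_\varepsilon)$, and the surviving function $\mathcal F$ (proportional to $x_1-x_{\varepsilon,1}$ plus a mean-zero convolution) satisfies $\|\mathcal F\|_{L^\infty}=O(\varepsilon)$. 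Only with this $O(\varepsilon)$ bound does the linear contribution land at $O(\varepsilon)$ in both norms. The crude consequence \eqref{2-22} you invoke is too weak for this purpose.

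A secondary omission: the paper's $W^{-1,q}$ estimate splits $B_{Ls_\varepsilon}(x_\varepsilon)$ into $B_{(1-d_\varepsilon)s_\varepsilon}(x_\varepsilon)$ and the annulus containing the free boundary $\partial\Omega_\varepsilon$ (parametrized in Lemma \ref{lemB-2}), applying the Taylor expansion \eqref{2-31} only on the inner ball and using the smallness of the annulus ($|{\cdot}|=O(s_\varepsilon^2 d_\varepsilon)$, integrand $O(d_\varepsilon)$) to gain the factor $d_\varepsilon^{1/q}$. For $p\ge2$ one may argue that $t\mapsto t_+^p$ is $C^{1,1}$ on bounded sets so a global quadratic remainder is available, but you should at least address why the positive parts on the two sides of the free boundary do not cause trouble; as written, your proposal is silent on this.
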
	
\begin{proof}	
	In view of Lemma \ref{lem2-9}, it suffices to estimate $$\varepsilon^{1-\frac{2}{q}} ||R_\varepsilon(\phi_\varepsilon)||_{W^{-1,q}(B_{Ls_\ep}(x_\ep))}+\varepsilon^2||R_\varepsilon(\phi_\varepsilon)||_{L^\infty(B_{\frac{s_\ep}{2} }(x_\ep))}.$$
	
	For $x\in B_{\frac{s_\ep}{2}}$, by Lemma \ref{lemB-1} and the   Taylor expansion
	\begin{equation}\label{2-31}
		t^p=t_0^p+pt_0^{p-1}(t-t_0)+O_M((t-t_0)^2),\  \forall\ t,t_0\in(0, M),
	\end{equation}
we have
\begin{align*}
		 \ep^2R_\varepsilon(\phi_\varepsilon)
			&= {x_1} \left(\left(U_{\ep, x_\ep,  a_{\ep}}(x)-a_\ep\ln\frac{1}{\ep} + \mathcal F(x)+\phi_\ep+O(\ep^2 |\ln\ep| )\right)_+^p -\left(U_{\ep, x_\ep, a_\ep}(x)- a_\ep\ln \frac{1}{\ep}\right)_+^{p}\right.\\
			&\quad\left.-p\left(U_{\ep, x_\ep, a_\ep}(x)- a_\ep\ln \frac{1}{\ep}\right)_+^{p-1} \phi_\varepsilon\right)\\
			&=O\left((\|\phi_\ep\|_{L^\infty(\Om_\ep)}+\|\mathcal F\|_{L^\infty(\Om_\ep)}+\ep^2 |\ln {\ep} |)^{2} \right)\\
			&=O\left( \ep^2+ o_\ep(1)\|\phi_\ep\|_{L^\infty(B_{2s_\ep}(x_\ep))}\right).
		\end{align*}
	
	Thus, it remains to estimate $\varepsilon^{1-\frac{2}{q}} ||R_\varepsilon(\phi_\varepsilon)||_{W^{-1,q}(B_{Ls_\ep}(x_\ep))}$.	Let
	\begin{equation}\label{2-32}
		d_\ep=\sup_{\theta\in [0,2\pi)} |t_\ep(\theta)|
	\end{equation}   with $t_\ep$ is the parametrization of the boundary $\partial \Om_\ep$ defined by Lemma \ref{lemB-2} in Appendix \ref{appB}.  By \eqref{B-6}, we know $d_\ep=O\left( \|\phi_\ep\|_{L^\infty(B_{2s_\ep}(x_\ep))}+\|\mathcal F\|_{L^\infty(B_{2s_\ep}(x_\ep))}+\ep^2\right)$. For each $\varphi\in C_0^1(B_{Ls_\ep}(x_\ep))$,   by the H\"older inequality and Poincar\'e inequality, one has
\begin{equation}\label{2-38}
\int_{\mathbb{R}^2_+}|\varphi(x)|dx\leq (\pi L^2 s_\ep^2)^{\frac{1}{q}}\|\varphi\|_{L^{q'}(B_{Ls_\ep}(x_\ep))}\leq C L^{2/q} \ep^{1+\frac{2}{q}}\|\nabla\varphi\|_{L^{q'}(B_{Ls_\ep}(x_\ep))},
\end{equation}
where $q'$ stands for the conjugate exponent of $q$ defined by $1/q'+1/q=1$.

Thus, direct calculation yields
	\begin{align*}
			&\  \langle R_\varepsilon(\phi_\varepsilon),\varphi \rangle =\int_{\mathbb{R}^2_+}\frac{x_1}{\varepsilon^2}\left(\left(\psi_\ep(x)-\frac{W}{2}x_1^2\ln\frac{1}{\ep}-\mu_\ep\right)_+^p -\left(U_{\ep, x_\ep, a_\ep}(x)- a_\ep\ln \frac{1}{\ep}\right)_+^{p}\right.\\
			&\left.-p\left(U_{\ep, x_\ep, a_\ep}(x)- a_\ep\ln \frac{1}{\ep}\right)_+^{p-1} \phi_\varepsilon\right)\varphi(x)dx\\
			=&\left(\int_{B_{(1-d_\ep)s_\ep}(x_\ep)}+\int_{B_{(1+d_\ep)s_\ep}(x_\ep)\setminus B_{(1-d_\ep)s_\ep}(x_\ep)}\right)\frac{x_1}{\varepsilon^2}\left(\left(\psi_\ep(x)-\frac{W}{2}x_1^2\ln\frac{1}{\ep}-\mu_\ep\right)_+^p \right.\\
			&\left.-\left(U_{\ep, x_\ep, a_\ep}(x)- a_\ep\ln \frac{1}{\ep}\right)_+^{p}-p\left(U_{\ep, x_\ep, a_\ep}(x)- a_\ep\ln \frac{1}{\ep}\right)_+^{p-1} \phi_\varepsilon\right)\varphi(x)dx\\
			=&\frac{1}{\ep^2}\int_{B_{(1-d_\ep)s_\ep}(x_\ep)}x_1\left(p\left(U_{\ep, x_\ep, a_\ep}(x)- a_\ep\ln \frac{1}{\ep}\right)_+^{p-1} \mathcal{F}(x)+O(\|\phi_\ep\|_{L^\infty}+\|\mathcal F\|_{L^\infty})^{2}\right)\varphi(x)dx\\
			&+O\left( \frac{\|\phi_\ep\|_{L^\infty}+\|\mathcal F\|_{L^\infty}}{\ep^2}\int_{B_{(1+d_\ep)s_\ep}(x_\ep)\setminus B_{(1-d_\ep)s_\ep}(x_\ep)} |\varphi(x)|dx\right)\\
			=& O(\|\phi_\ep\|_{L^\infty}^{2}+\|\mathcal F\|_{L^\infty})\ep^{-2}\int_{B_{(1-d_\ep)s_\ep}(x_\ep)} |\varphi(x)|dx\\
			&+O\left(  \|\phi_\ep\|_{L^\infty}+\|\mathcal F\|_{L^\infty} \right)\ep^{-2}\int_{B_{(1+d_\ep)s_\ep}(x_\ep)\setminus B_{(1-d_\ep)s_\ep}(x_\ep)} |\varphi(x)|dx\\
			= &O\left((\|\phi_\ep\|_{L^\infty}^{2}+\|\mathcal F\|_{L^\infty})+(\|\phi_\ep\|_{L^\infty} +\|\mathcal F\|_{L^\infty}) d_\ep^{\frac{1}{q}}\right)\ep^{-1+\frac{2}{q}}\|\nabla\varphi\|_{L^{q'}(B_{Ls_\ep}(x_\ep))}\\
			=&O\left(\ep +o_\ep(1)\|\phi_\ep\|_{L^\infty}\right)\ep^{-1+\frac{2}{q}}\|\nabla\varphi\|_{L^{q'}(B_{Ls_\ep}(x_\ep))}.			
		\end{align*}
Here we have used the expansion \eqref{2-31} in $B_{(1-d_\ep)s_\ep}(x_\ep)$ and the following facts
\begin{equation*}
	\begin{cases}
		&\|\phi_\ep\|_{L^\infty}=o_\ep(1),\ \  \|\mathcal F\|_{L^\infty(B_{2s_\ep}(x_\ep))}=O(\ep),\\
		&\text{the area}\ \  |B_{(1+d_\ep)s_\ep}(x_\ep)\setminus B_{(1-d_\ep)s_\ep}(x_\ep)|=O(s_\ep^2 d_\ep),\\
		&\psi_\ep(x)-\frac{W}{2}x_1^2\ln\frac{1}{\ep}-\mu_\ep=O(d_\ep), \ \ U_{\ep, x_\ep, a_\ep}(x)- a_\ep\ln \frac{1}{\ep}=O(d_\ep), \quad \forall \    x\in B_{(1+d_\ep)s_\ep}(x_\ep)\setminus B_{(1-d_\ep)s_\ep}(x_\ep),
	\end{cases}
\end{equation*}
 which can be verified by easy computations.

Therefore, we conclude
	\begin{equation*}
		\begin{array}{ll}
			\varepsilon^{1-\frac{2}{q}} || R_\varepsilon (\phi_\varepsilon)||_{W^{-1,q}(B_{Ls_\ep}(x_\ep))}+\varepsilon^2||R_\varepsilon(\phi_\varepsilon)||_{L^\infty(B_{\frac{s_\ep}{2} }(x_\ep))}
			=O \left(\ep+o(1)\|\phi_\ep\|_{L^\infty}\right).
		\end{array}
	\end{equation*}
	Then from the above discussion and Lemma \ref{lem2-9}, we finally obtain
	\[	
	||\phi_\varepsilon||_{L^\infty(\mathbb{R}_+^2)}+\varepsilon^{1-\frac{2}{q}} ||\nabla \phi_\varepsilon||_{L^{q}(B_{Ls_\ep}(x_\ep))}=O \left(\ep+o(1)\|\phi_\ep\|_{L^\infty}\right),
	\]
	which implies the  desired estimate \eqref{2-30} for $\ep$ sufficiently small and completes the proof.
\end{proof}

\subsubsection{The second approximation}\label{sec2-2-2}
The estimate in Proposition \ref{prop2-10} is not  precise enough yet. However, it seems hard to improve Proposition \ref{prop2-10} directly any more, since the estimate of the function $\mathcal F$ (i.e. \eqref{B-3} ) can not be improved.

Fortunately, by careful analysis, we find that Proposition \ref{prop2-10} can actually be improved by symmetries. The key observation is that the function $\mathcal F(x)$ is odd in $x_1-x_{\ep,1}$ in the sense that $$\mathcal F((-x_1,x_2)+x_\ep)=-\mathcal F((x_1,x_2)+x_\ep).$$ So some linear terms may \emph{be eliminated   by such odd symmetry}.

To do this,  we introduce \emph{ the second approximation} to eliminate $\mathcal F$ by an existence theory of the linearized equation with odd nonlinearities, which is established in Appendix \ref{appD}. Taking $h$ in Theorem \ref{thmD-2} as $h_\ep(x):=pU_+^{p-1}(x)\mathcal F(s_\ep x+x_\ep)$, we can easily see that $h_\ep$ satisfies all the assumptions in Theorem \ref{thmD-2}  and $\|h_\ep\|_ {W^{-1,q}(B_4(0))}=O(\ep)$. By Theorem  \ref{thmD-2}, we obtain a solution $\tilde \phi_\ep^o$ to \eqref{D-1} with the constant $b_\ep$ given by \eqref{D-2}. Moreover, by Theorem \ref{thmD-2} and Lemma \ref{lemD-3}, the following estimates hold  $$\|\tilde \phi_\ep^o\|_1+ \|\nabla \tilde \phi_\ep^o\|_{L^q(B_2(0))} =O(\ep), \quad |b_\ep|=O(\ep^2|\ln\ep|).$$
 Let $\phi_\ep^o(x):=\tilde \phi_\ep^o\left(\frac{x-x_\ep}{s_\ep}\right)$. Then $\phi_\ep^o(x)$ satisfies
 \begin{equation}\label{2-33}
 	\begin{cases}
 		&-\Delta \phi_\ep^o=\frac{p x_{\ep,1}^2}{\ep^2}\left(U_{\ep, x_\ep, a_\ep}(x)- a_\ep\ln \frac{1}{\ep}\right)_+^{p-1} \left(\phi_\ep^o +\mathcal F-b_\ep \frac{\partial U}{\partial x_1}\left(\frac{x-x_\ep}{s_\ep}\right)\right), \   \   \   \text{in}\  \   \mathbb{R}^2\\
 		&\|  \phi_\ep^o\|_{L^\infty(\mathbb{R}^2)}+\varepsilon^{1-\frac{2}{q}}\|\nabla   \phi_\ep^o\|_{L^q(B_{2s_\ep}(x_\ep))} =O(\ep),\quad \forall\ q\in(2,+\infty], \\
 		&\phi_\ep^{o}((-x_1,x_2)+x_\ep)=-\phi_\ep^{o}((x_1,x_2)+x_\ep).
 	\end{cases}
 \end{equation}

 Let $\bar \phi_\ep:= \phi_\ep-\phi_\ep^o$ be the remaining term in the error function $\phi_\ep$, then $\bar \phi_\ep$ satisfies the following equation
 \begin{align}\label{2-34}
 		&\quad\mathbb L_\ep \bar \phi_\ep  =\frac{x_1}{\varepsilon^2}\left(\left(\psi_\ep(x)-\frac{W}{2}x_1^2\ln\frac{1}{\ep}-\mu_\ep\right)_+^p -\left(U_{\ep, x_\ep, a_\ep}(x)- a_\ep\ln \frac{1}{\ep}\right)_+^{p}\right.\\
 		&\  \  \left.-p\left(U_{\ep, x_\ep,a_\ep}(x)- a_\ep\ln \frac{1}{\ep}\right)_+^{p-1} (\phi_\ep^o +\bar \phi_\ep+\mathcal F)\right)\nonumber\\
 		+&\frac{p }{\varepsilon^2}\left(U_{\ep, x_\ep, a_\ep}(x)- a_\ep\ln \frac{1}{\ep}\right)_+^{p-1}\left[\left(x_1-\frac{x_{\ep,1}^2}{x_1}\right)(\phi_\ep^o +\mathcal F )+\frac{ b_\ep x_{\ep,1}^2}{x_1}    \frac{\partial U}{\partial x_1}\left(\frac{x-x_\ep}{s_\ep}\right) \right]-\frac{1}{x_1^2}\partial_{x_1} \phi_\ep^o.\nonumber
 \end{align}

 Next we show that $\bar \phi_\ep$ is indeed a higher order term.
 \begin{proposition}\label{prop2-11}
 	For $q\in (2,+\infty]$ and $\varepsilon\in (0,\varepsilon_0]$, it holds
 	\begin{equation}\label{2-35}
 		||\bar \phi_\ep ||_{L^\infty(\mathbb{R}_+^2)}+\varepsilon^{1-\frac{2}{q}} ||\nabla \bar \phi_\ep ||_{L^{q}(B_{Ls_\ep}(x_\ep))}=O \left(\ep^{2}|\ln\ep|\right).
 	\end{equation}
 \end{proposition}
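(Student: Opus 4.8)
The plan is to run the same scheme as in the proof of Proposition~\ref{prop2-10}. Writing \eqref{2-34} as $\mathbb L_\ep\bar\phi_\ep=\mathcal N_\ep$ and applying the a~priori estimate of Lemma~\ref{lem2-9} to $\bar\phi_\ep$, the bound \eqref{2-35} reduces to showing
$$\ep^{1-\frac2q}\|\mathcal N_\ep\|_{W^{-1,q}(B_{Ls_\ep}(x_\ep))}+\ep^2\|\mathcal N_\ep\|_{L^\infty(B_{s_\ep/2}(x_\ep))}=O\big(\ep^2|\ln\ep|\big)+o(1)\|\bar\phi_\ep\|_{L^\infty(\mathbb R^2_+)},$$
after which the $o(1)\|\bar\phi_\ep\|_{L^\infty}$ term is absorbed into the left-hand side. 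The entire gain over Proposition~\ref{prop2-10} comes from the fact that the second approximation \eqref{2-33} was tailored so that the leading linear term $\frac p{\ep^2}(U_{\ep,x_\ep,a_\ep}-a_\ep\ln\frac1\ep)_+^{p-1}\mathcal F$ — the term of size $\ep^{-1}$ responsible for the $O(\ep)$ bound in Proposition~\ref{prop2-10} — is no longer present in $\mathcal N_\ep$, having been traded for the explicit correctors $\frac p{\ep^2}(\cdots)_+^{p-1}\big[(x_1-\frac{x_{\ep,1}^2}{x_1})(\phi_\ep^o+\mathcal F)+\frac{b_\ep x_{\ep,1}^2}{x_1}\frac{\partial U}{\partial x_1}(\frac{x-x_\ep}{s_\ep})\big]$ and $-\frac1{x_1^2}\partial_{x_1}\phi_\ep^o$, each of which we will show is genuinely of order $\ep^2|\ln\ep|$ (or better) in the relevant norm.

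For the pointwise bound near $x_\ep$, on $B_{s_\ep/2}(x_\ep)$ one has, by Lemma~\ref{lemB-1} and \eqref{2-23}, $\psi_\ep-\tfrac W2x_1^2\ln\tfrac1\ep-\mu_\ep=(U_{\ep,x_\ep,a_\ep}-a_\ep\ln\tfrac1\ep)+\mathcal F+\phi_\ep^o+\bar\phi_\ep+O(\ep^2|\ln\ep|)$; expanding the $p$-th power via \eqref{2-31} and cancelling the linear terms already present in \eqref{2-34}, the $p$-th power contribution to $\mathcal N_\ep$ becomes $\frac{x_1}{\ep^2}\big[p(U_{\ep,x_\ep,a_\ep}-a_\ep\ln\frac1\ep)_+^{p-1}O(\ep^2|\ln\ep|)+O((\|\phi_\ep\|_{L^\infty}+\|\mathcal F\|_{L^\infty}+\|\bar\phi_\ep\|_{L^\infty}+\ep^2|\ln\ep|)^2)\big]$, which is $O(|\ln\ep|)+o(1)\ep^{-2}\|\bar\phi_\ep\|_{L^\infty}$ once we use $\|\phi_\ep\|_{L^\infty}=O(\ep)$ (Proposition~\ref{prop2-10}), $\|\mathcal F\|_{L^\infty(B_{2s_\ep}(x_\ep))}=O(\ep)$ and $\|\bar\phi_\ep\|_{L^\infty}=o(1)$. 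The corrector terms are controlled pointwise on $B_{Ls_\ep}(x_\ep)$ by $x_1-\frac{x_{\ep,1}^2}{x_1}=O(s_\ep)$, $\|\phi_\ep^o\|_{L^\infty}+\|\mathcal F\|_{L^\infty}=O(\ep)$, $|b_\ep|=O(\ep^2|\ln\ep|)$ (Lemma~\ref{lemD-3}) and $\|\nabla\phi_\ep^o\|_{L^\infty(B_{2s_\ep}(x_\ep))}=O(1)$ from \eqref{2-33}. Altogether $\ep^2\|\mathcal N_\ep\|_{L^\infty(B_{s_\ep/2}(x_\ep))}=O(\ep^2|\ln\ep|)+o(1)\|\bar\phi_\ep\|_{L^\infty}$.

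For the negative Sobolev norm we pair $\mathcal N_\ep$ with $\varphi\in C_0^1(B_{Ls_\ep}(x_\ep))$ and split $B_{Ls_\ep}(x_\ep)$ into the bulk $B_{(1-d_\ep)s_\ep}(x_\ep)$ and the thin annulus $B_{(1+d_\ep)s_\ep}(x_\ep)\setminus B_{(1-d_\ep)s_\ep}(x_\ep)$ of area $O(s_\ep^2d_\ep)$, exactly as in the proof of Proposition~\ref{prop2-10}, with $d_\ep=O(\ep)$ by \eqref{2-32} and \eqref{B-6}. On the bulk, \eqref{2-31} together with the inequality \eqref{2-38} (which turns $\int|\varphi|$ into $C\ep^{1+2/q}\|\nabla\varphi\|_{L^{q'}}$) gives a contribution to $\ep^{1-\frac2q}\|\mathcal N_\ep\|_{W^{-1,q}}$ of size $O(\ep^2|\ln\ep|)+o(1)\|\bar\phi_\ep\|_{L^\infty}$; on the annulus the hypothesis $p\ge2$ makes the $p$-th power difference of size $O(d_\ep^p)$ where $(U_{\ep,x_\ep,a_\ep}-a_\ep\ln\frac1\ep)_+$ vanishes and $O(d_\ep^{p-1}(\ep+\|\bar\phi_\ep\|_{L^\infty}))$ elsewhere, which after the area factor and \eqref{2-38} is $o(\ep^2)+o(1)\|\bar\phi_\ep\|_{L^\infty}$; and the term $-\frac1{x_1^2}\partial_{x_1}\phi_\ep^o$ is handled by integrating by parts onto $\varphi$ and using $\|\phi_\ep^o\|_{L^\infty(\mathbb R^2)}=O(\ep)$, which yields $O(\ep^2)$. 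Combining all pieces with Lemma~\ref{lem2-9} gives \eqref{2-35}.

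The main obstacle is twofold. First, $\mathcal N_\ep$ fails to be compactly supported in $B_{2s_\ep}(x_\ep)$ because of the term $-\frac1{x_1^2}\partial_{x_1}\phi_\ep^o$, so Lemma~\ref{lem2-9} does not apply verbatim; the remedy is that outside $B_{2s_\ep}(x_\ep)$ one has $\Delta\phi_\ep^o=0$, hence $-\frac1{x_1^2}\partial_{x_1}\phi_\ep^o=x_1\mathcal L\phi_\ep^o$ there, so the equation for $\bar\phi_\ep$ reduces to $\mathcal L\bar\phi_\ep=\mathcal L\phi_\ep^o$, i.e. $\mathcal L\phi_\ep=0$ (in agreement with the proof of Proposition~\ref{prop2-10}), and on $\mathbb R^2_+\setminus B_{Ls_\ep}(x_\ep)$ one controls $\bar\phi_\ep$ by the maximum principle in terms of $\|\bar\phi_\ep\|_{L^\infty(\partial B_{Ls_\ep}(x_\ep))}$ plus the $\mathcal L$-potential of the tail of $-\frac1{x_1^2}\partial_{x_1}\phi_\ep^o$, the latter being $O(\ep^2)$ since $\phi_\ep^o$ is harmonic and $O(\ep)$ there and $G_1$ decays. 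Second, and more conceptually, one must verify that every object created by the second approximation — notably $b_\ep\frac{\partial U}{\partial x_1}(\frac{x-x_\ep}{s_\ep})$ through $|b_\ep|=O(\ep^2|\ln\ep|)$ and $\partial_{x_1}\phi_\ep^o$ through \eqref{2-33} — is genuinely higher order than $\mathcal F$ rather than of the same size; this, and not any single estimate, is exactly what drives the improvement from $O(\ep)$ to $O(\ep^2|\ln\ep|)$.
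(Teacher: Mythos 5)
Your proposal is correct and follows essentially the same route as the paper: apply the a priori estimate of Lemma \ref{lem2-9} to $\bar\phi_\ep$ and bound the right-hand side of \eqref{2-34} in $L^\infty(B_{s_\ep/2}(x_\ep))$ and $W^{-1,q}(B_{Ls_\ep}(x_\ep))$ via the Taylor expansion \eqref{2-31}, the inequality \eqref{2-38}, the bulk/annulus splitting with $d_\ep=O(\ep)$, and the key inputs $|b_\ep|=O(\ep^2|\ln\ep|)$, $\|\phi_\ep^o\|_{L^\infty}+\|\mathcal F\|_{L^\infty}=O(\ep)$. Your observation that $-x_1^{-2}\partial_{x_1}\phi_\ep^o$ is not compactly supported in $B_{2s_\ep}(x_\ep)$, so that the hypothesis of Lemma \ref{lem2-9} must be checked via the identity $\mathcal L\phi_\ep=0$ outside the vortex core, is a legitimate technical point that the paper passes over in silence, and your fix is the right one.
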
	
 \begin{proof}
 	In view of Lemma \ref{lem2-9},   we need to show that
 	\begin{equation}\label{2-36}
  \varepsilon^{1-\frac{2}{q}} ||\mathbb L_\ep \bar \phi_\ep||_{W^{-1,q}(B_{Ls_\ep}(x_\ep))}+\varepsilon^2||\mathbb L_\ep \bar \phi_\ep||_{L^\infty(B_{\frac{s_\ep}{2} }(x_\ep))}=O \left(\ep^{2}|\ln\ep|\right).
 	\end{equation}

 	 By \eqref{2-33} and \eqref{D-7}, we know that
 	 \begin{equation}\label{2-37}
 	 	|b_\ep|=O(\ep^2|\ln\ep|), \quad \|  \phi_\ep^o\|_{L^\infty(\mathbb{R}^2)}+\varepsilon\|\nabla   \phi_\ep^o\|_{L^\infty(B_{2s_\ep}(x_\ep))} =O(\ep).
 	 \end{equation}  For $x\in B_{\frac{s_\ep}{2}}$, by the expansion \eqref{2-31} we find
 	\begin{align*}
 			&\quad\ep^2\mathbb L_\ep \bar \phi_\ep  =  {x_1}\left(\left(\psi_\ep(x)-\frac{W}{2}x_1^2\ln\frac{1}{\ep}-\mu_\ep\right)_+^p -\left(U_{x_\ep,\ep, a}(x)- a\ln \frac{1}{\ep}\right)_+^{p}\right.\\
 			&\  \  \left.-p\left(U_{x_\ep,\ep, a}(x)- a\ln \frac{1}{\ep}\right)_+^{p-1} (\phi_\ep^o +\bar \phi_\ep+\mathcal F)\right)\\
 			+& {p } \left(U_{x_\ep,\ep, a}(x)- a\ln \frac{1}{\ep}\right)_+^{p-1}\left[\left(x_1-\frac{x_{\ep,1}^2}{x_1}\right)(\phi_\ep^o +\mathcal F )+\frac{ b_\ep x_{\ep,1}^2}{x_1}    \frac{\partial U}{\partial x_1}\left(\frac{x-x_\ep}{s_\ep}\right) \right]-\frac{\ep^2}{x_1^2}\partial_{x_1} \phi_\ep^o\\
 			=&O\left((\|\phi_\ep\|_{L^\infty}+\|\mathcal F\|_{L^\infty}+\ep^2 |\ln {\ep} |)^{2} +\ep (\|\phi_\ep^o\|_{L^\infty}+\|\mathcal F\|_{L^\infty})+|b_\ep|+\ep^2\right)
 			= O\left(\ep^{2}|\ln\ep|\right).
 		\end{align*}

 	For each $\varphi\in C_0^1(B_{Ls_\ep}(x_\ep))$, by the equation \eqref{2-34}, we have
 	\begin{align*}
 			&\quad\langle \mathbb L_\ep \bar \phi_\ep,\varphi \rangle\\ =&\int_{\mathbb{R}^2_+}\frac{x_1}{\varepsilon^2}\left(\left(\psi_\ep(x)-\frac{W}{2}x_1^2\ln\frac{1}{\ep}-\mu_\ep\right)_+^p -\left(U_{\ep, x_\ep, a_\ep}(x)- a_\ep\ln \frac{1}{\ep}\right)_+^{p}\right.\\
 			&\  \  \left.-p\left(U_{\ep, x_\ep, a_\ep}(x)- a_\ep\ln \frac{1}{\ep}\right)_+^{p-1} (\phi_\ep^o +\bar \phi_\ep+\mathcal F)\right)\varphi(x)dx\\
 			+&\int_{\mathbb{R}^2_+}\frac{p }{\varepsilon^2}\left(U_{\ep, x_\ep, a_\ep}(x)- a_\ep\ln \frac{1}{\ep}\right)_+^{p-1}\left[\left(x_1-\frac{x_{\ep,1}^2}{x_1}\right)(\phi_\ep^o +\mathcal F )+\frac{ b_\ep x_{\ep,1}^2}{x_1}    \frac{\partial U}{\partial x_1}\left(\frac{x-x_\ep}{s_\ep}\right) \right]\varphi(x)dx\\
 			&-\int_{\mathbb{R}^2_+} \frac{1}{x_1^2}\partial_{x_1} \phi_\ep^o \varphi(x)dx \
 			=: Y_1+Y_2.
 			\end{align*}

 		For the integral $Y_2$, using \eqref{2-38} , \eqref{2-37} and the fact $\left(U_{\ep, x_\ep, a_\ep}(x)- a_\ep\ln \frac{1}{\ep}\right)_+$ is bounded independent of $\ep$ and supported in $B_{s_\ep}(x_\ep)$, we conclude
 		\begin{align*}
 				Y_2=&\int_{\mathbb{R}^2_+}\frac{p }{\varepsilon^2}\left(U_{\ep, x_\ep, a_\ep}(x)- a_\ep\ln \frac{1}{\ep}\right)_+^{p-1}\left[\left(x_1-\frac{x_{\ep,1}^2}{x_1}\right)(\phi_\ep^o +\mathcal F)+\frac{ b_\ep x_{\ep,1}^2}{x_1}    \frac{\partial U}{\partial x_1}\left(\frac{x-x_\ep}{s_\ep}\right) \right]\varphi(x)dx\\
 				&-\int_{\mathbb{R}^2_+} \frac{1}{x_1^2}\partial_{x_1} \phi_\ep^o \varphi(x)dx\\
 				=&O\left( \ep^{-1} (\|\phi_\ep^o\|_{L^\infty(B_{Ls_\ep}(x_\ep))}+\|\mathcal F\|_{L^\infty}(B_{s_\ep}(x_\ep)))+\ep^{-2}|b_\ep|+1\right)\int_{\mathbb{R}^2_+}|\varphi(x)|dx\\
 				=&O\left(\ep^{2}|\ln\ep|\right)\ep^{-1+\frac{2}{q}}\|\nabla\varphi\|_{L^{q'}(B_{Ls_\ep}(x_\ep))}.
 		     \end{align*}
 	Recall the definition of $d_\ep$ in \eqref{2-32}. We infer from \eqref{2-30} and \eqref{B-6} that $d_\ep=O(\ep)$. Using the expansion \eqref{2-31}, the inequality  \eqref{2-38}, \eqref{2-37} and   the odd symmetry,  similar calculation  as in the proof of Proposition \ref{prop2-10} gives
 		\begin{align*}
 			Y_1=&\int_{\mathbb{R}^2_+}\frac{x_1}{\varepsilon^2}\left(\left( U_{x_\ep, \ep, a_{\ep}}(x)-a_\ep\ln\frac{1}{\ep} + \mathcal F(x)+\phi_\ep^o+\bar \phi_\ep+O(\ep^2 |\ln {\ep} |)\right)_+^p  -\left(U_{\ep, x_\ep, a_\ep}(x)- a_\ep\ln \frac{1}{\ep}\right)_+^{p}\right.\\
 			&\  \  \left.-p\left(U_{\ep, x_\ep, a_\ep}(x)- a_\ep\ln \frac{1}{\ep}\right)_+^{p-1} (\phi_\ep^o +\bar \phi_\ep+\mathcal F)\right)\varphi(x)dx\\
 			=&\left(\int_{B_{(1-d_\ep)s_\ep}(x_\ep)}+\int_{B_{(1+d_\ep)s_\ep}(x_\ep)\setminus B_{(1-d_\ep)s_\ep}(x_\ep)}\right)\frac{x_1}{\varepsilon^2}\left(\left( U_{\ep, x_\ep, a_{\ep}}(x)-a_\ep\ln\frac{1}{\ep} + \mathcal F(x)+\phi_\ep^o+\bar \phi_\ep+O(\ep^2 |\ln {\ep} |)\right)_+^p\right.\\
 			&\left.-\left(U_{\ep, x_\ep,  a_\ep}(x)- a_\ep\ln \frac{1}{\ep}\right)_+^{p}-p\left(U_{\ep, x_\ep, a_\ep}(x)- a_\ep\ln \frac{1}{\ep}\right)_+^{p-1} (\phi_\ep^o +\bar \phi_\ep+\mathcal F)\right)\varphi(x)dx\\
 			=& O(\ep^{-2}(\|\phi_\ep\|_{L^\infty}+\|\mathcal F\|_{L^\infty}+\ep^2 |\ln {\ep} |)^{2})\int_{B_{(1-d_\ep)s_\ep}(x_\ep)} |\varphi(x)|dx\\
 			&+O\left( \ep^{-2}(\|\phi_\ep\|_{L^\infty}+\|\mathcal F\|_{L^\infty} )d_\ep^{p-1}\right)\int_{B_{(1+d_\ep)s_\ep}(x_\ep)\setminus B_{(1-d_\ep)s_\ep}(x_\ep)} |\varphi(x)|dx\\
 			= &O\left((\|\phi_\ep\|_{L^\infty}+\|\mathcal F\|_{L^\infty}+\ep^2 |\ln {\ep} |)^{2}+(\|\phi_\ep\|_{L^\infty} +\|\mathcal F\|_{L^\infty}) d_\ep^{p-1+\frac{1}{q}} \right)   \ep^{-1+\frac{2}{q}}\|\nabla\varphi\|_{L^{q'}(B_{Ls_\ep}(x_\ep))}\\
 			=&O\left(\ep^{2}|\ln\ep|\right)\ep^{-1+\frac{2}{q}}\|\nabla\varphi\|_{L^{q'}(B_{Ls_\ep}(x_\ep))}.			
 	\end{align*}
 	Therefore, we arrive at \eqref{2-36} and complete the proof by applying Lemma \ref{lem2-9}.
 \end{proof}

 \subsubsection{Refined estimates}\label{sec2-2-3}
 The estimate  in Proposition \ref{prop2-11} is sufficiently accurate for us to give a refined Kelvin-Hicks formula of the location $x_\ep$ via a local Pohozaev identity.

  As a result of \eqref{2-35}, \eqref{D-6} and Lemma \ref{lemC-4}, we obtain the following identity for the location $x_\ep=(x_{\ep,1},0)$ and the radius $s_\ep$.
\begin{corollary}\label{cor2-12}
	It holds
	\begin{equation}\label{2-39}
		\begin{split}
			 Wx_{\ep,1}\ln\frac{1}{\varepsilon}-\frac{\kappa}{4\pi}\left(\ln\frac{8x_{\ep,1}}{s_\ep}+\frac{p-1 }{4}\right)= O\left( \ep^2|\ln\ep| \right),
		\end{split}
	\end{equation}
and
\begin{equation}\label{2-40}
	\begin{split}
		 \Lambda_p x_{\ep,1}^{-\frac{p+1}{p-1}}\left(\frac{\ep}{s_\ep}\right)^{\frac{2}{p-1}}=\kappa+O\left( \ep^2|\ln\ep| \right).
	\end{split}
\end{equation}
\end{corollary}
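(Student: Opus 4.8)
The identity \eqref{2-39} is the sharpening of Corollary \ref{cor2-7}, and \eqref{2-40} is the sharpening of \eqref{2-18}; both are obtained by re-running the local Pohozaev computation of Appendix \ref{appC} with the second-approximation error estimates in place. The plan is to revisit the Pohozaev identity \eqref{2-27} for $\psi_{1,\ep}$ on $B_\delta(x_\ep)$, substitute the refined decomposition $\psi_\ep=\Psi_{\ep,x_\ep,a_\ep}+\phi_\ep^o+\bar\phi_\ep$, and feed in: Proposition \ref{prop2-11}, which gives $\|\bar\phi_\ep\|_{L^\infty(\mathbb R_+^2)}+\ep^{1-2/q}\|\nabla\bar\phi_\ep\|_{L^q(B_{Ls_\ep}(x_\ep))}=O(\ep^2|\ln\ep|)$; the bound $|b_\ep|=O(\ep^2|\ln\ep|)$ from \eqref{2-37}; the bounds $\|\mathcal F\|_{L^\infty}=O(\ep)$ and $\|\phi_\ep\|_{L^\infty}=O(\ep)$ from \eqref{B-3} and Proposition \ref{prop2-10}; and crucially the odd symmetries $\mathcal F((-x_1,x_2)+x_\ep)=-\mathcal F((x_1,x_2)+x_\ep)$ and $\phi_\ep^o((-x_1,x_2)+x_\ep)=-\phi_\ep^o((x_1,x_2)+x_\ep)$. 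The key point is that in the identity recorded in Lemma \ref{lemC-4} the error is a linear combination of terms controlled by $\|\bar\phi_\ep\|_{L^\infty}$, $b_\ep$, $d_\ep$, and quadratic expressions in $\|\phi_\ep\|_{L^\infty}+\|\mathcal F\|_{L^\infty}$; the terms that are \emph{linear} in $\mathcal F$ or in $\phi_\ep^o$ --- which individually would only be $O(\ep)$ --- are paired against even factors such as $\partial_1\psi_{2,\ep}$ and $Wx_1\ln\frac{1}{\ep}$ over the nearly symmetric vortex core, and hence vanish by the odd symmetry in $x_1-x_{\ep,1}$. This cancellation is exactly what the second approximation was built to produce, and it upgrades the $o_\ep(1)$ remainder of Corollary \ref{cor2-7} to $O(\ep^2|\ln\ep|)$.

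Concretely, I would Taylor-expand the nonlinearity in \eqref{2-27} about $\bigl(U_{\ep,x_\ep,a_\ep}-a_\ep\ln\frac{1}{\ep}\bigr)_+$ using \eqref{2-31}, and split the integrals over $B_{(1-d_\ep)s_\ep}(x_\ep)$ and the thin annulus $B_{(1+d_\ep)s_\ep}(x_\ep)\setminus B_{(1-d_\ep)s_\ep}(x_\ep)$ exactly as in the proof of Proposition \ref{prop2-11}, using $d_\ep=O(\ep)$ from \eqref{B-6}. On the inner ball the $\mathcal F$- and $\phi_\ep^o$-linear terms drop by odd symmetry, the $\bar\phi_\ep$-linear term is $O(\ep^2|\ln\ep|)$ by \eqref{2-35}, and the quadratic remainders are $O((\|\phi_\ep\|_{L^\infty}+\|\mathcal F\|_{L^\infty}+\ep^2|\ln\ep|)^2)=O(\ep^2|\ln\ep|)$; the annular term is $O(\ep^2|\ln\ep|)$ via the area bound $O(s_\ep^2 d_\ep)$. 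The boundary terms on $\partial B_\delta(x_\ep)$ are treated as in Appendix \ref{appC}, expanding the Green function through the splitting \eqref{2-24}--\eqref{2-25} and using that $\psi_{1,\ep}$ is harmonic off $\Omega_\ep$. Assembling everything through Lemma \ref{lemC-4} produces \eqref{2-39}. For \eqref{2-40} I would instead expand the circulation constraint \eqref{2-3}: writing $\psi_\ep-\frac{W}{2}x_1^2\ln\frac{1}{\ep}-\mu_\ep=\bigl(U_{\ep,x_\ep,a_\ep}-a_\ep\ln\frac{1}{\ep}\bigr)+\mathcal F+\phi_\ep^o+\bar\phi_\ep+O(\ep^2|\ln\ep|)$, expanding the $p$-th power, discarding the $\mathcal F$- and $\phi_\ep^o$-linear terms by odd symmetry and the $(x_1-x_{\ep,1})$-linear term (produced by $x_1=x_{\ep,1}+(x_1-x_{\ep,1})$) by the radial symmetry of $U$, one is left with $\kappa=\ep^{-2}x_{\ep,1}\int_{\mathbb R^2}\bigl(U_{\ep,x_\ep,a_\ep}-a_\ep\ln\frac{1}{\ep}\bigr)_+^p\,dx+O(\ep^2|\ln\ep|)$, and evaluating the integral by the change of variables $y=(x-x_\ep)/s_\ep$ together with $\Lambda_p=\int_{B_1(0)}U^p$ yields \eqref{2-40}; alternatively \eqref{2-40} can be read off by combining \eqref{2-39} with the defining relations \eqref{2-21} and the leading-order asymptotics \eqref{2-17}--\eqref{2-18}.

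The main obstacle is controlling the odd-symmetry cancellations to the required order. The vortex core $\Omega_\ep$ and the balls $B_{(1\pm d_\ep)s_\ep}(x_\ep)$ are only \emph{approximately} even under $x_1-x_{\ep,1}\mapsto-(x_1-x_{\ep,1})$, the asymmetry being of size $d_\ep=O(\ep)$ and described by the boundary parametrization of Lemma \ref{lemB-2}; one must check that each odd integrand multiplied by this $O(\ep)$ asymmetry still lands in the $O(\ep^2|\ln\ep|)$ budget rather than contributing at order $\ep$, and similarly that the $b_\ep\,\partial_{x_1}U\bigl((x-x_\ep)/s_\ep\bigr)$ correction carried in from the second approximation and the Pohozaev boundary integrals do not spoil this budget. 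These are elementary but lengthy verifications, which is why they are collected in Appendix \ref{appC}.
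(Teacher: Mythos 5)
Your proposal is correct and follows essentially the same route as the paper: the paper obtains \eqref{2-39} by inserting the refined bound $\|\bar\phi_\ep\|_{L^\infty}=O(\ep^2|\ln\ep|)$ from Proposition \ref{prop2-11} (together with $\|\phi_\ep\|_{L^\infty}+\|\mathcal F\|_{L^\infty}=O(\ep)$) into the error term of the Pohozaev identity recorded in Lemma \ref{lemC-4}, and \eqref{2-40} by the same substitution into the circulation expansion \eqref{D-6}, both of which already encode the odd-symmetry cancellations and annulus splitting you describe. Your explicit re-derivation of those two identities is just an unpacking of Appendix \ref{appC}, so no gap.
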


For $\ep$ sufficiently small, let $(r_\ep^*, s_\ep^*) $ be the unique solution to the following system of equations of $(r,s)$ in a neighborhood of $\left(\kappa/(4\pi W), \Lambda_p^{\frac{p-1}{2}}\kappa^{-p}(4\pi W)^{\frac{p+1}{2}}\ep\right)$.
\begin{equation}\label{2-41}
	\begin{cases}
		&Wr\ln\frac{1}{\varepsilon}-\frac{\kappa}{4\pi}\left(\ln\frac{8r}{s }+\frac{p-1 }{4}\right)=0,\\
		&  \Lambda_p r^{-\frac{p+1}{p-1}}\left(\frac{\ep}{s }\right)^{\frac{2}{p-1}}= \kappa.
	\end{cases}
\end{equation}
Existence and uniqueness of solution to \eqref{2-41} in a small neighborhood of  $\left(\kappa/(4\pi W), \Lambda_p^{\frac{p-1}{2}}\kappa^{-p}(4\pi W)^{\frac{p+1}{2}}\ep\right)$ can be verified easily by the implicit function theorem for $\ep$ small, so we omit the details.

With Proposition \ref{prop2-11} and Corollary \ref{cor2-12}, we have the following fine estimates of the parameters used above, which are essential in the proof of uniqueness later.
 \begin{proposition}\label{prop2-13}
 	For each $\varepsilon\in(0,\varepsilon_0]$ with $\varepsilon_0>0$ sufficiently small, let $(r_\ep^*, s_\ep^*) $ be the unique solution to   system \eqref{2-41} in a small neighborhood of \\$\left(\kappa/(4\pi W), \Lambda_p^{\frac{p-1}{2}}\kappa^{-p}(4\pi W)^{\frac{p+1}{2}}\ep\right)$. Define $$a_\ep^*:=-\frac{ \ln s_\ep^*  }{  \ln \ep } {r_\ep^*}^{-\frac{2}{p-1}}\left(\frac{\ep}{s_\ep^* }\right)^{\frac{2}{p-1}}U'(1),$$
 	and
 	$$\mu_\ep^*:=a_\ep^* \ln \frac{1}{\ep}-\frac{ W}{2}{r_\ep^*}^2\ln \frac{1}{\ep}+\frac{{r_\ep^*}^{-\frac{2}{p-1}}}{2\pi}\left(\frac{\ep}{s_\ep^* }\right)^{\frac{2}{p-1}}\Lambda_p  \left( \ln(8r_\ep^*) -2  \right).$$
 	Then the following asymptotic behaviors hold as $\ep\to0$,
 	\begin{equation*}
 		|x_{\ep,1}-r_\ep^*|=O(\varepsilon^2),\ \	|s_\ep-s_\ep^*|=O(\varepsilon^3|\ln\ep|),\ \		|a_\ep-a_\ep^*|=O(\varepsilon^2|\ln\ep|),\  \  |\mu_\ep-\mu_\ep^*|=O(\varepsilon^2|\ln\ep|^2).
 	\end{equation*}
 \end{proposition}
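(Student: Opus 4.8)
The plan is to regard the pair of equations \eqref{2-41} as the \emph{exact} defining relations for $(r_\ep^*,s_\ep^*)$, and Corollary \ref{cor2-12} as the statement that $(x_{\ep,1},s_\ep)$ satisfies the \emph{same} pair of equations with residuals of size $O(\ep^2|\ln\ep|)$; the discrepancy between the two pairs is then estimated by inverting the Jacobian of \eqref{2-41}. Concretely, I would write $F_1(r,s)$ and $F_2(r,s)$ for the left-hand sides of the two equations in \eqref{2-41}, so that $F_1(r_\ep^*,s_\ep^*)=F_2(r_\ep^*,s_\ep^*)=0$, and observe that \eqref{2-39}--\eqref{2-40} read precisely $F_1(x_{\ep,1},s_\ep)=O(\ep^2|\ln\ep|)$ and $F_2(x_{\ep,1},s_\ep)=O(\ep^2|\ln\ep|)$. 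Since $s_\ep$ and $s_\ep^*$ are both of order $\ep$, I would substitute $s=\ep\sigma$ so that $r$ and $\sigma$ become order-one variables; from the earlier asymptotics (Proposition \ref{prop2-5}, \eqref{2-14} and \eqref{2-22}) one already knows $x_{\ep,1}-r_\ep^*\to0$ and $s_\ep/\ep-s_\ep^*/\ep\to0$ as $\ep\to0$, which legitimizes a mean value argument on the segment joining the two points.

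The heart of the matter is the \emph{anisotropic} structure of the Jacobian $\tilde J$ of $(r,\sigma)\mapsto(F_1(r,\ep\sigma),F_2(r,\ep\sigma))$ near $(r_\ep^*,s_\ep^*/\ep)$: using the defining relations one finds that $\partial_rF_1=W\ln\tfrac1\ep-\tfrac{\kappa}{4\pi r}$ is of order $\ln\tfrac1\ep$, whereas $\partial_\sigma F_1$, $\partial_rF_2$ and $\partial_\sigma F_2$ are all of order $1$; hence $\det\tilde J$ is dominated by this single large entry, $\det\tilde J=-\tfrac{2\kappa W}{(p-1)\sigma}\ln\tfrac1\ep+O(1)\neq0$ for $\ep$ small, and consequently every entry of $\tilde J^{-1}$ is of order $1/\ln\tfrac1\ep$ except the lower-right entry, which is of order $1$. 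Writing $\sigma_\ep:=s_\ep/\ep$, $\sigma_\ep^*:=s_\ep^*/\ep$ and applying the mean value theorem componentwise, one obtains a matrix $\tilde J_\ast$ with the same order structure as $\tilde J$ such that $(F_1(x_{\ep,1},\ep\sigma_\ep),F_2(x_{\ep,1},\ep\sigma_\ep))^{\mathsf T}=\tilde J_\ast\,(x_{\ep,1}-r_\ep^*,\ \sigma_\ep-\sigma_\ep^*)^{\mathsf T}$; inverting and inserting the $O(\ep^2|\ln\ep|)$ bounds on the residuals, the first component gives $|x_{\ep,1}-r_\ep^*|=O(1/\ln\tfrac1\ep)\cdot O(\ep^2|\ln\ep|)=O(\ep^2)$ and the second gives $|\sigma_\ep-\sigma_\ep^*|=O(\ep^2|\ln\ep|)$, that is $|s_\ep-s_\ep^*|=O(\ep^3|\ln\ep|)$. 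It is essential to use this componentwise bound on $\tilde J_\ast^{-1}$ rather than its operator norm (which is only $O(1)$), otherwise one loses a logarithm in the estimate for $x_{\ep,1}$.

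Once $|x_{\ep,1}-r_\ep^*|=O(\ep^2)$ and $|s_\ep-s_\ep^*|=O(\ep^3|\ln\ep|)$ are in hand, the remaining two estimates follow from the explicit formulas. By the third equation of \eqref{2-21}, $a_\ep=-\tfrac{\ln s_\ep}{\ln\ep}x_{\ep,1}^{-\frac{2}{p-1}}(\ep/s_\ep)^{\frac{2}{p-1}}U'(1)$, and $a_\ep^*$ is the same expression evaluated at $(r_\ep^*,s_\ep^*)$; here the $r$-derivative of this expression is $O(1)$ while the $s$-derivative is $O(1/\ep)$ (the factor $(\ep/s)^{2/(p-1)}$ has $s$-derivative of order $1/\ep$ at $s\sim\ep$, and the logarithms produced by differentiating $\ln s$ are absorbed by the prefactor $1/\ln\ep$), so the mean value theorem yields $|a_\ep-a_\ep^*|=O(1)\cdot O(\ep^2)+O(1/\ep)\cdot O(\ep^3|\ln\ep|)=O(\ep^2|\ln\ep|)$. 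Similarly, the second equation of \eqref{2-21} writes $\mu_\ep$ in terms of $a_\ep,x_{\ep,1},s_\ep$, and $\mu_\ep^*$ in terms of $a_\ep^*,r_\ep^*,s_\ep^*$; subtracting, the dominant term is $(a_\ep-a_\ep^*)\ln\tfrac1\ep=O(\ep^2|\ln\ep|^2)$, while $\tfrac W2(x_{\ep,1}^2-{r_\ep^*}^2)\ln\tfrac1\ep$ and the difference of the terms $\tfrac{\Lambda_p}{2\pi}r^{-\frac{2}{p-1}}(\ep/s)^{\frac{2}{p-1}}(\ln(8r)-2)$ are each $O(\ep^2|\ln\ep|)$ by the same estimates, whence $|\mu_\ep-\mu_\ep^*|=O(\ep^2|\ln\ep|^2)$.

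The step I expect to be the main obstacle is extracting the two geometric estimates at their sharp and \emph{different} orders $O(\ep^2)$ and $O(\ep^3|\ln\ep|)$: this forces one to keep careful track of the $\ln\tfrac1\ep$-weight hidden in $\partial_rF_1$ and of the $\ep$-scaling of $s$, and to exploit the componentwise structure of the inverse Jacobian rather than a crude norm bound, since a careless application of the implicit function theorem loses a logarithm in the estimate for $x_{\ep,1}$. The rest of the argument is routine bookkeeping with mean value estimates, where one only needs to watch which partial derivatives blow up like $1/\ep$.
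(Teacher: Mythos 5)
Your proposal is correct and follows essentially the same route as the paper, which simply says to take the difference of \eqref{2-39}--\eqref{2-40} with \eqref{2-41} and apply the implicit function theorem, leaving the rest to ``direct calculations.'' Your careful accounting of the anisotropic Jacobian --- the single $O(\ln\tfrac1\ep)$ entry $\partial_r F_1$ and the resulting componentwise bounds on $\tilde J^{-1}$ that yield the two different sharp orders $O(\ep^2)$ and $O(\ep^3|\ln\ep|)$ --- is exactly the detail the paper's two-sentence proof suppresses, and it is carried out correctly.
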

 \begin{proof}
  To show $|x_{\ep,1}-r_\ep^*|=O(\varepsilon^2)$, one just needs to take the difference of \eqref{2-39} and the first equation in \eqref{2-41} and apply the  implicit function theorem. Then the rest three estimates follow  from $|x_{\ep,1}-r_\ep^*|=O(\varepsilon^2)$, \eqref{2-21}, \eqref{2-40}, \eqref{2-41} and the definitions of $s_\ep^* $, $a_\ep^*$ and $\mu_\ep^*$ by direct calculations. We omit the details and complete the proof.
 \end{proof}
\begin{remark}
	It can be seen that parameters $r_\ep^*, s_\ep^*, a_\ep^*, s_\ep^*$ may depend on $\ep$, but do not depend on the vortex ring $\zeta_\ep$ directly. This fact will be used later in our proof.
\end{remark}

\subsection{The proof of uniqueness}\label{sec2-3}
	With the estimates on asymptotic behaviors in the previous subsection, we are in position to prove the uniqueness (i.e. Theorem \ref{thm2-1}). We argue by way of contradiction.	Suppose on the contrary that there are two different functions $\psi_\varepsilon^{(1)}$ and $\psi_\varepsilon^{(2)}$ that satisfy \eqref{2-2}--\eqref{2-5} and $\psi_\varepsilon^{(1)}$ and $\psi_\varepsilon^{(2)}$  are even symmetric with respect to the $x_1$-axis.
	
	Define the normalized difference between  $\psi_\varepsilon^{(1)}$ and $\psi_\varepsilon^{(2)}$ by
\begin{equation*}
	\xi_\varepsilon( x):=\frac{\psi_\varepsilon^{(1)}(  x)-\psi_\varepsilon^{(2)}(  x)}{||\psi_\varepsilon^{(1)}-\psi_\varepsilon^{(2)}||_{L^\infty(\mathbb R^2_+)}}.
\end{equation*}
Then  $\xi_\varepsilon$ satisfies $||\xi_\varepsilon||_{L^\infty(\mathbb R^2_+)}=1$ and
\begin{equation*}
	\begin{cases}
		-\text{div}\left(\frac{1}{x_1}\nabla \xi_\varepsilon\right) =f_\varepsilon(  x), & \text{in} \ \mathbb R^2_+,
		\\
		\xi_\varepsilon=0, & \text{on} \ \partial \mathbb R^2_+,
		\\
		\xi_\varepsilon   \to 0, &\text{as} \ |  x |\to +\infty,
	\end{cases}
\end{equation*}
where
\begin{equation*}
	f_\varepsilon( x)=\frac{x_1\left(\left(\psi_\ep^{(1)}(x)-\frac{W}{2}x_1^2\ln\frac{1}{\ep}-\mu_\ep^{(1)}\right)_+^p-\left(\psi_\ep^{(2)}(x)-\frac{W}{2}x_1^2\ln\frac{1}{\ep}-\mu_\ep^{(2)}\right)_+^p \right) }{\varepsilon^2||\psi_\varepsilon^{(1)}-\psi_\varepsilon^{(2)}||_{L^\infty(\mathbb R^2_+)}}.
\end{equation*}

In the following, we are to obtain a number of estimates for $\xi_\varepsilon$ and $f_\varepsilon$. Then we will derive a contradiction by local Pohozaev identity whenever $\psi_\varepsilon^{(1)} \not\equiv \psi_\varepsilon^{(2)}$.

Since the differences between the parameters $x_\ep^{(1)}$ and $x_\ep^{(2)}$, $a_\ep^{(1)}$ and $a_\ep^{(2)}$, $s_\ep^{(1)}$ and $s_\ep^{(2)}$ as well as $\mu_\ep^{(1)}$ and $\mu_\ep^{(2)}$ are sufficiently small by Proposition \ref{prop2-13}, we can approximate both $\psi_\varepsilon^{(1)}$ and $ \psi_\varepsilon^{(2)}$ by the same approximate solution.
\begin{lemma}\label{lem2-14}
	The following asymptotic behaviors hold as $\ep\to0$
		\begin{equation*}
		\psi_\ep^{(1)}(x)-\frac{W}{2}x_1^2\ln\frac{1}{\ep}-\mu_\ep^{(1)}= U_{\ep, x_\ep^{(1)},  a_{\ep}^{(1)}}(x)-a_\ep^{(1)}\ln\frac{1}{\ep} + \mathcal F^{(1)}(x)+\phi_\ep^{o,(1)}+O(\ep^2 |\ln {\ep}| ),
	\end{equation*}
\begin{equation*}
\psi_\ep^{(2)}(x)-\frac{W}{2}x_1^2\ln\frac{1}{\ep}-\mu_\ep^{(2)}= U_{\ep, x_\ep^{(1)},  a_{\ep}^{(1)}}(x)-a_\ep^{(1)}\ln\frac{1}{\ep} + \mathcal F^{(1)}(x)+\phi_\ep^{o,(1)}+O(\ep^2 |\ln {\ep}| ),
\end{equation*}
where $\mathcal F^{(1)}$ is defined by \eqref{B-1} with $x_\ep$, $a_\ep$ and $\mu_\ep$ replaced by $x_\ep^{(1)}$, $a_\ep^{(1)}$ and $\mu_\ep^{(1)}$ and $\phi_\ep^{o,(1)}$ is the second approximation for $\psi_\ep^{(1)}$  obtained in subsubsection \ref{sec2-2-2}.
\end{lemma}
\begin{proof}
	The first equation is a consequence of  \eqref{2-23}, \eqref{B-2} and Proposition \ref{prop2-11}. While the second equation follows by its own estimate
		\begin{equation*}
		\psi_\ep^{(2)}(x)-\frac{W}{2}x_1^2\ln\frac{1}{\ep}-\mu_\ep^{(2)}= U_{\ep, x_\ep^{(2)},  a_{\ep}^{(2)}}(x)-a_\ep^{(2)}\ln\frac{1}{\ep} + \mathcal F^{(2)}(x)+\phi_\ep^{o,(2)}+O(\ep^2 |\ln {\ep}| )
	\end{equation*}
and Proposition \ref{prop2-13}. So we complete the proof of this lemma.
\end{proof}

  To simplify notation, we always abbreviate the parameters $(s_\ep^{(1)},  x_\ep^{(1)}, a_\ep^{(1)})$ as $(s_\ep, x_\ep, a_\ep)$. We also denote the norm $\|\cdot\|_{L^\infty(\mathbb R^2_+)}$ as $|\cdot|_\infty$ throughout this subsection. We have the following estimates for the asymptotic behaviors of the nonlinearity $f_\varepsilon$.
\begin{lemma}\label{lem2-15}
	For $q\in (2,\infty]$ and any large $L>0$, it holds
	$$||s_\ep^2 f_\varepsilon(s_\ep y+x_\ep)||_{W^{-1,q}(B_L(  0))}=O_\varepsilon(1).$$
	Moreover, as $\varepsilon\to 0$, for all $ \varphi\in \ C_0^\infty(\mathbb{R}^2)$ it holds
	\begin{equation*}
		\int_{\mathbb{R}^2} s_\ep^2 f_\varepsilon(s_\ep y+x_\ep)\varphi(y) d y=  p\int_{\mathbb{R}^2 } \left(U (y)\right)_+^{p-1} \left(c_\varepsilon \frac{\partial U}{\partial x_1}(y)+\textcolor{black}{O(\varepsilon )}\right)  \varphi(y)dy,
	\end{equation*}
	where $c_\varepsilon$ is bounded by a constant independent of $\varepsilon$.
\end{lemma}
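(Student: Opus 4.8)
The plan is to linearize $f_\varepsilon$ and then read off the limit of the rescaled source. Write $u^{(i)}_\varepsilon:=\psi^{(i)}_\varepsilon-\frac{W}{2}x_1^2\ln\frac1\varepsilon-\mu^{(i)}_\varepsilon$, so $\Omega^{(i)}_\varepsilon=\{u^{(i)}_\varepsilon>0\}$ and $f_\varepsilon\equiv0$ off $\overline{\Omega^{(1)}_\varepsilon\cup\Omega^{(2)}_\varepsilon}$; by Proposition~\ref{prop2-13} both cores lie in $B_{2s_\varepsilon}(x_\varepsilon)$ (with $s_\varepsilon=s^{(1)}_\varepsilon$, $x_\varepsilon=x^{(1)}_\varepsilon$), i.e. inside $B_L(0)$ after the scaling $x=s_\varepsilon y+x_\varepsilon$. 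Since $u^{(1)}_\varepsilon,u^{(2)}_\varepsilon$ are uniformly bounded on their supports and $p\ge2$, the Taylor expansion \eqref{2-31} gives there $(u^{(1)}_\varepsilon)_+^p-(u^{(2)}_\varepsilon)_+^p=p(u^{(2)}_\varepsilon)_+^{p-1}(u^{(1)}_\varepsilon-u^{(2)}_\varepsilon)+O((u^{(1)}_\varepsilon-u^{(2)}_\varepsilon)^2)$, and by Lemma~\ref{lem2-14} $u^{(1)}_\varepsilon-u^{(2)}_\varepsilon=(\psi^{(1)}_\varepsilon-\psi^{(2)}_\varepsilon)-(\mu^{(1)}_\varepsilon-\mu^{(2)}_\varepsilon)=O(\varepsilon^2|\ln\varepsilon|)$ on that region. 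Hence, setting $m_\varepsilon:=(\mu^{(1)}_\varepsilon-\mu^{(2)}_\varepsilon)/|\psi^{(1)}_\varepsilon-\psi^{(2)}_\varepsilon|_\infty$, I would reduce to
\[
f_\varepsilon=\frac{x_1}{\varepsilon^2}\Big(p(u^{(2)}_\varepsilon)_+^{p-1}(\xi_\varepsilon-m_\varepsilon)+O(\varepsilon^2|\ln\varepsilon|)(\xi_\varepsilon-m_\varepsilon)\Big).
\]

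First I would check that $m_\varepsilon$ stays bounded. From the integral representation $\xi_\varepsilon(x)=\varepsilon^{-2}\int_{\mathbb R^2_+}G_1(x,y)[(u^{(1)}_\varepsilon)_+^p-(u^{(2)}_\varepsilon)_+^p](y)y_1\,dy$ and the mean-value identity $(u^{(1)}_\varepsilon)_+^p-(u^{(2)}_\varepsilon)_+^p=p\beta_\varepsilon(u^{(1)}_\varepsilon-u^{(2)}_\varepsilon)$ with $\beta_\varepsilon:=\int_0^1(u^{(2)}_\varepsilon+t(u^{(1)}_\varepsilon-u^{(2)}_\varepsilon))_+^{p-1}\,dt\ge0$, one obtains $\xi_\varepsilon(x)=\varepsilon^{-2}\int G_1(x,y)p\beta_\varepsilon\xi_\varepsilon y_1\,dy-m_\varepsilon K_\varepsilon(x)$ with $K_\varepsilon(x):=\varepsilon^{-2}\int G_1(x,y)p\beta_\varepsilon(y)y_1\,dy\ge0$. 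Since $G_1,\beta_\varepsilon,y_1\ge0$ and $|\xi_\varepsilon|\le1$, the first integral is dominated by $K_\varepsilon(x)$, so $|m_\varepsilon|K_\varepsilon(x)\le1+K_\varepsilon(x)$ for every $x$; taking $x=x_\varepsilon$, where Lemma~\ref{lem2-14} and the logarithmic singularity of $G_1$ force $K_\varepsilon(x_\varepsilon)\gtrsim\ln\frac1\varepsilon\to\infty$, yields $\limsup_\varepsilon|m_\varepsilon|\le1$. With $m_\varepsilon$, $(\tilde u^{(2)}_\varepsilon)_+^{p-1}$ and $s_\varepsilon^2(s_\varepsilon y_1+x_{\varepsilon,1})/\varepsilon^2$ all uniformly bounded (the last two by Lemma~\ref{lem2-14} and Proposition~\ref{prop2-13}), the displayed formula shows $g_\varepsilon(y):=s_\varepsilon^2 f_\varepsilon(s_\varepsilon y+x_\varepsilon)$ is bounded in $L^\infty(B_L(0))$ with support in $B_L(0)$, hence bounded in $W^{-1,q}(B_L(0))$; that is the first assertion.

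For the weak limit I would argue factor by factor. By Lemma~\ref{lem2-14}, the definition \eqref{2-19} of $U_{\varepsilon,x_\varepsilon,a_\varepsilon}$ and \eqref{2-40}, $\tilde u^{(2)}_\varepsilon(y)=C_\varepsilon U(y)+O(\varepsilon)$ with $C_\varepsilon=x_{\varepsilon,1}^{-2/(p-1)}(\varepsilon/s_\varepsilon)^{2/(p-1)}$ bounded, so $(\tilde u^{(2)}_\varepsilon)_+^{p-1}\to C_*^{p-1}(U)_+^{p-1}$ uniformly (recall $(U)_+$ is supported in $\overline{B_1(0)}$), while the prefactor $s_\varepsilon^2(s_\varepsilon y_1+x_{\varepsilon,1})/\varepsilon^2$ converges to a constant. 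The heart of the matter is $\tilde\xi_\varepsilon-m_\varepsilon$: since $\tilde\xi_\varepsilon$ solves $-\mathrm{div}((s_\varepsilon y_1+x_{\varepsilon,1})^{-1}\nabla\tilde\xi_\varepsilon)=g_\varepsilon$ with $g_\varepsilon$ bounded in $W^{-1,q}(B_L(0))$ and in $L^\infty$ near the origin and $|\tilde\xi_\varepsilon|\le1$, elliptic regularity makes $\tilde\xi_\varepsilon$ bounded in $W^{1,q}_{\mathrm{loc}}\cap C^\alpha_{\mathrm{loc}}$ (some $\alpha>0$), so along a subsequence $\tilde\xi_\varepsilon\to\xi^*$ in $C_{\mathrm{loc}}(\mathbb R^2)$ and $\eta^*:=\xi^*-\lim m_\varepsilon$ solves $-\Delta\eta^*=pU_+^{p-1}\eta^*$ in $\mathbb R^2$ with $\eta^*$ bounded. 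By the non-degeneracy Lemma~\ref{lem2-8}, together with the evenness of $\xi_\varepsilon$ in $x_2$ which kills the $\partial U/\partial x_2$ mode, $\eta^*=c^*\partial U/\partial x_1$; passing to the limit in $g_\varepsilon$ and using $-\Delta\partial_{x_1}U=pU_+^{p-1}\partial_{x_1}U$ gives $g_\varepsilon\rightharpoonup p(U)_+^{p-1}c^*\partial U/\partial x_1$, the several constant factors combining (by the choice of $s_\varepsilon$ in \eqref{2-41}) into one bounded constant that I fold into $c^*$. As this holds along every subsequence, the identity of the lemma follows for the whole family with $c_\varepsilon$ the uniformly bounded $\partial_{x_1}U$-projection of $g_\varepsilon$; to turn the $o_\varepsilon(1)$ remainder into $O(\varepsilon)$ I would invoke Propositions~\ref{prop2-11} and \ref{prop2-13}, which show that every contribution to $u^{(1)}_\varepsilon-u^{(2)}_\varepsilon$ other than the $x_1$-translation mode is $O(\varepsilon^2|\ln\varepsilon|)$.

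The hard part will be the analysis of the shifted normalized difference $\xi_\varepsilon-m_\varepsilon$: a priori neither $m_\varepsilon$ nor the coefficient $c_\varepsilon$ is obviously bounded, because $|\psi^{(1)}_\varepsilon-\psi^{(2)}_\varepsilon|_\infty$ might be arbitrarily small; the only leverage is the normalization $|\xi_\varepsilon|_\infty=1$, which I would exploit through the positivity of $G_1$ (as above for $m_\varepsilon$) and through the rigidity of the linearized operator (Lemma~\ref{lem2-8}) to pin the limit down to $\mathrm{span}\{\partial U/\partial x_1\}$. The second subtle point -- upgrading $o_\varepsilon(1)$ to $O(\varepsilon)$ -- relies crucially on the fact, prepared in Subsections~\ref{sec2-2-1}--\ref{sec2-2-3}, that $\psi^{(1)}_\varepsilon$ and $\psi^{(2)}_\varepsilon$ are compared through one and the same approximate solution, leaving a $\partial_{x_1}U$-translation as the only surviving first-order term.
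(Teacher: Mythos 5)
Your skeleton matches the paper's: bound the normalized constant $m_\varepsilon=(\mu^{(1)}_\varepsilon-\mu^{(2)}_\varepsilon)/|\psi^{(1)}_\varepsilon-\psi^{(2)}_\varepsilon|_\infty$, deduce the $W^{-1,q}$ bound from the linearization and Lemma \ref{lem2-14}, and identify the limit of $\xi_{\varepsilon,\mu}$ through the non-degeneracy Lemma \ref{lem2-8}. Your argument for $m_\varepsilon$ is genuinely different from the paper's and is fine: the paper integrates the circulation constraint \eqref{2-3} against the Taylor expansion and reads off \eqref{2-42} in one line, whereas you use positivity of $G_1$ plus the logarithmic divergence of $K_\varepsilon(x_\varepsilon)$; the paper's route is shorter but yours needs no extra input beyond $|\xi_\varepsilon|_\infty=1$.

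The gap is in the last step, the upgrade from $o_\varepsilon(1)$ to $O(\varepsilon)$, and your stated justification for it would not work. Propositions \ref{prop2-11} and \ref{prop2-13} control $u^{(1)}_\varepsilon-u^{(2)}_\varepsilon$ in \emph{absolute} terms (it is $O(\varepsilon^2|\ln\varepsilon|)$), but the quantity you must control is the \emph{normalized} difference $\xi_{\varepsilon,\mu}=(u^{(1)}_\varepsilon-u^{(2)}_\varepsilon)/|\psi^{(1)}_\varepsilon-\psi^{(2)}_\varepsilon|_\infty$, and the denominator is a priori unknown — it could be far smaller than $\varepsilon^2|\ln\varepsilon|$, in which case the absolute smallness of the numerator tells you nothing about the shape of $\xi_{\varepsilon,\mu}$ beyond $|\xi_{\varepsilon,\mu}|\le C$. (Indeed, if absolute smallness sufficed to pin down the normalized difference, the whole uniqueness proof would be unnecessary.) The correct mechanism is quantitative invertibility of the linearized operator on the complement of its kernel: set $\xi^*_{\varepsilon,\mu}:=\xi_{\varepsilon,\mu}-c_\varepsilon\,\partial U/\partial x_1$ with $c_\varepsilon$ the $\partial U/\partial x_1$-projection coefficient as in \eqref{2-44}, write the equation for $\xi^*_{\varepsilon,\mu}$ as in \eqref{2-45}, and observe that the right-hand side is $N_1+N_2=O(\varepsilon)\,\|\varphi\|_{W^{1,q'}}$ — the $O(\varepsilon)$ here comes not from the smallness of $u^{(1)}_\varepsilon-u^{(2)}_\varepsilon$ but from the $O(s_\varepsilon)$ perturbations of the coefficients, namely $\frac{1}{s_\varepsilon y_1+x_{\varepsilon,1}}-\frac{1}{x_{\varepsilon,1}}=O(\varepsilon)$ and $\left(U+O(\varepsilon)\right)_+^{p-1}-U_+^{p-1}=O(\varepsilon)$ (the latter using $p\ge2$). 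Then the coercivity estimate of Lemma \ref{lemD-1} (equivalently the compactness argument of Lemma \ref{lem2-9}), applied to the projected, $x_2$-even residual, yields $\|\xi^*_{\varepsilon,\mu}\|_{L^\infty(B_L(0))}+\|\nabla\xi^*_{\varepsilon,\mu}\|_{L^q(B_L(0))}=O(\varepsilon)$ with a rate. Your subsequential compactness argument only delivers $\xi_{\varepsilon,\mu}=c^*\partial U/\partial x_1+o_\varepsilon(1)$, and the $O(\varepsilon)$ rate is essential downstream: in Lemma \ref{lem2-17} and in \eqref{2-52} the leading term is only of size $s_\varepsilon|\ln s_\varepsilon|\,c_\varepsilon$, so an unquantified $o_\varepsilon(1)$ remainder would not produce the contradiction.
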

\begin{proof}
	
	The circulation constraint \eqref{2-3} implies
	\begin{equation*}
		\begin{split}
			0=&\int_{\mathbb R^2_+} x_1\left(\left(\psi_\ep^{(1)}(x)-\frac{W}{2}x_1^2\ln\frac{1}{\ep}-\mu_\ep^{(1)}\right)_+^p-\left(\psi_\ep^{(2)}(x)-\frac{W}{2}x_1^2\ln\frac{1}{\ep}-\mu_\ep^{(2)}\right)_+^p \right)dx\\
			=&p\int_{\mathbb R^2_+} x_1 \left(U_{\ep,x_\ep, a}(x)- a_\ep\ln \frac{1}{\ep}+O(\ep)\right)_+^{p-1}\left(\psi_\ep^{(1)}(x)-\psi_\ep^{(2)}(x)-(\mu_\ep^{(1)}-\mu_\ep^{(2)})\right)  dx,
		\end{split}
	\end{equation*}
	which implies 	
	\begin{equation}\label{2-42}
		\frac{|\mu_\varepsilon^{(1)}-\mu_\varepsilon^{(2)}|}{|\psi_\varepsilon^{(1)}-\psi_\varepsilon^{(2)}|_\infty}=O_\varepsilon(1).
	\end{equation}

	For any $\varphi \in C_0^\infty(B_{L }(0))$ and $q'\in [1,2)$, by Lemma \ref{lem2-14}, \eqref{2-42} and the Sobolev embedding, we have
	\begin{align}\label{2-43}
			&\quad \int_{\mathbb{R}^2}  s_\ep^2 f_\varepsilon(s_\ep y+x_\ep) \varphi(y) d y\nonumber\\
			&=\frac{s_\ep^2}{\varepsilon^2|\psi_\varepsilon^{(1)}-\psi_\varepsilon^{(2)}|_\infty} \int_{\mathbb R^2_+} (s_\ep y_1+x_{\ep,1})\left(\left(\psi_\ep^{(1)}(s_\ep y+x_\ep)-\frac{W}{2}(s_\ep y_1+x_{\ep,1})^2\ln\frac{1}{\ep}-\mu_\ep^{(1)}\right)_+^p\right.\nonumber\\
			&\quad\left.-\left(\psi_\ep^{(2)}(s_\ep y+x_\ep)-\frac{W}{2}(s_\ep y_1+x_{\ep,1})^2\ln\frac{1}{\ep}-\mu_\ep^{(2)}\right)_+^p \right) \varphi(y)dy\nonumber\\
			&=p\int_{\mathbb R^2 } (s_\ep y_1+x_{\ep,1})x_{\ep,1}^{-2}\left(U +O(\ep)\right)_+^{p-1}\frac{\left(\psi_\ep^{(1)}(s_\ep y+x_\ep)-\psi_\ep^{(2)}(s_\ep y+x_\ep)-(\mu_\ep^{(1)}-\mu_\ep^{(2)})\right)}{|\psi_\varepsilon^{(1)}-\psi_\varepsilon^{(2)}|_\infty} \varphi(y)dy\nonumber\\
			&=O_\ep(1) \int_{\mathbb R^2 } |\varphi(y)|dy =O_\varepsilon(||\varphi||_{W^{1,q'}(B_L(  0))}).
		\end{align}
	So for $q\in(2,+\infty]$, we obtain
	$$||s_\ep^2 f_\varepsilon(s_\ep y+x_\ep)||_{W^{-1,q}(B_L(  0))}=O_\varepsilon(1).$$

To obtain the second equation in Lemma \ref{lem2-15},	we   define the normalized difference of $\psi_\varepsilon^{(i)}-\mu_\varepsilon^{(i)}$ as
	\begin{equation*}
		\xi_{\varepsilon,\mu}(y):=\frac{\left(\psi_\varepsilon^{(1)}(s_\ep y+x_\ep)-\mu_\varepsilon^{(1)}\right)-\left(\psi_\varepsilon^{(2)}(s_\ep y+x_\ep)-\mu_\varepsilon^{(2)}\right)}{|\psi_\varepsilon^{(1)}-\psi_\varepsilon^{(2)}|_\infty}.
	\end{equation*}
	Then $|\xi_{\varepsilon,\mu}|_\infty=O_\ep(1)$ by \eqref{2-42} and $\xi_{\varepsilon,\mu}$ satisfies
	$$-\text{div}\left(\frac{1}{s_\ep y_1+x_{\ep,1}}\nabla \xi_{\varepsilon,\mu}(y)\right) =s_\ep^2f_\varepsilon(s_\ep y+x_\ep).$$
	
	By standard theory of regularity  for elliptic equations, $\xi_{\varepsilon,\mu}$ is bounded in $W^{1,q}_{\text{loc}}(\mathbb{R}^2)$ for $q\in(2,+\infty)$ and hence in  $C^\alpha_{\text{loc}}(\mathbb{R}^2)$. We let
	\begin{equation}\label{2-44}
		\xi_{\varepsilon,\mu}^*(y):=\xi_{\varepsilon,\mu}(y)-c_\varepsilon\frac{\partial U}{\partial x_1}(y)
	\end{equation}
	with the constant
	\begin{equation*}
		c_\varepsilon=\left(\int_{B_L(  0)} \xi_{\varepsilon,\mu}(y)(U(y))_+^{p-1} \frac{\partial U}{\partial x_1}(y)d y\right)\left(\int_{B_L(  0)}(U(y))_+^{p-1} \left(\frac{\partial U}{\partial x_1}(y)\right)^2 d  y\right)^{-1}
	\end{equation*}
	as the projection coefficient, which is bounded independent of $\varepsilon$. Then for any $\varphi \in C_0^\infty(B_{L }(0))$,  $\xi_{\varepsilon,\mu}^*$ satisfies
	\begin{align}\label{2-45}
			&\quad\int_{\mathbb{R}^2} \frac{1}{s_\ep y_1+x_{\ep,1}} \nabla \xi_{\varepsilon,\mu}^*(y)\cdot\nabla \varphi(y) d  y-\frac{p}{x_{\ep,1}}\int_{\mathbb{R}^2}U_+^{p-1}(y)\xi_{\varepsilon,\mu}^*(y) \varphi(y)dy\\
			&=-c_\varepsilon\left(\int_{B_L(  0)}  \frac{1}{s_\ep y_1+x_{\ep,1}} \nabla\left(\frac{\partial U}{\partial x_1}\right)(y)\cdot\nabla \varphi(y) d  y-\frac{p}{x_{\ep,1}}\int_{\mathbb{R}^2}U_+^{p-1}(y)\frac{\partial U}{\partial x_1}(y)\varphi(y)dy\right)\nonumber\\
			&\quad+\left(\int_{\mathbb{R}^2} s_\ep^2 f_\varepsilon(s_\ep y+x_\ep) \varphi(y) d  y-\frac{p}{x_{\ep,1}}\int_{\mathbb{R}^2}U_+^{p-1}(y)\xi_{\varepsilon,\mu}(y) \varphi(y)dy\right) =:N_1+N_2.\nonumber
		\end{align}
	Since $\text{ker}\{-\Delta-pU_+^{p-1}\}=\text{span}\left\{ \frac{\partial U}{\partial x_1}, \frac{\partial U}{\partial x_2}   \right\}$ by Lemma \ref{lem2-8},
	we deduce $$N_1=c_\varepsilon \int_{B_L(  0)}  \frac{s_\ep y_1}{x_{\ep,1}(s_\ep y_1+x_{\ep,1})} \nabla\left(\frac{\partial U}{\partial x_1}\right)(y)\cdot\nabla \varphi(y) d  y= O(\varepsilon) \| \varphi\|_{W^{1,q'}(B_L(  0))}.$$ For the term $N_2$, using the expansion of the function $t^{p-1}$ ($p\geq 2$), calculations in \eqref{2-43} and the boundedness of $\xi_{\varepsilon,\mu}$, we have
	\begin{align*}
			N_2&=p\int_{\mathbb R^2 } (s_\ep y_1+x_{\ep,1})x_{\ep,1}^{-2}\left(U +O(\ep)\right)_+^{p-1}\xi_{\varepsilon,\mu}(y) \varphi(y)dy -\frac{p}{x_{\ep,1}}\int_{\mathbb{R}^2}U_+^{p-1}(y)\xi_{\varepsilon,\mu}(y) \varphi(y)dy\\
			&=\textcolor{black}{O(\varepsilon )}  \| \varphi\|_{W^{1,q'}(B_L(  0))}.
		\end{align*}
	Since $\xi_{\varepsilon,\mu}^*$ satisfies the orthogonality condition by projection \eqref{2-44}, by virtue of an argument similar to the proof of Lemma \ref{lemD-1}, we can deduce from the estimates for $N_1, N_2$ that
	\begin{equation*}
		\|	\xi_{\varepsilon,\mu}^*\|_{L^\infty(B_L(  0))}+\|\nabla	\xi_{\varepsilon,\mu}^*\|_{L^q(B_{L}( 0))}=\textcolor{black}{O(\varepsilon )}, \quad \forall \, q\in(2,+\infty].
	\end{equation*}
	
	Now we arrive at a conclusion: by the definition of $	\xi_{\varepsilon,\mu}^*$ in \eqref{2-44}, for each $q\in(2,+\infty]$, it holds
	\begin{equation*}
		\xi_{\varepsilon,\mu}=c_\varepsilon\frac{\partial U}{\partial x_1}+\textcolor{black}{O(\varepsilon )}, \quad \text{in} \ W^{1,q}(B_{L}(  0)),
	\end{equation*}
	and for all $ \varphi\in \ C_0^\infty(\mathbb{R}^2)$, it holds
	\begin{equation*}
		\int_{\mathbb{R}^2} s^2_\ep f_\varepsilon(s_\ep y+x_\ep) \varphi(y) dy=  p\int_{\mathbb{R}^2 } \left(U (y)\right)_+^{p-1} \left(c_\varepsilon \frac{\partial U}{\partial x_1}(y)+\textcolor{black}{O(\varepsilon )}\right)  \varphi(y)dy,
	\end{equation*}
	where $c_\varepsilon$ is bounded independent of $\varepsilon$. So we have completed the proof of Lemma \ref{lem2-15}.
\end{proof}

To apply the local Pohozaev identity to obtain a contradiction, set
\begin{equation*}
	\xi_{1,\varepsilon}( x):=\frac{\psi_{1,\varepsilon}^{(1)}(  x)-\psi_{1,\varepsilon}^{(2)}(  x)}{|\psi_\varepsilon^{(1)}-\psi_\varepsilon^{(2)}|_\infty}.
\end{equation*}
Then $\xi_{1,\varepsilon}$ has the following integral representation
\begin{equation}\label{3-16}
	\xi_{1,\varepsilon}=x_{\ep,1}^2\int_{\mathbb R^2_+} G( x, y)\cdot y_1^{-1}f_\varepsilon(y)dy.
\end{equation}
By the asymptotic estimate for $f_\varepsilon $ in Lemma \ref{lem2-15}, one can check that
\begin{equation*}
	\frac{\psi_{2,\varepsilon}^{(1)}(  x)-\psi_{2,\varepsilon}^{(2)}(  x)}{|\psi_\varepsilon^{(1)}-\psi_\varepsilon^{(2)}|_\infty}=\int_{\mathbb R^2_+} H( x, y)y_1^{-1}f_\varepsilon(y)dy=o_\varepsilon(1).
\end{equation*}
So we see that $\xi_{1,\varepsilon}$ is indeed the main part of $\xi_\varepsilon$  and
\begin{equation}\label{xi}
	||\xi_{1, \varepsilon}||_{L^\infty(\mathbb R^2_+)}=1-o_\varepsilon(1).
\end{equation}  In order to derive a contradiction, we will show that $||\xi_{1,\varepsilon}||_{L^\infty(\mathbb R^2_+)}=o_\varepsilon(1)$ by the local Pohozaev identity.

Applying  \eqref{2-27} to $\psi_{1,\varepsilon}^{(1)}$ and $\psi_{1,\varepsilon}^{(2)}$ separately and calculating their difference, we  obtain the following local Pohozaev identity:
\begin{align}\label{2-47}
		&\quad-\int_{\partial B_\delta(x_\ep)}\frac{\partial\xi_{1,\varepsilon}}{\partial\nu}\frac{\partial\psi_{1,\varepsilon}^{(1)}}{\partial x_1}ds-\int_{\partial B_\delta(x_\ep)}\frac{\partial \psi_{1,\varepsilon}^{(2)}}{\partial\nu }\frac{\partial\xi_{1,\varepsilon}}{\partial x_1}ds+\frac{1}{2}\int_{\partial B_\delta(x_\ep)}\langle\nabla(\psi_{1,\varepsilon}^{(1)}+\psi_{1,\varepsilon}^{(2)}),\nabla\xi_{1,\varepsilon}\rangle\nu_1ds\nonumber\\
		&=-\frac{x_{\ep,1}^2}{\varepsilon^2|\psi_\varepsilon^{(1)}-\psi_\varepsilon^{(2)}|_\infty}\int_{B_\delta(x_\ep)} \left(\left(\psi_\ep^{(1)}(x)-\frac{W}{2}x_1^2\ln\frac{1}{\ep}-\mu^{(1)}_\ep\right)_+^p\partial_{x_1}\psi_{2,\varepsilon}^{(1)}  \right.\\
		&\quad\left.-\left(\psi_\ep^{(2)}(x)-\frac{W}{2}x_1^2\ln\frac{1}{\ep}-\mu^{(2)}_\ep\right)_+^p\partial_{x_1}\psi_{2,\varepsilon}^{(2)} \right)d  x.\nonumber
\end{align}

The proof of the uniqueness of a vortex ring with small cross-section is based on a careful estimate for each term in \eqref{2-47}. To deal with terms involving integral on $\partial B_\delta(x_\ep)$ in the local Pohozaev identity, we need the following lemma concerning the behavior of $\xi_{1,\varepsilon}$ away from $x_\ep$.
\begin{lemma}\label{lem2-16}
	For any fixed small $\delta>0$, it holds
	\begin{equation}\label{2-48}
		\xi_{1,\varepsilon}(  x)=  B_\varepsilon  \frac{s_\ep x_{\ep,1}^2}{2\pi}\frac{x_1-x_{\ep,1}}{|  x- x_\ep|^2}+ B_\varepsilon  \frac{s_\ep x_{\ep,1}^2}{2\pi}\frac{x_1+x_{\ep,1}}{|  x-  {\bar x_\ep}|^2}+  B_\varepsilon  \frac{s_\ep x_{\ep,1}}{2\pi} \ln \frac{| x- {\bar x_\ep}|}{|  x-  x_\ep|}+O(\varepsilon^2)
	\end{equation}
	in $C^1(\mathbb R^2_+\setminus B_{\delta/2}(x_\ep))$, where
	\begin{equation*}
		B_\varepsilon:=\frac{1}{s_\ep}\int_{B_{2s_\ep}(x_\ep)} (x_1-x_{\ep,1}) x_1^{-1}f_\varepsilon(  x) d  x
	\end{equation*}
	is uniformly bounded independent of $\varepsilon$.
\end{lemma}
\begin{proof}
	Using the expansion \eqref{C-5}, for $ x\in \mathbb{R}^2_+\setminus B_{\delta/2}(x_\ep)$ we have
	\begin{align*}
			\xi_{1,\varepsilon}( x)&=\frac{x_{\ep,1}^2}{2\pi}\int_{\mathbb{R}^2_+} y_1^{-1}\ln \left(\frac{| x-  {\bar y}|}{|  x- y|} \right)f_\varepsilon(y)dy=\frac{x_{\ep,1}^2}{2\pi}\int_{B_{2s_\ep}(x_\ep)} y_1^{-1}\ln \left(\frac{| x-  {\bar y}|}{|  x- y|} \right)f_\varepsilon(y)dy\\
			&=\frac{x_{\ep,1}}{2\pi}\ln \frac{1}{|  x-x_\ep|} \int_{B_{2s_\ep}(x_\ep)}  f_\varepsilon(y)dy+ \frac{x_{\ep,1}^2}{2\pi}\int_{B_{2s_\ep}(x_\ep)}y_1^{-1}\ln \left(\frac{|  x-x_\ep|}{|  x-y|}\right) f_\varepsilon(y)dy\\
			&\quad-\frac{x_{\ep,1}}{2\pi}\ln \frac{1}{| x- {\bar x_\ep}|} \int_{B_{2s_\ep}(x_\ep)}  f_\varepsilon(y)dy-\frac{x_{\ep,1}^2}{2\pi}\int_{B_{2s_\ep}(x_\ep)}y_1^{-1}\ln \left(\frac{| x-  {\bar x_\ep}|}{|  x-  {\bar y}|} \right)f_\varepsilon(y)dy\\
			&\quad-\frac{x_{\ep,1}}{2\pi} \ln \frac{| x- {\bar x_\ep}|}{|  x-x_\ep|} \int_{B_{2s_\ep}(x_\ep)}(y_1-x_{\ep,1}) y_1^{-1}f_\varepsilon(y)d  y\\
			&=-\frac{x_{\ep,1}^2}{4\pi}\int_{B_{2s_\ep}(x_\ep)}y_1^{-1}\ln\left(1+ \frac{2(  x-x_\ep)\cdot (x_\ep-y)}{| x-x_\ep|^2} +\frac{ |x_\ep-y|^2}{|  x-x_\ep|^2}\right)f_\varepsilon(y)dy\\
			&\quad +\frac{x_{\ep,1}^2}{4\pi}\int_{B_{2s_\ep}(x_\ep)}y_1^{-1}\ln\left(1+ \frac{2( x-  {\bar x_\ep})\cdot (  {\bar x_\ep}-  {\bar y})}{|  x-  {\bar x_{\ep}}|^2} +\frac{ |  {\bar x_\ep}- {\bar y} |^2}{|  x-  \bar x_\ep|^2}\right)f_\varepsilon(y)dy\\
			&\quad-\frac{x_{\ep,1}}{2\pi} \ln \frac{| x- {\bar x_\ep}|}{|  x-x_\ep|} \int_{B_{2s_\ep}(x_\ep)}(y_1-x_{\ep,1}) y_1^{-1}f_\varepsilon(y)d  y\\
			&=B_\varepsilon  \frac{s_\ep x_{\ep,1}^2}{2\pi}\frac{x_1-x_{\ep,1}}{|  x- x_\ep|^2}+ B_\varepsilon  \frac{s_\ep x_{\ep,1}^2}{2\pi}\frac{x_1+x_{\ep,1}}{|  x-  {\bar x_\ep}|^2}+  B_\varepsilon  \frac{s_\ep x_{\ep,1}}{2\pi} \ln \frac{| x- {\bar x_\ep}|}{|  x-  x_\ep|}+O(\varepsilon^2).
		\end{align*}
	Moreover, $  B_\varepsilon$ is bounded independent of $\varepsilon$ since $||s^2 f_\varepsilon(s_\ep   y+x_\ep)||_{W^{-1,q}(B_L(\boldsymbol 0))}=O_\varepsilon(1)$ for $q\in (2,\infty]$. Then we can verify \eqref{2-48} in $C^1(\mathbb R^2_+\setminus B_{\delta/2}(x_\ep))$ by a similar argument. So we finish the proof of this lemma.
\end{proof}

Using the asymptotic estimate for $\psi_{1,\varepsilon}$ in Lemma \ref{lemC-1} and $\xi_\varepsilon$ in Lemma \ref{lem2-16}, we deduce by direct computations that
\begin{align}\label{2-49}
		&\quad-\int_{\partial B_\delta(x_\ep)}\frac{\partial\xi_{1,\varepsilon}}{\partial\nu}\frac{\partial\psi_{1,\varepsilon}^{(1)}}{\partial x_1}ds-\int_{\partial B_\delta(x_\ep)}\frac{\partial \psi_{1,\varepsilon}^{(2)}}{\partial\nu }\frac{\partial\xi_{1,\varepsilon}}{\partial x_1}ds+\frac{1}{2}\int_{\partial B_\delta(x_\ep)}\langle\nabla(\psi_{1,\varepsilon}^{(1)}+\psi_{1,\varepsilon}^{(2)}),\nabla\xi_{1,\varepsilon}\rangle\nu_1ds\nonumber\\
		&=O(\varepsilon)  B_\varepsilon+O(\varepsilon^2)=O(\ep).
\end{align}

For the right hand side of \eqref{2-47}, by careful analysis, we have the following identity.
\begin{lemma}\label{lem2-17}
	It holds
	\begin{align*}
			&  -\frac{x_{\ep,1}^2}{\varepsilon^2|\psi_\varepsilon^{(1)}-\psi_\varepsilon^{(2)}|_\infty}\int_{B_\delta(x_\ep)} \left(\left(\psi_\ep^{(1)}(x)-\frac{W}{2}x_1^2\ln\frac{1}{\ep}-\mu^{(1)}_\ep\right)_+^p\partial_{x_1}\psi_{2,\varepsilon}^{(1)}  \right.\\
			&\quad\left.-\left(\psi_\ep^{(2)}(x)-\frac{W}{2}x_1^2\ln\frac{1}{\ep}-\mu^{(2)}_\ep\right)_+^p\partial_{x_1}\psi_{2,\varepsilon}^{(2)} \right)d  x=c_\ep\frac{3 s_\ep \kappa \Lambda_p }{4\pi } \ln\left(\frac{1}{s_\ep}\right)+O(\varepsilon),
		\end{align*}
where $c_\ep$ is the constant in Lemma \ref{lem2-15}.
\end{lemma}
\begin{proof}

To deal with the right hand side of \eqref{2-47}, we write
\begin{align*}
		&\quad\frac{x_{\ep,1}^2}{\varepsilon^2|\psi_\varepsilon^{(1)}-\psi_\varepsilon^{(2)}|_\infty}\int_{B_\delta(x_\ep)} \left(\left(\psi_\ep^{(1)}(x)-\frac{W}{2}x_1^2\ln\frac{1}{\ep}-\mu^{(1)}_\ep\right)_+^p\partial_{x_1}\psi_{2,\varepsilon}^{(1)}  \right.\\
		&\quad\left.-\left(\psi_\ep^{(2)}(x)-\frac{W}{2}x_1^2\ln\frac{1}{\ep}-\mu^{(2)}_\ep\right)_+^p\partial_{x_1}\psi_{2,\varepsilon}^{(2)} \right)d  x\\
		&=\frac{x_{\ep,1}^2}{\varepsilon^2|\psi_\varepsilon^{(1)}-\psi_\varepsilon^{(2)}|_\infty}\int_{B_\delta(x_\ep)} \left(\left(\psi_\ep^{(1)}(x)-\frac{W}{2}x_1^2\ln\frac{1}{\ep}-\mu^{(1)}_\ep\right)_+^p-\left(\psi_\ep^{(2)}(x)-\frac{W}{2}x_1^2\ln\frac{1}{\ep}-\mu^{(2)}_\ep\right)_+^p\right)\partial_{x_1}\psi_{2,\varepsilon}^{(1)}dx   \\
		&\quad+\frac{x_{\ep,1}^2}{\varepsilon^2|\psi_\varepsilon^{(1)}-\psi_\varepsilon^{(2)}|_\infty}\int_{B_\delta(x_\ep)}\left( \left(\psi_\ep^{(2)}(x)-\frac{W}{2}x_1^2\ln\frac{1}{\ep}-\mu^{(2)}_\ep\right)_+^p(\partial_{x_1}\psi_{2,\varepsilon}^{(1)} -\partial_{x_1}\psi_{2,\varepsilon}^{(2)}) \right)d  x\\
		&:=K_1+K_2.
	\end{align*}
We decompose $K_1$ into four parts.
\begin{align*}
		K_1&=\frac{x_{\ep,1}^2}{\varepsilon^2}\int_{B_\delta(x_\ep)} x_1^{-1}f_\varepsilon( x)\int_{\mathbb{R}_+^2} \partial_{x_1}H( x,y)  \left(\psi_\ep^{(1)}(y)-\frac{W}{2}y_1^2\ln\frac{1}{\ep}-\mu^{(1)}_\ep\right)_+^p dydx\\
		&=K_{11}+K_{12}+K_{13}+K_{14},\end{align*}
where
\begin{flalign*}
	&K_{11}=\frac{x_{\ep,1}^2}{4\pi\varepsilon^2} \ln\left(\frac{1}{s_\ep}\right) \int_{B_\delta(x_\ep)} x_1^{-3/2}f_\varepsilon(  x)\int_{\mathbb{R}_+^2}y_1^{3/2}\left(\psi_\ep^{(1)}(y)-\frac{W}{2}y_1^2\ln\frac{1}{\ep}-\mu^{(1)}_\ep\right)_+^p dyd x, &
\end{flalign*}
\begin{flalign*}
	&K_{12}=\frac{x_{\ep,1}^2}{4\pi\varepsilon^2} \int_{B_\delta(x_\ep)} x_1^{-3/2}f_\varepsilon( x)\int_{\mathbb{R}_+^2}y_1^{3/2} \ln\left(\frac{s_\ep}{| x-y|}\right)\left(\psi_\ep^{(1)}(y)-\frac{W}{2}y_1^2\ln\frac{1}{\ep}-\mu^{(1)}_\ep\right)_+^p dyd x,&
\end{flalign*}

\begin{flalign*}
	&K_{13}=-\frac{x_{\ep,1}^2}{2\pi\varepsilon^2}\int_{B_\delta(x_\ep)} x_1^{-1}f_\varepsilon(  x)\int_{\mathbb{R}_+^2}\left(x_1^{1/2}y_1 ^{3/2}-x_{\ep,1}^2\right) \frac{x_1-y_1}{| x-y|^2}\left(\psi_\ep^{(1)}(y)-\frac{W}{2}y_1^2\ln\frac{1}{\ep}-\mu^{(1)}_\ep\right)_+^p  dyd x &
\end{flalign*}
and $K_{14}$ is the remaining  regular term. Using the circulation constraint \eqref{2-3} and Lemma \ref{lem2-15}, we deduce by Taylor expansion that
\begin{align*}
		&K_{11}=\frac{x_{\ep,1}^2}{4\pi\varepsilon^2} \ln\left(\frac{1}{s_\ep}\right) \int_{B_\delta(x_\ep)} x_1^{-3/2}f_\varepsilon(  x)\int_{\mathbb{R}_+^2}y_1 \left(x_{\ep,1}^{1/2}+O(\ep)\right)\left(\psi_\ep^{(1)}(y)-\frac{W}{2}y_1^2\ln\frac{1}{\ep}-\mu^{(1)}_\ep\right)_+^p dyd x\\
		&=\frac{\kappa x_{\ep,1}^2}{4\pi } \ln\left(\frac{1}{s_\ep}\right)\left(x_{\ep,1}^{1/2}+O(\ep)\right)   \int_{B_\delta(x_\ep)} x_1^{-3/2}f_\varepsilon(  x) d  x\\
		&=\frac{\kappa x_{\ep,1}^2}{4 } \ln\left(\frac{1}{s_\ep}\right)\left(x_{\ep,1}^{1/2}+O(\ep)\right)   \int_{B_\delta(x_\ep)} f_\varepsilon \left(x_{\ep,1}^{-3/2}-\frac{3}{2x_{\ep,1}^{5/2}} (x_1-x_{\ep,1})+O(\varepsilon^2)\right)   d  x\\
		&=\frac{\kappa x_{\ep,1}^2}{4 } \ln\left(\frac{1}{s_\ep}\right)\left(x_{\ep,1}^{1/2}+O(\ep)\right)   \int_{B_{\delta s_\ep^{-1}}( 0)} s_\ep^2f_\varepsilon (s_\ep y+x_\ep)\left( -\frac{3 s_\ep y_1 }{2x_{\ep,1}^{5/2}}  +O(\varepsilon^2)\right)   d  x\\
		&=\frac{\kappa x_{\ep,1}^2}{4\pi } \ln\left(\frac{1}{s_\ep}\right)\left(x_{\ep,1}^{1/2}+O(\ep)\right)   p\int_{\mathbb{R}^2 } \left(U (y)\right)_+^{p-1} \left(c_\varepsilon \frac{\partial U}{\partial x_1}(y)+\textcolor{black}{O(\varepsilon )}\right)  \left( -\frac{3 s_\ep y_1 }{2x_{\ep,1}^{5/2}}  +O(\varepsilon^2)\right)   d  x\\
		&=c_\ep\frac{3 s_\ep \kappa \Lambda_p }{8\pi } \ln\left(\frac{1}{s_\ep}\right)+O(\varepsilon),
	\end{align*}
where we have used the following formula by integration by part
\begin{equation*}
	p\int_{\mathbb{R}^2 } \left(U (y)\right)_+^{p-1} \frac{\partial U}{\partial x_1}(y) y_1 dy=\int_{\mathbb{R}^2 } y_1\partial_{y_1} (\left(U (y)\right)_+^{p })   dy=-\int_{\mathbb{R}^2 }\left(U (y)\right)_+^{p }=\Lambda_p.
\end{equation*}

For the term $K_{12}$, it holds
\begin{align*}
		&K_{12}=\frac{x_{\ep,1}^2}{4\pi\varepsilon^2} \int_{B_\delta(x_\ep)} \left(x_{\ep,1}^{-3/2}+O(\ep)\right)f_\varepsilon( x)\\
		&\quad\times\int_{\mathbb{R}_+^2}\left(x_{\ep,1}^{3/2}+O(\ep)\right) \ln\left(\frac{s_\ep}{| x-y|}\right)\left(\psi_\ep^{(1)}(y)-\frac{W}{2}y_1^2\ln\frac{1}{\ep}-\mu^{(1)}_\ep\right)_+^p dyd x\\
		&=\frac{x_{\ep,1}^2}{4\pi\varepsilon^2} \int_{B_\delta(x_\ep)} f_\varepsilon( x) \int_{\mathbb{R}_+^2}  \ln\left(\frac{s_\ep}{| x-y|}\right)\left(\psi_\ep^{(1)}(y)-\frac{W}{2}y_1^2\ln\frac{1}{\ep}-\mu^{(1)}_\ep\right)_+^p dyd x+O(\varepsilon)\\
		&=\frac{1}{2}\int_{B_\delta(x_\ep)} f_\varepsilon( x)U\left(\frac{x-x_\ep}{s_\ep}\right)dx+O(\ep)\\
		&=\frac{p}{2}\int_{\mathbb{R}^2 } \left(U (y)\right)_+^{p-1} \left(c_\varepsilon \frac{\partial U}{\partial x_1}(y)+\textcolor{black}{O(\varepsilon )}\right)U(x)d x+O(\varepsilon) =O(\varepsilon),
	\end{align*}
where we have used the formula
\begin{align*}
		&	\frac{x_{\ep,1}^2}{2\pi\varepsilon^2}\int_{\mathbb{R}_+^2}  \ln\left(\frac{s_\ep}{| x-y|}\right)\left(\psi_\ep^{(1)}(y)-\frac{W}{2}y_1^2\ln\frac{1}{\ep}-\mu^{(1)}_\ep\right)_+^p dy\\
		=&\frac{x_{\ep,1}^2}{2\pi\varepsilon^2}\int_{\mathbb{R}_+^2}  \ln\left(\frac{s_\ep}{| x-y|}\right)\left(U_{\ep,x_\ep,a_\ep}-a_\ep\ln\frac{1}{\ep}\right)_+^p dy+O(\ep)\\
		=&U\left(\frac{x-x_\ep}{s_\ep}\right) +O(\ep)
	\end{align*}
and $\int_{\mathbb{R}^2 } \left(U (y)\right)_+^{p-1}  \frac{\partial U}{\partial x_1}(y) dy =0$ due to the odd symmetry.

Similarly, using the fact that
$\int_{\mathbb{R}_+^2}\left((x_1-x_{\ep,1})+3(y_1-x_{\ep,1})\right)  \frac{x_1-y_1}{| x-y|^2}\left(U_{\ep,x_\ep,a_\ep}-a_\ep\ln\frac{1}{\ep}\right)_+^p  dy$ is a bounded even function depending on  $|x_1-x_{\ep,1}|$ and $|x_2-x_{\ep,2}|$, we find
\begin{align*}
		&  K_{13} =-\frac{x_{\ep,1}^2}{2\pi\varepsilon^2}\int_{B_\delta(x_\ep)} x_1^{-1}f_\varepsilon(  x)\int_{\mathbb{R}_+^2}\left(x_1^{1/2}y_1 ^{3/2}-x_{\ep,1}^2\right) \frac{x_1-y_1}{| x-y|^2}\left(\psi_\ep^{(1)}(y)-\frac{W}{2}y_1^2\ln\frac{1}{\ep}-\mu^{(1)}_\ep\right)_+^p  dyd x\\
		&=-\frac{x_{\ep,1}^2}{2\pi\varepsilon^2}\int_{B_\delta(x_\ep)} x_{\ep,1}^{-1}f_\varepsilon(  x)\int_{\mathbb{R}_+^2}\left((x_1-x_{\ep,1})+3(y_1-x_{\ep,1})\right)  \frac{x_1-y_1}{| x-y|^2}\left(U_{\ep,x_\ep,a_\ep}-a_\ep\ln\frac{1}{\ep}\right)_+^p  dydx\\
		&\quad+O(\varepsilon)=O(\ep).
	\end{align*}

For the regular term $K_{14}$, it is easy to verify that $K_{14}=O(\varepsilon)$. Summarizing all the estimates above, we get
\begin{equation}\label{2-50}
	K_1=c_\ep\frac{3 s_\ep \kappa \Lambda_p }{8\pi } \ln\left(\frac{1}{s_\ep}\right)+O(\varepsilon).
\end{equation}

Next, we turn to deal with $K_2$. Using Fubini's theorem, one can easily check that
\begin{equation*}
	\begin{split}
		K_2&=\frac{x_{\ep,1}^2}{\varepsilon^2}\int_{B_\delta(x_\ep)} x_1^{-1}f_\varepsilon( x)\int_{\mathbb{R}_+^2} \partial_{x_1}H( x,y)  \left(\psi_\ep^{(2)}(y)-\frac{W}{2}y_1^2\ln\frac{1}{\ep}-\mu^{(2)}_\ep\right)_+^p dydx,
	\end{split}
\end{equation*}
which is quite similar to $K_1$. Therefore, by calculations similarly, we obtain
\begin{equation}\label{2-51}
	K_2=c_\ep\frac{3 s_\ep \kappa \Lambda_p }{8\pi} \ln\left(\frac{1}{s_\ep}\right)+O(\varepsilon).
\end{equation}
Then this lemma follows immediately from \eqref{2-50} and \eqref{2-51} and  the proof is hence completed.
\end{proof}

Now, we are able to prove the uniqueness.

\noindent{\bf Proof of Theorem \ref{thm2-1}:}
By \eqref{2-47}, \eqref{2-49} and Lemma \ref{lem2-17}, we conclude
\begin{equation}\label{2-52}
	c_\ep\frac{3 s_\ep \kappa \Lambda_p }{4\pi} \ln\left(\frac{1}{s_\ep}\right)=O(\varepsilon),
\end{equation}
from which we derive $|	c_\varepsilon| =O\left(\frac{1}{|\ln\varepsilon|}\right)$.
According to Lemma \ref{lem2-15}, we can   use the fact that for fixed $  y\in\mathbb R^2$
$$\frac{1}{2\pi}\ln\left(\frac{1}{| y-\cdot|}\right)\in W_{\text{loc}}^{1,q'}(\mathbb R^2),\quad \forall \, q'\in [1,2)$$
to deduce
\begin{align*}
		& \xi_{1,\varepsilon}(s_\ep y+x_\ep) =\frac{x_{\ep,1}^2}{2\pi}\int_{B_2(0)} (s_\ep z_1+x_{\ep,1})^{-1}\ln \left(\frac{2x_{\ep,1}+s_\ep(y_1+z_1, y_2-_2)|}{s_\ep|y- z|} \right)s_\ep^2f_\varepsilon(s_\ep z+x_\ep)dz\\
		&= \frac{x_{\ep,1}^2}{2\pi}\int_{B_2(0)} (s_\ep z_1+x_{\ep,1})^{-1}\ln \left(\frac{2x_{\ep,1}+s_\ep(y_1+z_1, y_2-_2)|}{s_\ep|y- z|} \right) \left(U (z)\right)_+^{p-1} \left(c_\varepsilon \frac{\partial U}{\partial x_1}(y)+\textcolor{black}{O(\varepsilon )}\right) dz\\
		&=O\left(|c_\ep|\right)=O\left(\frac{1}{|\ln\varepsilon|}\right).
	\end{align*}
Thus we conclude that $||\xi_{1,\varepsilon}||_{L^\infty(\mathbb R^2_+)}=O(1/|\ln\varepsilon|)=o_\ep(1)$, which   contradicts \eqref{xi}.  So, we obtain a contradiction, which implies that   Theorem \ref{thm2-1} holds true.
\qed

\section{Nonlinear orbital stability of vortex rings}\label{sec3}
In Section \ref{sec2}, we obtain a nonlinear stability theorem on the set of maximizers. As a consequence, it is nature to study the structure of the set of maximizers, which is extremely difficult in general. Thus, we restrict ourselves to consider the case of  concentrated steady vortex rings, of which the existence was established for a large class of nonlinearities in \cite{CWWZ}, including the vortex rings in Proposition \ref{thmE} as a special case.

In what follows, we will apply the general stability result Theorem \ref{thmSs} to obtain orbital stability of =vortex rings. We can reduce the stability of the concentrated vortex ring $\zeta_\ep$ obtained in \cite{CWWZ} to a problem of uniqueness (see Theorem \ref{U-S}) and hence by Theorem \ref{thmU}, we finally obtain the nonlinear stability of the steady vortex rings given in Proposition \ref{thmE} and finish Theorem \ref{thmS}.

\subsection{Uniqueness implies orbital stability}\label{sec3-3}

First let us recall a result in \cite{CWWZ}.
 Let $f:\mathbb{R}\to \mathbb{R}$ be a function satisfying\\
\noindent\, $(f_1).$ $f(s)\equiv 0$ for $s\le 0$, $f(s)>0$ for $s> 0$, and $f$ is non-decreasing in $\mathbb{R}$; \\
 and either\\
\noindent\, $(f_2).$ $f$ is bounded;\\
or\\
\noindent\,$(f_3).$ There exists some positive numbers $\delta_0\in(0,1)$ and $\delta_1>0$ such that
 	\[F(s):=\int_{0}^{s}f(t)dt \le \delta_0 f(s)s+\delta_1f(s), \ \ \forall~ s\ge 0. \]
 	In addition, for all $\tau >0$,
	$$\lim_{s\to +\infty}\left(f(s)\,e^{-\tau s}\right)=0.$$

 Let $\kappa>0$, $W>0$ be fixed and $0<\varepsilon<1$ be a parameter. Let  $D_0=\{x=(x_1, x_2)\in \mathbb{R}_+^2\mid \kappa/(8\pi W)\le x_1\le \kappa/(4\pi W)+1, -1\le x_2\le 1\},$ and define
 \begin{equation*}
 	\mathcal{A}:=\left\{0\leq \zeta\in L^\infty(\mathbb{R}_+^2)~\bigg|~\int_{\mathbb{R}_+^2} \zeta d\nu\le \kappa,\ \text{supp}(\zeta)\subseteq D_0\right\}.
 \end{equation*}
 Consider the maximization of the following functional over $\mathcal{A}$
 \begin{equation}\label{3-21}
 	E_\varepsilon(\zeta):= (E-W\ln\frac{1}{\ep} P)(\zeta)-\frac{1}{\varepsilon^2}\int_{D_0} G(\varepsilon^2\zeta)d\nu,
 \end{equation}
where $	E(\zeta)=\frac{1}{2}\int_{\mathbb R^2_+}\xi(x)\mathcal{G}_1\xi(x) d\nu $, $P(\zeta)=\frac{1}{2}\int_{\mathbb R^2_+}\xi(x) x_1^2 d\nu$ and
 \begin{equation*}
 	G(s)=\sup_{s'\in\mathbb{R}}\left[ss'-F(s')\right]
 \end{equation*}
is the conjugate function to $F(s):=\int_{0}^{s}f(t)dt$.

 Denote by $\tilde\Sigma_{\varepsilon}\subset 	\mathcal{A}$ the set of maximizers of \eqref{3-21}. Note that any $x_2$-directional translation of $\zeta\in \tilde\Sigma_{\varepsilon}$ is still in $\tilde \Sigma_{\varepsilon}$.

 The following result is essentially contained in \cite{CWWZ}.
 \begin{proposition}[\cite{CWWZ}]\label{prop3-11}
 	Suppose that $f$ is a function satisfying all the conditions  $(f_1)$ and either $(f_2)$ or $(f_3)$. If $\varepsilon>0$ is sufficiently small, then $\tilde\Sigma_{\varepsilon}\neq \emptyset$ and each maximizer ${\zeta}_\varepsilon \in \tilde\Sigma_{\varepsilon}$ is a steady vortex ring satisfying the following properties.
 	\begin{itemize}
 		\item [(i).]$ {\zeta}_\varepsilon=\varepsilon^{-2}f(\mathcal{G}_1\zeta_\ep-\frac{W}{2}x_1^2\ln\frac{1}{\ep}-\mu_\ep)$ for some constant $\mu_\ep$ and $\mathcal{G}_1\zeta_\ep-\frac{W}{2}x_1^2\ln\frac{1}{\ep}-\mu_\ep$ is bounded from above independent of $\ep$.
 		\item [(ii).]  $C_1\ep \le \mathrm{diam}\left(\mathrm{supp}({\zeta}_\varepsilon)\right)<C_2\ep$ for some constants $0<C_1<C_2<\infty$ and $$\sup_{x\in \mathrm{supp}({\zeta}_\varepsilon)} |x-(r_*,0)|=o_{\ep}(1)$$ with $r_*={\kappa}/{4\pi W}$.
 		\item [(iii).] $\int_{\mathbb{R}_+^2}  \zeta_{\varepsilon}d\nu=\kappa$, $\ep^{-2}\int_{D_0} G(\varepsilon^2\zeta)d\nu =O(1),$
 		\begin{equation*}
 			E_\varepsilon(\zeta_\varepsilon)  =\left(\frac{\kappa^2r_*}{4\pi}-\frac{\kappa Wr_*^2}{2}\right)\log{\frac{1}{\varepsilon}}+O(1),
 		\end{equation*}
 	and
 	\begin{equation*}
 		\mu^\varepsilon  =\left(\frac{\kappa r_*}{2\pi}-\frac{Wr_*^2}{2}\right)\log{\frac{1}{\varepsilon}}+o\left(\log{\frac{1}{\varepsilon}}\right).
 	\end{equation*}

 	\end{itemize}
 \end{proposition}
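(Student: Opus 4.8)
The plan is to reduce the statement to the corresponding results already obtained in \cite{CWWZ}, adapting only those points where the present setup differs (chiefly, notation and the precise form of the functional). First I would note that the maximization of $E_\varepsilon$ over $\mathcal{A}$ is exactly the problem studied in \cite{CWWZ}, so the nonemptiness of $\tilde\Sigma_\varepsilon$ follows from the direct method: one takes a maximizing sequence $\{\zeta_n\}\subset\mathcal{A}$, uses that $\mathcal A$ is bounded in $L^\infty$ with supports contained in the fixed bounded set $D_0$, extracts a weak-$*$ limit $\zeta_\varepsilon\in\mathcal A$, and checks that $E$ is weakly continuous on functions supported in $D_0$ (by compactness of $\mathcal G_1$ on such a set, using \eqref{2-9}), that $P$ is weakly continuous there, and that $\zeta\mapsto\int_{D_0}G(\varepsilon^2\zeta)\,d\nu$ is weakly lower semicontinuous (by convexity of $G$). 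Hence the supremum is attained. This requires the growth hypotheses $(f_1)$--$(f_3)$ only to ensure $G$ is finite-valued and the energy is finite, which is standard.

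Next I would derive the Euler--Lagrange characterization in item (i). The key point is that the constraint $\int_{\mathbb R^2_+}\zeta\,d\nu\le\kappa$ is handled via a Lagrange multiplier: performing variations $\zeta_\varepsilon+t(\eta-\zeta_\varepsilon)$ with $\eta\in\mathcal A$ and letting $t\to0^+$, one obtains that $\zeta_\varepsilon$ maximizes pointwise the integrand, which after the Legendre duality $G=F^*$ yields $\varepsilon^2\zeta_\varepsilon=f(\mathcal G_1\zeta_\varepsilon-\tfrac W2 x_1^2\ln\tfrac1\varepsilon-\mu_\varepsilon)$ on $D_0$, with $\mu_\varepsilon\ge0$ the multiplier associated to the circulation constraint; the monotonicity in $(f_1)$ gives that $\zeta_\varepsilon$ is a monotone function of $\mathcal G_1\zeta_\varepsilon-\tfrac W2x_1^2\ln\tfrac1\varepsilon-\mu_\varepsilon$, which via \eqref{1-5} makes $\zeta_\varepsilon$ a steady vortex ring. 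Boundedness of $\mathcal G_1\zeta_\varepsilon-\tfrac W2x_1^2\ln\tfrac1\varepsilon-\mu_\varepsilon$ from above independent of $\varepsilon$ follows from the a priori $L^\infty$ bound on $\zeta_\varepsilon$ (itself a consequence of the bound on $\varepsilon^{-2}\int G(\varepsilon^2\zeta_\varepsilon)\,d\nu$ together with the superlinearity encoded in $(f_3)$), exactly as in \cite{CWWZ}.

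The substance of items (ii) and (iii) is the asymptotic/concentration analysis, and this is where I expect the main work to lie; fortunately it is precisely what is carried out in \cite{CWWZ}. The strategy there is: (a) obtain the leading-order value $E_\varepsilon(\zeta_\varepsilon)=\bigl(\tfrac{\kappa^2 r_*}{4\pi}-\tfrac{\kappa W r_*^2}{2}\bigr)\log\tfrac1\varepsilon+O(1)$ by comparing with an explicit test function (a suitably rescaled patch or ground-state profile concentrated near $(r_*,0)$ with $r_*=\kappa/4\pi W$), which pins down the optimal concentration scale as comparable to $\varepsilon$ and forces $\int\zeta_\varepsilon\,d\nu=\kappa$ (the circulation constraint must saturate, else one could increase $E_\varepsilon$); (b) use the Green's function expansion \eqref{2-7}--\eqref{2-8} to show that the diameter of $\mathrm{supp}(\zeta_\varepsilon)$ is squeezed between $C_1\varepsilon$ and $C_2\varepsilon$ — the upper bound from the energy estimate plus $(f_3)$, the lower bound because a support smaller than $\sim\varepsilon$ would make the self-energy logarithmically too large; (c) locate the support: a first moment / Pohozaev-type computation shows $\mathrm{supp}(\zeta_\varepsilon)$ must concentrate at the unique minimizer $(r_*,0)$ of the relevant effective potential; (d) read off $\mu_\varepsilon=\bigl(\tfrac{\kappa r_*}{2\pi}-\tfrac{W r_*^2}{2}\bigr)\log\tfrac1\varepsilon+o(\log\tfrac1\varepsilon)$ from the Euler--Lagrange relation evaluated near the support, since there $\mathcal G_1\zeta_\varepsilon\approx\tfrac{\kappa r_*}{2\pi}\log\tfrac1\varepsilon$ by \eqref{2-7}. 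The hardest step is (b)--(c), the two-sided control of the support size and its localization, because it requires combining the crude blow-up bounds with the logarithmic Green's function asymptotics in a self-improving way; but since this is established in \cite{CWWZ} for the whole class $(f_1)$--$(f_3)$ and the functional \eqref{3-21} here is identical to the one there, I would simply invoke those arguments, remarking (as the paper does in the subsequent remark) that the cosmetic restriction $\mathrm{supp}(\zeta)\subseteq D_0$ versus $\mathrm{supp}(\zeta)\subseteq\Pi$ is immaterial. This completes the proof.
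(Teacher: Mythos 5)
Your proposal is correct and matches the paper's treatment: the paper gives no proof of this proposition at all, simply invoking \cite{CWWZ}, and your sketch of the underlying vorticity-method arguments (direct method, Lagrange-multiplier/duality characterization, test-function energy lower bound, and the self-improving diameter and localization estimates via the Green's function expansion) is exactly the structure the authors themselves reproduce in Subsection \ref{sec3-2} for the analogous rearrangement problem. The only cosmetic discrepancy is your heuristic for the lower diameter bound: in the actual argument (cf.\ Lemma \ref{lem3-24}) it follows trivially from $\zeta_\varepsilon\le C\varepsilon^{-2}$ together with $\int\zeta_\varepsilon\,d\nu=\kappa$, not from a self-energy blow-up.
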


 Note that the above maximization problem is quite different from that we have considered in Section \ref{sec2}. Taking a steady vortex ring  $\zeta_\ep\in \tilde\Sigma_\ep$ obtained in Proposition \ref{prop3-11}, to apply our stability result Theorem \ref{thmSs} (or Theorem \ref{Sset}) to show the stability of $\zeta_\ep$,  we need to consider the maximizing problem in rearrangement class:
 \begin{equation}\label{3-22}
 	\begin{split}
  \sup_{\zeta\in\overline{\mathcal{R}(\zeta_\ep)^w}}\mathcal E_{-W\ln\ep}(\zeta),
 	\end{split}
 \end{equation}
where $$\mathcal E_{-W\ln\ep}(\zeta)	:=\frac{1}{2}\int_{\mathbb{R}_+^2}{\zeta \mathcal{G}_1\zeta}d\nu-\frac{W}{2}\log\frac{1}{\varepsilon}\int_{\mathbb{R}_+^2}x_1^2\zeta d\nu.$$
 For simplicity, we    denote $\mathcal E_\ep:=\mathcal E_{-W\ln\ep}$ and  $  \Sigma_\ep$ as the set of maximizers of $\mathcal E_\varepsilon$ over the rearrangement class $\overline{\mathcal{R}(\zeta_\ep)^w}$. Our  results for problem \eqref{3-22} are presented in the following theorem, whose proof will be given in the next subsection.
 \begin{theorem}\label{thm3-12}
 	For $\ep>0$ small, the set of maximizers $\Sigma_\ep$ satisfies $$\emptyset\neq  \Sigma_\ep\subset \mathcal{R}(\zeta_\ep).$$ Moreover, for each  $\zeta^\ep \in   \Sigma_\ep$,  the following claims hold.
 	\begin{itemize}
 	\item [(i).]$ {\zeta}^\varepsilon= g_\ep(\mathcal{G}_1\zeta^\ep-\frac{W}{2}\ln\frac{1}{\ep}x_1^2-\tilde\mu_\ep)$ for some non-negative and non-decreasing function $g_\varepsilon:\mathbb{R}\to \mathbb{R}$ satisfying $g_\varepsilon(\tau)>0$ if $\tau>0$ and $g_\varepsilon(\tau)=0$ if $\tau\le0$ and some constant $\tilde \mu_\ep$.
 	\item [(ii).]  $C_3\ep \le \text{diam}\left(\mathrm{supp}({\zeta}^\varepsilon)\right)<C_4\ep$ for some constants $0<C_3<C_4<\infty$ and up to a suitable translation in the $x_2$ direction $\sup_{x\in \mathrm{supp}({\zeta}^\varepsilon)} |x-(r_*,0)|=o_{\ep}(1)$ with $r_*={\kappa}/{4\pi W}$.\end{itemize}
 \end{theorem}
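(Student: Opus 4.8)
The plan is to tie the maximization \eqref{3-22} over the weakly closed rearrangement class to the maximization \eqref{3-21} solved in \cite{CWWZ}, the bridge being that the penalization term is \emph{rearrangement invariant}. Since $\zeta_\ep$ is bounded with $\nu(\mathrm{supp}(\zeta_\ep))<\infty$, we have $\zeta_\ep\in L^1\cap L^q(\mathbb R^2_+,\nu)$ for every $q\in(2,\infty)$, so that Lemmas \ref{lem3-6}--\ref{lem3-7} apply; moreover, for any Borel $\Phi:[0,\infty)\to[0,\infty]$ with $\Phi(0)=0$, the integral $\int_{\mathbb R^2_+}\Phi(\zeta)\,d\nu$ has the same value for every $\zeta\in\mathcal R(\zeta_\ep)$. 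Applying this with $\Phi(t)=\ep^{-2}G(\ep^2 t)$ gives a finite constant $c_\ep:=\ep^{-2}\int_{\mathbb R^2_+}G(\ep^2\zeta)\,d\nu=\ep^{-2}\int_{D_0}G(\ep^2\zeta_\ep)\,d\nu=O(1)$ (by Proposition \ref{prop3-11}(iii)), independent of $\zeta\in\mathcal R(\zeta_\ep)$, whence $\mathcal E_\ep(\zeta)=E_\ep(\zeta)+c_\ep$ on $\mathcal R(\zeta_\ep)$. By Proposition \ref{prop3-11} together with the remark that the constraint $\mathrm{supp}(\zeta)\subseteq D_0$ may be relaxed to $\mathrm{supp}(\zeta)\subseteq\Pi$, the function $\zeta_\ep$ maximizes $E_\ep$ over $\mathcal A':=\{0\le\zeta\in L^\infty(\mathbb R^2_+):\ \int_{\mathbb R^2_+}\zeta\,d\nu\le\kappa\}\supseteq\mathcal R(\zeta_\ep)$; hence $\zeta_\ep$ maximizes $\mathcal E_\ep$ over $\mathcal R(\zeta_\ep)$, and Lemma \ref{lem3-7} yields $S_\ep:=\sup_{\overline{\mathcal R(\zeta_\ep)^w}}\mathcal E_\ep=\sup_{\mathcal R(\zeta_\ep)}\mathcal E_\ep=\mathcal E_\ep(\zeta_\ep)$, which is attained. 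In particular $\Sigma_\ep\neq\emptyset$.

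To prove $\Sigma_\ep\subset\mathcal R(\zeta_\ep)$ I would use that $E(\zeta)=\tfrac12\int_{\mathbb R^2_+}\zeta\,\mathcal G_1\zeta\,d\nu$ is a positive definite quadratic form on $L^2(\mathbb R^2_+,\nu)$ (writing $\psi=\mathcal G_1\zeta$, one has $\int\zeta\,\mathcal G_1\zeta\,d\nu=\int_{\mathbb R^2_+}x_1^{-1}|\nabla\psi|^2\,dx$, which vanishes only for $\zeta\equiv0$), so $\mathcal E_\ep=E-W\ln\tfrac{1}{\ep}\,P$ is strictly convex; a strictly convex functional can attain its maximum over the weakly compact convex set $\overline{\mathcal R(\zeta_\ep)^w}$ only at extreme points, and the extreme points of $\overline{\mathcal R(\zeta_\ep)^w}$ are exactly the members of $\mathcal R(\zeta_\ep)$ (the rearrangement theory of \cite{Dou, Bad}). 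Hence every $\zeta^\ep\in\Sigma_\ep$ lies in $\mathcal R(\zeta_\ep)$, and then, combining the first paragraph with $\mathcal E_\ep(\zeta^\ep)=S_\ep$,
\[
E_\ep(\zeta^\ep)=\mathcal E_\ep(\zeta^\ep)-c_\ep=S_\ep-c_\ep=\mathcal E_\ep(\zeta_\ep)-c_\ep=E_\ep(\zeta_\ep)=\max_{\zeta\in\mathcal A'}E_\ep(\zeta).
\]
Thus every maximizer $\zeta^\ep$ of \eqref{3-22} is in fact a maximizer of $E_\ep$ over the enlarged admissible class $\mathcal A'$.

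It remains to establish (i) and (ii) for such a $\zeta^\ep$, and here I would run the vorticity method of \cite{CWWZ} with the box constraint removed. Since $\zeta^\ep\in\mathcal R(\zeta_\ep)$, it is bounded and has support of finite, fixed $\nu$-measure $\asymp\ep^2$, so $\mathrm{diam}(\mathrm{supp}(\zeta^\ep))\ge C_3\ep$ is immediate; arguing as in Lemmas \ref{lem3-5}--\ref{lem3-6} shows $\mathrm{supp}(\zeta^\ep)\subseteq[0,R_0]\times\mathbb R$. The a priori energy estimate $E_\ep(\zeta^\ep)=\bigl(\tfrac{\kappa^2 r_*}{4\pi}-\tfrac{\kappa W r_*^2}{2}\bigr)\ln\tfrac{1}{\ep}+O(1)$, in which the leading coefficient $\kappa r_*\cdot\tfrac{W r_*}{2}$ is strictly positive and is the maximum over the location and core-size parameters, combined with a sharp lower bound produced by a trial vortex patch concentrated near $\mathcal C_{r_*}$ and a concentration argument in the spirit of Lemma \ref{ccp}, then forces $\mathrm{supp}(\zeta^\ep)$ to have diameter $<C_4\ep$ and, after a translation in the $x_2$ direction, to collapse to a point of $\mathcal C_{r_*}$ with $r_*=\kappa/(4\pi W)$; this is (ii). Once this localization is known, the translated $\zeta^\ep$ lies in the original class $\mathcal A$, hence in $\tilde\Sigma_\ep$, so Proposition \ref{prop3-11}(i) applies and gives $\zeta^\ep=\ep^{-2}f\bigl(\mathcal G_1\zeta^\ep-\tfrac{W}{2}\ln\tfrac{1}{\ep}\,x_1^2-\mu_\ep\bigr)$; this is (i) with $g_\ep:=\ep^{-2}f$ and $\tilde\mu_\ep:=\mu_\ep$, the required sign, monotonicity and support properties of $g_\ep$ being precisely $(f_1)$.

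I expect the last step to be the main obstacle: the vorticity method of \cite{CWWZ} is designed for competitors supported in the fixed box $D_0$, whereas here one must re-derive its a priori estimates for the maximizer over $\mathcal A'$ — in particular the two-sided control of $\mathrm{diam}(\mathrm{supp}(\zeta^\ep))$ and the exclusion of any loss of the vortex mass to infinity or to other radii — which is where the energy upper bound, the trial-function lower bound, and a concentration--compactness argument must be combined carefully. The remaining ingredients (the rearrangement invariance of the penalization, the strict convexity of $\mathcal E_\ep$, and the transfer of maximizers between the two problems) are comparatively routine. Throughout, $f$ is assumed to satisfy $(f_1)$--$(f_3)$, so that Proposition \ref{prop3-11} is available.
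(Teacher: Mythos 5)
Your reduction of the rearrangement-class problem \eqref{3-22} to the \cite{CWWZ} problem via the rearrangement invariance of the penalization $\ep^{-2}\int G(\ep^{2}\zeta)\,d\nu$ is a genuinely different strategy from the paper's, and the soft half of it is sound: the identity $\mathcal E_\ep=E_\ep+c_\ep$ on $\mathcal R(\zeta_\ep)$ is correct, and the strict-convexity/extreme-point argument for $\Sigma_\ep\subset\mathcal R(\zeta_\ep)$ would work \emph{provided} you supply the characterization of the extreme points of $\overline{\mathcal{R}(\zeta_\ep)^w}$ as exactly $\mathcal R(\zeta_\ep)$ in this weighted, infinite-measure setting; the paper's Lemma \ref{lem3-1} records only convexity, weak compactness, density and the norm inequalities, so this is an extra citation you must justify. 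The paper instead proves membership in $\mathcal R(\zeta_\ep)$ via the first-variation inequality together with Burton's Lemma \ref{lem3-15} and Steiner symmetrization (Lemma \ref{lem3-16}), which has the decisive advantage of delivering the representation (i) at the same time.

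The genuine gap is in (i)--(ii). Your argument there rests on two unproven inputs: that $\zeta_\ep$ maximizes $E_\ep$ over the box-free class $\mathcal A'$, and that the asymptotic analysis of \cite{CWWZ} (Proposition \ref{prop3-11}) persists for maximizers over $\mathcal A'$. Both are only asserted in a remark in Section \ref{sec1}; the paper's proof deliberately avoids relying on them and instead re-derives, for maximizers of \eqref{3-22}, the uniform support bounds (Lemmas \ref{lem3-17} and \ref{lem3-19}), the lower bound on the multiplier $\tilde\mu_\ep$ (Lemma \ref{lem3-18}), the convergence $A_\ep,B_\ep\to r_*$ (Lemmas \ref{lem3-20}--\ref{lem3-22}) and the two-sided $O(\ep)$ diameter bound (Lemmas \ref{lem3-23}--\ref{lem3-24}); this is the bulk of the proof, and you explicitly defer it. There is also an ordering problem: all of these localization lemmas use the representation (i), specifically the inequality $\mathcal{G}_1\zeta^\ep-\frac{W}{2}x_1^2\ln\frac{1}{\ep}\geq\tilde\mu_\ep$ on $\mathrm{supp}(\zeta^\ep)$ together with the lower bound on $\tilde\mu_\ep$, whereas your plan obtains (i) only after (ii) via Proposition \ref{prop3-11}(i). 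Unless you derive the variational inequality on the support independently (which is exactly what Lemma \ref{lem3-16} does), the localization step you yourself flag as ``the main obstacle'' cannot even be started.
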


With the asymptotic behavior obtained in Theorem \ref{thm3-12}, we prove the following theorem, which reduces the stability of a concentrated vortex ring to a problem of the local uniqueness of solutions to an elliptic system.
\begin{theorem}\label{U-S}
	Suppose that $f$ satisfies conditions $(f_1)$ and either $(f_2)$ or $(f_3)$. For given $\kappa>0$ and $W>0$, let $\zeta_\ep$ be the vortex ring obtained in Proposition \ref{prop3-11} with the distribution function $f$. Suppose that for fixed $\ep>0$ small, the solution to following problem is unique up to translations in the $x_2$-direction:
	\begin{equation}\label{3-45}
		\begin{cases}
			-\frac{1}{x_1}\mathrm{div}\left(\frac{1}{x_1}\nabla u\right) =\frac{1}{\ep^2}f\left(u-\frac{W}{2}x_1^2\ln\frac{1}{\ep}-\mu_\ep\right) , & \text{in} \ \mathbb R^2_+,
			\\
			u=0, & \text{on} \ x_1=0,
			\\
			u  \to0, &\text{as} \ |  x |\to \infty,\\
			\ep^{-2}\int_{\mathbb{R}_+^2} f\left(u-\frac{W}{2}x_1^2\ln\frac{1}{\ep}-\mu_\ep\right)=\kappa,\\
			\mathrm{diam}\left(\mathrm{supp}\left(\left(u-\frac{W}{2}x_1^2\ln\frac{1}{\ep}-\mu_\ep\right)_+\right) \right)\leq C\ep,  & \text{for some constant}\  C>0,\\
			\mathrm{dist}\left(\mathrm{supp}\left(\left(u-\frac{W}{2}x_1^2\ln\frac{1}{\ep}-\mu_\ep\right)_+\right), x_0 \right)\to 0, & \text{as}\ \ep\to0\  \text{for some}\ x_0\in \mathbb{R}_+^2.
		\end{cases}
	\end{equation}
	Then $\zeta_\ep$ is orbitally stable in the following sense:
	
	For every $\epsilon>0$, there exists $\delta>0$ such that for any nonnegative $\omega_0 $ satisfying $\omega_0, x_1\omega_0\in L^\infty(\mathbb R^2_+)$,  $P(|\omega_0|)<+\infty$  and
	\begin{equation*}
		\inf_{c\in \mathbb{R}}\Bigg{\{}\vertiii{\omega_0-\zeta_{\ep}(\cdot+c\mathbf{e}_2)}_1+\vertiii{\omega_0-\zeta_{\ep}(\cdot+c\mathbf{e}_2)}_2+|P(\omega_0-\zeta_{\ep}(\cdot+c\mathbf{e}_2))|\Bigg{\}} \leq \delta,
	\end{equation*}
	it holds
	\begin{equation*}
		\inf_{c\in \mathbb{R}} \Big{\{}\vertiii{\omega(t)-\zeta_{\ep}(\cdot+c\mathbf{e}_2)}_1+\vertiii{\omega(t)-\zeta_{\ep}(\cdot+c\mathbf{e}_2)}_2+P(|\omega(t)-\zeta_{\ep}(\cdot+c\mathbf{e}_2)|) \Big{\}}\leq \epsilon,\ \ \text{for all}\,\, t>0,
	\end{equation*}
	where $\omega(t)$ is the corresponding  solution of \eqref{1-3}  with initial data $\omega_0$.
\end{theorem}
\begin{proof}
	  Recall that  $\tilde\Sigma_{\varepsilon}\subset 	\mathcal{A}$ denotes the set of maximizers of $E_\ep$ defined by \eqref{3-21} and there exists at least one maximizer $\zeta_\ep\in \tilde\Sigma_{\varepsilon}$ for $\ep$ small. For arbitrary $\tilde\zeta_\ep\in \tilde\Sigma_{\varepsilon}$, we deduce from Proposition \ref{prop3-11} that the corresponding stream function $\tilde \psi_\ep:=\mathcal{G}_1\tilde\zeta_\ep$ is a solution \eqref{3-45}. Therefore, by the assumption of the uniqueness, we conclude that
	$$\tilde\Sigma_{\varepsilon}=\left\{ \zeta_\ep(\cdot+c\mathbf{e}_2)\mid c\in\mathbb{R}\right\}.$$
	
	Next, as in Theorem \ref{thm3-12}, we maximize   $\mathcal E_\varepsilon$ defined by \eqref{3-22} over the rearrangement class $\overline{\mathcal{R}(\zeta_\ep)^w}$ and denote $  \Sigma_\ep$ as the set of maximizers. Then, by Theorem \ref{thm3-12}, we have $$\emptyset\neq  \Sigma_\ep\subset \mathcal{R}(\zeta_\ep)$$ and $$\text{supp}(\zeta^\ep)\subset D_0,$$
	which implies $\Sigma_\ep\subset \mathcal{A}$.
	Notice that $\frac{1}{\varepsilon^2}\int_{D_0} G(\varepsilon^2\zeta)d\nu=const., \quad \forall \zeta\in \mathcal{R}(\zeta_\ep)$ by the property of rearrangement. Then it can be  easily seen that
	$$\Sigma_\ep=\tilde\Sigma_{\varepsilon}=\left\{ \zeta_\ep(\cdot+c\mathbf{e}_2)\mid c\in\mathbb{R}\right\}.$$
	
	Therefore,   the stability of $\zeta_\ep$ follows immediately by applying Theorem \ref{Sset} to $\Sigma_\ep$ and thus the proof is complete.
\end{proof}

Now, we apply Theorem \ref{thmU} (or Theorem \ref{thm2-1}) and Theorem \ref{U-S} to prove Theorem \ref{thmS}.

\noindent{\bf Proof of Theorem \ref{thmS}:}
The proof of Theorem \ref{thmS} is a combinations of Theorem \ref{thmU} and Theorem \ref{U-S}. \qed

\subsection{Asymptotic behavior of maximizers in rearrangement class and proof of Theorem 4.2}\label{sec3-2}
We   study the asymptotic behaviors of maximizers of   problem \eqref{3-22} in rearrangement class by an adapted vorticity method of \cite{CWWZ}, which will prove Theorem \ref{thm3-12}.
A series of lemmas are needed.
 \begin{lemma}\label{lem3-13}
 	\begin{equation*}
 		\max_{\zeta\in\overline{\mathcal{R}(\zeta_\ep)^w}}\mathcal E_\varepsilon(\zeta)\geq \left(\frac{\kappa^2r_*}{4\pi}-\frac{\kappa Wr_*^2}{2}\right)\log{\frac{1}{\varepsilon}}+O_\varepsilon(1).
 	\end{equation*}
 \end{lemma}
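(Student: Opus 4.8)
The plan is to use $\zeta_\ep$ itself as a test function in the maximization problem \eqref{3-22}. Since $\zeta_\ep\in\mathcal{R}(\zeta_\ep)\subset\overline{\mathcal{R}(\zeta_\ep)^w}$, it is an admissible competitor, so that
\begin{equation*}
	\max_{\zeta\in\overline{\mathcal{R}(\zeta_\ep)^w}}\mathcal E_\varepsilon(\zeta)\ \geq\ \mathcal E_\varepsilon(\zeta_\ep)\ =\ \Big(E-W\ln\tfrac{1}{\ep}\,P\Big)(\zeta_\ep).
\end{equation*}
It then remains only to evaluate the right-hand side asymptotically, and for this I would invoke Proposition \ref{prop3-11}, which is applicable because the nonlinearity $f(s)=s_+^p$ with $p\geq 2$ satisfies $(f_1)$--$(f_3)$ (taking, e.g., $\delta_0=\tfrac{1}{p+1}$ in $(f_3)$), and the $\zeta_\ep$ entering the rearrangement class in \eqref{3-22} is precisely the maximizer of \eqref{3-21} furnished by that proposition (equivalently, the vortex ring of Theorem \ref{thmE}).

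From the definition \eqref{3-21} of $E_\ep$ together with the definition of $\mathcal E_\ep$ one has the identity
\begin{equation*}
	\mathcal E_\varepsilon(\zeta_\ep)=E_\varepsilon(\zeta_\ep)+\frac{1}{\varepsilon^2}\int_{D_0} G(\varepsilon^2\zeta_\ep)\,d\nu .
\end{equation*}
Proposition \ref{prop3-11}(iii) gives $E_\varepsilon(\zeta_\varepsilon)=\big(\tfrac{\kappa^2 r_*}{4\pi}-\tfrac{\kappa W r_*^2}{2}\big)\log\tfrac{1}{\varepsilon}+O(1)$ and $\varepsilon^{-2}\int_{D_0}G(\varepsilon^2\zeta_\ep)\,d\nu=O(1)$, whence
\begin{equation*}
	\mathcal E_\varepsilon(\zeta_\ep)=\Big(\frac{\kappa^2 r_*}{4\pi}-\frac{\kappa W r_*^2}{2}\Big)\log\frac{1}{\varepsilon}+O(1),
\end{equation*}
and the claimed lower bound follows immediately.

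There is essentially no analytic obstacle here: the statement is a one-line consequence of the already established asymptotics for the vorticity-method maximizer $\zeta_\ep$. The only point requiring a bit of care is bookkeeping of normalizations, namely checking that the functionals $E$ and $P$ used in Subsection \ref{sec3-1} (which already incorporate the division by $2\pi$) coincide with those appearing in \eqref{3-21}, so that the value $E_\varepsilon(\zeta_\varepsilon)$ quoted from \cite{CWWZ} may be transferred directly into the estimate for $\mathcal E_\varepsilon(\zeta_\ep)$.
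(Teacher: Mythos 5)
Your proposal is correct and is essentially the paper's own argument: the paper proves the lemma in one line by noting $\zeta_\ep\in\overline{\mathcal{R}(\zeta_\ep)^w}$ and invoking Proposition \ref{prop3-11}(iii), which is exactly your test-function computation with the $O(1)$ bound on $\varepsilon^{-2}\int_{D_0}G(\varepsilon^2\zeta_\ep)\,d\nu$. Your version just spells out the bookkeeping between $\mathcal E_\varepsilon$ and $E_\varepsilon$ more explicitly.
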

 \begin{proof}
 	This is a simple consequence of the fact $\zeta_\ep\in\overline{\mathcal{R}(\zeta_\ep)^w}$ and Proposition \ref{prop3-11} (iii).
 \end{proof}

 \begin{lemma}\label{lem3-14}
 	$\mathcal{E}_\ep$ attains its maximum value over $\overline{\mathcal{R}(\zeta_\ep)^w}$ at some $\zeta^\varepsilon$, which is Steiner symmetric about the $x_1$-axis.
 \end{lemma}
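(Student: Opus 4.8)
The plan is to produce a maximizing sequence consisting of functions that are Steiner symmetric about the $x_1$-axis and supported in a \emph{fixed} compact rectangle, and then to pass to a weak limit. Let $\{\zeta_n\}_{n=1}^\infty\subset \overline{\mathcal{R}(\zeta_\ep)^w}$ be a maximizing sequence for $\mathcal{E}_\ep$; such a sequence exists since $\sup_{\overline{\mathcal{R}(\zeta_\ep)^w}}\mathcal{E}_\ep$ is finite by Lemma \ref{lem3-6} (and strictly positive for $\ep$ small by Lemma \ref{lem3-13}). Here $\zeta_\ep\in L^1\cap L^q(\mathbb{R}_+^2,\nu)$ for every $2<q<\infty$ with $\nu(\mathrm{supp}(\zeta_\ep))<\infty$, by Proposition \ref{prop3-11}. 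Applying Lemma \ref{lem3-5}~ii) with $h=\zeta_n 1_{(R_0,\infty)\times\mathbb{R}}$ (valid because $(R_0,\infty)\times\mathbb{R}\subset\{\mathcal{G}_1\zeta_n-\frac{W}{2}\ln\frac1\ep\,x_1^2\le 0\}$ by Lemma \ref{lem3-5}~i)), I may assume each $\zeta_n$ is supported in $[0,R_0]\times\mathbb{R}$. Replacing $\zeta_n$ by its Steiner symmetrization $\zeta_n^\star$ keeps the support in $[0,R_0]\times\mathbb{R}$, preserves $\vertiii{\zeta_n}_1$, $\vertiii{\zeta_n}_q$ and $P(\zeta_n)$ (the symmetrization only rearranges each $x_1$-slice in the $x_2$-variable, and $x_1^2$ does not depend on $x_2$), does not decrease the energy by the rearrangement inequality \eqref{rieq}, and stays inside $\overline{\mathcal{R}(\zeta_\ep)^w}$ because Steiner symmetrization preserves the $\nu$-distribution function. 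Thus, after relabelling, $\{\zeta_n\}$ is a maximizing sequence of Steiner symmetric functions supported in $[0,R_0]\times\mathbb{R}$ with $\vertiii{\zeta_n}_1\le\vertiii{\zeta_\ep}_1$, $\vertiii{\zeta_n}_q\le\vertiii{\zeta_\ep}_q$ (Lemma \ref{lem3-1}~d)) and $P(\zeta_n)\le R_0^2\vertiii{\zeta_\ep}_1$.

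Next I would pin down the support in the $x_2$-direction. By the decay estimate for Steiner symmetric functions, Lemma \ref{lem3-4}, together with the uniform bounds just recorded, there is a constant $Z_0>1$ \emph{independent of $n$} such that $\mathcal{G}_1\zeta_n(x)-\frac{W}{2}\ln\frac1\ep\,x_1^2\le 0$ whenever $|x_2|>Z_0$ and $0<x_1\le R_0$. Applying Lemma \ref{lem3-5}~ii) once more with $h=\zeta_n 1_{\{|x_2|>Z_0\}}$, I may replace $\zeta_n$ by $\zeta_n 1_K$ with $K:=[0,R_0]\times[-Z_0,Z_0]$ without decreasing $\mathcal{E}_\ep$; this cut-off remains Steiner symmetric (the $x_1$-slices of its superlevel sets are centred intervals) and remains in $\overline{\mathcal{R}(\zeta_\ep)^w}$. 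Hence I obtain a maximizing sequence of Steiner symmetric functions all supported in the fixed compact set $K$ and bounded in $L^q(K,\nu)$.

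Finally I would extract a weak limit and verify that it is a Steiner symmetric maximizer. Since $1<q<\infty$, after passing to a subsequence $\zeta_n\rightharpoonup\zeta^\ep$ weakly in $L^q(K,\nu)$; then $\zeta^\ep\in\overline{\mathcal{R}(\zeta_\ep)^w}$ by weak compactness (Lemma \ref{lem3-1}~a)), $\zeta^\ep$ is supported in $K$, and $\zeta^\ep$ is Steiner symmetric about the $x_1$-axis because "even in $x_2$" is the kernel of the bounded linear reflection operator (hence weakly closed) and "non-increasing in $|x_2|$ for a.e.\ $x_1$" defines a strongly closed convex set (hence weakly closed). To see $\mathcal{E}_\ep(\zeta^\ep)=\sup$: the linear functional $\zeta\mapsto P(\zeta)=\tfrac12\int_K x_1^2\zeta\,d\nu$ is weakly continuous since $x_1^2\in L^{q'}(K,\nu)$; and $E(\zeta_n)\to E(\zeta^\ep)$ because \eqref{2-9} (with $\delta$ close to $3/2$) gives $G_1\in L^{s}(K\times K,\nu\otimes\nu)$ for every $s<\infty$, so $\mathcal{G}_1\zeta_n\to\mathcal{G}_1\zeta^\ep$ in $L^{q'}(K,\nu)$ by dominated convergence together with the uniform bound $\|\mathcal{G}_1\zeta_n\|_{L^\infty(K)}\le C$, whence $\langle\zeta_n,\mathcal{G}_1\zeta_n\rangle_\nu\to\langle\zeta^\ep,\mathcal{G}_1\zeta^\ep\rangle_\nu$ by the standard weak–strong argument $\langle\zeta_n,\mathcal{G}_1\zeta_n\rangle-\langle\zeta^\ep,\mathcal{G}_1\zeta^\ep\rangle=\langle\zeta_n-\zeta^\ep,\mathcal{G}_1\zeta^\ep\rangle+\langle\zeta_n-\zeta^\ep,\mathcal{G}_1\zeta_n-\mathcal{G}_1\zeta^\ep\rangle$. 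Therefore $\mathcal{E}_\ep(\zeta^\ep)=\lim_n\mathcal{E}_\ep(\zeta_n)=\sup_{\overline{\mathcal{R}(\zeta_\ep)^w}}\mathcal{E}_\ep$, so $\zeta^\ep$ is a Steiner symmetric maximizer.

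The main obstacle is the absence of any a priori compactness in the $x_2$-direction: a general maximizing sequence for $\mathcal{E}_\ep$ over $\overline{\mathcal{R}(\zeta_\ep)^w}$ could drift to $x_2=\pm\infty$. The device that removes it is the combination of the rearrangement inequality \eqref{rieq} (which lets one assume Steiner symmetry at no cost) with the Steiner-symmetric decay estimate Lemma \ref{lem3-4} (which then forces uniform compact support via Lemma \ref{lem3-5}~ii)); once the problem is confined to the fixed rectangle $K$, weak compactness of $L^q(K,\nu)$ and compactness of the Green operator there make the convergence of $E$ and $P$ routine.
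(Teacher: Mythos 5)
Your proposal is correct and follows essentially the same route as the paper's proof: truncate the maximizing sequence to a vertical strip via Lemma \ref{lem3-5}, Steiner symmetrize using \eqref{rieq}, confine the support in the $x_2$-direction via Lemma \ref{lem3-4} and Lemma \ref{lem3-5}~ii), and pass to a weak limit on the resulting compact rectangle using weak continuity of $E$ and $P$ there. Your explicit verification that the weak limit inherits Steiner symmetry is a small detail the paper leaves implicit, but it is not a different argument.
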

 \begin{proof}
 	By Lemma \ref{lem3-5}, there exists a constant $R_\ep>0$   depending  on $\ep$, $\zeta_\ep$ and $W$ such that
 	$$\mathcal{G}_1\zeta(x)-\frac{W }{2}x_1^2\ln \frac{1}{\ep}<0,\quad \forall x\ \text {with}\  x_1>R_\ep, \quad \forall \zeta\in \overline{\mathcal{R}(\zeta_\ep)^w}.$$
 	 It is easy to check that $\mathcal{E}_\ep$ is bounded from above over $\overline{\mathcal{R}(\zeta_\ep)^w}$ by Lemma \ref{lem3-7}. Let $\{\zeta_j\}\subset \overline{\mathcal{R}(\zeta_\ep)^w}$ be a sequence such that as $j\to+\infty$
 	\begin{equation*}
 		\mathcal{E}_\ep(\zeta_j)\to \sup_{\overline{\mathcal{R}(\zeta_\ep)^w}}\mathcal{E}_\ep.
 	\end{equation*}
 	We may assume that $\zeta_j$ is supported in $(0, R_\ep)\times \mathbb R$ by Lemma \ref{lem3-5}. We can also assume $\zeta_j$ is Steiner symmetric about the $x_1$-axis by replacing $\zeta_j$ with its own  Steiner symmetrization. Since $\int_{\mathbb{R}_+^2} x_1^2 \zeta_j d\nu \leq R_\ep^2 \kappa$,  there exists a constant $Z_\ep>0$ such that $\mathcal{G}_1\zeta_j(x)-\frac{W}{2}x_1^2\ln\frac{1}{\ep} <0,\quad \forall x\ \text {with}\  |x_2|>Z_\ep,\quad\forall \  j$ due to \eqref{3-4}. Thus, by  Lemma \ref{lem3-5} ($\mathrm{ii}$), we may assume that  $\zeta_j$ is supported in $[0, R_\ep]\times [-Z_\ep,Z_\ep]$. Then, up to a subsequence, we may assume that as $j\to +\infty$, $\zeta_j\to \zeta^\varepsilon \in \overline{\mathcal{R}(\zeta_\ep)^w}$ weakly in $L^{2}(\mathbb{R}^2, \nu)$. Since $G_1(\cdot,\cdot)\in L^2_{loc} $ by \eqref{2-9}, we deduce that
 	\begin{equation*}
 		\lim_{j\to +\infty}\mathcal{E}_\ep(\zeta_j)=\mathcal{E}_\ep(\zeta^\varepsilon).
 	\end{equation*}
 	This means that $\zeta^\varepsilon$ is a maximizer and completes the proof.
 \end{proof}
 Having shown the existence of a maximizer, to study the properties of maximizers, we need the following lemma from \cite{Bu1}.
 \begin{lemma}[Lemmas 2.4 and 2.9 in \cite{Bu1}]\label{lem3-15}
 	Let $(\Omega,\nu)$ be a finite positive measure space. Let $\xi_0: \Omega\to \mathbb{R}$ and $\zeta_0: \Omega\to \mathbb{R}$ be $\nu$-measurable functions, and suppose that every level set of $\zeta_0$ has zero measure. Then there is a nondecreasing function $f$ such that $f\circ \zeta_0$ is a rearrangement of $\xi_0$. Moreover, if $\xi_0 \in L^q(\Omega,\nu)$ for some $1\le q<+\infty$ and $\zeta_0\in L^{q'}(\Omega,\nu)$, then $f\circ \zeta_0$ is the unique maximizer of linear functional
 	\begin{equation*}
 		M(\xi):=\int_\Omega \xi(x)\zeta_0(x)d\nu(x)
 	\end{equation*}
 	relative to $\overline{\mathcal{R}(\xi_0)^w}$.
 \end{lemma}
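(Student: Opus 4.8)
The plan is to recover both halves of the statement from the classical theory of rearrangements (this is exactly the content of \cite[Lemmas 2.4 and 2.9]{Bu1}). First I would fix notation: let $m:=\nu(\Omega)<\infty$ and let $\zeta_0^\Delta,\xi_0^\Delta\colon(0,m)\to\mathbb R$ denote the decreasing rearrangements of $\zeta_0$ and $\xi_0$ onto the interval $(0,m)$ equipped with Lebesgue measure, each being nonincreasing and equimeasurable with the corresponding function. The hypothesis that every level set $\{\zeta_0=t\}$ is $\nu$-null is precisely what makes the distribution function $t\mapsto\nu(\{\zeta_0>t\})$ continuous; consequently the map $\sigma(x):=\nu(\{\zeta_0>\zeta_0(x)\})$ pushes $\nu$ forward to Lebesgue measure on $(0,m)$ and satisfies $\zeta_0=\zeta_0^\Delta\circ\sigma$ a.e. Setting $f:=\xi_0^\Delta\circ(\zeta_0^\Delta)^{-1}$, with $(\zeta_0^\Delta)^{-1}$ an appropriate generalized inverse defined on the essential range of $\zeta_0$, one checks that $f$ is nondecreasing (the composition of a nonincreasing map with the decreasing generalized inverse of a nonincreasing one) and that $f\circ\zeta_0=\xi_0^\Delta\circ\sigma$ a.e.; since $\sigma$ is measure preserving this shows $f\circ\zeta_0$ is equimeasurable with $\xi_0^\Delta$, hence with $\xi_0$, so $f\circ\zeta_0\in\mathcal R(\xi_0)$. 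This settles the first assertion.

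For the second assertion I would first verify that $f\circ\zeta_0$ maximizes $M(\xi)=\int_\Omega\xi\zeta_0\,d\nu$ over $\mathcal R(\xi_0)$: by the Hardy--Littlewood (bathtub) inequality, $M(\xi)\le\int_0^m\xi_0^\Delta\zeta_0^\Delta\,ds$ for every $\xi\in\mathcal R(\xi_0)$, and the right-hand side is attained by $f\circ\zeta_0$ in view of $f\circ\zeta_0=\xi_0^\Delta\circ\sigma$, $\zeta_0=\zeta_0^\Delta\circ\sigma$ with $\sigma$ measure preserving. To pass from $\mathcal R(\xi_0)$ to $\overline{\mathcal R(\xi_0)^w}$ I would use that $M$ is a bounded linear functional on $L^q(\Omega,\nu)$ (recall $\zeta_0\in L^{q'}$), hence weakly continuous, together with the weak density of $\mathcal R(\xi_0)$ in the weakly compact convex set $\overline{\mathcal R(\xi_0)^w}$ (the density and compactness facts recalled in Lemma \ref{lem3-1}, see also \cite{Bu1,Dou}); therefore $\sup_{\overline{\mathcal R(\xi_0)^w}}M=\sup_{\mathcal R(\xi_0)}M=M(f\circ\zeta_0)$, so $f\circ\zeta_0$ is a maximizer over $\overline{\mathcal R(\xi_0)^w}$ as well.

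The hard part is uniqueness. Given any maximizer $\psi\in\overline{\mathcal R(\xi_0)^w}$ of $M$, the key claim is that $\psi$ and $\zeta_0$ are \emph{similarly ordered}, i.e. $\bigl(\psi(x)-\psi(y)\bigr)\bigl(\zeta_0(x)-\zeta_0(y)\bigr)\ge0$ for $(\nu\times\nu)$-a.e. $(x,y)$. If this failed, one could find disjoint positive-measure sets on which a measure-preserving transposition of the values of $\psi$ keeps the perturbed function inside $\overline{\mathcal R(\xi_0)^w}$—here the defining inequalities $\int_\Omega(\psi-\alpha)_+\,d\nu\le\int_\Omega(\xi_0-\alpha)_+\,d\nu$ must be used to certify admissibility of the perturbation—while strictly increasing $M$, contradicting maximality. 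Once $\psi$ is known to be similarly ordered with $\zeta_0$, it must have the form $\psi=h\circ\zeta_0$ a.e. for some nondecreasing $h$ defined on the essential range of $\zeta_0$; then the null-level-set hypothesis (equivalently, the continuity of the distribution of $\zeta_0$) makes the equimeasurability and maximality constraints determine $h$ uniquely up to a $(\nu\circ\zeta_0^{-1})$-null set, which forces $\psi=f\circ\zeta_0$ a.e. I expect the transposition/exchange argument, carried out within the weak closure $\overline{\mathcal R(\xi_0)^w}$ rather than within $\mathcal R(\xi_0)$ itself, to be the main technical obstacle, and it is exactly for this that one may invoke \cite[Lemmas 2.4 and 2.9]{Bu1}.
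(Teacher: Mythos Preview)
The paper does not supply its own proof of this lemma: it is stated with the attribution ``Lemmas 2.4 and 2.9 in \cite{Bu1}'' and immediately used, so there is nothing in the paper to compare your argument against beyond the citation itself. Your sketch is a faithful outline of Burton's original argument---the construction of $f$ via decreasing rearrangements and a generalized inverse, the Hardy--Littlewood inequality for the maximum, and the similar-ordering/exchange argument for uniqueness---and you correctly flag the passage from $\mathcal R(\xi_0)$ to $\overline{\mathcal R(\xi_0)^w}$ as the delicate point.

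One remark on that delicate point: your uniqueness argument ends by saying that once $\psi=h\circ\zeta_0$ with $h$ nondecreasing, ``the equimeasurability and maximality constraints determine $h$ uniquely''. But a maximizer $\psi\in\overline{\mathcal R(\xi_0)^w}$ is not a priori equimeasurable with $\xi_0$, so equimeasurability cannot be invoked directly. The clean way to close this (and the way Burton does it) is structural: the maximizing set of a bounded linear functional on the weakly compact convex set $\overline{\mathcal R(\xi_0)^w}$ is a weakly closed face, every extreme point of which is an extreme point of $\overline{\mathcal R(\xi_0)^w}$ and hence lies in $\mathcal R(\xi_0)$; since you have already shown the maximizer in $\mathcal R(\xi_0)$ is unique, Krein--Milman forces the face to be the singleton $\{f\circ\zeta_0\}$. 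This both avoids the transposition argument inside the weak closure and supplies the missing equimeasurability.
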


 \begin{lemma}\label{lem3-16}
 	Let  $\zeta^\varepsilon \in \Sigma_\ep$ be an arbitrary maximizer. Then, up to a translation in the $x_2$ direction, $\zeta^\ep$ must be Steiner symmetric about the $x_1$-axis and supported in $[0, R_\ep]\times [-Z_\ep,Z_\ep]$.   Moreover,   for $\ep>0$ small,     $0\not\equiv \zeta^\varepsilon\in \mathcal{R}(\zeta_\ep)$ and there exists a nonnegative and nondecreasing function $g_\varepsilon: \mathbb{R}\to \mathbb{R}$ satisfying $g_\varepsilon(\tau)>0$ if $\tau>0$  and $g_\varepsilon(\tau)=0$ if $\tau\le 0$, such that for some constant $\tilde\mu_\varepsilon \geq 0$,
 	\begin{equation}\label{3-23}
 		\zeta^\varepsilon(x)=g_\varepsilon(\mathcal{G}_1\zeta^\varepsilon(x)-\frac{W}{2}x_1^2\ln\frac{1}{\ep} -\tilde\mu_\varepsilon),\ \ \forall\,x\in \mathbb{R}_+^2.
 	\end{equation}
 	
 \end{lemma}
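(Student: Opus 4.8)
The plan is to establish, in order, the four assertions of the lemma: Steiner symmetry up to a translation, the compact support, non-triviality, and membership in $\mathcal{R}(\zeta_\ep)$ together with the profile representation \eqref{3-23}. For the Steiner symmetry, I would compare $\zeta^\ep\in\Sigma_\ep$ with its Steiner symmetrization $\zeta^{\ep\star}$ about the $x_1$-axis. Symmetrizing in $x_2$ at fixed $x_1$ preserves the $\nu$-distribution and every slice mass, so $\zeta^{\ep\star}\in\mathcal{R}(\zeta^\ep)$, $P(\zeta^{\ep\star})=P(\zeta^\ep)$, and $\zeta^{\ep\star}\in\overline{\mathcal{R}(\zeta_\ep)^w}$ (the defining inequalities $\int(\cdot-\alpha)_+\,d\nu\le\int(\zeta_\ep-\alpha)_+\,d\nu$ pass to $\zeta^{\ep\star}$). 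By \eqref{rieq}, $E(\zeta^{\ep\star})\ge E(\zeta^\ep)$, hence $\mathcal{E}_\ep(\zeta^{\ep\star})\ge\mathcal{E}_\ep(\zeta^\ep)$; maximality forces equality, hence equality in \eqref{rieq}, which gives $\zeta^\ep(\cdot)\equiv\zeta^{\ep\star}(\cdot+c\mathbf{e}_2)$ for some $c$, so after this translation $\zeta^\ep$ is Steiner symmetric.

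For the support and non-triviality I would use Lemma \ref{lem3-5} i) with $\mathcal{W}=W\ln\frac1\ep$ to produce $R_\ep$ with $\mathcal{G}_1\zeta-\frac W2 x_1^2\ln\frac1\ep<0$ on $\{x_1>R_\ep\}$ for all $\zeta\in\overline{\mathcal{R}(\zeta_\ep)^w}$, then take $h=\zeta^\ep 1_{\{x_1>R_\ep\}}$ in Lemma \ref{lem3-5} ii): since $\zeta^\ep-h\le\zeta^\ep$ stays in $\overline{\mathcal{R}(\zeta_\ep)^w}$, maximality gives $\zeta^\ep=0$ for $x_1>R_\ep$. Then $P(\zeta^\ep)\le R_\ep^2\vertiii{\zeta_\ep}_1<\infty$ by Lemma \ref{lem3-1}, so Lemma \ref{lem3-4} yields $Z_\ep$ with $\mathcal{G}_1\zeta^\ep-\frac W2 x_1^2\ln\frac1\ep<0$ for $|x_2|>Z_\ep$, and a second truncation via Lemma \ref{lem3-5} ii) gives $\mathrm{supp}(\zeta^\ep)\subset[0,R_\ep]\times[-Z_\ep,Z_\ep]$. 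Non-triviality follows from Lemma \ref{lem3-13}: with $r_*=\kappa/(4\pi W)$ the leading coefficient equals $\frac{\kappa^2 r_*}{4\pi}-\frac{\kappa W r_*^2}{2}=\frac{\kappa^3}{32\pi^2 W}>0$, so $\mathcal{E}_\ep(\zeta^\ep)>0=\mathcal{E}_\ep(0)$ for $\ep$ small.

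For the profile, write $\phi_\ep:=\mathcal{G}_1\zeta^\ep-\frac W2 x_1^2\ln\frac1\ep$. The first-order optimality condition on the convex set $\overline{\mathcal{R}(\zeta_\ep)^w}$ (Lemma \ref{lem3-1}) reads $\int\phi_\ep\zeta\,d\nu\le\int\phi_\ep\zeta^\ep\,d\nu$ for all $\zeta\in\overline{\mathcal{R}(\zeta_\ep)^w}$, so $\zeta^\ep$ maximizes the linear functional $M(\zeta)=\int\zeta\phi_\ep\,d\nu$ there. Since $E(h)=\frac12\int_{\mathbb{R}^2_+}\frac1{x_1}|\nabla\mathcal{G}_1 h|^2\,dx$ is a strictly positive quadratic form (so $E(h)=0\Rightarrow h=0$), $\mathcal{E}_\ep$ is convex and the parallelogram identity for $E$ shows any splitting $\zeta^\ep=\frac12(\zeta_1+\zeta_2)$ in $\overline{\mathcal{R}(\zeta_\ep)^w}$ forces $E(\zeta_1-\zeta_2)=0$, hence $\zeta_1=\zeta_2$; thus $\zeta^\ep$ is an extreme point of $\overline{\mathcal{R}(\zeta_\ep)^w}$, which Burton's theory \cite{Bu1,Dou} identifies with $\mathcal{R}(\zeta_\ep)$. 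In particular $\zeta^\ep\in\mathcal{R}(\zeta_\ep)$ is bounded and $\emptyset\ne\Sigma_\ep\subset\mathcal{R}(\zeta_\ep)$ (non-emptiness by Lemma \ref{lem3-14}). Invoking Lemma \ref{lem3-15} on $([0,R_\ep]\times[-Z_\ep,Z_\ep],\nu)$ with $\xi_0=\zeta_\ep$, $\zeta_0=\phi_\ep$ (level sets $\nu$-null; see below) gives a nondecreasing $f_\ep$ with $\zeta^\ep=f_\ep\circ\phi_\ep$. Finally, taking $V=\{\phi_\ep\le0\}$ and $h=\zeta^\ep 1_V\le\zeta^\ep$ in Lemma \ref{lem3-5} ii) forces $\zeta^\ep=0$ a.e. on $\{\phi_\ep\le0\}$, so $f_\ep\equiv0$ on $(-\infty,0]$; setting $\tilde\mu_\ep:=\sup\{t\ge0:f_\ep(t)=0\}\in[0,\infty)$ and $g_\ep(\tau):=f_\ep(\tau+\tilde\mu_\ep)$ produces a nonnegative nondecreasing $g_\ep$ vanishing on $(-\infty,0]$ and positive on $(0,\infty)$ with $\zeta^\ep=g_\ep(\phi_\ep-\tilde\mu_\ep)$, i.e. \eqref{3-23}.

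The main obstacle is the hypothesis of Lemma \ref{lem3-15} that every level set of $\phi_\ep$ is $\nu$-null. Since $\zeta^\ep$ is bounded, $\phi_\ep\in W^{2,s}_{\mathrm{loc}}$ and $\mathcal{L}\phi_\ep=\zeta^\ep$ (using $\mathcal{L}(x_1^2)=0$); on a level set $\{\phi_\ep=c\}$ one has $\nabla\phi_\ep=0$ and $D^2\phi_\ep=0$ a.e., hence $\mathcal{L}\phi_\ep=0$ and so $\zeta^\ep=0$ a.e. there. Thus any positive-measure level set lies in $\{\zeta^\ep=0\}$, on whose interior $\phi_\ep$ is $\mathcal{L}$-harmonic and non-constant (it tends to $-\infty$ at infinity), so unique continuation for $\mathcal{L}$ excludes it; the remaining possibility, the free boundary $\{\phi_\ep=\tilde\mu_\ep\}$, is handled by a finer version of the same elliptic argument together with the monotone structure already obtained on the complement of the positive-measure level sets. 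This elliptic/free-boundary bookkeeping is the delicate part; everything else reduces to the rearrangement machinery of \cite{Bu1,Dou} and Lemmas \ref{lem3-4}--\ref{lem3-5}.
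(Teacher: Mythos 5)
Your overall strategy (symmetrize via \eqref{rieq}, truncate via Lemma \ref{lem3-5}, get positivity from Lemma \ref{lem3-13}, derive the first-order condition from convexity of $\overline{\mathcal{R}(\zeta_\ep)^w}$, and invoke Lemma \ref{lem3-15}) matches the paper's proof, and your alternative route to $\zeta^\ep\in\mathcal{R}(\zeta_\ep)$ via extreme points of the weak closure is a legitimate variant of the paper's use of the uniqueness clause in Lemma \ref{lem3-15}. However, there is a genuine gap at exactly the step you flag as "delicate": verifying that every level set of $\phi_\ep=\mathcal{G}_1\zeta^\ep-\frac{W}{2}x_1^2\ln\frac1\ep$ is $\nu$-null. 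Your elliptic argument (vanishing of $D^2\phi_\ep$ a.e.\ on a level set, hence $\zeta^\ep=0$ there, then unique continuation) only disposes of level sets contained in the open set where $\zeta^\ep$ vanishes; it does not rule out a plateau of positive measure at the free-boundary level $\{\phi_\ep=\tilde\mu_\ep\}$, which is precisely the level set that could cause Lemma \ref{lem3-15} to fail, and you leave its treatment to "a finer version of the same argument" without specifying it. Moreover, the order of your argument is circular there: you want to apply Lemma \ref{lem3-15} to obtain the monotone structure, but you appeal to "the monotone structure already obtained" to handle the problematic level set.

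The paper closes this gap with a much simpler observation that your proposal misses: once $\zeta^\ep$ is known to be Steiner symmetric about the $x_1$-axis (which you do establish first), the Green's function representation shows that $\mathcal{G}_1\zeta^\ep$ is even and \emph{strictly} decreasing in $|x_2|$, hence so is $\phi_\ep$ for each fixed $x_1$. Consequently, for every $c$ and every $x_1$ the slice $\{x_2:\phi_\ep(x_1,x_2)=c\}$ contains at most two points, and by Fubini every level set of $\phi_\ep$ has Lebesgue (hence $\nu$-) measure zero — no elliptic regularity, unique continuation, or free-boundary analysis is needed. Inserting this strict monotonicity step (which is where the Steiner symmetry established earlier actually gets used) would repair the proof; as written, the argument for the hypothesis of Lemma \ref{lem3-15} is incomplete.
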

 \begin{proof}
 	For $\ep>0$ small, we see from Lemma \ref{lem3-13} that $\mathcal{E}_\ep(\zeta^\varepsilon)>0$ and hence $\zeta^\ep\not\equiv 0$.

 	Thanks to Lemma \ref{lem3-1}, we know that $\overline{\mathcal{R}(\zeta_\ep)^w}$ is a convex set. Thus for each $\zeta\in \overline{\mathcal{R}(\zeta_\ep)^w}$, it holds $\zeta_\tau:=\zeta^\varepsilon+\tau(\zeta-\zeta^\varepsilon)\in \overline{\mathcal{R}(\zeta_\ep)^w}$ for any $\tau\in [0,1]$. Since $\zeta^\varepsilon$ is a maximizer of $\mathcal E_\ep$, we have
 	\begin{equation*}
 		 \frac{d}{d\tau}\bigg|_{\tau=0^+}\mathcal{E}_\ep(\zeta_\tau)=\int_{\mathbb{R}_+^2}(\zeta-\zeta^\varepsilon)\left(\mathcal{G}_1\zeta^\varepsilon -\frac{W}{2}x_1^2 \ln \frac{1}{\ep}\right) d\mu\le 0,
 	\end{equation*}
 	which yields
 	\begin{equation*}
 		\int_{\mathbb{R}_+^2}\zeta\left(\mathcal{G}_1\zeta^\varepsilon -\frac{W}{2}x_1^2 \ln \frac{1}{\ep}\right) d\nu\le \int_{\mathbb{R}_+^2}\zeta^\varepsilon \left(\mathcal{G}_1\zeta^\varepsilon -\frac{W}{2}x_1^2 \ln \frac{1}{\ep}\right) d\nu, \quad \forall \  \zeta\in \overline{\mathcal{R}(\zeta_\ep)^w}.
 	\end{equation*}

 	By \eqref{rieq}, we know that after a translation in the $x_2$ direction, $\zeta^\ep$ must be Steiner symmetric about the $x_1$-axis. Then 	by Lemma \ref{lem3-5} ($\mathrm{ii}$), through an similar argument as the proof of Lemma \ref{lem3-14}, we find that $\zeta^\ep$ is supported in $[0, R_\ep]\times [-Z_\ep,Z_\ep]$.  Using the fact that $\zeta^\varepsilon$ is Steiner symmetric about the $x_1$-axis, one can verified   that $\mathcal{G}_1\zeta^\varepsilon$ is even and strictly decreasing with respect to $x_2$. It follows from the implicit function theorem that every level set of $\mathcal{G}_1\zeta^\varepsilon-\frac{W}{2}x_1^2 \ln \frac{1}{\ep}$  has measure zero. By  Lemma \ref{lem3-15}, there exists a non-decreasing function $\tilde{g}_\varepsilon: \mathbb{R}\to \mathbb{R}$, such that,
 	\begin{equation*}
 		\tilde\zeta^\varepsilon(x)=\tilde{g}_\varepsilon\left(\mathcal{G}_1\zeta^\varepsilon(x)-\frac{W}{2}x_1^2 \ln \frac{1}{\ep}\right).
 	\end{equation*}
 	for some $\tilde\zeta^\varepsilon\in \mathcal{R}(\zeta_\ep)$. From the conclusion of Lemma \ref{lem3-15}, we also know $\tilde\zeta^\varepsilon(x)$ is the unique maximizer of the linear functional $\zeta \mapsto \int_{[0, R_\ep]\times [-Z_\ep,Z_\ep]} \zeta \left(\mathcal{G}_1\zeta^\varepsilon -\frac{W}{2}x_1^2 \ln \frac{1}{\ep}\right) d\nu$ relative to $\overline{\mathcal{R}(\zeta_\ep)^w}$. Hence it holds $\zeta^\varepsilon=\tilde\zeta^\varepsilon\in \mathcal{R}(\zeta_\ep)$.
 	
 	Now, let
 	\begin{equation*}
 		\tilde\mu_\varepsilon:=\sup\left\{\mathcal{G}_1\zeta^\varepsilon(x)-\frac{W}{2}x_1^2 \ln \frac{1}{\ep}\bigg|\ x\in \mathbb{R}_+^2\ \ \text{s.t.}\  \zeta^\varepsilon(x)=0 \right\}\in \mathbb{R},
 	\end{equation*}
 	and $g_\varepsilon(\cdot)=\max\{\tilde{g}_\varepsilon(\cdot+\tilde\mu_\varepsilon),0\}$. We have $$\zeta^\varepsilon(x)=g_\varepsilon\left(\mathcal{G}_1\zeta^\varepsilon(x)-\frac{W}{2}x_1^2 \ln \frac{1}{\ep}-\tilde\mu_\varepsilon\right) \quad \text{for any} \ x\in \mathbb{R}_+^2.$$
 	Moreover, by the definition of $\tilde\mu_\varepsilon$ and the continuity of $\mathcal{G}_1\zeta^\varepsilon(x)$, we have $g_\ep(\tau)>0$ if $\tau>0$ and $g_\varepsilon(\tau)=0$ if $\tau\le 0$.
 	
 	It remains to show that $\tilde \mu_\ep\geq 0$. Since $\nu(\text{supp}(\zeta^\varepsilon))=\nu(\text{supp} (\zeta_\varepsilon))<\infty$ by the property of rearrangement, we can take a sequence $\{x_n\}_{n=1}^\infty\subset \mathbb{R}_+^2$ with $x_{n,1}\to 0$ and $\zeta^\varepsilon(x_n)=0$. Then by the definition of $\tilde \mu_\ep$, we have $$\tilde \mu_\ep\geq \limsup_{n\to\infty}\left\{\mathcal{G}_1\zeta^\varepsilon(x_n)-\frac{W}{2}x_{n,1}^2 \ln \frac{1}{\ep}\right\}\geq \limsup_{n\to\infty}\left\{ -\frac{W}{2}x_{n,1}^2 \ln \frac{1}{\ep}\right\}= 0.$$

 	The proof is thus complete.
 \end{proof}

 By Lemma \ref{lem3-16}, we may assume that the $\zeta^\ep$ considered later in Lemmas \ref{lem3-17}- \ref{lem3-24} is Steiner symmetric about the $x_1$-axis. We improve the estimate of the support of a maximizer in the above lemmas step by step.
 \begin{lemma}\label{lem3-17}
 	There exists a constant $R_1>0$ independent of $\ep$ such that
 	$$\mathrm{supp}(\zeta^\ep)\subset [0, R_1]\times \mathbb R,\quad \forall \ \zeta^\ep \in   \Sigma_\ep.$$
 \end{lemma}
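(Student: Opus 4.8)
The plan is to bound $R_\ep:=\sup\{x_1:x\in\mathrm{supp}(\zeta^\ep)\}$ by a constant independent of $\ep$ (for $\ep$ small; if $R_\ep$ is already below an absolute constant there is nothing to prove). By Lemma \ref{lem3-6} the maximizer $\zeta^\ep$ is supported in $[0,R_0]\times\mathbb R$, where $R_0$ is the (a priori, $\ep$-dependent) constant of Lemma \ref{lem3-5} applied with $\mathcal W=W\ln\frac1\ep$; unwinding \eqref{3-1} together with $\vertiii{\zeta^\ep}_q=\vertiii{\zeta_\ep}_q\lesssim\ep^{-(2-2/q)}$ shows $R_\ep\le R_0\le\ep^{-C}$ for some $C=C(q)$, so in particular $\ln R_\ep=O(\ln\frac1\ep)$. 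The extra information I would exploit is the semilinear representation \eqref{3-23} of Lemma \ref{lem3-16} and $\tilde\mu_\ep\ge0$, which give $\{\zeta^\ep>0\}\subset\{\mathcal G_1\zeta^\ep(x)>\tfrac W2x_1^2\ln\tfrac1\ep\}$. Since $\mathcal G_1\zeta^\ep$ is continuous, we may choose $x^*$ with $x^*_1=R_\ep$ in the closure of $\{\zeta^\ep>0\}$ satisfying
\[
\mathcal G_1\zeta^\ep(x^*)\ \ge\ \frac W2\,R_\ep^{\,2}\ln\frac1\ep .
\]
The whole proof then reduces to the opposite estimate $\mathcal G_1\zeta^\ep(x^*)\le C\,R_\ep\ln\frac1\ep$ with an $\ep$-independent constant $C$; comparing the two bounds and dividing by $R_\ep\ln\frac1\ep$ forces $R_\ep\le 2C/W=:R_1$.

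To bound $\mathcal G_1\zeta^\ep(x^*)=\int_{\mathbb R^2_+}G_1(x^*,y)\zeta^\ep(y)\,d\nu$ I would split the domain into the ``far'' region $\{y_1<R_\ep/2\}$ and the ``near'' region $\{y_1\ge R_\ep/2\}$, using $\mathrm{supp}(\zeta^\ep)\subset\{y_1\le R_\ep\}$. On the far region $|x^*_1-y_1|\ge R_\ep/2$, so \eqref{2-9} with $\delta=\frac32$ gives $G_1(x^*,y)\le C y_1^2/R_\ep$, whence the far contribution is $\le\frac{C}{R_\ep}\int y_1^2\zeta^\ep\,d\nu=\frac{2C}{R_\ep}P(\zeta^\ep)\le CR_\ep\kappa$, using $\vertiii{\zeta^\ep}_1=\kappa$ and $P(\zeta^\ep)\le\frac12R_\ep^2\kappa$. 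On the near region $x^*_1=R_\ep$ and $y_1$ both lie in $[R_\ep/2,R_\ep]$, so $(x^*_1y_1)^{1/2}\le R_\ep$ and \eqref{2-7}--\eqref{2-8} yield $G_1(x^*,y)\le CR_\ep\big(\ln_+\frac{R_\ep}{|x^*-y|}+1\big)$. Here I would use the three facts inherited from $\zeta^\ep\in\mathcal R(\zeta_\ep)$ and Proposition \ref{prop3-11}: $\vertiii{\zeta^\ep}_\infty=\vertiii{\zeta_\ep}_\infty\le C\ep^{-2}$ (since the argument of $f$ in \ref{prop3-11}(i) is bounded above uniformly in $\ep$), $\vertiii{\zeta^\ep}_1=\kappa$, and $\nu(\mathrm{supp}(\zeta^\ep))=\nu(\mathrm{supp}(\zeta_\ep))\le C\ep^2$ (from $\mathrm{diam}(\mathrm{supp}(\zeta_\ep))<C_2\ep$). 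The last fact forces the Lebesgue measure of $\{\zeta^\ep>0\}\cap\{y_1\ge R_\ep/2\}$ to be at most $2C\ep^2/R_\ep$; combined with the density bound $\zeta^\ep\le C\ep^{-2}$, a bathtub (monotone rearrangement) estimate for $\int\big(\ln_+\frac{R_\ep}{|x^*-y|}\big)\zeta^\ep\,d\nu$ bounds the near contribution by $CR_\ep\big(\ln R_\ep+\ln\frac1\ep+\kappa\big)$.

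Adding the two pieces and inserting $\ln R_\ep=O(\ln\frac1\ep)$ from the a priori bound gives $\mathcal G_1\zeta^\ep(x^*)\le CR_\ep\ln\frac1\ep$ for $\ep$ small, with $C$ depending only on $q$, $\kappa$, $W$; comparison with the lower bound yields $R_\ep\le R_1$ as claimed. The main obstacle is exactly the near-region estimate: the naive bound on $\mathcal G_1\zeta^\ep$ via \eqref{3-1} carries the factor $\vertiii{\zeta^\ep}_q=\vertiii{\zeta_\ep}_q\sim\ep^{-(2-2/q)}$, which blows up far faster than $\ln\frac1\ep$ and cannot close the argument, so one must instead use the \emph{thinness} of the vortex core ($\nu$-measure $O(\ep^2)$ together with density $O(\ep^{-2})$), which produces only a logarithmic loss. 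A secondary technical point is to keep the auxiliary $\ln R_\ep$ dependence harmless, which is precisely what the a priori finiteness of $R_\ep$ from Lemma \ref{lem3-6} provides.
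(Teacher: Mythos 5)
Your proposal is correct and follows essentially the same strategy as the paper: on the support one has the lower bound $\mathcal{G}_1\zeta^\ep(x)\geq\frac{W}{2}x_1^2\ln\frac{1}{\ep}$ from the Euler--Lagrange representation with $\tilde\mu_\ep\geq0$, while the Green-function splitting plus the bathtub principle (using $\zeta^\ep\leq C\ep^{-2}$ and circulation $\kappa$, both inherited through the rearrangement) yields the matching upper bound of order $x_1\ln\frac{1}{\ep}+x_1\ln x_1$, and comparison forces $x_1\leq R_1$. The only cosmetic differences are that the paper decomposes the integral by the distance $|x-y|$ into three annular regions rather than by $y_1$, and it absorbs the $\ln x_1$ term directly in the final comparison (since $\ln x_1=o(x_1)$) instead of invoking the a priori bound $\ln R_\ep=O(\ln\frac{1}{\ep})$ from Lemma \ref{lem3-6}.
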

 \begin{proof}
 	 Let $\eta:=\left((x_1-y_1)^2+(x_2-y_2)^2\right)^{\frac{1}{2}}$. We need the following estimates for the Green function from \cite{FT} (see also (2.5) in \cite{Bad})
 	\begin{equation}\label{3-25}
 		G_1(x,y)\leq C\times
 		\begin{cases}
 			x_1 \ln\frac{x_1}{\eta},\quad &\eta \leq \frac{1}{2}x_1,\\
 			\frac{x_1^2 y_1^2 }{\eta^3},&\eta \geq \frac{1}{2}x_1,
 		\end{cases}
 	\end{equation}
 	which can also be verified directly  by using \eqref{2-7} and \eqref{2-8}.
 	We consider $x\in\mathbb{R}_+^2$ with $x_1\geq 2$ and divide the integral into three parts as following
 	\begin{equation*}
 		\mathcal{G}_1 \zeta^\ep=\int_{\eta>\frac{1}{2}x_1} G_1(x,y) \zeta^\ep(y)d\nu+\int_{x_1^{-\frac{1}{2}}\leq \eta\leq \frac{1}{2}x_1} G_1(x,y) \zeta^\ep(y)d\nu+\int_{  \eta\leq x_1^{-\frac{1}{2} }} G_1(x,y) \zeta^\ep(y)d\nu.
 	\end{equation*}
 	For the first integral, noticing  that when $\eta>\frac{1}{2}x_1$, it holds $y_1\leq x_1+\eta\leq 3 \eta$, by \eqref{3-25}  we obtain
 	\begin{equation*}
 		\int_{\eta>\frac{1}{2}x_1} G_1(x,y) \zeta^\ep(y)d\nu \leq C x_1 \vertiii{\zeta^\ep}_1=C\kappa x_1.
 	\end{equation*}
 	For the second integral,   by \eqref{3-25}  we get
 	\begin{align*}
 			&\int_{x_1^{-\frac{1}{2}}\leq \eta\leq \frac{1}{2}x_1} G_1(x,y) \zeta^\ep(y)d\nu\\
 			\leq &Cx_1\int_{x_1^{-\frac{1}{2}}\leq \eta\leq \frac{1}{2}x_1}  \ln\left(\frac{x_1}{\eta}\right) \zeta^\ep(y)d\nu\leq Cx_1 \ln\left(x_1^{\frac{3}{2}}\right)\vertiii{\zeta^\ep}_1=\frac{3}{2}C\kappa x_1 \ln( x_1).
 		\end{align*}
 	For the lat integral, on the set  $\{\eta\leq x_1^{-\frac{1}{2} }\}$, it is easy to see that $y_1\leq 2x_1$. Then the  function  $y_1\zeta^\ep(y) $  satisfies $0\leq y_1\zeta^\ep(y)\leq C_5x_1\ep^{-2}$ for some constant $C_5$ and $\int_{\eta\leq x_1^{-\frac{1}{2} }} y_1\zeta^\ep(y) dy \leq \kappa$ by Proposition \ref{prop3-11} and the property of rearrangement. Thus, by the Bathtub principle \cite{Lieb}, we obtain
 	\begin{equation*}
 		\int_{ \eta\leq x_1^{-\frac{1}{2} }}  \ln\left(\frac{1}{\eta}\right) y_1 \zeta^\ep(y) dy\leq C_5x_1\ep^{-2}	\int_{ |y|\leq  \sqrt{\kappa/ (\pi C_5x_1)}\ep}\ln\left(\frac{1}{|y|}\right)  dy \leq C\ln \frac{1}{\ep}+C\ln x_1,
 	\end{equation*}
 which deduces that
 	\begin{align*}
 			&\int_{  \eta\leq x_1^{-\frac{1}{2} }} G_1(x,y) \zeta^\ep(y)d\nu
 			\leq  Cx_1\int_{ \eta\leq x_1^{-\frac{1}{2} }}  \ln\left(\frac{x_1}{\eta}\right) \zeta^\ep(y)d\nu\\
 			= &C\kappa x_1 \ln (x_1)+Cx_1\int_{ \eta\leq x_1^{-\frac{1}{2} }}  \ln\left(\frac{1}{\eta}\right) \zeta^\ep(y)d\nu
 			\leq  Cx_1 \ln \frac{1}{\ep}+C  x_1 \ln( x_1).
 		\end{align*}
 	Summarizing the above calculations,  we reach the key estimate for $x_1\geq 2$,
 	\begin{equation*}
 		\mathcal{G}_1 \zeta^\ep(x)\leq Cx_1 \ln \frac{1}{\ep}+ Cx_1\ln( x_1), \quad \forall \ \zeta^\ep.
 	\end{equation*}
 	On the other hand, for $x\in \mathbb{R}_+^2$ such that $x_1\geq 2$ and $\zeta^\ep(x)>0$, by Lemma \ref{lem3-16} it holds
 	\begin{equation*}
 		\mathcal{G}_1\zeta^\varepsilon(x)\geq \frac{W}{2}x_1^2 \ln \frac{1}{\ep}+\tilde\mu_\varepsilon\geq \frac{W}{2}x_1^2 \ln \frac{1}{\ep}.
 	\end{equation*}
 	Thus, we deduce
 	\begin{equation*}
 		\frac{W}{2}x_1^2 \ln \frac{1}{\ep}\leq Cx_1 \ln \frac{1}{\ep}+ Cx_1\ln( x_1),
 	\end{equation*}
 	which implies
 	$$x_1\leq R_1,$$
 	for some constant $R_1$. The proof is thus finished.
 \end{proof}
 Using the previous lemma, we are able to obtain a better estimate for the constant $\tilde \mu_\ep$.
 \begin{lemma}\label{lem3-18}
 	For a maximizer $\zeta^\varepsilon\in \Sigma_\ep$, let $\tilde\mu_\ep$ be the constant in Lemma \ref{lem3-16}, then
 	\begin{equation*}
 		\tilde \mu_\ep\geq \left(\frac{\kappa r_*}{2\pi}-   Wr_*^2 \right)\log{\frac{1}{\varepsilon}}+\frac{W}{2\kappa}\log\frac{1}{\varepsilon}\int_{\mathbb{R}_+^2}x_1^2\zeta^\ep d\nu+O_\ep(1).
 	\end{equation*}
 \end{lemma}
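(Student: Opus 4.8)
The plan is to read a lower bound for $\tilde\mu_\ep$ directly off the Euler--Lagrange equation of the maximizer $\zeta^\ep$, combined with the lower bound on the maximal energy coming from Lemma \ref{lem3-13}; the only genuinely new analytic ingredient needed is a ``bounded kinetic energy in the core'' estimate, entirely analogous to Lemma \ref{lem2-2}. Throughout put $\Phi^\ep:=\mathcal{G}_1\zeta^\ep-\tfrac{W}{2}x_1^2\ln\tfrac1\ep-\tilde\mu_\ep$, so that by Lemma \ref{lem3-16} one has $\zeta^\ep=g_\ep(\Phi^\ep)=g_\ep(\Phi^\ep_+)$, $\mathrm{supp}(\zeta^\ep)=\overline{\{\Phi^\ep>0\}}$, and $\zeta^\ep\Phi^\ep=\zeta^\ep\Phi^\ep_+$ on all of $\mathbb R^2_+$.

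\emph{Step 1 (an exact identity).} Since $\zeta^\ep\in\mathcal R(\zeta_\ep)$ we have $\int_{\mathbb R^2_+}\zeta^\ep\,d\nu=\kappa$ (Proposition \ref{prop3-11}(iii)) and $\int_{\mathbb R^2_+}x_1^2\zeta^\ep\,d\nu=2P(\zeta^\ep)$, so integrating $\zeta^\ep\Phi^\ep$ against $d\nu$ gives
$$\int_{\mathbb R^2_+}\zeta^\ep\Phi^\ep_+\,d\nu=\int_{\mathbb R^2_+}\zeta^\ep\,\mathcal{G}_1\zeta^\ep\,d\nu-W\ln\tfrac1\ep\,P(\zeta^\ep)-\kappa\tilde\mu_\ep=2\mathcal E_\ep(\zeta^\ep)+W\ln\tfrac1\ep\,P(\zeta^\ep)-\kappa\tilde\mu_\ep,$$
i.e. $\kappa\tilde\mu_\ep=2\mathcal E_\ep(\zeta^\ep)+W\ln\tfrac1\ep\,P(\zeta^\ep)-\int_{\mathbb R^2_+}\zeta^\ep\Phi^\ep_+\,d\nu$. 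Because $\zeta^\ep$ maximizes $\mathcal E_\ep$ over $\overline{\mathcal R(\zeta_\ep)^w}$ and $\zeta_\ep$ lies in this set, Lemma \ref{lem3-13} (equivalently Proposition \ref{prop3-11}(iii)) gives $\mathcal E_\ep(\zeta^\ep)\ge\big(\tfrac{\kappa^2 r_*}{4\pi}-\tfrac{\kappa W r_*^2}{2}\big)\ln\tfrac1\ep+O(1)$. Using $\tfrac2\kappa\big(\tfrac{\kappa^2 r_*}{4\pi}-\tfrac{\kappa W r_*^2}{2}\big)=\tfrac{\kappa r_*}{2\pi}-Wr_*^2$ and $\tfrac{W}{\kappa}P(\zeta^\ep)=\tfrac{W}{2\kappa}\int_{\mathbb R^2_+}x_1^2\zeta^\ep\,d\nu$, the claimed inequality reduces to the single estimate $\int_{\mathbb R^2_+}\zeta^\ep\Phi^\ep_+\,d\nu=O_\ep(1)$.

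\emph{Step 2 (the core energy estimate).} Since $\mathcal L(\mathcal{G}_1\zeta^\ep)=\zeta^\ep$ and $\mathcal L\big(\tfrac{W}{2}x_1^2\ln\tfrac1\ep+\tilde\mu_\ep\big)=0$, the function $\Phi^\ep_+$ solves $\mathcal L\Phi^\ep_+=\zeta^\ep$ on $\{\Phi^\ep>0\}$ and vanishes on its boundary and on $\{x_1=0\}$ (there $\mathcal{G}_1\zeta^\ep=0$ and $\tilde\mu_\ep\ge0$). Multiplying by $x_1\Phi^\ep_+$ and integrating by parts yields $\int_{\mathbb R^2_+}\tfrac1{x_1}|\nabla\Phi^\ep_+|^2\,dx=\int_{\mathbb R^2_+}\zeta^\ep\Phi^\ep_+\,d\nu$, exactly as in Lemma \ref{lem2-2}. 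Being a rearrangement of $\zeta_\ep$, $\zeta^\ep$ inherits $\|\zeta^\ep\|_{L^\infty(\nu)}=\|\zeta_\ep\|_{L^\infty(\nu)}=O(\ep^{-2})$ (by Proposition \ref{prop3-11}(i) the argument of $f$ is bounded above uniformly in $\ep$) and $\nu(\mathrm{supp}(\zeta^\ep))=\nu(\mathrm{supp}(\zeta_\ep))=O(\ep^2)$. Combined with Lemma \ref{lem3-17} (so $x_1\le R_1$ on the support) and the fact, to be established, that $\mathrm{supp}(\zeta^\ep)\subset\{x_1\ge c_0\}$ for some $c_0>0$ — hence has \emph{Lebesgue} measure $O(\ep^2)$ — a Sobolev embedding in $\mathbb R^2$ followed by H\"older's inequality on this small-measure set closes the bound $\int_{\mathbb R^2_+}\tfrac1{x_1}|\nabla\Phi^\ep_+|^2\,dx\le C$ verbatim as in the proof of Lemma \ref{lem2-2}, and the lemma follows.

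\emph{Main obstacle.} The delicate point is Step 2, and specifically the assertion that $\mathrm{supp}(\zeta^\ep)$ stays a fixed distance from the axis $\{x_1=0\}$: the bound \eqref{3-2} only gives $\mathcal{G}_1\zeta^\ep(x)=O(\ep^{-1}x_1)$ near the axis, which is too weak to be compared with $\tfrac{W}{2}x_1^2\ln\tfrac1\ep$ there. One has to squeeze more out of the structure — either sharpening the Green's function estimate on the core using the Steiner symmetry of $\zeta^\ep$ (as in Lemma \ref{lem3-4}), or running a short bootstrap that feeds the partial information produced by Step 1 back into the estimate for $\mathcal{G}_1\zeta^\ep$ near the axis. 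Everything else (the integration by parts, the Sobolev--H\"older chain, and the final arithmetic) is routine and follows the template already set in Lemma \ref{lem2-2} and in the earlier lemmas of this subsection.
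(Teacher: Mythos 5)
Your Step 1 is exactly the paper's argument: the identity $\kappa\tilde\mu_\ep=2\mathcal E_\ep(\zeta^\ep)+\frac{W}{2}\log\frac1\ep\int_{\mathbb R^2_+}x_1^2\zeta^\ep\,d\nu-\int_{\mathbb R^2_+}\zeta^\ep(\psi^\ep)_+\,d\nu$ together with Lemma \ref{lem3-13} reduces everything to $\int_{\mathbb R^2_+}\zeta^\ep(\psi^\ep)_+\,d\nu=O_\ep(1)$, and the integration by parts giving $\int\zeta^\ep(\psi^\ep)_+\,d\nu=\int x_1^{-2}|\nabla(\psi^\ep)_+|^2\,d\nu$ is also what the paper does. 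The gap is in how you propose to close Step 2. You assert that one needs $\mathrm{supp}(\zeta^\ep)\subset\{x_1\ge c_0\}$ in order to convert the $\nu$-measure bound $\nu(\mathrm{supp}(\zeta^\ep))\le C\ep^2$ into a Lebesgue-measure bound, you leave this unproved, and you flag it as the main obstacle. But this fact is not available at this stage without circularity: the only place the paper establishes that the support stays away from the axis is Lemma \ref{lem3-20} (via $A_\ep\to r_*$), whose proof uses the very lower bound on $\tilde\mu_\ep$ you are trying to prove. So neither of your two suggested repairs (a sharpened Green's function estimate near the axis, or a bootstrap feeding Step 1 back in) is the route the paper takes, and the bootstrap in particular risks being circular.

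The resolution is that no lower bound on $x_1$ is needed: the entire Hölder--Sobolev chain can be run in the weighted measure $\nu$. Concretely, writing $S=\{\psi^\ep>0\}=\{\zeta^\ep>0\}$ with $\nu(S)\le C\ep^2$, one estimates
\begin{equation*}
\int_{\mathbb R^2_+}\zeta^\ep(\psi^\ep)_+\,d\nu\le C\ep^{-2}\,\nu(S)^{\frac12}\Bigl(\int_{\mathbb R^2_+}(\psi^\ep)_+^2\,d\nu\Bigr)^{\frac12}
\le C\ep^{-2}\,\nu(S)^{\frac12}R_1^{\frac12}\Bigl(\int_{\mathbb R^2_+}(\psi^\ep)_+^2\,dx\Bigr)^{\frac12},
\end{equation*}
where the upper bound $x_1\le R_1$ from Lemma \ref{lem3-17} is the only pointwise information about $x_1$ used. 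The critical embedding $W^{1,1}(\mathbb R^2)\hookrightarrow L^2(\mathbb R^2)$ then gives $\bigl(\int(\psi^\ep)_+^2\,dx\bigr)^{1/2}\le C\int|\nabla(\psi^\ep)_+|\,dx=C\int x_1^{-1}|\nabla(\psi^\ep)_+|\,d\nu$, and a second Cauchy--Schwarz in $d\nu$ over $S$ yields $\int x_1^{-1}|\nabla(\psi^\ep)_+|\,d\nu\le\nu(S)^{1/2}\bigl(\int x_1^{-2}|\nabla(\psi^\ep)_+|^2\,d\nu\bigr)^{1/2}$. The two factors of $\nu(S)^{1/2}$ combine to cancel the $\ep^{-2}$, giving $\int\zeta^\ep(\psi^\ep)_+\,d\nu\le C\bigl(\int\zeta^\ep(\psi^\ep)_+\,d\nu\bigr)^{1/2}$ and hence the desired $O_\ep(1)$ bound, with no reference to the distance of the support from the axis.
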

 \begin{proof}
 	Take a maximizer $\zeta^\varepsilon\in \Sigma_\ep$. Then by Lemma \ref{lem3-16}, $\zeta^\varepsilon$ has the representation
 	\begin{equation*}
 		\zeta^\varepsilon(x)=g_\varepsilon(\mathcal{G}_1\zeta^\varepsilon(x)-\frac{W}{2}x_1^2\ln\frac{1}{\ep} -\tilde\mu_\varepsilon),\ \ \forall\,x\in \mathbb{R}_+^2
 	\end{equation*}
 	for some constant $\tilde\mu_\varepsilon \geq 0$.
 	Set $\psi^\ep:=\mathcal{G}_1\zeta^\varepsilon(x)-\frac{W}{2}x_1^2\ln\frac{1}{\ep} -\tilde\mu_\varepsilon$. Then we have
 	\begin{equation}\label{3-26}
 		-\text{div}\left(\frac{1}{x_1}\nabla \psi^\ep\right)=x_1\zeta^\ep.
 	\end{equation}
 Note that it holds
 	\begin{equation}\label{e-mu}
 		2\mathcal{E}_\ep(\zeta^\ep)= \int_{\mathbb{R}_+^2}{\zeta^\ep \mathcal{G}_1\zeta^\ep}d\nu-W\log\frac{1}{\varepsilon}\int_{\mathbb{R}_+^2}x_1^2\zeta^\ep d\nu=\int_{\mathbb{R}_+^2}\zeta^\ep (\psi^\ep)_+d \nu-\frac{W}{2}\log\frac{1}{\varepsilon}\int_{\mathbb{R}_+^2}x_1^2\zeta^\ep d\nu + \kappa\tilde \mu_\ep.
 	\end{equation}
 	Since $\mathcal{G}_1\zeta^\varepsilon(x)-\frac{W}{2}x_1^2\ln\frac{1}{\ep}=0$ on $\partial \mathbb{R}_+^2$, we can multiply  the equation \eqref{3-26} by $(\psi^\ep)_+$ and use integration by part to obtain
 	\begin{equation*}
 		\int_{\mathbb{R}_+^2}\zeta^\ep (\psi^\ep)_+d \nu=\int_{\mathbb{R}_+^2}\frac{|\nabla (\psi^\ep)_+|^2}{x_1^2} d\nu.
 	\end{equation*}
 	On the other hand, since $0\leq \zeta_\ep(y)\leq C\ep^{-2}$ and $\nu(\text{supp}( \zeta_\ep))\leq C\ep^2$ and $ \zeta^\ep$ is a rearrangement of $ \zeta_\ep $ with respect to the  measure $\nu$, we have  $0\leq  \zeta^\ep\leq C\ep^{-2}$ and $\nu (\text{supp}( \zeta^\ep))\leq C\ep^2$. Thus, by Lemma \ref{lem3-17}, H\"older's inequality and the Sobolev embedding, we calculate
 	\begin{align*}
 			&\int_{\mathbb{R}_+^2}\zeta^\ep(x) (\psi^\ep)_+(x)d \nu
 			\leq  C\ep^{-2} \int_{\mathbb{R}_+^2} (\psi^\ep)_+(x) d \nu
 			\leq  C\ep^{-2} \nu(\text{supp}( \zeta^\ep))^{\frac{1}{2}} \left(\int_{\mathbb{R}_+^2} (\psi^\ep)_+^2(x) d \nu\right)^{\frac{1}{2}}\\
 			\leq &CR_1^{\frac{1}{2}}\ep^{-2} \nu(\text{supp}( \zeta^\ep))^{\frac{1}{2}} \left(\int_{\mathbb{R}_+^2} (\psi^\ep)_+^2(x) d x\right)^{\frac{1}{2}}
 			\leq  CR_1^{\frac{1}{2}}\ep^{-2} \nu(\text{supp}( \zeta^\ep))^{\frac{1}{2}}  \int_{\mathbb{R}_+^2} |\nabla (\psi^\ep _+)| d x \\
 			=&CR_1^{\frac{1}{2}}\ep^{-2} \nu(\text{supp}( \zeta^\ep))^{\frac{1}{2}}  \int_{\mathbb{R}_+^2} \frac{|\nabla (\psi^\ep _+)| }{x_1}d \nu
 			\leq CR_1^{\frac{1}{2}}\ep^{-2} \nu(\text{supp}( \zeta^\ep))  \left(\int_{\mathbb{R}_+^2} \frac{|\nabla (\psi^\ep _+)| ^2}{x_1^2} d \nu\right)^{\frac{1}{2}}\\ \leq  &C\left(\int_{\mathbb{R}_+^2} \frac{|\nabla (\psi^\ep _+)| ^2}{x_1^2} d \nu\right)^{\frac{1}{2}}.
 		\end{align*}
 	Hence, we derive
 	$$\int_{\mathbb{R}_+^2}\frac{|\nabla (\psi^\ep)_+|^2}{x_1^2} d\nu=\int_{\mathbb{R}_+^2}\zeta^\ep(x) (\psi^\ep)_+(x)d \nu\leq C\left(\int_{\mathbb{R}_+^2} \frac{|\nabla (\psi^\ep _+)| ^2}{x_1^2} d \nu\right)^{\frac{1}{2}},$$ which implies $$ \int_{\mathbb{R}_+^2}\zeta^\ep (\psi^\ep)_+d \nu=\int_{\mathbb{R}_+^2}\frac{|\nabla (\psi^\ep)_+|^2}{x_1^2} d\nu\leq C.$$
 	So, we obtain from Lemma \ref{lem3-13} and \eqref{e-mu} that
 	\begin{equation*}
 		\tilde \mu_\ep=\frac{2}{\kappa}\mathcal{E}_\ep(\zeta^\ep)+\frac{W}{2\kappa}\log\frac{1}{\varepsilon}\int_{\mathbb{R}_+^2}x_1^2\zeta^\ep d\nu-C\geq  \left(\frac{\kappa r_*}{2\pi}-   Wr_*^2 \right)\log{\frac{1}{\varepsilon}}+\frac{W}{2\kappa}\log\frac{1}{\varepsilon}\int_{\mathbb{R}_+^2}x_1^2\zeta^\ep d\nu+O_\ep(1),
 	\end{equation*}
 	which completes the proof.
 \end{proof}

 Now we estimate the bound of the support of a maximizer in the $x_2$-direction.

 \begin{lemma}\label{lem3-19}
 	There exists a constant $Z_1>0$ independent of $\ep$ such that  up to translations in the $x_2$ direction,
 	$$\mathrm{supp}(\zeta^\ep)\subset [0, R_1]\times [-Z_1, Z_1],\quad \forall \ \zeta^\ep \in  \Sigma_\ep.$$
 \end{lemma}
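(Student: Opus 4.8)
The plan is to show that a maximizer $\zeta^\ep$ cannot be supported at points with $|x_2|$ larger than a fixed ($\ep$-independent) constant. By Lemma \ref{lem3-16} we may assume, after a translation in the $x_2$ direction, that $\zeta^\ep$ is Steiner symmetric about the $x_1$-axis and that
\[
\zeta^\ep(x) = g_\ep\Big(\mathcal G_1\zeta^\ep(x) - \tfrac{W}{2}x_1^2\ln\tfrac1\ep - \tilde\mu_\ep\Big),\qquad \tilde\mu_\ep \ge 0 ;
\]
together with Lemma \ref{lem3-17} this means that for every $x$ with $\zeta^\ep(x)>0$ we have $x_1\le R_1$ and $\mathcal G_1\zeta^\ep(x) \ge \tfrac{W}{2}x_1^2\ln\tfrac1\ep + \tilde\mu_\ep \ge \tilde\mu_\ep$. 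Moreover, since $r_*=\kappa/(4\pi W)$ makes the coefficient $\frac{\kappa r_*}{2\pi}-Wr_*^2=\frac{\kappa^2}{16\pi^2 W}$ strictly positive and $\int_{\mathbb R^2_+}x_1^2\zeta^\ep\,d\nu\ge 0$, Lemma \ref{lem3-18} yields a constant $c_0>0$ with $\tilde\mu_\ep\ge c_0\ln\frac1\ep$ once $\ep$ is small. Thus it is enough to exhibit a fixed $Z_1$ such that $\mathcal G_1\zeta^\ep(x)<c_0\ln\frac1\ep$ whenever $x_1\le R_1$ and $|x_2|>Z_1$; this contradicts the displayed lower bound and forces $\zeta^\ep$ to vanish on $\{|x_2|>Z_1\}$, which with Lemma \ref{lem3-17} gives the claim.

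To bound $\mathcal G_1\zeta^\ep(x)=\int_{\mathbb R^2_+}G_1(x,y)\zeta^\ep(y)\,d\nu(y)$ for $|x_2|$ large I would split the integral into the ``far'' part over $\{|x-y|\ge1\}$ and the ``near'' part over $\{|x-y|<1\}$. The far part is elementary: using \eqref{2-9} with some $\delta\in(1,\tfrac32)$ and $x_1,y_1\le R_1$ on $\mathrm{supp}(\zeta^\ep)$, it is bounded by a constant depending only on $R_1$ and $\kappa=\vertiii{\zeta^\ep}_1$, hence eventually smaller than $\frac{c_0}{2}\ln\frac1\ep$.

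The essential difficulty is the near part, which encodes the self-interaction of the vortex core and cannot be handled by Lemmas \ref{lem3-2} and \ref{lem3-4} alone, since $\vertiii{\zeta^\ep}_q=\vertiii{\zeta_\ep}_q\to\infty$ as $\ep\to0$ (only an $\ep$-dependent threshold would come out of a crude Hölder estimate). My plan is to combine the logarithmic Green-function bound $G_1(x,y)\le C(x_1y_1)^{1/2}\big(1+(\log\tfrac{x_1y_1}{|x-y|^2})_+\big)\le Cx_1^{1/2}R_1^{1/2}\big(1+2(\log\tfrac{R_1}{|x-y|})_+\big)$ (which follows from \eqref{2-7}--\eqref{2-8} since $G_1(x,y)/(x_1y_1)^{1/2}$ depends only on $\rho(x,y)$) with the bathtub/rearrangement principle applied to $h:=y_1\zeta^\ep\,1_{\{|x-y|<1\}}$. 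One has $0\le h\le R_1\|\zeta^\ep\|_{L^\infty}\le CR_1\ep^{-2}$ by the property of rearrangement, and — crucially — since $y_1\zeta^\ep$ is Steiner symmetric and $|x_2|>1$, the Steiner inequality \eqref{3-5} gives $\int_{\mathbb R^2_+}h\,dy\le\int_{\{|y_2-x_2|<1\}}y_1\zeta^\ep\,dy\le\kappa/|x_2|$. The bathtub principle then bounds the near part $\int G_1(x,y)h(y)\,dy$ by $CR_1\ep^{-2}\int_{B_\rho(x)}x_1^{1/2}R_1^{1/2}(1+2(\log\tfrac{R_1}{|x-y|})_+)\,dy$ with $\pi\rho^2=(\kappa/|x_2|)(CR_1\ep^{-2})^{-1}$, i.e.\ $\rho\sim\ep/\sqrt{|x_2|}$; computing the ball integral (for $\ep$ small, so $\rho<1\le R_1$) produces a factor $\rho^2\log\frac{R_1}{\rho}$, the $\ep^{-2}$ cancels against $\rho^2\sim\ep^2/|x_2|$, and one is left with
\[
\big(\text{near part}\big)(x)\;\le\;\frac{C\,x_1^{1/2}}{|x_2|}\Big(\ln\tfrac1\ep+\tfrac12\log|x_2|+C\Big)\;\le\;\frac{C\,R_1^{1/2}}{|x_2|}\ln\tfrac1\ep + C',
\]
where in the last inequality I used $x_1\le R_1$ and $\sup_{t\ge1}\tfrac{\log t}{t}<\infty$.

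Combining the two pieces, $\mathcal G_1\zeta^\ep(x)\le \frac{C R_1^{1/2}}{|x_2|}\ln\frac1\ep+C''$ with $C''$ a fixed constant. Taking $Z_1:=\max\{1,\,2CR_1^{1/2}/c_0\}$, which is independent of $\ep$, gives $\frac{CR_1^{1/2}}{|x_2|}\ln\frac1\ep\le\frac{c_0}{2}\ln\frac1\ep$ for $|x_2|>Z_1$, while $C''<\frac{c_0}{2}\ln\frac1\ep$ for $\ep$ small; hence $\mathcal G_1\zeta^\ep(x)<c_0\ln\frac1\ep\le\tilde\mu_\ep$, contradicting $\mathcal G_1\zeta^\ep(x)\ge\tilde\mu_\ep$ on $\mathrm{supp}(\zeta^\ep)$. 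Therefore $\zeta^\ep$ vanishes for $|x_2|>Z_1$, and with Lemma \ref{lem3-17} the support lies in $[0,R_1]\times[-Z_1,Z_1]$. The one genuinely delicate point is the bathtub estimate of the near field; everything else is routine manipulation of \eqref{2-7}--\eqref{2-9} and \eqref{3-5}.
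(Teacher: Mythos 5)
Your proof is correct, and it reaches the same reduction as the paper (via Lemmas \ref{lem3-16}--\ref{lem3-18}: on the support one has $\mathcal G_1\zeta^\ep(x)\ge\tilde\mu_\ep\gtrsim\ln\frac1\ep$, so it suffices to beat this with an upper bound on $\mathcal G_1\zeta^\ep$ for $|x_2|$ large), but you handle the near-field differently. The paper splits at $|x_2-y_2|\le|x_2|/2$ and, on that region, discards the logarithmic kernel entirely in favour of \eqref{2-9} with $\delta=\tfrac12$, i.e. $G_1(x,y)\le C(x_1y_1)^{3/4}|x_2-y_2|^{-1/2}$; pairing this with the one-dimensional marginal bound $\int_0^\infty\zeta^\ep(y_1,s)\,y_1\,dy_1\le\kappa/|s|$ (the slicewise version of your \eqref{3-5}) yields $\mathcal G_1\zeta^\ep(x)\le C|x_2|^{-1/2}$ for $|x_2|\ge2$ — a bound with \emph{no} $\ln\frac1\ep$ whatsoever, so the comparison with $\tilde\mu_\ep$ is immediate and in fact gives the sharper conclusion $|x_2|\le\max\{2,o_\ep(1)\}$ on the support. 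You instead keep the logarithmic kernel and control the self-interaction by the bathtub principle with $\|h\|_\infty\le C\ep^{-2}$ and $\|h\|_1\le\kappa/|x_2|$, which re-introduces a $\ln\frac1\ep$ but with the decaying prefactor $C/|x_2|$; since $\tilde\mu_\ep\ge c_0\ln\frac1\ep$ this is still sufficient once $Z_1\gtrsim C/c_0$. Both arguments rest on the same two pillars (Steiner symmetry localizing mass in $y_2$, and the lower bound on $\tilde\mu_\ep$); the paper's choice of $\delta=\tfrac12$ is the slicker device because it makes the core's $\ep$-scale concentration completely invisible, whereas your bathtub step is a slightly heavier but equally valid substitute. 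All the individual steps you flag (the scaling identity $G_1/(x_1y_1)^{1/2}=g(\rho)$, the positivity of $\frac{\kappa r_*}{2\pi}-Wr_*^2=\frac{\kappa^2}{16\pi^2W}$, the use of $x_1\le R_1$ from Lemma \ref{lem3-17} at the evaluation point) check out.
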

 \begin{proof}
 	Take a maximizer $\zeta^\ep \in  \Sigma_\ep$. Note that $\zeta^\ep$ is Steiner symmetric about the $x_1$ axis by Lemma \ref{lem3-16}. We use this property to estimate $\mathcal{G}_1\zeta^\ep$. It is easy to see that for $x_2\neq 0$,
 	\begin{equation}\label{3-27}
 		\int_0^\infty \zeta^\ep(x_1,x_2)x_1 dx_1 \leq \frac{1}{ |x_2|} \int_{0}^{|x_2|} \int_0^\infty \zeta^\ep(x_1,y_2)x_1 dx_1 dy_2\leq \frac{1}{ |x_2|} \vertiii{\zeta^\ep}_1=\frac{\kappa}{ |x_2|}.
 	\end{equation}
 	For $|x_2|\geq 2$, by the inequality \eqref{2-9} we calculate
 	\begin{align*}
 			\mathcal{G}_1 \zeta^\ep(x)&=\int_{|x_2-y_2|>\frac{|x_2|}{2} } G_1(x,y) \zeta^\ep(y)d\nu+\int_{|x_2-y_2|\leq\frac{|x_2|}{2}} G_1(x,y) \zeta^\ep(y)d\nu\\
 			&\leq C\int_{|x_2-y_2|>\frac{|x_2|}{2} } \frac{x_1^2 y_1^2}{|x_2-y_2|^3} \zeta^\ep(y)d\nu +C\int_{|x_2-y_2|\leq\frac{|x_2|}{2}} \frac{x_1^{\frac{3}{4}} y_1^{\frac{3}{4}}}{|x_2-y_2|^{\frac{1}{2}}} \zeta^\ep(y)d\nu\\
 			&\leq \frac{C\kappa R_1^4}{|x_2|^3}+CR_1^{\frac{3}{2}} \int_{|x_2-y_2|\leq\frac{|x_2|}{2}} \frac{1}{|x_2-y_2|^{\frac{1}{2}}} \zeta^\ep(y)d\nu\\
 			&\leq \frac{C \kappa R_1^4}{|x_2|^3}+CR_1^{\frac{3}{2}} \int_0^{\frac{|x_2|}{2}}\frac{1}{|y_2|^{\frac{1}{2}}}\int_{0}^\infty  \left(\zeta^\ep(y_1, x_2-y_2)+\zeta^\ep(y_1, x_2+y_2)\right)y_1 dy_1 dy_2\\
 			&\leq \frac{C \kappa R_1^4}{|x_2|^3}+\frac{\kappa C R_1^{\frac{3}{2}}}{2} \int_0^{\frac{|x_2|}{2}}\frac{1}{|y_2|^{\frac{1}{2}}} \left(\frac{1}{|x_2-y_2|}+\frac{1}{|x_2+y_2|}\right) dy_2\\
 			&\leq C \left( \frac{1}{|x_2|^3}+\frac{1}{|x_2|^{\frac{1}{2}}}\right)\leq \frac{C}{|x_2|^{\frac{1}{2}}}.
 		\end{align*}
 	Therefore, for $x\in \mathbb{R}_+^2$ such that $|x_2|\geq 2$ and $\zeta^\ep(x)>0$, it holds
 	\begin{equation*}
 		0\leq \mathcal{G}_1\zeta^\varepsilon(x)- \frac{W}{2}x_1^2 \ln \frac{1}{\ep}-\tilde\mu_\varepsilon\leq \frac{C}{|x_2|^{\frac{1}{2}}}- \tilde\mu_\varepsilon\leq \frac{C}{|x_2|^{\frac{1}{2}}}-\left(\frac{\kappa r_*}{2\pi}-   Wr_*^2 \right)\log{\frac{1}{\varepsilon}}-\frac{W}{2\kappa}\log\frac{1}{\varepsilon}\int_{\mathbb{R}_+^2}x_1^2\zeta^\ep d\nu+O_\ep(1),
 	\end{equation*}
 	which implies $|x_2| \leq \max\{2, o_\ep(1)\}$. Therefore, for $0<\ep<\ep_0$, we can take a uniform constant $Z_1>0$ such that
 	$|x_2|<Z_1$ and finish the proof.
 \end{proof}

 Following the notations of \cite{CWWZ},  we set
 \begin{equation*}
 	\Gamma_1(t)=\frac{\kappa t}{2\pi}-Wt^2,\ t\in[ 0, +\infty).
 \end{equation*}
 Recall $ r_*=\frac{\kappa}{4\pi W} $.
 It is easy to check that $\Gamma_1(r_*)=\max_{t\in[ 0, +\infty)}\Gamma_1(t)$ and $r_*$ is the unique maximizer.
 Let $\zeta^\ep\in  \Sigma_\ep$ be a maximizer that is even in $x_2$ and
 set
 \begin{equation}\label{3-28}
 	A_\varepsilon=\inf\{x_1\mid(x_1,0)\in {supp}(\zeta^{{\varepsilon}})\},
 \end{equation}
 \begin{equation}\label{3-29}
 	B_\varepsilon=\sup\{x_1\mid(x_1,0)\in {supp}(\zeta^{{\varepsilon}})\}.
 \end{equation}
 We denote $D=( 0, R_1) \times  (-Z_1, Z_1) $.
 \begin{lemma}\label{lem3-20} It holds
 	$\lim_{\varepsilon\to 0^+}A_\varepsilon=r_*$.
 \end{lemma}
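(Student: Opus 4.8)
The plan is to show that $A_\varepsilon \to r_*$ by a combination of a lower bound (which forces $A_\varepsilon$ not too small) and an upper bound (which forces $A_\varepsilon$ not too large), both extracted from the variational characterization and the estimates already obtained. First I would recall that by Lemma \ref{lem3-17}, Lemma \ref{lem3-19} the support of $\zeta^\ep$ lies in $D$, and by Proposition \ref{prop3-11} together with the rearrangement property we have $\nu(\mathrm{supp}(\zeta^\ep)) \le C\ep^2$ and $\mathrm{diam}(\mathrm{supp}(\zeta^\ep)) \le C\ep$ (this last diameter bound will need to be derived here or may follow from the forthcoming Lemma \ref{lem3-24} — if not yet available I would instead argue purely with the measure bound and the energy). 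The key quantitative input is the estimate of $\tilde\mu_\ep$ from Lemma \ref{lem3-18}:
\begin{equation*}
	\tilde\mu_\ep \ge \left(\frac{\kappa r_*}{2\pi}-Wr_*^2\right)\log\frac{1}{\ep}+\frac{W}{2\kappa}\log\frac{1}{\ep}\int_{\mathbb{R}_+^2}x_1^2\zeta^\ep\,d\nu+O_\ep(1),
\end{equation*}
combined with an upper bound for $\mathcal{G}_1\zeta^\ep$ on the core.

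Second, the central computation: on the set where $\zeta^\ep(x)>0$ one has $\mathcal{G}_1\zeta^\ep(x) \ge \frac{W}{2}x_1^2\log\frac{1}{\ep}+\tilde\mu_\ep$. I would integrate (or take a suitable sup) this inequality against $\zeta^\ep$, or more simply evaluate at a point $x^\ep=(A_\ep,0)$ which is a limit of core points. Since the core has $\nu$-measure $O(\ep^2)$ and $\zeta^\ep \le C\ep^{-2}$, the logarithmic Green kernel \eqref{2-7} gives $\mathcal{G}_1\zeta^\ep(x) \le \frac{A_\ep^{1/2}\kappa}{2\pi}(A_\ep^{1/2}\log\frac{1}{\ep} + O(1)) = \frac{\kappa A_\ep}{2\pi}\log\frac{1}{\ep}+O(1)$ on a core point whose first coordinate is near $A_\ep$ (using that all core points have first coordinate comparable to $A_\ep$ by the diameter/measure bound, and that $x_1 \mapsto x_1^{1/2}$, $\log(x_1 y_1)$ are bounded on $D$). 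Plugging the two bounds together and using $\int x_1^2\zeta^\ep\,d\nu \ge (A_\ep+o_\ep(1))^2\kappa \cdot\tfrac12$-type lower bounds (again from concentration), one obtains, after dividing by $\log\frac{1}{\ep}$,
\begin{equation*}
	\frac{W}{2}A_\ep^2 + \frac{\kappa r_*}{2\pi}-Wr_*^2 + \frac{W}{2}A_\ep^2 \le \frac{\kappa}{2\pi}A_\ep + o_\ep(1),
\end{equation*}
i.e. $WA_\ep^2 - \frac{\kappa}{2\pi}A_\ep + (\frac{\kappa r_*}{2\pi}-Wr_*^2) \le o_\ep(1)$. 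Since $\frac{\kappa r_*}{2\pi}-Wr_*^2 = Wr_*^2$ and the quadratic $Wt^2 - \frac{\kappa}{2\pi}t + Wr_*^2 = W(t-r_*)^2$ (using $\kappa = 4\pi W r_*$), this reads $W(A_\ep - r_*)^2 \le o_\ep(1)$, forcing $A_\ep \to r_*$.

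The main obstacle I expect is making the two-sided bound on $\mathcal{G}_1\zeta^\ep$ at a core point genuinely sharp to leading order $\frac{\kappa A_\ep}{2\pi}\log\frac1\ep$: the lower bound needs care because $\mathcal{G}_1\zeta^\ep$ at a point deep in the core could in principle be larger than the ``thin-ring'' value, so I would instead test the inequality $\mathcal{G}_1\zeta^\ep \ge \frac W2 x_1^2\log\frac1\ep + \tilde\mu_\ep$ by integrating against $\zeta^\ep\,d\nu$, giving $2E(\zeta^\ep) \ge W\log\frac1\ep\, P(\zeta^\ep)\cdot 2 \cdot \tfrac12 + \kappa\tilde\mu_\ep$ — wait, more precisely $\int \zeta^\ep\mathcal{G}_1\zeta^\ep\,d\nu \ge W\log\frac1\ep\int x_1^2\zeta^\ep\,d\nu \cdot \tfrac12 \cdot 2 + \kappa\tilde\mu_\ep$ is not quite it either; the clean route is to use the energy bound $2\mathcal E_\ep(\zeta^\ep) = \int\zeta^\ep\mathcal{G}_1\zeta^\ep\,d\nu - W\log\frac1\ep\int x_1^2\zeta^\ep\,d\nu$ from Lemma \ref{lem3-13}, together with the logarithmic upper bound $\int\zeta^\ep\mathcal{G}_1\zeta^\ep\,d\nu \le \frac{\kappa^2 A_\ep}{2\pi}\log\frac1\ep + o(\log\frac1\ep)$ (valid since the core is concentrated near $x_1 = A_\ep + o_\ep(1)$, as will follow once the diameter estimate is in place — hence I would either invoke or prove a preliminary concentration fact $B_\ep - A_\ep = o_\ep(1)$ and $A_\ep$ bounded away from $0$, the latter from $\tilde\mu_\ep \le \mathcal{G}_1\zeta^\ep \le C A_\ep\log\frac1\ep$ and $\tilde\mu_\ep \gtrsim \log\frac1\ep$). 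Carefully chaining $2\mathcal E_\ep(\zeta^\ep) \ge (\frac{\kappa^2 r_*}{4\pi}-\frac{\kappa W r_*^2}{2})\log\frac1\ep + O(1)$ with these bounds then yields the same quadratic inequality $W(A_\ep-r_*)^2 \le o_\ep(1)$, completing the proof.
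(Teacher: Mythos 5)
Your primary route — evaluating the inequality $\mathcal{G}_1\zeta^\varepsilon(x)\ge \frac{W}{2}x_1^2\log\frac{1}{\varepsilon}+\tilde\mu_\varepsilon$ at the single point $x=(A_\varepsilon,0)$, upper-bounding the Green potential there by $\frac{A_\varepsilon}{2\pi}\kappa\log\frac{1}{\varepsilon}+$ lower order via the measure bound $\zeta^\varepsilon\le C\varepsilon^{-2}$, $\nu(\mathrm{supp}(\zeta^\varepsilon))\le C\varepsilon^2$ and a Bathtub/rearrangement estimate for the near-field, invoking Lemma \ref{lem3-18} together with $\int y_1^2\zeta^\varepsilon\,d\nu\ge\kappa A_\varepsilon^2$, and reading off $W(A_\varepsilon-r_*)^2\le o_\varepsilon(1)$ — is exactly the paper's argument (the paper phrases the final quadratic via $\Gamma_1(t)=\frac{\kappa t}{2\pi}-Wt^2$ and its unique maximizer $r_*$, and realizes the splitting of the potential at $\rho=\varepsilon^\gamma$ with the $\sinh^{-1}$ bound, which produces the harmless $O(\gamma)$ error you absorb by a $\delta\to 0$ limit instead). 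Your algebra checks out, and the impulse bound $\int y_1^2\zeta^\varepsilon\,d\nu\ge\kappa A_\varepsilon^2$ needs no ``concentration'': it is immediate from the definition of $A_\varepsilon$ and the Steiner symmetry of $\zeta^\varepsilon$, with no factor $\tfrac12$.

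The one genuine danger in your write-up is the two alternative routes you keep reaching for. The diameter bound $\mathrm{diam}(\mathrm{supp}(\zeta^\varepsilon))\le C\varepsilon$ is Lemma \ref{lem3-24} and the concentration $B_\varepsilon-A_\varepsilon=o_\varepsilon(1)$ is Lemmas \ref{lem3-22}--\ref{lem3-23}; in the paper's logical order both come \emph{after} and depend on the present lemma, so your ``clean route'' via $\int\zeta^\varepsilon\mathcal{G}_1\zeta^\varepsilon\,d\nu\le\frac{\kappa^2A_\varepsilon}{2\pi}\log\frac{1}{\varepsilon}+o(\log\frac{1}{\varepsilon})$ is circular as stated. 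The pointwise argument does not need either fact: for $y$ within a fixed small distance $\delta$ of $(A_\varepsilon,0)$ one has $y_1\le A_\varepsilon+\delta$ trivially, the local circulation is bounded by the total circulation $\kappa$, and the contribution of $y$ at distance $\ge\delta$ is $O_\delta(1)$. Stick to that version and discard the energy-integrated variant.
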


 \begin{proof}
 	Let $\rho$ be defined by $\eqref{2-6}$ and $\gamma\in(0,1)$. By Lemma \ref{lem3-16}, for any  $x=(x_1,x_2)\in \text{supp}(\zeta^{{\varepsilon}})$, we have
 	\begin{equation}\label{3-30}
 		\mathcal{G}_1\zeta^{{\varepsilon}}(x)-\frac{W x_1^2}{2}\log{\frac{1}{\varepsilon}}\ge \tilde \mu^{{\varepsilon}}.
 	\end{equation}
 	By the definition of $ \mathcal{G}_1 $, we derive
 	\begin{align*}
 			\mathcal{G}_1\zeta^{{\varepsilon}}(x)&= \int_{D}G_1(x,y)\zeta^{{\varepsilon}}(y)y_ 1  dy=\big(\int_{D\cap\{\rho>\varepsilon^\gamma\}}+\int_{D\cap\{\rho \le \varepsilon^\gamma\}}\big)G_1(x,y)\zeta^{{\varepsilon}} (y)y_ 1  dy\\
 			&:=E_1+E_2.
 		\end{align*}
 	On the one hand, by the estimate from Lemma 3.4 in \cite{CWWZ}
 	$$ 0<G(x,y)\leq\frac{(x_1 y_1)^\frac{1}{2}}{4\pi}\sinh^{-1}\left(\frac{1}{\rho}\right), \ \ \forall\,\rho >0,$$ we obtain
 	\begin{align}\label{3-31}
 			E_1 &=\int\limits_{D\cap\{\rho>\varepsilon^\gamma\}}G_1(x,y)\zeta^{{\varepsilon}}(y)y_ 1  dy\le \frac{(x_1)^{\frac{1}{2}}}{4\pi} \sinh^{-1}(\frac{1}{\varepsilon^\gamma})\int\limits_{D\cap\{\rho>\varepsilon^\gamma\}}\zeta^{{\varepsilon}}(y)y_1^{{3}/{2}}  dy  \\
 			&\le \frac{R_1}{4\pi} \sinh^{-1}(\frac{1}{\varepsilon^\gamma})\int\limits_{D\cap\{\rho>\varepsilon^\gamma\}}\zeta^{\varepsilon}(y)y_ 1  dy \le \frac{\kappa R_1}{4\pi} \sinh^{-1}(\frac{1}{\varepsilon^\gamma}).\nonumber
 	\end{align}
 	On the other hand, by the definition of $\rho$ we have $D\cap\{\rho \le \varepsilon^\gamma\} \subseteq B_{ R_1\varepsilon^\gamma} (x) $. Indeed, for any $ y\in D\cap\{\rho \le \varepsilon^\gamma\} $, we have $  \rho=[(x_1-y_1)^2+(x_2-y_2)^2]^\frac{1}{2}/(x_1 y_1)^\frac{1}{2}<\varepsilon^\gamma, $ which means that  $ [(x_1-y_1)^2+(x_2-y_2)^2]^\frac{1}{2}\le   (x_1 y_1)^\frac{1}{2}\varepsilon^\gamma\le R_1\varepsilon^\gamma $. Hence using \eqref{2-7}, we get
 	\begin{align*}
 			E_2 &=\int\limits_{D\cap\{\rho \le \varepsilon^\gamma\}}G_1(x,y)\zeta^{{\varepsilon}}(y)y_1  dy\\
 			&\le \frac{ x_1 +C\varepsilon^\gamma}{2\pi}\int\limits_{D\cap\{\rho \le \varepsilon^\gamma\}}\log [(x_1-y_1)^2+(x_2-y_2)^2]^{-\frac{1}{2}}\zeta^{\varepsilon}(y)y_1 dy+C.
 		\end{align*}
 	Notice that $ \log\frac{1}{|x|} $ is a radially symmetric monotone decreasing function, and $ y_1\zeta^{\varepsilon}(y)\le \frac{C}{\varepsilon^2}  $. We choose a radially symmetric step function $ \zeta^{\varepsilon}_*(y)=\frac{C}{\varepsilon^2}\chi_{B_{ {r}_\varepsilon}(x)} $ such that $ \int \zeta^{\varepsilon}_*(y)dy=\int\limits_{D\cap\{\rho \le \varepsilon^\gamma\}}\zeta^{\varepsilon}(y)y_1 dy $. Then by  the rearrangement inequality and direct calculations we have
 	\begin{align}\label{3-32}
 			E_2 &\le \frac{x_1+C\varepsilon^\gamma}{2\pi}\int_{\mathbb{R}_+^2}\log [(x_1-y_1)^2+(x_2-y_2)^2]^{-\frac{1}{2}}\zeta_*^{\varepsilon}(y)dy+C\\
 			&\le \frac{x_1+C\varepsilon^\gamma}{2\pi}\log\frac{1}{\varepsilon} \int\limits_{D\cap\{\rho \le \varepsilon^\gamma\}}\zeta^{\varepsilon}(y) y_1 dy+C  \frac{x_1}{2\pi}\log\frac{1}{\varepsilon} \int\limits_{B_{R_1\varepsilon^\gamma} (x) }\zeta^{\varepsilon} d\nu+C.\nonumber
 	\end{align}
 	
 	Therefore by Lemma \ref{lem3-13}, \eqref{3-31} and \eqref{3-32}, we conclude that there holds
 	\begin{align*}
 			\frac{x_1}{2\pi}\log\frac{1}{\varepsilon} \int\limits_{B_{R_1\varepsilon^\gamma} (x) }\zeta^{\varepsilon} d\nu&+\frac{\kappa R_1}{4\pi} \sinh^{-1}(\frac{1}{\varepsilon^\gamma})-\frac{Wx_1^2}{2}\log{\frac{1}{\varepsilon}} \\
 			& \ge \left(\frac{\kappa r_*}{2\pi}-   Wr_*^2 \right)\log{\frac{1}{\varepsilon}}+\frac{W}{2\kappa}\log\frac{1}{\varepsilon}\int_{\mathbb{R}_+^2}y_1^2\zeta^\ep(y) d\nu(y)-C.
 		\end{align*}
 	Dividing both sides of the above inequality by $\log\frac{1}{\varepsilon}$, we obtain
 		\begin{align}\label{3-33}
 			\Gamma_1(x_1)&\ge \frac{x_1}{2\pi}\int\limits_{B_{R_1\varepsilon^\gamma} (x) }\zeta^{\varepsilon} d\nu-Wx_1^2 \ge ~ \Gamma_1(r_*)+\frac{W}{2\kappa}\Big{\{}\int_D\zeta^{\varepsilon}(y) y_1^2d\nu(y)-\kappa x_1^2\Big{\}}   \\
 			&-\frac{\kappa R_1}{4\pi} \sinh^{-1}\left(\frac{1}{\varepsilon^\gamma}\right)/\log\frac{1}{\varepsilon}-C/\log\frac{1}{\varepsilon}.\nonumber
 		\end{align}
 	Noting that
 	\begin{equation*}
 		\int_D\zeta^{\varepsilon}(y) y_1^2d\nu(y) \ge \kappa(A_\varepsilon)^2,
 	\end{equation*}
 	taking $x=(A_\varepsilon,0)$ and letting $\varepsilon$ tend to $0^+$, we deduce from $\eqref{3-33}$ that
 	\begin{equation}\label{3-34}
 		\liminf_{\varepsilon\to 0^+}\Gamma_1(A_\varepsilon)\ge \Gamma_1(r^*)-\gamma \kappa R_1/(2\pi).
 	\end{equation}
 	Hence we get the desired result by letting   $\gamma \to 0$ since $r_*$ is the unique maximizer of the function $\Gamma_1$.
 \end{proof}

 Next, we estimate the impulse of the flow.
 \begin{lemma} \label{lem3-21}
 	As $\varepsilon\to 0^+$, one has
 	\begin{equation}\label{3-35}
 		\int_D\zeta^{\varepsilon}(y) y_1^2d\nu(y)\to \kappa r_*^2.
 	\end{equation}
 	As a consequence, for any $\eta>0$, there holds
 	\begin{equation}\label{3-36}
 		\lim_{\varepsilon\to 0^+}\int_{D\cap\{y_1\ge r_*+\eta\}}\zeta^{\varepsilon}(y)  d\nu =0.
 	\end{equation}
 \end{lemma}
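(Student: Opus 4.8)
The plan is to prove \eqref{3-35} by a matching pair of one‑sided bounds and then deduce \eqref{3-36} from it by an elementary splitting of the impulse integral. For the \emph{lower bound} I would read off $\liminf_{\varepsilon\to0}\int_D\zeta^\varepsilon y_1^2\,d\nu\ge\kappa r_*^2$ directly from the Steiner symmetry of $\zeta^\varepsilon$ recorded in Lemma \ref{lem3-16}: since each super‑level set $\{\zeta^\varepsilon>\tau\}$ is an interval in the $x_2$‑variable centred at $x_2=0$, the vertical section of $\mathrm{supp}(\zeta^\varepsilon)$ over any $x_1$ is either empty or meets the $x_1$‑axis, whence $\mathrm{supp}(\zeta^\varepsilon)\subset\{x_1\ge A_\varepsilon\}\times\mathbb R$ with $A_\varepsilon$ as in \eqref{3-28}. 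Therefore $\int_D\zeta^\varepsilon y_1^2\,d\nu\ge A_\varepsilon^2\int_D\zeta^\varepsilon\,d\nu=A_\varepsilon^2\kappa$, and $A_\varepsilon\to r_*$ by Lemma \ref{lem3-20}.

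For the \emph{upper bound} I would recycle the computation already carried out in the proof of Lemma \ref{lem3-20}. Specializing inequality \eqref{3-33} to $x=(A_\varepsilon,0)$ — which lies in $\mathrm{supp}(\zeta^\varepsilon)$ since the support is closed — and rearranging gives
\[
\frac{W}{2\kappa}\Big(\int_D\zeta^\varepsilon y_1^2\,d\nu-\kappa A_\varepsilon^2\Big)\le \Gamma_1(A_\varepsilon)-\Gamma_1(r_*)+\frac{\kappa R_1}{4\pi}\,\frac{\sinh^{-1}(\varepsilon^{-\gamma})}{\log(1/\varepsilon)}+\frac{C}{\log(1/\varepsilon)}.
\]
Passing to the limit $\varepsilon\to0$, using continuity of $\Gamma_1$, $A_\varepsilon\to r_*$, and $\sinh^{-1}(\varepsilon^{-\gamma})/\log(1/\varepsilon)\to\gamma$, I obtain $\limsup_{\varepsilon\to0}\int_D\zeta^\varepsilon y_1^2\,d\nu\le\kappa r_*^2+C'\gamma$ for a constant $C'$ independent of $\gamma$; since $\gamma\in(0,1)$ is arbitrary this yields $\limsup_{\varepsilon\to0}\int_D\zeta^\varepsilon y_1^2\,d\nu\le\kappa r_*^2$, and together with the lower bound proves \eqref{3-35}.

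For the consequence \eqref{3-36}, I would fix $\eta>0$ and set $m_\varepsilon:=\int_{D\cap\{y_1\ge r_*+\eta\}}\zeta^\varepsilon\,d\nu$. Using $y_1\ge A_\varepsilon$ on $\mathrm{supp}(\zeta^\varepsilon)$ for the part $\{y_1<r_*+\eta\}$ and $y_1\ge r_*+\eta$ on the remainder, together with $\int_D\zeta^\varepsilon\,d\nu=\kappa$, one gets
\[
\int_D\zeta^\varepsilon y_1^2\,d\nu\ \ge\ A_\varepsilon^2(\kappa-m_\varepsilon)+(r_*+\eta)^2 m_\varepsilon\ =\ A_\varepsilon^2\kappa+\big((r_*+\eta)^2-A_\varepsilon^2\big)m_\varepsilon .
\]
Since $A_\varepsilon\to r_*$, the coefficient $(r_*+\eta)^2-A_\varepsilon^2$ is bounded below by a positive constant for $\varepsilon$ small, so \eqref{3-35} forces $\limsup_{\varepsilon\to0}m_\varepsilon\le0$, i.e. $m_\varepsilon\to0$.

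The only mildly delicate points are the topological step that $\mathrm{supp}(\zeta^\varepsilon)=\overline{\{\zeta^\varepsilon>0\}}$ inherits the centred‑interval structure of the super‑level sets (so that the inclusion $\mathrm{supp}(\zeta^\varepsilon)\subset\{x_1\ge A_\varepsilon\}$ holds), and the routine bookkeeping of the $\gamma$‑dependent error terms when passing to the limit in \eqref{3-33}. Neither is a genuine obstacle; the substantive input, namely the sharp localization $A_\varepsilon\to r_*$, is already supplied by Lemma \ref{lem3-20}, so this lemma needs no new energy estimates.
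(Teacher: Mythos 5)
Your proposal is correct and follows essentially the same route as the paper: the paper also obtains \eqref{3-35} by combining the trivial lower bound $\int_D\zeta^\varepsilon y_1^2\,d\nu\ge\kappa A_\varepsilon^2$ with the upper bound extracted from \eqref{3-33} at $x=(A_\varepsilon,0)$, letting $\gamma\to0$, and invoking Lemma \ref{lem3-20}. Your Chebyshev-type splitting for \eqref{3-36} is exactly the ``simple calculation'' the paper leaves implicit.
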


 \begin{proof}
 	From $\eqref{3-33}$, we know that for any $\gamma\in(0,1)$,
 	\begin{equation*}
 		0\le \liminf_{\varepsilon\to 0^+}\Big[\int_D\zeta^{\varepsilon}(y) y_1^2d\nu(y) -\kappa(A_\varepsilon)^2\Big]\le \limsup_{\varepsilon\to 0^+}\Big[\int_D\zeta^{\varepsilon} (y) y_1^2d\nu(y)-\kappa(A_\varepsilon)^2\Big]\le \frac{\kappa^2 R_1 \gamma}{2\pi W}.
 	\end{equation*}
 	Combining this with Lemma \ref{lem3-20} we get
 	\begin{equation*}
 		\lim_{\varepsilon\to 0^+}\int_D\zeta^{\varepsilon} (y) y_1^2d\nu(y)=\lim_{\varepsilon\to 0^+}\kappa(A_\varepsilon)^2 =\kappa r_*^2.
 	\end{equation*}
 	By  $\eqref{3-35}$ and a simple calculation, we immediately get \eqref{3-36}.
 \end{proof}

 Using this result, we can   get estimates of $ B_\varepsilon. $
 \begin{lemma}\label{lem3-22}
 	$\lim_{\varepsilon\to 0^+}B_\varepsilon=r_*$.
 \end{lemma}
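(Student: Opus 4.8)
The plan is to establish $\lim_{\ep\to0^+}B_\ep=r_*$ by proving the two one-sided bounds separately; the upper bound will be an analogue ``from above'' of the argument used for $A_\ep$ in Lemma~\ref{lem3-20}.

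For the lower bound, first observe that since $\zeta^\ep\not\equiv0$ is Steiner symmetric about the $x_1$-axis, one has $\zeta^\ep(x_1,0)\geq\zeta^\ep(x_1,x_2)$ for all $x_2$, so the set $\{x_1\mid(x_1,0)\in\mathrm{supp}(\zeta^\ep)\}$ is a non-empty closed subset of $[0,R_1]$, with $R_1$ from Lemma~\ref{lem3-17}; in particular $A_\ep\leq B_\ep$. Hence Lemma~\ref{lem3-20} gives $\liminf_{\ep\to0^+}B_\ep\geq\lim_{\ep\to0^+}A_\ep=r_*$.

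For the upper bound I would argue by contradiction. Suppose $\limsup_{\ep\to0^+}B_\ep>r_*$ and pass to a subsequence $\ep_n\downarrow0$ with $B_{\ep_n}\to b>r_*$; set $\eta_0:=(b-r_*)/3>0$ and $x_n:=(B_{\ep_n},0)\in\mathrm{supp}(\zeta^{\ep_n})$ (the supremum defining $B_{\ep_n}$ is attained since the relevant set is closed and bounded). Since $\mathrm{supp}(\zeta^{\ep_n})=\overline{\{\mathcal G_1\zeta^{\ep_n}-\tfrac W2 x_1^2\ln\tfrac1{\ep_n}-\tilde\mu_{\ep_n}>0\}}$ by Lemma~\ref{lem3-16} and $\mathcal G_1\zeta^{\ep_n}$ is continuous, we obtain
\begin{equation*}
\mathcal G_1\zeta^{\ep_n}(x_n)\ \geq\ \frac W2 B_{\ep_n}^2\ln\frac1{\ep_n}+\tilde\mu_{\ep_n}.
\end{equation*}
Next, fixing $\gamma\in(0,1)$ and bounding $\mathcal G_1\zeta^{\ep_n}(x_n)$ from above exactly as in the proof of Lemma~\ref{lem3-20} --- splitting the convolution at $\{\rho(x_n,\cdot)=\ep_n^\gamma\}$ and using \eqref{2-7}--\eqref{2-8}, the rearrangement inequality and $y_1\zeta^{\ep_n}(y)\leq C\ep_n^{-2}$ --- one gets
\begin{equation*}
\mathcal G_1\zeta^{\ep_n}(x_n)\ \leq\ \frac{\kappa R_1}{4\pi}\sinh^{-1}\!\bigl(\ep_n^{-\gamma}\bigr)+\frac{B_{\ep_n}}{2\pi}\ln\frac1{\ep_n}\int_{B_{R_1\ep_n^\gamma}(x_n)}\zeta^{\ep_n}\,d\nu+C.
\end{equation*}
The decisive new input is that, because $B_{\ep_n}\to b>r_*+2\eta_0$ and $R_1\ep_n^\gamma\to0$, for $n$ large $B_{R_1\ep_n^\gamma}(x_n)\subset\{y_1\geq r_*+\eta_0\}$, so by \eqref{3-36} the local mass $\int_{B_{R_1\ep_n^\gamma}(x_n)}\zeta^{\ep_n}\,d\nu\to0$ and that term is $o(\ln\tfrac1{\ep_n})$. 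Combining the two displays with the lower bound for $\tilde\mu_{\ep_n}$ from Lemma~\ref{lem3-18} and $\int_D\zeta^{\ep_n}y_1^2\,d\nu\to\kappa r_*^2$ from \eqref{3-35}, dividing by $\ln\tfrac1{\ep_n}$ and letting $n\to\infty$ (so that $\sinh^{-1}(\ep_n^{-\gamma})/\ln\tfrac1{\ep_n}\to\gamma$) yields, for every fixed $\gamma\in(0,1)$,
\begin{equation*}
\frac W2 b^2+\frac{\kappa r_*}{2\pi}-W r_*^2+\frac{W r_*^2}{2}\ \leq\ C_*\gamma, \qquad C_*:=\frac{\kappa R_1}{4\pi}.
\end{equation*}
Letting $\gamma\to0$ gives $\tfrac W2 b^2+\tfrac{\kappa r_*}{2\pi}-\tfrac{W r_*^2}{2}\leq0$; but $b>r_*$ forces the left-hand side to exceed $\tfrac{\kappa r_*}{2\pi}>0$, a contradiction. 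Hence $\limsup_{\ep\to0^+}B_\ep\leq r_*$, and together with the lower bound this proves the lemma.

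The main obstacle is the self-interaction term $\tfrac{B_\ep}{2\pi}\ln\tfrac1\ep\int_{B_{R_1\ep^\gamma}(x_n)}\zeta^\ep\,d\nu$: the crude bound $\int_{B_{R_1\ep^\gamma}(x_n)}\zeta^\ep\,d\nu\leq\kappa$ that suffices in Lemma~\ref{lem3-20} is useless here, since after absorbing $\tilde\mu_\ep$ one ends up comparing the values of $t\mapsto\tfrac{\kappa t}{2\pi}-\tfrac{W t^2}{2}$ at $b$ and at $r_*$, and this function is \emph{increasing} on $(r_*,2r_*)$, so no contradiction would follow. The argument closes only because Lemma~\ref{lem3-21} guarantees the vanishing of the mass of $\zeta^\ep$ in a shrinking ball about $x_n$, which kills the logarithmically large self-interaction and leaves the decisive contribution $-\tfrac W2 b^2\ln\tfrac1\ep$ to compete with the lower bound for $\tilde\mu_\ep$.
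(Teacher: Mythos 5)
Your proof is correct and follows essentially the same route as the paper's: both arguments evaluate the inequality chain \eqref{3-33} (variational inequality plus the Green-function upper bound and Lemma \ref{lem3-18}) at $x=(B_\ep,0)$, use Lemma \ref{lem3-21} to control the impulse and the local mass, and conclude via \eqref{3-36}. The only difference is organizational — the paper derives a positive lower bound on $\int_{B_{R_1\ep^\gamma}((B_\ep,0))}\zeta^\ep\,d\nu$ and then invokes \eqref{3-36}, while you run the contrapositive as a reductio — so the two proofs are logically equivalent.
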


 \begin{proof}
 	Clearly, $ \liminf_{\varepsilon\to 0^+}B_\varepsilon\geq\lim_{\varepsilon\to 0^+}A_\varepsilon =r_*  $. Taking $x=(B_\varepsilon,0)$ in \eqref{3-33}, noting that $0<r_*<R_1$,  we obtain from $\eqref{3-33}$
 	\begin{equation*}
 		\begin{split}
 			\frac{R_1}{2\pi}\liminf_{\varepsilon\to 0^+}\int\limits_{B_{R_1\varepsilon^\gamma}\big((B_\varepsilon,0)\big)}\zeta^{\varepsilon} d\nu  & \ge \Gamma_1(r_*)+\frac{ W}{2}\liminf_{\varepsilon\to 0^+}(B_\varepsilon)^2+\frac{ Wr_*^2}{2}-\frac{\kappa d\gamma}{4\pi} \\
 			& \ge \frac{\kappa r_*}{2\pi}-\frac{\kappa R_1\gamma}{4\pi}.
 		\end{split}
 	\end{equation*}
 	Hence
 	\begin{equation*}
 		\liminf_{\varepsilon\to 0^+}\int\limits_{B_{R_1\varepsilon^\gamma}\big((B_\varepsilon,0)\big)}\zeta^{\varepsilon} d\nu\ge\frac{\kappa r_*}{R_1}-\frac{\gamma\kappa }{2}.
 	\end{equation*}
 	The desired result clearly follows from Lemma $\ref{lem3-21}$ by taking $\gamma$ so small that $r_*/R_1-\gamma/2>0$.
 \end{proof}

 Having obtained that $\lim_{\varepsilon\to 0^+}A_\varepsilon=\lim_{\varepsilon\to 0^+}B_\varepsilon=r_*$, we next show that the  diameter of the support set of $ \zeta^{\varepsilon} $ can be bounded by $ \varepsilon^{\gamma} $ for any $ \gamma\in(0, 1) $.
 \begin{lemma}\label{lem3-23}
 	For any number $\gamma\in(0, 1)$, there holds $$\mathrm{diam}\big(\mathrm{supp}(\zeta^{\varepsilon})\big) \le 2R_1 \varepsilon^{\gamma}$$ provided $\varepsilon$ is small enough. As a consequence, we arrive at $$\lim_{\varepsilon\to 0^+}\mathrm{dist}\big(\mathrm{supp}(\zeta^{\varepsilon}), (r_*,0)\big)=0.$$
 \end{lemma}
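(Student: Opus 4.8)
The plan is to prove the diameter bound by showing that the Stokes stream function $\mathcal G_1\zeta^{\ep}$ is \emph{almost maximal} along $\mathrm{supp}(\zeta^{\ep})$, and then to convert this near‑maximality into a statement that essentially all of the mass of $\zeta^{\ep}$ concentrates in a ball of radius $\ep^\gamma$ about any single support point; two disjoint such balls would carry total mass exceeding $\kappa$, which is impossible. Throughout I use that $\mathrm{supp}(\zeta^{\ep})\subset\{A_\ep\le x_1\le B_\ep\}$ — a consequence of the Steiner symmetry of $\zeta^{\ep}$ about the $x_1$–axis together with the definitions \eqref{3-28}, \eqref{3-29} — so that $x_1=r_*+o_\ep(1)$ on $\mathrm{supp}(\zeta^{\ep})$ by Lemmas \ref{lem3-20} and \ref{lem3-22}. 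For $x\in\mathrm{supp}(\zeta^{\ep})$, Lemma \ref{lem3-16} gives $\mathcal G_1\zeta^{\ep}(x)\ge\tfrac{W}{2}x_1^2\log\tfrac1\ep+\tilde\mu_\ep$; inserting the lower bound for $\tilde\mu_\ep$ from Lemma \ref{lem3-18}, the limit $\int_{\mathbb R_+^2}x_1^2\zeta^{\ep}\,d\nu\to\kappa r_*^2$ from Lemma \ref{lem3-21}, the relation $x_1=r_*+o_\ep(1)$, and the identity $\tfrac{\kappa}{2\pi}=2Wr_*$ (so that $\Gamma_1(r_*)+\tfrac{W}{2}r_*^2=\tfrac{\kappa r_*}{2\pi}-\tfrac{W}{2}r_*^2$), I obtain
\[
\mathcal G_1\zeta^{\ep}(x)\ \ge\ \frac{\kappa r_*}{2\pi}\log\frac1\ep-o_\ep(1)\log\frac1\ep,\qquad\forall\,x\in\mathrm{supp}(\zeta^{\ep}).
\]
Since $\zeta^{\ep}y_1\le C\ep^{-2}$ and $\int_{\mathbb R_+^2}\zeta^{\ep}\,d\nu=\kappa$, the bathtub principle bounds $\int_{B_L(x)}\log\tfrac1{|x-y|}\zeta^{\ep}\,d\nu$ by $(\kappa-m_L(x))\log\tfrac1\ep+O(1)$, where $m_L(x):=\int_{\{|x-y|\ge L\}}\zeta^{\ep}\,d\nu$; combined with the displayed inequality this says the stream function essentially attains its largest possible value on the support.

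First I would show $\mathrm{diam}(\mathrm{supp}(\zeta^{\ep}))\to0$. Fix $L>0$ small enough that \eqref{2-7} is valid on $B_L(x)$; there $G_1(x,y)=\tfrac{r_*+o_\ep(1)}{2\pi}\log\tfrac1{|x-y|}+O(1)$, while on $\{|x-y|\ge L\}$ one has $G_1\le C_L$ by \eqref{2-7}--\eqref{2-8}. Splitting $\mathcal G_1\zeta^{\ep}(x)$ accordingly, bounding the near part by the bathtub estimate and the tail crudely by $C_L m_L(x)$, the comparison with the lower bound above forces $m_L(x)=o_\ep(1)$ for every $x\in\mathrm{supp}(\zeta^{\ep})$ and each fixed $L$. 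If two support points were at distance $\ge L$, the balls of radius $L/3$ about them would be disjoint, each carrying mass $\kappa-o_\ep(1)$, for a total exceeding $\kappa$; impossible. Hence $\mathrm{diam}(\mathrm{supp}(\zeta^{\ep}))<L$ for $\ep$ small, and $L$ being arbitrary, $\mathrm{diam}(\mathrm{supp}(\zeta^{\ep}))\to0$.

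Now fix $\gamma\in(0,1)$. Since $\mathrm{supp}(\zeta^{\ep})$ lies in an arbitrarily small ball, \eqref{2-7} applies to all pairs $x,y\in\mathrm{supp}(\zeta^{\ep})$, so for $|x-y|\ge\ep^\gamma$ one gets the improved bound $G_1(x,y)\le\tfrac{(r_*+o_\ep(1))\gamma}{2\pi}\log\tfrac1\ep+O(1)$. Splitting $\mathcal G_1\zeta^{\ep}(x)$ at radius $\ep^\gamma$, the tail contributes at most $\tfrac{(r_*+o_\ep(1))\gamma}{2\pi}m_{\ep^\gamma}(x)\log\tfrac1\ep$ while the near part contributes at most $\tfrac{r_*+o_\ep(1)}{2\pi}(\kappa-m_{\ep^\gamma}(x))\log\tfrac1\ep+O(1)$; adding these and comparing with the lower bound $\tfrac{\kappa r_*}{2\pi}\log\tfrac1\ep-o_\ep(1)\log\tfrac1\ep$ forces $(1-\gamma)\,m_{\ep^\gamma}(x)=o_\ep(1)$, that is,
\[
\int_{\{|x-y|\ge\ep^\gamma\}}\zeta^{\ep}(y)\,d\nu=o_\ep(1),\qquad\forall\,x\in\mathrm{supp}(\zeta^{\ep}).
\]
Thus every ball $B_{\ep^\gamma}(x)$, $x\in\mathrm{supp}(\zeta^{\ep})$, carries mass $\kappa-o_\ep(1)$, and the disjointness argument of the previous step, now at scale $\ep^\gamma$, shows no two support points are farther apart than $2\ep^\gamma$; hence $\mathrm{diam}(\mathrm{supp}(\zeta^{\ep}))\le 2R_1\ep^\gamma$ for $\ep$ small. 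Finally $(A_\ep,0)\in\mathrm{supp}(\zeta^{\ep})$ and $A_\ep\to r_*$ give $|x-(r_*,0)|\le\mathrm{diam}(\mathrm{supp}(\zeta^{\ep}))+|A_\ep-r_*|\to0$ for all $x\in\mathrm{supp}(\zeta^{\ep})$, which is the stated consequence.

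The main obstacle is the passage from the near–maximality of $\mathcal G_1\zeta^{\ep}$ to quantitative concentration: it requires tracking the logarithmic asymptotics \eqref{2-7} precisely and carrying out the bathtub rearrangement bookkeeping carefully, and it is essential to first establish $\mathrm{diam}(\mathrm{supp}(\zeta^{\ep}))\to0$ before the comparison radius can be lowered all the way to $\ep^\gamma$ with the implicit constant kept uniform over $\gamma\in(0,1)$.
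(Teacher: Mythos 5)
Your argument is correct and is essentially the paper's: both rest on comparing the lower bound $\mathcal G_1\zeta^{\ep}(x)\ge\frac{W}{2}x_1^2\log\frac1\ep+\tilde\mu_\ep$ on the support (with $\tilde\mu_\ep$ bounded below via Lemmas \ref{lem3-18} and \ref{lem3-21}) against an upper bound obtained by splitting $\mathcal G_1\zeta^{\ep}$ into near and far parts and applying the bathtub/rearrangement inequality to the near part — this is precisely the mechanism behind \eqref{3-33} — and then ruling out two distant concentrations of mass carrying more than $\kappa$ in total. The only difference is the order of the bootstrap: you first prove $\mathrm{diam}(\mathrm{supp}(\zeta^{\ep}))\to0$ at a fixed scale $L$ and then pass directly to scale $\ep^\gamma$ for every $\gamma\in(0,1)$, whereas the paper first obtains the $\ep^\gamma$ concentration only for small $\gamma$ (where the crude far-field coefficient $R_1$ still yields mass $>\kappa/2$), uses it to shrink the support, and then sharpens that coefficient from $R_1$ to $r_*$ to cover all $\gamma\in(0,1)$.
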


 \begin{proof}
 	Let us use the same notation as in the proof of Lemma $\ref{lem3-20}$. Since $\int_{D}\zeta^{\varepsilon} d\nu=\kappa, $ it suffices to prove that (see e.g. \cite{CWZ,T83})
 	\begin{equation}\label{3-37}
 		\int\limits_{B_{R_1\varepsilon^\gamma}(x)}\zeta^{\varepsilon}(y) d\nu(y)>\kappa/2, \ \ \ \forall\, x\in \text{supp}(\zeta^{\varepsilon}).
 	\end{equation}
 	For any $x=(x_1,x_2)\in \text{supp}(\zeta^{\varepsilon})$, from Lemma \ref{lem3-20} and  Lemma \ref{lem3-22} we know that $x_1\to r_*$  as $\varepsilon \to 0^+$.
 	Taking this into $\eqref{3-33}$ and by a direct calculation, we get
 	\begin{equation}\label{3-38}
 		\liminf_{\varepsilon\to 0^+}\int\limits_{B_{R_1\varepsilon^\gamma}(x)}\zeta^{\varepsilon} d\nu \ge \kappa-\frac{\kappa R_1\gamma}{r_*},
 	\end{equation}
 	which implies \eqref{3-37} for all small $\gamma$ such that $1-\frac{R_1\gamma}{r_*}>1/2$. Then we get $ \text{diam}\big(\text{supp}(\zeta^{\varepsilon})\big) \le 2R_1 \varepsilon^{\gamma} $ for small $ \gamma $, which in particular implies $$\text{diam}\big(\text{supp}(\zeta^{\varepsilon})\big)\le C/\log\frac{1}{\varepsilon}$$ provided $\varepsilon$ is small enough. Thus \eqref{3-31}  can be improved    as follows
 	\begin{align*}
 			E_1 &=\int\limits_{D\cap\{\rho>\varepsilon^\gamma\}}G_1(x,y)\zeta^{{\varepsilon}}(y)y_ 1  dy \le \frac{x_1^{\frac{1}{2}}}{4\pi} \sinh^{-1}\left(\frac{1}{\varepsilon^\gamma}\right)\int\limits_{D\cap\{\rho>\varepsilon^\gamma\}}\zeta^{\varepsilon}(y)y_1^{\frac{3}{2}} dy \\
 			&\le \frac{\kappa x_1}{4\pi} \sinh^{-1}\left(\frac{1}{\varepsilon^\gamma}\right)+C.
 		\end{align*}
 	Repeating the proof of \eqref{3-31}, we can sharpen $\eqref{3-38}$ as follows
 	\begin{equation}\label{3-39}
 		\liminf_{\varepsilon\to 0^+}\int\limits_{B_{R_1\varepsilon^\gamma}(x)}\zeta^{\varepsilon}d\nu \ge \kappa-\frac{\gamma\kappa}{2},
 	\end{equation}
 	which implies that for any $ \gamma\in (0,1) $ and $ \varepsilon $ sufficiently small, $$ \int\limits_{B_{R_1\varepsilon^\gamma}(x)}\zeta^{\varepsilon}d\nu > \frac{\kappa}{2}. $$ So we get $ \text{diam}\big(\text{supp}(\zeta^{\varepsilon})\big) \le 2R_1 \varepsilon^{\gamma}. $
 	
 	Since $ \zeta^{\varepsilon}$ is Steiner symmetric about the $x_1$-axis, and 	$\lim_{\varepsilon\to 0^+}A_\varepsilon=\lim_{\varepsilon\to 0^+}B_\varepsilon=r_*$, we have
 	$$\lim_{\varepsilon\to 0^+}dist\big({supp}(\zeta^{\varepsilon}), (r_*,0)\big)=0.$$
 	The proof is hence complete.\end{proof}

  We further prove that the diameter of $ \text{supp}(\zeta^\varepsilon) $ can be bounded by $ \varepsilon $.

 \begin{lemma}\label{lem3-24}
 	There exist  $0<C_3<C_4<\infty$ not depending on $\varepsilon$ such that
 	\begin{equation*}
 		C_3 \varepsilon\leq \mathrm{diam}(\mathrm{supp}(\zeta^\varepsilon)) \le C_4 \varepsilon,
 	\end{equation*}
 	provided $\varepsilon$ is small enough.
 \end{lemma}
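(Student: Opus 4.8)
The statement consists of two inequalities, and only the upper one is delicate. For the lower bound I would argue directly: since $\zeta^\ep\in\mathcal{R}(\zeta_\ep)$ it has the same distribution function as $\zeta_\ep$ with respect to $\nu$, so $\|\zeta^\ep\|_{L^\infty}=\|\zeta_\ep\|_{L^\infty}\le C\ep^{-2}$ by Proposition \ref{prop3-11}(i) (there the argument of $f$ is bounded above independently of $\ep$ and $f$ is nondecreasing), and by Lemma \ref{lem3-23} the support of $\zeta^\ep$ lies within $o_\ep(1)$ of $(r_*,0)$, hence $r_*/2<x_1<2r_*$ on it for $\ep$ small. Therefore $\kappa=\int_{\mathbb R^2_+}\zeta^\ep\,d\nu\le\|\zeta^\ep\|_{L^\infty}\,\nu(\mathrm{supp}(\zeta^\ep))\le C\ep^{-2}\cdot 2r_*\cdot\frac{\pi}{4}\,\mathrm{diam}(\mathrm{supp}(\zeta^\ep))^2$, which yields $\mathrm{diam}(\mathrm{supp}(\zeta^\ep))\ge C_3\ep$.

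For the upper bound I would argue by contradiction, assuming there are $\ep_n\to0$ with $s_n:=\tfrac12\mathrm{diam}(\mathrm{supp}(\zeta^{\ep_n}))$ satisfying $s_n/\ep_n\to\infty$. Writing $\psi^\ep:=\mathcal G_1\zeta^\ep-\tfrac{W}{2}x_1^2\ln\tfrac1\ep-\tilde\mu_\ep$ and $\Omega^\ep:=\mathrm{supp}(\zeta^\ep)=\{\psi^\ep>0\}$ as in the proof of Lemma \ref{lem3-16}, and letting $z_\ep$ be a maximum point of $\psi^\ep$ over $\Omega^\ep$, Lemma \ref{lem3-23} gives $s_\ep\to0$ and $z_\ep\to(r_*,0)$. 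Two a priori facts I would record are: (a) since $\zeta^\ep\in\mathcal R(\zeta_\ep)$, Proposition \ref{prop3-11}(ii) gives $\nu(\Omega^\ep)=\nu(\mathrm{supp}(\zeta_\ep))\le C\ep^2$; and (b) comparing $(\psi^\ep)_+$, which solves $-\mathrm{div}(\tfrac1{x_1}\nabla(\psi^\ep)_+)=x_1\zeta^\ep\le C\ep^{-2}$ in $B_{2s_\ep}(z_\ep)$ and vanishes outside $\Omega^\ep$, with the solution of the same right-hand-side bound on $B_{2s_\ep}(z_\ep)$ with zero boundary data, one gets $\max(\psi^\ep)_+\le C\ep^{-2}s_\ep^2$.

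Then I would blow up at scale $s_\ep$: set $v_\ep(y):=(\psi^\ep)_+(s_\ep y+z_\ep)$, extended by $0$, so that $v_\ep\ge0$, $\{v_\ep>0\}=s_\ep^{-1}(\Omega^\ep-z_\ep)\subset\overline{B_2(0)}$ (because $z_\ep\in\Omega^\ep$ and $\mathrm{diam}(\Omega^\ep)=2s_\ep$), and $v_\ep$ solves $-\mathrm{div}\big(\tfrac1{s_\ep y_1+z_{\ep,1}}\nabla v_\ep\big)=\tilde h_\ep$ with $\tilde h_\ep(y):=s_\ep^2(s_\ep y_1+z_{\ep,1})\zeta^\ep(s_\ep y+z_\ep)\ge0$. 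The change of variables gives $\|\tilde h_\ep\|_{L^1(\mathbb R^2)}=\int_{\mathbb R^2_+}\zeta^\ep\,d\nu=\kappa$, while by (a) and $x_1>r_*/2$ on $\Omega^\ep$ one has $|\{v_\ep>0\}|=s_\ep^{-2}|\Omega^\ep|_{\mathbb R^2}\le\tfrac{2}{r_*}s_\ep^{-2}\nu(\Omega^\ep)\le C(\ep/s_\ep)^2\to0$, and by (b) $\max v_\ep\le C(s_\ep/\ep)^2$; hence $\|v_\ep\|_{L^1(B_R)}\le\max v_\ep\cdot|\{v_\ep>0\}|\le C$ for every $R$. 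By the Stampacchia/Boccardo--Gallou\"et estimate for $L^1$ data, $v_\ep$ is bounded in $W^{1,q}_{\mathrm{loc}}(\mathbb R^2)$ for every $q\in(1,2)$, so along a subsequence $v_\ep\to v$ in $L^p_{\mathrm{loc}}$ for all $p<\infty$ and $\tilde h_\ep\rightharpoonup\mu$ weakly-$\ast$ as measures, with $-\tfrac1{r_*}\Delta v=\mu$ (using $s_\ep y_1+z_{\ep,1}\to r_*$ locally uniformly). On the other hand $\int_{B_R}v_\ep^{1/2}\le|\{v_\ep>0\}|^{1/2}\big(\int_{B_R}v_\ep\big)^{1/2}\to0$, so $v_\ep\to0$ in $L^{1/2}(B_R)$ and hence $v\equiv0$; thus $\mu=0$, which contradicts $\langle\mu,\varphi\rangle=\lim_{\ep\to0}\int_{\mathbb R^2}\tilde h_\ep\varphi\,dy=\kappa$ for any $\varphi\in C_c^\infty(\mathbb R^2)$ equal to $1$ on $B_2$ (recall $\mathrm{supp}\,\tilde h_\ep\subset\overline{B_2}$). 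This contradiction proves $s_\ep\le C_4\ep$ for $\ep$ small.

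The main obstacle is the non-degeneracy of the blow-up limit: the amplitude $\max(\psi^\ep)_+$ may be as large as $\sim(s_\ep/\ep)^2$ while the rescaled vortex support has Lebesgue measure $\sim(\ep/s_\ep)^2\to0$, and it is exactly the equimeasurability $\nu(\mathrm{supp}(\zeta^\ep))=\nu(\mathrm{supp}(\zeta_\ep))$ — available because $\zeta^\ep\in\mathcal R(\zeta_\ep)$ rather than merely in $\overline{\mathcal R(\zeta_\ep)^w}$ — that makes these two estimates combine into a contradiction; one must also be careful when passing to the limit in the semilinear equation, since the nonlinearity $g_\ep$ degenerates as $\ep\to0$.
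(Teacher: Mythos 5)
Your lower bound is correct and is exactly the paper's argument: $\kappa=\int\zeta^\ep d\nu\le C\ep^{-2}\,\mathrm{diam}(\mathrm{supp}(\zeta^\ep))^2$.

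The upper bound, however, has a genuine gap, located in the distributional equation you assign to the truncated stream function. You claim that $v_\ep(y)=(\psi^\ep)_+(s_\ep y+z_\ep)$ solves $-\mathrm{div}\big(\tfrac{1}{s_\ep y_1+z_{\ep,1}}\nabla v_\ep\big)=\tilde h_\ep$ on $\mathbb R^2$. It does not. Since $\psi^\ep$ is $C^{1,\alpha}$ and strictly positive inside $\Omega^\ep$ with zero boundary values, its normal derivative on $\partial\{\psi^\ep>0\}$ is (in general strictly) negative, so $\nabla(\psi^\ep)_+$ jumps across the free boundary and
\begin{equation*}
-\mathrm{div}\Big(\tfrac{1}{x_1}\nabla(\psi^\ep)_+\Big)=x_1\zeta^\ep-\sigma_\ep,\qquad \sigma_\ep:=-\tfrac{1}{x_1}\,\partial_\nu\psi^\ep\,d\mathcal H^1\big|_{\partial\{\psi^\ep>0\}}\ \ge 0 .
\end{equation*}
Testing against a function equal to $1$ on a neighbourhood of $\mathrm{supp}(v_\ep)$ shows $\sigma_\ep(\mathbb R^2_+)=\int x_1\zeta^\ep\,dx=\kappa$ exactly. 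Hence in your blow-up limit the equation reads $-\tfrac{1}{r_*}\Delta v=\mu-\sigma$ with $\sigma\ge0$ of total mass $\kappa$, and the conclusion $v\equiv0$ only yields $\mu=\sigma$ — no contradiction. Indeed, all the facts you collect ((a), (b), the $L^1$ bound and $|\{v_\ep>0\}|\to0$) are consistent with, say, $\zeta^\ep$ being spread over a thin annulus of radius $s_\ep\gg\ep$ and width $\sim\ep^2/s_\ep$; nothing in your argument rules this configuration out, because you never use the variational characterization of $\zeta^\ep$ in the upper-bound step.

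That variational input is precisely what the paper's proof of this lemma relies on: for every $x\in\mathrm{supp}(\zeta^\ep)$ one has $\mathcal G_1\zeta^\ep(x)-\tfrac{W}{2}x_1^2\log\tfrac1\ep\ge\tilde\mu^\ep$, and $\tilde\mu^\ep$ is bounded below by $\big(\tfrac{\kappa A_\ep}{2\pi}-\tfrac{W A_\ep^2}{2}\big)\log\tfrac1\ep-C$ (Lemmas \ref{lem3-13} and \ref{lem3-18}). Splitting $\mathcal G_1\zeta^\ep(x)$ at the scale $\rho\le R\ep$ and comparing the two bounds gives $\int_{\{\rho\le R\ep\}}\zeta^\ep d\nu\ge\kappa(1-C/\log R)$, so for $R$ large more than half the circulation sits within distance $R_1R\ep$ of \emph{every} point of the support, forcing $\mathrm{diam}(\mathrm{supp}(\zeta^\ep))\le 2R_1R\ep$. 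To repair your approach you would have to feed this maximizer inequality into the blow-up (or work with an energy identity such as $\int x_1\zeta^\ep(\psi^\ep)_+dx=\int\tfrac{1}{x_1}|\nabla(\psi^\ep)_+|^2dx$, where the free-boundary term genuinely vanishes); as written, the contradiction does not materialize.
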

 \begin{proof}
 	Since $\int_D \zeta^{\varepsilon} d\nu =\kappa$ and $ \zeta^{\varepsilon}\leq {C}/{\varepsilon^2}$, we get $\kappa \le C \varepsilon^{-2} \left(\text{diam}(\text{supp}(\zeta^{\varepsilon}))\right)^2$, which implies that there exists a constant $ C_3>0 $ independent of $ \varepsilon $, such that $$ \mathrm{diam}(\mathrm{supp}(\zeta^{\varepsilon}))\geq  C_3\varepsilon. $$
 	
 	Then it suffices to prove $ \text{diam}(\text{supp}(\zeta^\varepsilon)) \le C_4 \varepsilon $ for some $C_4>0$.  Let us use the same notation as in the proof of Lemma $\ref{lem3-23}$. Recalling that for any $ x\in \text{supp}(\zeta^\varepsilon) $, we have
 	\begin{equation}\label{3-40}
 		\mathcal{G}_1\zeta^{{\varepsilon}}(x)-\frac{W x_1^2}{2}\log{\frac{1}{\varepsilon}}\ge \tilde \mu^{{\varepsilon}}.
 	\end{equation}
 	According to Lemma $\ref{lem3-23}$, there holds
 	\begin{equation*}
 		\text{supp}(\zeta^\varepsilon)\subseteq B_{2R_1\varepsilon^{\frac{1}{2}}}\big((A_\varepsilon,0)\big)
 	\end{equation*}
 	for  $\varepsilon$ sufficiently small.
 	
 	Let $R>1$ be a positive number to be determined. On the one hand, we have
 	\begin{align*}
 			\mathcal{G}_1\zeta^{\varepsilon}(x)&= \int_{D}G_1(x,y)\zeta^{{\varepsilon}}(y)d\nu =\left(\int_{D\cap\{\rho>R\varepsilon\}}+\int_{D\cap\{\rho \le R \varepsilon\}}\right)G_1(x,y)\zeta^{{\varepsilon}}(y)d\nu\\
 			&:=F_1+F_2.
 		\end{align*}
 	By \eqref{2-7}   and Lemma \ref{lem3-23}, we get

 		\begin{align}\label{3-41}
 			F_1 &=\int\limits_{D\cap\{\rho>R\varepsilon\}}G_1(x,y)\zeta^{{\varepsilon}}(y)d\nu \le \frac{(A_\varepsilon)^2+O(\varepsilon^{\frac{1}{2}})}{2\pi}\int\limits_{D\cap\{\rho>R\varepsilon\}}\log{\frac{1}{\rho}}\zeta^\varepsilon(y) dy+C\\
 			&\le \frac{(A_\varepsilon)^2+O(\varepsilon^{\frac{1}{2}})}{2\pi}\log{\frac{1}{R\varepsilon}}\int\limits_{D\cap\{\rho>R\varepsilon\}}\zeta^\varepsilon(y) dy+C \le \frac{A_\varepsilon}{2\pi}\log{\frac{1}{R\varepsilon}}\int\limits_{D\cap\{\rho>R\varepsilon\}}\zeta^\varepsilon d\nu+C,\nonumber
 		\end{align}
 	and
 	\begin{align}\label{3-42}
 			F_2 &=\int\limits_{D\cap\{\rho \le R\varepsilon\}}G_1(x,y)\zeta^{{\varepsilon}}(y)d\nu\\
 			& \le \frac{(A_\varepsilon)^2+O(\varepsilon^\frac{1}{2})}{2\pi}\int\limits_{D\cap\{\rho \le R\varepsilon\}}\log\frac{1}{[(x_1-y_1)^2+(x_2-y_2)^2]^{\frac{1}{2}}}\zeta^\varepsilon(y)dy+C\nonumber \\
 			&\le \frac{(A_\varepsilon)^2+O(\varepsilon^\frac{1}{2})}{2\pi}\log\frac{1}{\varepsilon}\int\limits_{D\cap\{\rho \le R\varepsilon\}}\zeta^\varepsilon dy+C  \le \frac{A_\varepsilon}{2\pi}\log\frac{1}{\varepsilon}\int\limits_{D\cap\{\rho\le R\varepsilon\}}\zeta^\varepsilon d\nu+C.\nonumber
 		\end{align}
 	Taking \eqref{3-41} and \eqref{3-42} into \eqref{3-40}, we get
 	\begin{equation}\label{3-43}
 		\frac{A_\varepsilon}{2\pi}\log{\frac{1}{R\varepsilon}}\int\limits_{D\cap\{\rho>R\varepsilon\}}\zeta^\varepsilon d\nu+\frac{A_\varepsilon}{2\pi}\log\frac{1}{\varepsilon}\int\limits_{D\cap\{\rho \le R\varepsilon\}}\zeta^\varepsilon d\nu-\frac{W(A_\varepsilon)^2}{2}\log{\frac{1}{\varepsilon}}+C\ge \tilde\mu^{\varepsilon}.
 	\end{equation}
 	On the other hand, by Lemmas \ref{lem3-13} and the fact that $r_*$ is the maximizer of $\Gamma_1$ one has
 	\begin{equation}\label{3-44}
 		\tilde\mu^\varepsilon\ge\left(\frac{\kappa A_\varepsilon}{2\pi}-\frac{W(A_\varepsilon)^2}{2}\right)\log\frac{1}{\varepsilon}-C.
 	\end{equation}
 	Combining $\eqref{3-43}$ and $\eqref{3-44}$, we obtain
 	\begin{equation*}
 		\frac{\kappa A_\varepsilon}{2\pi}\log\frac{1}{\varepsilon} \le \frac{A_\varepsilon}{2\pi}\log{\frac{1}{R\varepsilon}}\int\limits_{D\cap\{\rho>R\varepsilon\}}\zeta^\varepsilon d\nu+\frac{A_\varepsilon}{2\pi}\log\frac{1}{\varepsilon}\int\limits_{D\cap\{\rho \le R\varepsilon\}}\zeta^\varepsilon d\nu+C.
 	\end{equation*}
 	Hence we have
 	\begin{equation*}
 		\int\limits_{D\cap\{\rho \le R\varepsilon\}}\zeta^\varepsilon d\nu\ge \kappa\left(1-\frac{C}{\log R}\right).
 	\end{equation*}
 	Choosing $R>1$ such that $C(\log R)^{-1}<1/2$, we obtain
 	\begin{equation*}
 		\int\limits_{D\cap{B_{R_1R\varepsilon}(x)}}\zeta^\varepsilon d\nu\ge\int\limits_{D\cap\{\rho \le R\varepsilon\}}\zeta^\varepsilon d\nu> \frac{\kappa}{2}.
 	\end{equation*}
 	Taking $C_4=2R_1R$, we finish the proof of Lemma \ref{lem3-24}.	
 \end{proof}

Now, we are able to prove Theorem \ref{thm3-12}.

\noindent{\bf Proof of Theorem \ref{thm3-12}:}
The proof of Theorem \ref{thm3-12} is a combination of the above lemmas. \qed

\vspace{0.5cm}

\appendix

\section{Essential estimates for the nonlinearity and the free boundary}\label{appB}
In this appendix, we will prove some estimates and statements for free boundary $\partial \Om_\varepsilon$ and the nonlinearity.  In the following, we always assume that $L>0$ is a large fixed constant. Recall that
\begin{equation*}
	\Psi_{\ep, x_\ep, a_{\ep}}(x) =\frac{1}{\ep^2}\int_{\mathbb{R}_+^2} G_1(x,y) \left(U_{\ep,x_\ep, a_\ep}(y)- a_\ep\ln \frac{1}{\ep}\right)_+^p y_1dy,
\end{equation*}
with $ U_{\ep,x_\ep, a_\ep}$ is defined by \eqref{2-19} and \eqref{2-21}.

For $x\in\mathbb{R}^2$ with the first variable $x_1>0$, let
\begin{equation}\label{B-1}
	\begin{split}
		\mathcal F(x)&=(x_1-x_{\ep,1})\left[\frac{1}{2 x_{\ep,1}} U_{\ep, x_\ep,  a_{\ep}}(x)-Wx_{\ep,1}\ln\frac{1}{\ep}+\frac{x_{\ep,1}}{4\pi }\left(\ln (8 x_{\ep,1})-1\right) x_{\ep,1}^{-\frac{2p}{p-1}} \left(\frac{\ep}{s_\ep}\right)^{\frac{2}{p-1}} \Lambda_p\right]\\
		&+\frac{3x_{\ep,1}}{4\pi \ep^2}\int_{\mathbb{R}^2_+} (y_1-x_{\ep,1})\ln \frac{s_\ep}{|x-y|}\left(U_{\ep, x_\ep, a_\ep}(y)- a_\ep\ln \frac{1}{\ep}\right)_+^p dy.
	\end{split}
\end{equation}
Then we have the following estimate for $\Psi_{\ep, x_\ep,  a_{\ep}}$.
\begin{lemma}\label{lemB-1}
	For every $x\in B_{Ls_\ep}(x_\ep)$, it holds
	\begin{equation}\label{B-2}
		\Psi_{\ep, x_\ep,  a_{\ep}}(x)-\frac{W}{2}x_1^2\ln\frac{1}{\ep}-\mu_\ep= U_{\ep, x_\ep,  a_{\ep}}(x)-a_\ep\ln\frac{1}{\ep} + \mathcal F(x)+O(\ep^2 |\ln {\ep}| ),
	\end{equation}
  and
  \begin{equation}\label{B-3}
  	  |\mathcal F(x)|=O(\ep  ).
  \end{equation}
\end{lemma}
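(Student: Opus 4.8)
\textbf{Proof proposal for Lemma \ref{lemB-1}.}

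The plan is to compute $\Psi_{\ep, x_\ep, a_\ep}$ directly from its defining integral representation and compare it term by term with $U_{\ep,x_\ep,a_\ep}$ and the correction $\mathcal F$. First I would exploit that the density $\left(U_{\ep,x_\ep,a_\ep}(y)-a_\ep\ln\frac1\ep\right)_+^p$ is supported in the ball $B_{s_\ep}(x_\ep)$ (by the matching condition \eqref{2-21} defining $s_\ep$), so that for $x\in B_{Ls_\ep}(x_\ep)$ only values $\rho(x,y)=O(\ep^2/s_\ep^2)\cdot(\text{bounded})$ enter, i.e. $\rho$ is small. Hence I would substitute the short-range expansion \eqref{2-7} of $G_1$ into the integral, writing $y_1 G_1(x,y)$ via the splitting $y_1 G_1(x,y)=x_{\ep,1}^2 G(x,y)+H(x,y)$ from \eqref{2-24}–\eqref{2-25}, where $G$ is the half-plane Laplacian Green function. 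The leading piece $\frac{x_{\ep,1}^2}{\ep^2}\int G(x,y)(\cdots)_+^p dy$ is exactly the function whose Laplacian is $\frac{x_{\ep,1}^2}{\ep^2}(\cdots)_+^p$, and by the definition \eqref{2-19} of $U_{\ep,x_\ep,a_\ep}$ together with the scaling $U(z)=-U'(1)\ln\frac1{|z|}$ for $|z|>1$ and $\Lambda_p=-2\pi U'(1)$, this reproduces $U_{\ep,x_\ep,a_\ep}(x)-a_\ep\ln\frac1\ep$ up to the additive constant that the second line of \eqref{2-21} is designed to absorb; the leftover after subtracting $\frac W2 x_1^2\ln\frac1\ep+\mu_\ep$ is $O(\ep^2|\ln\ep|)$, which I would track using $s_\ep/\ep\to$ finite and $|x-x_\ep|\le Ls_\ep$.

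Next I would Taylor-expand the lower-order pieces around $x_\ep$. The term $H(x,y)$ contributes, after expanding $x_1^{1/2}y_1^{3/2}=x_{\ep,1}^2+x_{\ep,1}(\tfrac32(y_1-x_{\ep,1})+\tfrac12(x_1-x_{\ep,1}))+O(|x-x_\ep|^2+|y-x_\ep|^2)$ and similarly expanding the $\ln(x_1y_1)+2\ln8-4$ factor and the reflected kernel $\ln\frac1{|x-\bar y|}$, exactly the three groups of terms appearing in \eqref{B-1}: the factor multiplying $(x_1-x_{\ep,1})$ coming from $\frac1{2x_{\ep,1}}U_{\ep,x_\ep,a_\ep}$, the Kelvin–Hicks-type constant $-Wx_{\ep,1}\ln\frac1\ep+\frac{x_{\ep,1}}{4\pi}(\ln(8x_{\ep,1})-1)x_{\ep,1}^{-2p/(p-1)}(\ep/s_\ep)^{2/(p-1)}\Lambda_p$ (using $\frac1{\ep^2}\int y_1(\cdots)_+^p dy = \kappa$ up to scaling and $\int U_+^p=\Lambda_p$ after rescaling), and the nonlocal integral $\frac{3x_{\ep,1}}{4\pi\ep^2}\int (y_1-x_{\ep,1})\ln\frac{s_\ep}{|x-y|}(\cdots)_+^p dy$. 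All remainders are $O(|x-x_\ep|^2+|y-x_\ep|^2)$ times the mass, hence $O(s_\ep^2)=O(\ep^2)$, or carry an extra $\rho\ln\frac1\rho=O(\ep^2|\ln\ep|)$ from the error in \eqref{2-7}; collecting these gives \eqref{B-2}.

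For \eqref{B-3} I would estimate each summand in \eqref{B-1} on $B_{Ls_\ep}(x_\ep)$: the prefactor $(x_1-x_{\ep,1})=O(s_\ep)=O(\ep)$ multiplies quantities that are $O(\ln\frac1\ep)$ (namely $U_{\ep,x_\ep,a_\ep}=O(\ln\frac1\ep)$ after the $a_\ep\ln\frac1\ep$ bookkeeping, $Wx_{\ep,1}\ln\frac1\ep$, and the $\Lambda_p$-term which by \eqref{2-22} and \eqref{2-21} is $O(\ln\frac1\ep)$), so this block is $O(\ep\ln\frac1\ep)$; wait — more carefully, the combination inside the bracket is actually bounded, because $\frac1{2x_{\ep,1}}U_{\ep,x_\ep,a_\ep}-Wx_{\ep,1}\ln\frac1\ep$ is precisely the quantity controlled by Corollary \ref{cor2-7}/\ref{cor2-12} to be $O(1)$, so the first block is $O(\ep)$. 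For the nonlocal integral, after rescaling $y=s_\ep z+x_\ep$ one gets $\frac{3x_{\ep,1}}{4\pi}s_\ep\cdot s_\ep^{-2}\ep^{-2}\cdot\ep^2 s_\ep^{?}\int z_1\ln\frac1{|(x-x_\ep)/s_\ep - z|}U_+^p(z)\,dz$ — bookkeeping the powers of $s_\ep/\ep$ via \eqref{2-19} shows the constant is $O(1)$ and the integral is bounded uniformly for $(x-x_\ep)/s_\ep\in B_L(0)$ since $\ln\frac1{|\cdot|}\in L^1_{loc}$ and $U_+^p$ is bounded with compact support, while the leftover $s_\ep=O(\ep)$ survives; hence this block is also $O(\ep)$. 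Summing yields $|\mathcal F(x)|=O(\ep)$. The main obstacle is purely organizational: keeping the cancellations in the constant term clean, since the definitions \eqref{2-19} and \eqref{2-21} are precisely engineered so that the $\ln\frac1\ep$-sized and $O(1)$-sized constants cancel, and one must invoke exactly those matching equations (not yet the refined estimates of Proposition \ref{prop2-13}) to see the residue is genuinely $O(\ep^2|\ln\ep|)$ in \eqref{B-2} and $O(\ep)$ in \eqref{B-3}.
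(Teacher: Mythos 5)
Your proposal is correct and follows essentially the same route as the paper: expand $G_1$ via \eqref{2-7} over the support $B_{s_\ep}(x_\ep)$ of the density, identify the leading $\frac{x_{\ep,1}^2}{2\pi}\ln\frac{1}{|x-y|}$ convolution with $U_{\ep,x_\ep,a_\ep}$, Taylor-expand the weights $x_1^{1/2}y_1^{3/2}$ about $x_{\ep,1}$ to produce the three blocks of $\mathcal F$, absorb the constants through the second line of \eqref{2-21}, and prove \eqref{B-3} afterwards by invoking the Kelvin--Hicks relation of Corollary \ref{cor2-7} to cancel the $\ln\frac1\ep$-sized pieces in the bracket (which is exactly what the paper does, with the caveat that \eqref{B-2} must be established first since Corollary \ref{cor2-7} rests on Lemma \ref{lem2-6}, which uses \eqref{B-2}). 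One small point you should make explicit: the first-order term $\frac{3x_{\ep,1}^{1/2}}{2}(y_1-x_{\ep,1})$ arising from expanding $y_1^{3/2}$ in the \emph{constant} block $\frac{x_1^{1/2}y_1^{3/2}}{4\pi}(\ln(x_1y_1)+2\ln 8-4)$ is not a quadratic remainder and does not appear in $\mathcal F$; it is killed only because $\int (y_1-x_{\ep,1})\bigl(U_{\ep,x_\ep,a_\ep}(y)-a_\ep\ln\frac1\ep\bigr)_+^p\,dy=0$ by the radial symmetry of $U$, and without this cancellation the error in \eqref{B-2} would only be $O(\ep)$ rather than $O(\ep^2|\ln\ep|)$.
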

\begin{proof}
	By the definition of $\Psi_{ \ep, x_\ep,a_{\ep}}$, it holds
	\begin{align*}
			&\Psi_{\ep, x_\ep, a_{\ep}}=\frac{1}{2\pi\ep^2}\int_{ B_{s_\ep}(  x_{\ep})}x_1^{1/2}y_1^{3/2}\ln\left(\frac{1}{|  x-y|}\right)\left(U_{\ep,x_\ep, a_{\ep}}(y)- a_\ep\ln \frac{1}{\ep}\right)_+^p d  y \\
			&\quad+\frac{1}{4\pi\varepsilon^2}\int_{ B_{s_\ep}(  x_{\ep})}x_1^{1/2}y_1^{3/2}\left(\ln(x_1y_1)+2\ln 8-4+O\left(\rho\ln\frac{1}{\rho}\right)\right) \left(U_{\ep,x_\ep, a_{\ep}}(y)- a_\ep\ln \frac{1}{\ep}\right)_+^p dy\\
			&=\frac{x_{\ep,1}^2}{2\pi\varepsilon^2}\int_{ B_{s_\ep}(  x_{\ep})}\ln\left(\frac{1}{| x-y|}\right)\left(U_{\ep, x_\ep, a_{\ep}}(y)- a_\ep\ln \frac{1}{\ep}\right)_+^p d y\\
			&\quad +\frac{1}{2\pi\varepsilon^2}\int_{ B_{s_\ep}(  x_{\ep})}(x_1^{1/2} y_1^{3/2}-x_{\ep,1}^2)\ln\left(\frac{1}{|x- y|}\right)\left(U_{\ep, x_\ep, a_{\ep}}(y)- a_\ep\ln \frac{1}{\ep}\right)_+^pdy\\
			&\quad+\frac{1}{4\pi\varepsilon^2}\int_{ B_{s_\ep}(  x_{\ep})}x_1^{1/2} y_1^{3/2}\left(\ln(x_1 y_1)+2\ln 8-4\right)\left(U_{\ep, x_\ep, a_{\ep}}(y)- a_\ep\ln \frac{1}{\ep}\right)_+^p d y+O\left(\ep^2\ln\frac{1}{ \ep}\right),
		\end{align*}
	with $\rho$ defined in \eqref{2-6}.
	By the definition of $U_{\ep, x_\ep, a_\ep}$, one can check that $$-\Delta U_{\ep, x_\ep, a_\ep}=\frac{x_{\ep,1}^2}{\ep^2}\left(U_{\ep, x_\ep,a_\ep} - a_\ep\ln \frac{1}{\ep}\right)_+^p.$$ Thus, we have
	\begin{equation*}
		\frac{x_{\ep,1}^2}{2\pi\varepsilon^2}\int_{ B_{s_\ep}(  x_{\ep})}\ln\left(\frac{1}{| x-y|}\right)\left(U_{\ep, x_\ep, a_\ep}(y)- a_\ep\ln \frac{1}{\ep}\right)_+^p d y=U_{x_\ep,\ep, a_\ep}(x).
	\end{equation*}
According to Taylor's formula, we calculate
	\begin{align*}
			&\frac{1}{2\pi\varepsilon^2}\int_{ B_{s_\ep}(  x_{\ep})}(x_1^{1/2} y_1^{3/2}-x_{\ep,1}^2)\ln\left(\frac{1}{|x- y|}\right)\left(U_{\ep,x_\ep, a_\ep}(y)- a_\ep\ln \frac{1}{\ep}\right)_+^pdy\\
			&=\frac{1}{2\pi\varepsilon^2}\int_{ B_{s_\ep}(  x_{\ep})}\left(\left(x_{\ep,1}^{1/2}+\frac{1}{2x_{\ep,1}^{1/2}}(x_1-x_{\ep,1})+O(\ep^2)\right)\left(x_{\ep,1}^{3/2}+\frac{3x_{\ep,1}^{1/2}}{2}(y_1-x_{\ep,1})+O(\ep^2)\right)-x_{\ep,1}^2\right)\\
			&\quad\times\ln\left(\frac{1}{| x-y|}\right)\left(U_{\ep, x_\ep, a_\ep}(y)- a_\ep\ln \frac{1}{\ep}\right)_+^pd y\\
			&=\frac{x_{\ep,1}}{2\pi\varepsilon^2}\int_{B_{s_\ep}(  x_{\ep})}\left(\frac{x_1-x_{\ep,1}}{2}+\frac{3(y_1-x_{\ep,1})}{2}\right)\ln\left(\frac{1}{| x- y|}\right)\left(U_{\ep, x_\ep, a_\ep}(y)- a_\ep\ln \frac{1}{\ep}\right)_+^pd y+O(\varepsilon^2|\ln\varepsilon|)\\
			&=\frac{x_1-x_{\ep,1}}{2x_{\ep,1}}U_{\ep, x_\ep, a_\ep}(x)+\frac{3x_{\ep,1}}{4\pi\varepsilon^2}\int_{B_{s_\ep}(  x_{\ep})}(y_1-x_{\ep,1})\ln\left(\frac{1}{| x- y|}\right)\left(U_{\ep, x_\ep, a_\ep}(y)- a_\ep\ln \frac{1}{\ep}\right)_+^pd y +O(\varepsilon^2|\ln\varepsilon|)\\
			&=\frac{x_1-x_{\ep,1}}{2x_{\ep,1}}U_{\ep, x_\ep, a_\ep}(x)+\frac{3x_{\ep,1}}{4\pi\varepsilon^2}\int_{B_{s_\ep}(  x_{\ep})}(y_1-x_{\ep,1})\ln\left(\frac{s_\ep}{| x- y|}\right)\left(U_{\ep, x_\ep, a_\ep}(y)- a_\ep\ln \frac{1}{\ep}\right)_+^pd y +O(\varepsilon^2|\ln\varepsilon|).
		\end{align*}
	Note that $\int_{ B_{s_\ep}(  x_{\ep})} (y_1-x_{\ep,1})\left(U_{x_\ep,\ep, a}(y)- a_\ep\ln \frac{1}{\ep}\right)_+^p d y=0$ due to the odd symmetry. Then by straightforward calculations, we find
	\begin{align*}
			&	\frac{1}{4\pi\varepsilon^2}\int_{ B_{s_\ep}(  x_{\ep})}x_1^{1/2} y_1^{3/2}\left(\ln(x_1 y_1)+2\ln 8-4\right)\left(U_{\ep, x_\ep, a}(y)- a_\ep\ln \frac{1}{\ep}\right)_+^p d y\\
			&=\frac{1}{4\pi\varepsilon^2}\int_{ B_{s_\ep}(  x_{\ep})} \left(x_{\ep,1}^{1/2}+\frac{1}{2x_{\ep,1}^{1/2}}(x_1-x_{\ep,1})+O(\ep^2)\right)\left(x_{\ep,1}^{3/2}+\frac{3x_{\ep,1}^{1/2}}{2}(y_1-x_{\ep,1})+O(\ep^2)\right)\\
			&\quad \times \left(2\ln x_{\ep,1}+2\ln 8- 4+\frac{1}{x_{\ep,1}}(x_1-x_{\ep,1}+y_1-x_{\ep,1})+O(\ep^2)\right)\left(U_{x_\ep,\ep, a}(y)- a_\ep\ln \frac{1}{\ep}\right)_+^p d y\\
			&=\frac{1}{2\pi}(\ln( 8 x_{\ep,1})-2) x_{\ep,1}^{-\frac{2}{p-1}}\left(\frac{\ep}{s_\ep}\right)^{\frac{2}{p-1}}\Lambda_p+\frac{x_{\ep,1}(x_1-x_{\ep,1})}{4\pi}(\ln( 8 x_{\ep,1})-1)x_{\ep,1}^{-\frac{2p}{p-1}}\left(\frac{\ep}{s_\ep}\right)^{\frac{2}{p-1}}\Lambda_p+O(\ep^2).
		\end{align*}
	Combining all the facts above and the second equation in \eqref{2-21}, we have
	$$\Psi_{\ep, x_\ep,  a_{\ep}}(x)-\frac{W}{2}x_1^2\ln\frac{1}{\ep}-\mu_\ep= U_{\ep, x_\ep,  a_{\ep}}(x)-a_\ep\ln\frac{1}{\ep} + \mathcal F(x)+O\left(\ep^2 \ln\frac{1}{\ep} \right).$$
	Using the definition \eqref{B-1} and \eqref{2-19}, we see that  $\|\mathcal F(x)\|_{L^\infty(B_{Ls_\ep}(x_\ep))}=O(\ep|\ln\ep|  )$.  One can prove  \eqref{B-3} by directly calculations using the definition \eqref{B-1}, \eqref{2-18}, \eqref{2-19}, \eqref{2-21}, \eqref{D-6} and Corollary \ref{cor2-7}. Note that one should take $\bar \phi_\ep=\phi_\ep$ and $\phi_\ep^o=0$ to apply \eqref{D-6} here.
	The proof of Lemma \ref{lemB-1} is thus complete.
\end{proof}

	Thanks to the implicit function theorem, now we can estimate the free boundary $\partial \Om_\varepsilon$.
\begin{lemma}\label{lemB-2}
	Suppose that $ \phi_\ep$ is a function satisfying
	\begin{equation}\label{B-4}
		\|\phi_\ep\|_{L^\infty(\mathbb{R}_+^2)}+\ep\|\nabla \phi_\ep\|_{L^\infty(B_{Ls_\ep}(x_\ep))}=o_\ep(1).
	\end{equation}
	Then the set
	$$ \Gamma_{\ep}=\{y\mid  \Psi_{\ep, x_\ep, a_{\ep}}(s_\ep y+x_\ep)-\frac{W}{2}(s_\ep y_1+x_{\ep,1})^2\ln\frac{1}{\ep}-\mu_\ep+\phi_\ep(s_\ep y+x_\ep)=0\}\cap B_L(0)$$
	is a continuous closed curve in $\mathbb{R}^2$, and can be parameterized as
	\begin{equation}\label{B-5}
		\begin{split}
			\Gamma_{\ep} &=\{(1+ t_{\ep}(\theta))(\cos\theta,\sin\theta)\mid \theta\in[0,2\pi)\}	
		\end{split}
	\end{equation}
	for some function $t_\ep(\theta)$  with the following expansion
	\begin{equation}\label{B-6}
		\begin{split}
			t_{\ep}(\theta)&=\frac{\ln s_{\ep}}{a_{\ep} \ln \ep} \left(  \phi_\ep(s_\ep y_{\ep}+x_\ep)+\mathcal{F} (s_\ep y_{\ep}+x_\ep)\right)  +O\left(  {\ep}^2+\| {\phi}_\ep\|_{L^\infty}^2\right),
		\end{split}
	\end{equation}
	where $y_\ep:=(1+t_\ep(\theta))(\cos\theta, \sin\theta)\in\Gamma_\ep$.
	Moreover, for $y=(1+t)(\cos\theta, \sin\theta)$, it holds
	\begin{equation}\label{B-7}
		\Psi_{\ep, x_\ep, a_{\ep}}(s_\ep y+x_\ep)-\frac{W}{2}(s_\ep y_1+x_{\ep,1})^2\ln\frac{1}{\ep}-\mu_\ep+\phi_\ep(s_\ep y+x_\ep)\begin{cases}
			>0 & \text{if}\  \   \ t<t_{\ep}(\theta),\\
			<0 & \text{if}\  \ \    t>t_{\ep}(\theta).
		\end{cases}
	\end{equation}
\end{lemma}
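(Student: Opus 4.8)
The plan is to apply the implicit function theorem in polar coordinates to the rescaled profile. Set
$$
g_\ep(y):=\Psi_{\ep,x_\ep,a_\ep}(s_\ep y+x_\ep)-\frac{W}{2}(s_\ep y_1+x_{\ep,1})^2\ln\frac{1}{\ep}-\mu_\ep+\phi_\ep(s_\ep y+x_\ep),\qquad y\in B_L(0),
$$
so that $\Gamma_\ep=\{g_\ep=0\}\cap B_L(0)$. First I would use the expansion \eqref{B-2} of Lemma \ref{lemB-1}, the explicit form \eqref{2-19} of $U_{\ep,x_\ep,a_\ep}$ on and off $B_{s_\ep}(x_\ep)$, and the $C^1$/matching condition in \eqref{2-21}, to write
$$
g_\ep(y)=\gamma_\ep\,U(y)+\mathcal F(s_\ep y+x_\ep)+\phi_\ep(s_\ep y+x_\ep)+O\big(\ep^2|\ln\ep|\big),\qquad \gamma_\ep:=x_{\ep,1}^{-\frac{2}{p-1}}\Big(\frac{\ep}{s_\ep}\Big)^{\frac{2}{p-1}},
$$
where $U$ is the harmonically extended ground state \eqref{2-15}. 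The crucial structural facts are that $U>0$ in $B_1$, $U=0$ on $\partial B_1$, $U(y)=-U'(1)\ln(1/|y|)<0$ for $|y|>1$, and that, by the matching condition and \eqref{2-18}, $\gamma_\ep U'(1)=-a_\ep\ln\ep/\ln s_\ep$, which by \eqref{2-22} (together with $s_\ep\sim C\ep$, so $\ln s_\ep/\ln\ep\to1$, and $a_\ep$ bounded away from $0$ and $\infty$) is negative and bounded away from $0$ and $\infty$ uniformly in $\ep$; this is the non-degeneracy that drives the argument. Moreover, by \eqref{B-3} and \eqref{B-4}, the perturbation $\mathcal F(s_\ep\cdot+x_\ep)+\phi_\ep(s_\ep\cdot+x_\ep)$ is $O(\ep+\|\phi_\ep\|_{L^\infty})=o_\ep(1)$ in $L^\infty(B_L(0))$, and its $y$-gradient $s_\ep(\nabla\mathcal F+\nabla\phi_\ep)(s_\ep\cdot+x_\ep)$ is $o_\ep(1)$ as well: the $\phi_\ep$ part is $\le C\ep\|\nabla\phi_\ep\|_{L^\infty(B_{Ls_\ep}(x_\ep))}=o_\ep(1)$ directly from \eqref{B-4}, while for $\mathcal F$ one differentiates \eqref{B-1} and checks that even a crude bound gives $s_\ep\|\nabla\mathcal F\|_{L^\infty(B_{Ls_\ep}(x_\ep))}=O(\ep|\ln\ep|)=o_\ep(1)$.

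Next I would localize and run the implicit function theorem. Fix a small $\eta_0>0$. From the expansion above and the $L^\infty$-smallness of the perturbation, $g_\ep>0$ on $\{|y|\le1-\eta_0\}$ and $g_\ep<0$ on $\{1+\eta_0\le|y|\le L\}$ for $\ep$ small (using $\gamma_\ep U\ge c(\eta_0)>0$ there in the first case and $\gamma_\ep U\le-c(\eta_0,L)<0$ in the second), so all zeros of $g_\ep$ in $B_L(0)$ lie in the annulus $1-\eta_0<|y|<1+\eta_0$. Writing $y=(1+t)(\cos\theta,\sin\theta)$ with $|t|<\eta_0$ and using $U\big((1+t)(\cos\theta,\sin\theta)\big)=U'(1)t+O(t^2)$, we get
$$
\partial_t\,g_\ep\big((1+t)(\cos\theta,\sin\theta)\big)=\gamma_\ep U'(1)+O(t)+o_\ep(1)<0
$$
uniformly in $\theta$, once $\eta_0$ and $\ep$ are small, because $\gamma_\ep U'(1)$ is negative of definite size and the $C^1$-smallness of the perturbation controls the other terms. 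Hence, for each $\theta$, $t\mapsto g_\ep\big((1+t)(\cos\theta,\sin\theta)\big)$ is strictly decreasing on $(-\eta_0,\eta_0)$, positive at $-\eta_0$ and negative at $\eta_0$, so it has a unique zero $t=t_\ep(\theta)$; since $g_\ep$ is at least Lipschitz on $B_L(0)$ (being a sum of Newtonian-type potentials and the regular term $\phi_\ep$), the implicit function theorem gives that $t_\ep$ is continuous in $\theta$, whence \eqref{B-5}. The strict monotonicity in $t$ just established is precisely the sign alternative \eqref{B-7}.

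Finally I would derive the expansion \eqref{B-6} by solving $g_\ep\big((1+t_\ep(\theta))(\cos\theta,\sin\theta)\big)=0$ to first order. With $y_\ep=(1+t_\ep(\theta))(\cos\theta,\sin\theta)$, the expansion of $g_\ep$ and $U(y_\ep)=U'(1)t_\ep(\theta)+O(t_\ep(\theta)^2)$ give
$$
\gamma_\ep U'(1)\,t_\ep(\theta)=-\big(\phi_\ep(s_\ep y_\ep+x_\ep)+\mathcal F(s_\ep y_\ep+x_\ep)\big)+O\big(\gamma_\ep t_\ep(\theta)^2+\ep^2|\ln\ep|\big),
$$
and, using $-1/(\gamma_\ep U'(1))=\ln s_\ep/(a_\ep\ln\ep)$ (bounded), $t_\ep(\theta)=O(\ep+\|\phi_\ep\|_{L^\infty})$, and $\|\mathcal F\|_{L^\infty}=O(\ep)$ from \eqref{B-3}, this rearranges to
$$
t_\ep(\theta)=\frac{\ln s_\ep}{a_\ep\ln\ep}\Big(\phi_\ep(s_\ep y_\ep+x_\ep)+\mathcal F(s_\ep y_\ep+x_\ep)\Big)+O\big(\ep^2+\|\phi_\ep\|_{L^\infty}^2\big),
$$
which is \eqref{B-6}. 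I expect the main obstacle to be the two quantitative inputs that make the implicit function theorem uniform in $\ep$ and $\theta$: the uniform lower bound $|\gamma_\ep U'(1)|\ge c>0$, which rests on the asymptotics \eqref{2-18} and \eqref{2-22} and on the non-degeneracy $U'(1)\ne0$ of the ground state at $\partial B_1$; and the $C^1$ (not merely $L^\infty$) smallness after rescaling of $\mathcal F$, whose defining terms in \eqref{B-1} are individually of size $\ln(1/\ep)$, so that one must invoke the same structural cancellations exploited in Lemma \ref{lemB-1} (and, for the sharp error in \eqref{B-6}, the refined location estimate of Corollary \ref{cor2-7}) to see that its scaled gradient is $o_\ep(1)$.
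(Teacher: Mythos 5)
Your proof is correct and follows essentially the same route as the paper: reduce to the ground-state profile via Lemma \ref{lemB-1}, use the sign of $U$ away from $\partial B_1(0)$ together with the non-degeneracy $U'(1)<0$ and the matching condition in \eqref{2-21} to get a unique radial zero $t_\ep(\theta)$ by monotonicity/implicit function theorem, and then solve to first order for \eqref{B-6}. The only (harmless) difference is that you treat $|y|\lessgtr 1$ in a unified way through the harmonic extension \eqref{2-15} and are slightly more explicit about the $C^1$-smallness of the rescaled $\mathcal F$, whereas the paper splits into the two cases \eqref{B-9}--\eqref{B-12}.
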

\begin{proof}
	We first show the existence of such $t_\ep$. Denote $\tilde\psi_\ep(y):=\Psi_{\ep, x_\ep, a_{\ep}}(s_\ep y+x_\ep)-\frac{W}{2}(s_\ep y_1+x_{\ep,1})^2\ln\frac{1}{\ep}-\mu_\ep+\phi_\ep(s_\ep y+x_\ep)$ for convenience. By Lemma  \ref{lemB-1} and the assumption \eqref{B-4}, we have
	\begin{align*}
			\tilde\psi_\ep(y)	=&\Psi_{\ep, x_\ep, a_{\ep}}(s_\ep y+x_\ep)-\frac{W}{2}(s_\ep y_1+x_{\ep,1})^2\ln\frac{1}{\ep}-\mu_\ep+\phi_\ep(s_\ep y+x_\ep)\\
			=&U_{\ep, x_\ep, a_{\ep}}(s_\ep y+x_\ep)-a_\ep\ln\frac{1}{\ep} + \mathcal F(s_\ep y+x_\ep)+O(\ep^2 \ln\frac{1}{\ep} )+\phi_\ep(s_\ep y+x_\ep) \\
			=&U_{x_\ep, \ep, a_{\ep}}(s_{\ep} y+x_{\ep})-a_\ep \ln \frac{1}{\ep} +o_\ep(1).
		\end{align*}
	Using the definition of $U_{\ep, x_\ep, a_{\ep}}$ and the above equation, one can check that  for any fixed small constant $l>0$, $\tilde\psi_\ep(y)>0$ when $|y|<1-l$ and $\tilde\psi_\ep(y)<0$ when $|y|>1-l$ provided $\ep$ sufficiently small. Hence, for each $\theta$,  we can find a $t_\ep(\theta)$ such that $(1+t_\ep(\theta))(\cos\theta, \sin\theta)\in \Gamma_{\ep}$.
	It follows from \eqref{B-4} that
	\begin{equation}\label{B-8}
		\frac{\partial \tilde{\psi}_\ep((1+t )(\cos\theta,\sin\theta))}{\partial t}\Bigg|_{t=0}=x_{\ep,1}^{-\frac{2}{p-1}}\left(\frac{ {\ep}^2}{s_\ep^2}\right)^{\frac{2}{p-1}}\left(  U'(1)+o_\ep(1) \right) <0.
	\end{equation}
	As a result, $t_{\ep}$ is unique by the implicit function theorem and $ \Gamma_{\ep}$ is a continuous closed curve in $\mathbb{R}^2$.  So it is easy to check   that \eqref{B-7} holds.
	
	For any point
	$y_\ep =(1+t_{\ep}(\theta))(\cos\theta,\sin\theta)\in \Gamma_{\ep},$
	we infer from Lemma \ref{lemB-1} that if $|y_{\ep} |>1$, then
	\begin{equation}\label{B-9}
		\begin{split}
			0&=\Psi_{\ep, x_\ep, a_{\ep}}(s_\ep y_\ep+x_\ep)-\frac{W}{2}(s_\ep y_{\ep,1}+x_{\ep,1})^2\ln\frac{1}{\ep}-\mu_\ep+\phi_\ep(s_\ep y_\ep+x_\ep)\\
			=&U_{\ep, x_\ep, a_{\ep}}(s_\ep y_\ep+x_\ep)-a_\ep\ln\frac{1}{\ep} + \mathcal F(s_\ep y_\ep+x_\ep)+\phi_\ep(s_\ep y_\ep+x_\ep)+O(\ep^2 \ln\frac{1}{\ep} )\\
			=&a_\ep\frac{ \ln \ep}{\ln s_\ep} \ln \frac{1}{|y_\ep|}+ \mathcal F(s_\ep y+x_\ep)+\phi_\ep(s_\ep y+x_\ep)+O(\ep^2 \ln\frac{1}{\ep} ).
		\end{split}
	\end{equation}
	Thus, we obtain
	\begin{equation}\label{B-10}
		\begin{split}
			|y_\ep|&=e^{\frac{\ln s_{\ep} \left(\mathcal F(s_\ep y_\ep+x_\ep)+\phi_\ep(s_\ep y_\ep+x_\ep)+O(\ep^2 |\ln\ep| )\right)}{a_{\ep} \ln \ep}}\\
			&=1+\frac{\ln s_{\ep}}{a_{\ep} \ln \ep}  \phi_\ep(s_\ep y_\ep+x_\ep)+\frac{ \ln s_\ep }{a_{\ep} \ln \ep}\mathcal F(s_\ep y_\ep+x_\ep)\\
			&\quad+O\left( \ep^2 |\ln\ep|+\| {\phi}_\ep\|_{L^\infty(B_{Ls_\ep}(x_\ep))}^2+\| \mathcal F\|_{L^\infty(B_{Ls_\ep}(x_\ep))}^2\right).
		\end{split}
	\end{equation}
	If $|y_\ep|<1$, then we find
	\begin{equation}\label{B-11}
		\begin{split}
			0&=U_{\ep, x_\ep, a_{\ep}}(s_\ep y_\ep+x_\ep)-a_\ep\ln\frac{1}{\ep} + \mathcal F(s_\ep y_\ep+x_\ep)+\phi_\ep(s_\ep y_\ep+x_\ep)+O(\ep^2 |\ln\ep| )\\
			&= x_{\ep,1}^{-\frac{2}{p-1}}\left(\frac{\ep }{ s_\ep }\right)^{\frac{2}{p-1}}U(y_\ep)+ \mathcal F(s_\ep y_\ep+x_\ep)+\phi_\ep(s_\ep y_\ep+x_\ep)+O(\ep^2 |\ln\ep| )\\
			&=x_{\ep,1}^{-\frac{2}{p-1}}\left(\frac{\ep }{ s_\ep }\right)^{\frac{2}{p-1}}U'(1)(|y_\ep|-1)+\mathcal F(s_\ep y_\ep+x_\ep)+\phi_\ep(s_\ep y_\ep+x_\ep)\\
			&\quad +O(\ep^2 |\ln\ep|+(|y_\ep|-1)^2),
		\end{split}
	\end{equation}	
 which implies
	\begin{equation}\label{B-12}
		\begin{split}
			|y_\ep|&=1-x_{\ep,1}^{ \frac{2}{p-1}}\left(\frac{s_\ep }{ \ep }\right)^{\frac{2}{p-1}}\frac{1}{U'(1)}\left(\mathcal F(s_\ep y_\ep+x_\ep)+\phi_\ep(s_\ep y_\ep+x_\ep) \right)\\
			&+O\left(\ep^2 |\ln\ep|+\| {\phi}_\ep\|_{L^\infty(B_{Ls_\ep}(x_\ep))}^2+\| \mathcal F\|_{L^\infty(B_{Ls_\ep}(x_\ep))}^2 \right).
		\end{split}
	\end{equation}
	So \eqref{B-6} follows from \eqref{B-10}, \eqref {B-12} and the definition of $s_\ep$ in \eqref{2-21}. The proof is thus finished. 
\end{proof}

\section{Estimates of the location $x_\ep$}\label{appC}
This appendix is devoted to estimate the location of $x_\ep$ by the local Pohozaev identity. Using \eqref{2-3}, \eqref{2-19},  \eqref{D-6} and the odd symmetry, we can calculate each term in \eqref{2-27}. Recall the Pohozaev identity
\begin{align*}
		&\quad-\int_{\partial B_\delta(x_\ep)} \frac{\partial\psi_{1,\varepsilon}}{\partial \nu}\frac{\partial\psi_{1,\varepsilon}}{\partial x_1}ds+ \frac{1}{2}\int_{\partial B_\delta(x_\ep)} |\nabla\psi_{1,\varepsilon}|^2 \nu_1ds\\
		&=-\frac{x_{\ep,1}^2}{\varepsilon^2}\int_{B_\delta(x_\ep)} \left(\psi_\ep(x)-\frac{W}{2}x_1^2\ln\frac{1}{\ep}-\mu_\ep\right)_+^p\partial_1\psi_{2,\varepsilon}(  x)   d x\\
		&\quad+\frac{x_{\ep,1}^2}{\varepsilon^2}\int_{B_\delta(x_\ep)} Wx_1\ln\frac{1}{\varepsilon}\left(\psi_\ep(x)-\frac{W}{2}x_1^2\ln\frac{1}{\ep}-\mu_\ep\right)_+^pd x.
	\end{align*}

By \eqref{B-2}, we have
\begin{align}\label{C-1}
		\psi_\ep(x)-\frac{W}{2}x_1^2\ln\frac{1}{\ep}-\mu_\ep&=\Psi_{\ep, x_\ep,  a_{\ep}}(x) +\phi_\ep(x)-\frac{W}{2}x_1^2\ln\frac{1}{\ep}-\mu_\ep\\
		&= U_{\ep, x_\ep, a_{\ep}}(x) -a_\ep\ln\frac{1}{\ep} + \mathcal F(x)+\phi_\ep(x)+O(\ep^2 \ln\frac{1}{\ep} ),\nonumber
\end{align}
where $\mathcal F(x)$ is defined by \eqref{B-1}.
Note that the function $\mathcal F(x)$ is odd in $x_1-x_{\ep,1}$ in the sense that $$\mathcal F((-x_1,x_2)+x_\ep)=-\mathcal F((x_1,x_2)+x_\ep).$$
Suppose that $\phi_\ep$ takes the following form
\begin{equation}\label{C-2}
	\phi_\ep=\phi_\ep^{o}+\bar \phi_\ep,
\end{equation}
where $\phi_\ep^{o}$ is a function  odd in $x_1-x_{\ep,1}$ in the sense that $$\phi_\ep^{o}((-x_1,x_2)+x_\ep)=-\phi_\ep^{o}((x_1,x_2)+x_\ep).$$
We can use the odd symmetry to eliminate some linear terms, which is the key observation to improve the estimates.
We   proceed a series of lemmas to compute each term in the Pohozaev identity \eqref{2-27}.

Note that all $L^\infty$ norms are taken in the small ball $B_{2s_\ep}(x_\ep)$ since we have $\Om_\ep\subset B_{2s_\ep}(x_\ep)$ by Lemmas \ref{lem2-6} and \ref{lemB-2}. We drop the domain $B_{2s_\ep}(x_\ep)$ in the $\|\cdot\|_{L^\infty(B_{2s_\ep}(x_\ep))}$ and simply denote it as $\|\cdot\|_{L^\infty }$ throughout Appendix \ref{appC}.

To calculate the left hand side of the Pohozaev identity \eqref{2-27}, we need the following estimates for $\psi_{1,\varepsilon}$ far away from $x_\ep$.
\begin{lemma}\label{lemC-1}
	For every $ x\in \mathbb R^2_+\setminus\{  x\ | \ \mathrm{dist}(  x, \Om_\varepsilon)\le L s_\varepsilon\}$, we have
	\begin{equation}\label{C-3}
		\psi_{1,\varepsilon}( x)=\frac{\kappa x_{\ep,1}}{2\pi} \ln \frac{|  x- \overline{ x_\varepsilon}|}{|  x-  x_\varepsilon|}+O\left(\ep(\|\phi_\ep\|_{L^\infty}+\|\mathcal F\|_{L^\infty})+\ep^3 |\ln\ep|+\frac{\ep^2}{|x-x_\ep|}+\frac{\ep^2}{|x-x_\ep|^2}\right),
	\end{equation}
	and
	\begin{equation}\label{C-4}
		\begin{split}
			\nabla \psi_{1,\varepsilon}(  x)=&-\frac{\kappa x_{\ep,1}}{2\pi}  \frac{  x-  x_\varepsilon}{|  x-  x_\varepsilon|^2}+\frac{\kappa x_{\ep,1}}{2\pi} \frac{  x-  \overline{ x_\varepsilon}}{| x- \overline{ x_\varepsilon}|^2}\\
			&+O\left(\ep(\|\phi_\ep\|_{L^\infty}+\|\mathcal F\|_{L^\infty})+\ep^3 |\ln\ep|+\frac{\ep^2}{|x-x_\ep|^2}+\frac{\ep^2}{|x-x_\ep|^3}\right).
		\end{split}
	\end{equation}
\end{lemma}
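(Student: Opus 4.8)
The plan is to estimate $\psi_{1,\varepsilon}$ directly from its integral representation
\begin{equation*}
	\psi_{1,\varepsilon}( x)=\frac{x_{\ep,1}^2}{\varepsilon^2}\int_{\mathbb R^2_+}G(  x,y)\left(\psi_\ep(y)-\frac{W}{2}y_1^2\ln\frac{1}{\ep}-\mu_\ep\right)_+^p dy,
\end{equation*}
exploiting the fact that the integrand is supported in $B_{2s_\ep}(x_\ep)\subset B_{Ls_\ep}(x_\ep)$ by Lemmas \ref{lem2-6} and \ref{lemB-2}, so that for $x$ at distance at least $Ls_\ep$ from $\Om_\ep$ one has $|x-y|$ comparable to $|x-x_\ep|$ and we may Taylor-expand $G(x,y)$ about $y=x_\ep$. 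First I would write
\begin{equation*}
	G(x,y)=G(x,x_\ep)+\nabla_y G(x,x_\ep)\cdot (y-x_\ep)+O\!\left(\frac{|y-x_\ep|^2}{\mathrm{dist}(x,\Om_\ep)^2}\right),
\end{equation*}
recalling $G(x,y)=\frac{1}{4\pi}\ln\frac{(x_1+y_1)^2+(x_2-y_2)^2}{(x_1-y_1)^2+(x_2-y_2)^2}$, so that $G(x,x_\ep)=\frac{1}{2\pi}x_{\ep,1}^{-1}\cdot x_{\ep,1}\ln\frac{|x-\bar x_\ep|}{|x-x_\ep|}$ up to the harmless normalization. The zeroth-order term, after using the circulation identity \eqref{2-3} in the form $\frac{1}{\ep^2}\int y_1^{-1}\cdot\big(\cdots\big)_+^p\,d\nu=\kappa$ together with $y_1=x_{\ep,1}+O(\ep)$ on the support, produces the main term $\frac{\kappa x_{\ep,1}}{2\pi}\ln\frac{|x-\bar x_\ep|}{|x-x_\ep|}$. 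The remaining terms are error terms: the linear term in $(y-x_\ep)$ is controlled by noting $\|\psi_\ep-\tfrac{W}{2}x_1^2\ln\tfrac1\ep-\mu_\ep\|_{L^\infty}=O(\ep^{2/(p-1)})$-ish and using \eqref{C-1} plus the odd symmetry of $\mathcal F$ and $\phi_\ep^o$, and a first moment estimate on the vorticity, which gives the $O\big(\ep(\|\phi_\ep\|_{L^\infty}+\|\mathcal F\|_{L^\infty})+\ep^3|\ln\ep|\big)$ contribution; the quadratic remainder in the Taylor expansion of $G$ times $\int |y-x_\ep|^2(\cdots)_+^p$, which is $O(\ep^2)\cdot\ep^2$ divided by $\mathrm{dist}(x,\Om_\ep)^2$, contributes the $\ep^2/|x-x_\ep|^2$ and (from the next order) $\ep^2/|x-x_\ep|$ terms once one keeps track of all scales in $\rho(x,y)$.

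For the gradient estimate \eqref{C-4} the plan is the same: differentiate under the integral sign, $\nabla_x\psi_{1,\varepsilon}(x)=\frac{x_{\ep,1}^2}{\varepsilon^2}\int \nabla_x G(x,y)(\cdots)_+^p\,dy$, note $\nabla_x G(x,x_\ep)=\frac{1}{2\pi}\big(\frac{x-\bar x_\ep}{|x-\bar x_\ep|^2}-\frac{x-x_\ep}{|x-x_\ep|^2}\big)$ up to normalization, and Taylor-expand $\nabla_x G(x,y)$ about $y=x_\ep$; the derivative of $G$ gains one power of $|x-x_\ep|^{-1}$ in each remainder, which is exactly why the error exponents in \eqref{C-4} are one worse ($\ep^2/|x-x_\ep|^2$ and $\ep^2/|x-x_\ep|^3$) than in \eqref{C-3}. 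I would assemble the main term using \eqref{2-3} again and then collect the same four sources of error, being careful that $\nabla_x$ and the Taylor remainder commute with the $y$-integration on the compact support. Throughout one uses $x_{\ep,1}\to r_*>0$ so that all the weights $x_1,x_1^{-1}$ are bounded above and below on the relevant region.

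The main obstacle I anticipate is purely bookkeeping rather than conceptual: one must carefully separate the several length scales — $|x-x_\ep|$ ranging from $Ls_\ep\sim\ep$ out to fixed $O(1)$ distances, the vortex-core scale $\ep$, and the factors $\mathcal F=O(\ep)$, $\phi_\ep$, $\ep^2\ln\tfrac1\ep$ appearing in \eqref{C-1} — and verify that the odd-in-$(x_1-x_{\ep,1})$ structure of $\mathcal F$ and $\phi_\ep^o$ really does kill the otherwise-dangerous first moment $\int (y_1-x_{\ep,1})(\cdots)_+^p\,dy$, so that only the genuinely higher-order pieces survive. A secondary care point is that $\psi_\ep-\tfrac W2 x_1^2\ln\tfrac1\ep-\mu_\ep$ is only $(\cdot)_+^p$, not smooth across $\partial\Om_\ep$, so the moment bounds must be phrased in terms of $\|\cdot\|_{L^\infty}$ on $\Om_\ep$ and $|\Om_\ep|=O(\ep^2)$ rather than any pointwise expansion near the free boundary; this is routine given Lemma \ref{lem2-6} and Lemma \ref{lemB-2} but needs to be invoked at each step. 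Once these are in place, both \eqref{C-3} and \eqref{C-4} follow by direct estimation, and the $C^1$ statement away from $x_\ep$ is immediate since the error terms are themselves $C^1$ in $x$ with the stated bounds.
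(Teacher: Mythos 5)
Your proposal follows essentially the same route as the paper: expand the Green function $G(x,y)$ about $y=x_\ep$ over the compactly supported vorticity, extract the main term from the circulation constraint \eqref{2-3}, and control the linear and quadratic remainders using the decomposition \eqref{C-1}, the support bound $|y-x_\ep|=O(\ep)$, and $|\Om_\ep|=O(\ep^2)$. One small clarification: in this lemma the dangerous first moment $\int(y-x_\ep)\zeta_\ep\,dy$ is annihilated to leading order by the radial symmetry of the profile $U_{\ep,x_\ep,a_\ep}$ (not by the oddness of $\mathcal F$ and $\phi_\ep^o$, which only enters later, e.g.\ in Lemma \ref{lemC-3} and Appendix \ref{appD}), while the $\mathcal F$ and $\phi_\ep$ contributions are simply absorbed into the error $\ep(\|\phi_\ep\|_{L^\infty}+\|\mathcal F\|_{L^\infty})$ via the Lipschitz expansion of $t_+^p$.
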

\begin{proof}
	By the definition of $\psi_{\ep,1}$, it holds
	\begin{align*}
			\psi_{1,\ep}(x)&= \frac{x_{\ep,1}^2}{2\pi }\int_{\mathbb{R}_+^2}\left(\ln \frac{1}{|x-y|}-\ln \frac{1}{|x-\bar{y}|}\right)\zeta_\ep(y)dy \\
			&=\frac{x_{\ep,1}}{2\pi }\int_{\mathbb{R}_+^2}\left(\ln \frac{1}{|x-y|}-\ln \frac{1}{|x-\bar{y}|}\right)y_1\zeta_\ep(y)dy\\
			&+\frac{x_{\ep,1}}{2\pi }\int_{\mathbb{R}_+^2}\left(\ln \frac{1}{|x-y|}-\ln \frac{1}{|x-\bar{y}|}\right)(x_{\ep,1}-y_1)\zeta_\ep(y)dy.
		\end{align*}
For $x\in \mathbb{R}^2_+\setminus \{x: d(x,\Omega_{\ep})\le Ls_{\ep}\}$, using  \eqref{2-3} and  the  expansion
	\begin{equation}\label{C-5}
		|x-y|=|x-x_{\ep}|-\langle\frac{x-x_{\ep}}{|x-x_{\ep}|},y-x_{\ep}\rangle+O\left(\frac{|y-x_{\ep}|^2}{|x-x_{\ep}| }\right), \ \ \forall y\in\Om_{\ep},
	\end{equation}
	we get
	\begin{align*}
			&\int_{\mathbb{R}_+^2} \left(\ln \frac{1}{|x-y|} \right)y_1\zeta_\ep(y)dy
			=  \kappa\ln \frac{1}{|x-x_\ep|}+\int_{\mathbb{R}_+^2} \left(\ln \frac{|x-x_\ep|}{|x-y|} \right)y_1\zeta_\ep(y)dy\\
			=&\kappa\ln \frac{1}{|x-x_\ep|}+\int_{\mathbb{R}_+^2} \left( \frac{(x-x_\ep) \cdot(y-x_\ep) }{|x-x_\ep|^2} +O\left(\frac{\ep^2}{|x-x_\ep|^2}\right)\right)y_1\zeta_\ep(y)dy\\
			=&\kappa\ln \frac{1}{|x-x_\ep|}+x_{\ep,1}\int_{\mathbb{R}_+^2}  \frac{(x-x_\ep) \cdot(y-x_\ep) }{|x-x_\ep|^2}   \zeta_\ep(y)dy+O\left(\frac{\ep^2}{|x-x_\ep|}+\frac{\ep^2}{|x-x_\ep|^2}\right).
		\end{align*}
	By the expansion of $t_+^p$ and the odd symmetry, we find
	\begin{align*}
			&\int_{\mathbb{R}_+^2}  \frac{(x-x_\ep) \cdot(y-x_\ep) }{|x-x_\ep|^2}   \zeta_\ep(y)dy\\
			=&\frac{1}{\ep^2}\int_{\mathbb{R}_+^2}  \frac{(x-x_\ep) \cdot(y-x_\ep) }{|x-x_\ep|^2}   \left(U_{\ep, x_\ep, a_{\ep}}(x)-a_\ep\ln\frac{1}{\ep} + \mathcal F(x)+\phi_\ep(x)+O(\ep^2 |\ln\ep| )\right)_+^pdy\\
			=&\frac{1}{\ep^2}\int_{\mathbb{R}_+^2}  \frac{(x-x_\ep) \cdot(y-x_\ep) }{|x-x_\ep|^2}   \left(U_{\ep, x_\ep,  a_{\ep}}(x)-a_\ep\ln\frac{1}{\ep}\right)_+^pdy+O\left(\ep(\|\phi_\ep\|_{L^\infty}+\|\mathcal F\|_{L^\infty})+\ep^3 |\ln\ep| \right)\\
			=&O\left(\ep(\|\phi_\ep\|_{L^\infty}+\|\mathcal F\|_{L^\infty})+\ep^3 |\ln\ep| \right).
		\end{align*}
	Thus, we obtain
	\begin{align*}
	\int_{\mathbb{R}_+^2} \left(\ln \frac{1}{|x-y|} \right)y_1\zeta_\ep(y)dy	&=\kappa\ln \frac{1}{|x-x_\ep|}\\
		&\quad	+O\left(\ep(\|\phi_\ep\|_{L^\infty}+\|\mathcal F\|_{L^\infty})+\ep^3 |\ln\ep|+\frac{\ep^2}{|x-x_\ep|}+\frac{\ep^2}{|x-x_\ep|^2}\right).
		\end{align*}
	Similarly,
	\begin{align*}
			\int_{\mathbb{R}_+^2} \left(\ln \frac{1}{|x-\bar y|} \right)y_1\zeta_\ep(y)dy&=\kappa\ln \frac{1}{|x-\overline{x_\ep}|}\\
			&\quad+O\left(\ep(\|\phi_\ep\|_{L^\infty}+\|\mathcal F\|_{L^\infty})+\ep^3 |\ln\ep|+\frac{\ep^2}{|x-\overline{x_\ep}|}+\frac{\ep^2}{|x-\overline{x_\ep}|^2}\right).\\
		\end{align*}
	Using the expansion \eqref{C-5} and odd symmetry, one can find
	\begin{align*}
			&\int_{\mathbb{R}_+^2}  \left(\ln \frac{1}{|x-y|}  \right)(x_{\ep,1}-y_1)\zeta_\ep(y)dy\\
			=& \int_{\mathbb{R}_+^2}  \left(\ln \frac{1}{|x-x_\ep|}  \right)(x_{\ep,1}-y_1)\zeta_\ep(y) dy+O\left( \frac{\ep^2}{|x-x_\ep|}+\frac{\ep^2}{|x-x_\ep|^2}\right)\\
			=&O\left(\ep(\|\phi_\ep\|_{L^\infty}+\|\mathcal F\|_{L^\infty})+\ep^3 |\ln\ep|+\frac{\ep^2}{|x-x_\ep|}+\frac{\ep^2}{|x-x_\ep|^2}\right).
		\end{align*}
	We obtain \eqref{C-3} by combining the above calculations.

	The proof of \eqref{C-4} is similar by using the expansion
	\begin{equation*}
		\frac{x -x_{\ep }}{|x-x_{\ep}|^2}-\frac{x -y}{|x-y|^2}=\frac{y -x_{\ep }}{|x-x_{\ep}|^2}+\frac{x -x_{\ep }}{|x-x_{\ep}|^4}(x -x_{\ep })\cdot(y-x_\ep)+O\left(\frac{ \ep^2}{|x-x_{\ep}|^3}\right),
	\end{equation*}
	so we omit it and finish the proof of this lemma.
\end{proof}

Using Lemma \ref{lemC-1}, we can compute the left hand side of \eqref{2-27} as follows.
\begin{lemma}\label{lemC-2}
	It holds
	\begin{equation*}
		-\int_{\partial B_\delta(x_\ep)} \frac{\partial\psi_{1,\varepsilon}}{\partial \nu}\frac{\partial\psi_{1,\varepsilon}}{\partial x_1}ds+ \frac{1}{2}\int_{\partial B_\delta(x_\ep)} |\nabla\psi_{1,\varepsilon}|^2 \nu_1ds=\frac{ x_{\ep,1} \kappa^2}{4\pi }+O\left(\ep(\|\phi_\ep\|_{L^\infty}+\|\mathcal F\|_{L^\infty})+\ep^2\right).
	\end{equation*}
\end{lemma}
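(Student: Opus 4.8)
The plan is to substitute the asymptotic expansions of Lemma \ref{lemC-1} into the two boundary integrals in \eqref{2-27} and isolate the leading contribution, which comes entirely from the half-plane Green-function model
$$\Phi(x):=\frac{\kappa x_{\ep,1}}{2\pi}\ln\frac{|x-\overline{x_\ep}|}{|x-x_\ep|}=\kappa x_{\ep,1}\,G(x,x_\ep),$$
where $G$ is the function in \eqref{2-24}. Set $\eta:=\psi_{1,\ep}-\Phi$. Since $\partial B_\delta(x_\ep)$ lies at the fixed distance $\delta$ from $x_\ep$ (with $\delta\in(0,\delta_0]$ fixed) and well outside $\Om_\ep$ for $\ep$ small, the terms $\ep^2/|x-x_\ep|$, $\ep^2/|x-x_\ep|^2$ and $\ep^3|\ln\ep|$ appearing in \eqref{C-3}–\eqref{C-4} are all $O(\ep^2)$ there, so
$$\|\eta\|_{L^\infty(\partial B_\delta(x_\ep))}+\|\nabla\eta\|_{L^\infty(\partial B_\delta(x_\ep))}=O\big(\ep(\|\phi_\ep\|_{L^\infty}+\|\mathcal F\|_{L^\infty})+\ep^2\big).$$

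First I would expand the quadratic boundary functional bilinearly. Writing $\mathcal Q_\delta[u]:=-\int_{\partial B_\delta(x_\ep)}\frac{\partial u}{\partial\nu}\frac{\partial u}{\partial x_1}\,ds+\frac12\int_{\partial B_\delta(x_\ep)}|\nabla u|^2\nu_1\,ds$ and letting $\mathcal B$ be the associated symmetric bilinear form, one has $\mathcal Q_\delta[\psi_{1,\ep}]=\mathcal Q_\delta[\Phi]+\mathcal B(\Phi,\eta)+\mathcal Q_\delta[\eta]$. On $\partial B_\delta(x_\ep)$ the functions $\Phi$ and $\nabla\Phi$ are bounded by constants depending only on $\delta$ (fixed) and $x_{\ep,1}$ (bounded away from $0$ by \eqref{2-5}), hence
$$|\mathcal B(\Phi,\eta)|\le C\big(\|\eta\|_{L^\infty(\partial B_\delta(x_\ep))}+\|\nabla\eta\|_{L^\infty(\partial B_\delta(x_\ep))}\big)=O\big(\ep(\|\phi_\ep\|_{L^\infty}+\|\mathcal F\|_{L^\infty})+\ep^2\big),$$
while $|\mathcal Q_\delta[\eta]|\le C\|\nabla\eta\|_{L^\infty(\partial B_\delta(x_\ep))}^2$ is of higher order. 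It therefore remains only to evaluate $\mathcal Q_\delta[\Phi]$ exactly.

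For $\mathcal Q_\delta[\Phi]$ I would use that $\Phi=\kappa x_{\ep,1}G(\cdot,x_\ep)$ is harmonic in $\mathbb R^2_+\setminus\{x_\ep\}$ and that $B_{\delta_0}(x_\ep)\subset\mathbb R^2_+$. Applying the Pohozaev identity with the field $\partial_{x_1}$ to $\Phi$ on the annulus $B_\delta(x_\ep)\setminus B_{\delta'}(x_\ep)$, where the right-hand side vanishes, shows that $\mathcal Q_\delta[\Phi]$ is independent of $\delta\in(0,\delta_0]$, so $\mathcal Q_\delta[\Phi]=\lim_{\delta'\to0^+}\mathcal Q_{\delta'}[\Phi]$. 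Splitting $\Phi=\frac{\kappa x_{\ep,1}}{2\pi}\ln\frac{1}{|x-x_\ep|}+\Phi_{\mathrm{reg}}$ with $\Phi_{\mathrm{reg}}(x)=\frac{\kappa x_{\ep,1}}{2\pi}\ln|x-\overline{x_\ep}|$ smooth near $x_\ep$, and writing $x-x_\ep=\delta'\nu$ on $\partial B_{\delta'}(x_\ep)$, the purely singular part integrates to $0$ by oddness of $\nu_1$, the regular–regular part is $O(\delta')$, and only the singular–regular cross term survives in the limit, giving
$$\mathcal Q_\delta[\Phi]=2\pi\cdot\frac{\kappa x_{\ep,1}}{2\pi}\,\partial_{x_1}\Phi_{\mathrm{reg}}(x_\ep)=\kappa x_{\ep,1}\cdot\frac{\kappa}{4\pi}=\frac{\kappa^2 x_{\ep,1}}{4\pi},$$
since $\partial_{x_1}\Phi_{\mathrm{reg}}(x_\ep)=\frac{\kappa x_{\ep,1}}{2\pi}\cdot\frac{(x_\ep-\overline{x_\ep})_1}{|x_\ep-\overline{x_\ep}|^2}=\frac{\kappa x_{\ep,1}}{2\pi}\cdot\frac{1}{2x_{\ep,1}}=\frac{\kappa}{4\pi}$. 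Combining this with the two estimates of the previous paragraph yields the claimed identity.

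The main obstacle will be the exact evaluation of $\mathcal Q_\delta[\Phi]$: one has to justify carefully that this boundary quantity is genuinely $\delta$-independent (so that the limit $\delta'\to0^+$ is legitimate) and then extract the single surviving singular–regular cross term with the precise constant. Once this is secured, the rest is routine bookkeeping, built on the fact that $\delta$ is a \emph{fixed} constant — which is exactly what converts the negative powers of $|x-x_\ep|$ in Lemma \ref{lemC-1} into harmless $O(\ep^2)$ contributions on $\partial B_\delta(x_\ep)$.
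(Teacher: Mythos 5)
Your proof is correct and follows essentially the same route as the paper: Lemma \ref{lemC-1} is used to replace $\psi_{1,\varepsilon}$ by $\kappa x_{\ep,1}G(\cdot,x_\ep)$ up to an error of the stated size, and the quadratic boundary functional of the Green function is evaluated exactly (the paper quotes the identity $-\int_{\partial B_\delta(x_\ep)}\partial_\nu G\,\partial_{x_1}G+\tfrac12\int_{\partial B_\delta(x_\ep)}|\nabla G|^2\nu_1=\frac{1}{4\pi x_{\ep,1}}$ as a "straightforward calculation", which is precisely what your $\delta$-independence plus $\delta'\to0^+$ limit computation supplies, with the matching constant $\frac{\kappa^2x_{\ep,1}}{4\pi}$).
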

\begin{proof}
	We need the following identity for the Green function $G(x,y)=\frac{1}{2\pi}\ln \frac{|x-\bar y|}{|x-y|}$.
	\begin{equation*}
		-\int_{\partial B_\delta(x_\ep)}\frac{G( x_\ep, y)}{\partial \nu}\frac{G(  x_\ep, y)}{\partial x_1}ds+\frac{1}{2}\int_{\partial B_\delta(x_\ep)} |\nabla G(x_\ep, y)|^2 \nu_1 ds
		=\frac{1}{4\pi x_{\ep,1}},
	\end{equation*}
	which can be verified by straightforward calculations. By  the asymptotic estimate in Lemma \ref{lemC-1}, we obtain the conclusion of this lemma  directly.
\end{proof}

Using the circulation constraint \eqref{2-3}, it is obvious that
\begin{equation}\label{C-6}
	\frac{x_{\ep,1}^2}{\varepsilon^2}\int_{B_\delta(x_\ep)} Wx_1\ln\frac{1}{\varepsilon}\left(\psi_\ep(x)-\frac{W}{2}x_1^2\ln\frac{1}{\ep}-\mu_\ep\right)_+^pd x=\kappa Wx_{\ep,1}^2\ln\frac{1}{\varepsilon}.
\end{equation}
Thus we will focus on the first term in the right hand side of \eqref{2-27} relevant to $\partial_1\psi_{2,\varepsilon}$, which is the most difficult term to estimate. We have the following lemma.
\begin{lemma}\label{lemC-3}
	It holds
	\begin{equation*}
		\begin{split}
			&-\frac{x_{\ep,1}^2}{\varepsilon^2}\int_{B_\delta(x_\ep)} \left(\psi_\ep(x)-\frac{W}{2}x_1^2\ln\frac{1}{\ep}-\mu_\ep\right)_+^p\partial_1\psi_{2,\varepsilon}(  x)   d x=- \frac{x_{\ep,1}\kappa^2 }{4\pi}\left(\ln\frac{8x_{\ep,1}}{s_\ep}+\frac{p-5}{4}\right)\\
			+	&O\left(\|\bar{\phi}_\ep\|_{L^\infty}+\ep^2|\ln\ep|+(\|\phi_\ep\|_{L^\infty}+\|\mathcal F\|_{L^\infty})^{2}+\ep(\|\phi_\ep\|_{L^\infty}+\|\mathcal F\|_{L^\infty})  \right).
		\end{split}
	\end{equation*}
\end{lemma}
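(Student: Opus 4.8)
The plan is to compute $-\frac{x_{\ep,1}^2}{\varepsilon^2}\int_{B_\delta(x_\ep)} \left(\psi_\ep-\frac{W}{2}x_1^2\ln\frac{1}{\ep}-\mu_\ep\right)_+^p\partial_1\psi_{2,\varepsilon}\, dx$ by first expanding $\partial_1\psi_{2,\varepsilon}$ using the explicit formula \eqref{2-25} for $H(x,y)$. Differentiating \eqref{2-25} in $x_1$ produces three structurally different pieces: a logarithmic term $\frac{x_{\ep,1}^2-x_1^{1/2}y_1^{3/2}}{2\pi}\frac{x_1-y_1}{|x-y|^2}$-type contribution coming from the singular part, a term $-\frac{x_{\ep,1}^2}{2\pi}\frac{x_1+y_1}{|x-\bar y|^2}$ from the reflected kernel, a piece involving $\partial_1\big(x_1^{1/2}y_1^{3/2}\big)\big(\ln(x_1y_1)+2\ln8-4\big)$ from the regular remainder, plus the $O(|\rho\ln\rho|)$ error. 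I would plug $y=s_\ep z+x_\ep$, $x=s_\ep w+x_\ep$, rescale, and use the fundamental asymptotics already established: by \eqref{C-1}, on $\Om_\ep\subset B_{2s_\ep}(x_\ep)$ the profile $\left(\psi_\ep-\frac{W}{2}x_1^2\ln\frac1\ep-\mu_\ep\right)_+$ equals $U_{\ep,x_\ep,a_\ep}-a_\ep\ln\frac1\ep+\mathcal F+\phi_\ep^o+\bar\phi_\ep+O(\ep^2|\ln\ep|)$, whose leading part after rescaling is $x_{\ep,1}^{-2/(p-1)}(\ep/s_\ep)^{2/(p-1)}U(z)$, and whose $p$-th power integrates (against $y_1\,dy$) to the circulation $\kappa$ by \eqref{2-3}.

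The key steps, in order: (i) substitute \eqref{2-25}–differentiated into the integral and split into the four pieces above; (ii) in each piece, Taylor-expand $x_1^{1/2}y_1^{3/2}$, $y_1^{-1}$, $\ln(x_1y_1)$ etc.\ around $x_{\ep,1}$ to the needed order in $s_\ep=O(\ep)$, keeping track of which error controls the final remainder $\|\bar\phi_\ep\|_{L^\infty}+\ep^2|\ln\ep|+(\|\phi_\ep\|_{L^\infty}+\|\mathcal F\|_{L^\infty})^2+\ep(\|\phi_\ep\|_{L^\infty}+\|\mathcal F\|_{L^\infty})$; (iii) exploit the odd symmetry — $\mathcal F$ and $\phi_\ep^o$ are odd in $x_1-x_{\ep,1}$, and $\left(U(z)\right)_+^{p}$ and $\left(U(z)\right)_+^{p-1}$ are even in $z_1$ — so the linear-in-$\mathcal F$ and linear-in-$\phi_\ep^o$ terms cancel against odd factors from the kernel and drop to higher order, leaving only $\bar\phi_\ep$ and the quadratic terms as genuine error; (iv) evaluate the surviving leading integrals, which reduce to $\frac{\kappa}{2\pi}\ln\frac{1}{|x-y|}$-type self-interactions and produce, via \eqref{2-3}, the Pohozaev identities \eqref{2-14}, and the definitional relation for $s_\ep$ in \eqref{2-21}, the combination $-\frac{x_{\ep,1}\kappa^2}{4\pi}\big(\ln\frac{8x_{\ep,1}}{s_\ep}+\frac{p-5}{4}\big)$. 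The $\ln\frac{8x_{\ep,1}}{s_\ep}$ comes from matching the $\ln s_\ep$ scale of $U_{\ep,x_\ep,a_\ep}$ with the $2\ln8-4$ and $\ln(x_1y_1)$ terms in $H$, while the rational constant $\frac{p-5}{4}$ is assembled from the $\frac{x_1-y_1}{|x-y|^2}$ piece (which contributes via the Pohozaev identity $\int U_+^{p+1}=\frac{\pi(p+1)}{2}|U'(1)|^2$ and $\Lambda_p=-2\pi U'(1)$) together with the $-\frac14$ of the classical Kelvin–Hicks formula.

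The main obstacle will be step (iv): bookkeeping the rational constant $\frac{p-5}{4}$. One must carefully separate the contributions of the three $H$-pieces and the $\partial_1$ acting on each factor, since each produces a term of the form (const)$\cdot x_{\ep,1}\kappa\cdot\big(\text{moment of }U\big)$, and several of these moments are related only through integration by parts plus the two Pohozaev identities \eqref{2-14}; a sign error or a missing factor of $p$ anywhere corrupts the final constant. In particular the term $K_{13}$-type integral $\int_{\mathbb R_+^2}\big((x_1-x_{\ep,1})+3(y_1-x_{\ep,1})\big)\frac{x_1-y_1}{|x-y|^2}\left(U_{\ep,x_\ep,a_\ep}-a_\ep\ln\frac1\ep\right)_+^p dy$, though an \emph{even} function so that its pairing with odd data vanishes, is \emph{not} negligible when paired with the even profile itself and must be evaluated exactly. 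I would handle this by rescaling everything to the fixed profile $U$ on $\mathbb R^2$, reducing every surviving integral to $\int_{\mathbb R^2} U_+^{p}$, $\int_{\mathbb R^2} U_+^{p-1}\frac{\partial U}{\partial x_1}y_1$ (which equals $\Lambda_p$ by the integration-by-parts identity already used in the proof of Lemma \ref{lem2-17}), and $\int_{B_1(0)}U^{p+1}$, then substituting \eqref{2-14}, \eqref{2-18} and the second equation of \eqref{2-21}; the remaining arithmetic, being routine, I would relegate to the displayed computation rather than the plan. Once the leading term is pinned down, the error estimate follows mechanically from \eqref{B-3}, Lemma \ref{lem2-6}, and the pointwise bounds on $\partial_1 H$ away from the diagonal together with $|\nabla G_1(x,y)|\le C/|x-y|$ near it.
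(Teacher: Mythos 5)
Your proposal follows essentially the same route as the paper's proof: differentiate the explicit expansion \eqref{2-25} of $H$, split the resulting integral into the $\ln(1/s_\ep)$ piece, the $\ln(s_\ep/|x-y|)$ self-interaction, the $\frac{x_1-y_1}{|x-y|^2}$ piece with coefficient $x_1^{1/2}y_1^{3/2}-x_{\ep,1}^2$, and the regular remainder (the paper's $I_1,\dots,I_4$), then Taylor-expand around $x_{\ep,1}$, use the odd symmetry of $\mathcal F$ and $\phi_\ep^o$ to demote the linear terms, and evaluate the surviving even integrals via \eqref{2-3}, \eqref{2-14}, \eqref{2-21} and the identity \eqref{D-6} relating $\kappa$ to $\Lambda_p x_{\ep,1}^{-\frac{p+1}{p-1}}(\ep/s_\ep)^{\frac{2}{p-1}}$. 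Your identification of the sources of $\ln\frac{8x_{\ep,1}}{s_\ep}$ and of the constant $\frac{p-5}{4}$ (from the $I_2$, $I_3$ pieces via the two Pohozaev identities) matches the paper's computation, so the plan is correct as stated.
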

\begin{proof}
	By the definition of $\partial_1\psi_{2,\varepsilon}$, it holds
	$$\partial_1\psi_{2,\varepsilon}= \int_{\mathbb R^2_+}\partial_{x_1}H(  x, y)\zeta_\ep(y)dy.$$
	Recall the definition of $H(x,y)$ and the following asymptotic behavior in \eqref{2-24} and \eqref{2-25}
	\begin{equation*}
		\begin{split}
			H( x,y)&= \frac{x_1^{1/2}y_1^{3/2}-x_{\ep,1}^2}{2\pi}  \ln\frac{1}{|  x-y|}+\frac{x_{\ep,1}^2}{2\pi}\ln\frac{1}{| x- {\bar y}|}\\
			&\quad+\frac{x_1^{1/2}y_1^{3/2}}{4\pi}\left(\ln(x_1 y_1)+2\ln 8-4+  O(|\rho\ln \rho|)\right).
		\end{split}
	\end{equation*}
	For simplicity, we divide the integral into four parts
	\begin{equation*}
		-\frac{x_{\ep,1}^2}{\varepsilon^2}\int_{B_\delta(x_\ep)} \left(\psi_\ep(x)-\frac{W}{2}x_1^2\ln\frac{1}{\ep}-\mu_\ep\right)_+^p\partial_1\psi_{2,\varepsilon}(  x)   d x=I_1+I_2+I_3+I_4,
	\end{equation*}
	where
	\begin{align*}
		I_1&=-\frac{x_{\ep,1}^2}{4\pi }\ln\left(\frac{1}{s_\ep}\right)\int_{B_\delta(x_\ep)}\int_{\mathbb R^2_+}x_1^{-1/2}y_1^{3/2}\zeta_\ep(x)\zeta_\ep(y)d yd x,\\
		I_2&=-\frac{x_{\ep,1}^2}{4\pi }\int_{B_\delta(x_\ep)}\int_{\mathbb R^2_+}x_1^{-1/2}y_1^{3/2}\zeta_\ep(x)\zeta_\ep(y)\ln\left(\frac{s_\ep}{|  x-y|}\right)dy d  x,\\
		I_3&=\frac{x_{\ep,1}^2}{2\pi }\int_{B_\delta(x_\ep)}\int_{\mathbb R^2_+}\left(x_1^{1/2}y_1^{3/2}-x_{\ep,1}^2\right) \frac{x_1-y_1}{|  x-y|^2} \zeta_\ep(x)\zeta_\ep(y)dy d  x,
	\end{align*}
	and $I_{4}$ is the remaining regular terms.
	
	Let us consider $I_1$ first. Using Taylor's expansion, $I_1$ can be rewritten as
	\begin{align*}
			I_1&=-\frac{x_{\ep,1}^2}{4\pi } \ln\frac{1}{s_\ep} \int_{B_\delta(x_\ep)}x_1 \zeta_\ep(x)\left(x_{\ep,1}^{-3/2}-\frac{3}{2x_{\ep,1}^{5/2}} (x_1-x_{\ep,1})+O(\ep^2)\right)d  x\\
			&\quad \times\int_{\mathbb R^2_+} y_1\zeta_\ep(y)\left(x_{\ep,1}^{1/2}+\frac{1}{2x_{\ep,1}^{1/2}}(y_1-x_{\ep,1})+O(\ep^2)\right)dy\\
			&=-\frac{x_{\ep,1} \kappa^2}{4\pi } \ln\frac{1}{s_\ep}-\frac{ \kappa}{4\pi } \ln\frac{1}{s_\ep}\int_{\mathbb R^2_+} (x_1-x_{\ep,1})x_1 \zeta_\ep(x) dx +O\left(  \varepsilon^2|\ln\varepsilon|\right).
		\end{align*}
	
	By the odd symmetry and the Lipschitz continuity of the function $t_+^p$, it holds
	\begin{align*}
			&\quad\int_{\mathbb R^2_+} (x_1-x_{\ep,1})x_1 \zeta_\ep(x) dx\\
			&=x_{\ep,1}\int_{\mathbb R^2_+} (x_1-x_{\ep,1})  \zeta_\ep(x) dx-\int_{\mathbb R^2_+} (x_1-x_{\ep,1})^2 \zeta_\ep(x) dx\\
			&=\frac{x_{\ep,1}}{\ep^2}\int_{\mathbb R^2_+} (x_1-x_{\ep,1})  \left(U_{\ep, x_\ep, a_{\ep}}(x)-a_\ep\ln\frac{1}{\ep} + \mathcal F(x)+\phi_\ep(x)+O(\ep^2 |\ln\ep| )\right)_+^p dx+O(\varepsilon^2)\\
			&=\frac{x_{\ep,1}}{\ep^2}\int_{\mathbb R^2_+} (x_1-x_{\ep,1})  \left(U_{\ep, x_\ep, a_{\ep}}(x)-a_\ep\ln\frac{1}{\ep}  \right)_+^p dx+O(\ep(\|\phi_\ep\|_{L^\infty}+\|\mathcal F\|_{L^\infty})+\ep^3 |\ln\ep|+\varepsilon^2) \\
			&=O\left(\ep(\|\phi_\ep\|_{L^\infty}+\|\mathcal F\|_{L^\infty})+\ep^2\right).
		\end{align*}
Here we have used $\frac{x_{\ep,1}}{\ep^2}\int_{\mathbb R^2_+} (x_1-x_{\ep,1})  \left(U_{\ep, x_\ep, a_{\ep}}(x)-a_\ep\ln\frac{1}{\ep}  \right)_+^p dx=0$ due to the odd symmetry. Thus we have shown
	\begin{equation}\label{C-7}
		I_1=-\frac{x_{\ep,1} \kappa^2}{4\pi } \ln\frac{1}{s_\ep} +O\left( \ep|\ln\varepsilon|(\|\phi_\ep\|_{L^\infty}+\|\mathcal F\|_{L^\infty})+ \varepsilon^2|\ln\varepsilon|\right).
	\end{equation}

	For the second term $I_2$, we also expand it as
	\begin{align*}
			I_2&=-\frac{x_{\ep,1}^2}{4\pi }\int_{\mathbb R^2_+} \left(x_{\ep,1}^{-1/2}-\frac{1}{2x_{\ep,1}^{3/2}}(x_1-x_{\ep,1})+O(\ep^2)\right)\zeta_\ep(x)\\
			&\quad \times\int_{\mathbb R^2_+}\left(x_{\ep,1}^{3/2}+\frac{3x_{\ep,1}^{1/2}}{2}(y_1-x_{\ep,1})+O(\ep^2)\right)\ln\left(\frac{s_\ep}{|x-y|}\right)\zeta_\ep(y)dyd x\\
			&=-\frac{x_{\ep,1}^3}{4\pi }\int_{\mathbb R^2_+}\int_{\mathbb R^2_+}\ln\left(\frac{s_\ep}{|x-y|}\right)\zeta_\ep(x)\zeta_\ep(y) dy dx\\
			&\quad-\frac{x_{\ep,1}^2}{4\pi }\int_{\mathbb R^2_+}\int_{\mathbb R^2_+}\ln\left(\frac{s_\ep}{|x-y|}\right)(x_1-x_{\ep,1})\zeta_\ep(x)\zeta_\ep(y) dx dy+O(\ep^2).
		\end{align*}
	Recall that $d_\ep=\sup_{\theta\in [0,2\pi)} |t_\ep(\theta)|$ with $t_\ep$, which is defined in \eqref{2-32} and by \eqref{B-6}, $d_\ep=O\left( \|\phi_\ep\|_{L^\infty}+\|\mathcal F\|_{L^\infty}+\ep^2\right)$.
	For $x\in \Om_\ep\setminus B_{(1-d_\ep)s_\ep}(x_\ep)\subset  B_{(1+d_\ep)s_\ep}(x_\ep)\setminus B_{(1-d_\ep)s_\ep}(x_\ep)$, it can be seen $$U_{\ep, x_\ep, a_{\ep}}(x)-a_\ep\ln\frac{1}{\ep}=O(d_\ep), \quad \zeta_\ep(x)=O(d_\ep+\ep^2|\ln\ep|).$$ Using a similar method as we deal with $I_1$ and the calculations in the proof of Proposition \ref{prop2-10}, by \eqref{2-14}, the expansion \eqref{2-31} and the odd symmetry, we compute
	\begin{align}\label{C-8}
			&\frac{1}{2\pi }\int_{\mathbb R^2_+}\int_{\mathbb R^2_+}\ln\left(\frac{s_\ep}{|x-y|}\right)\zeta_\ep(x)\zeta_\ep(y) dy dx\\
			=&\frac{1}{2\pi }\int_{ B_{(1-d_\ep)s_\ep}(x_\ep)}\int_{ B_{(1-d_\ep)s_\ep}(x_\ep)}\ln\left(\frac{s_\ep}{|x-y|}\right)\zeta_\ep(x)\zeta_\ep(y) dy dx\nonumber\\
			&+\frac{1}{ \pi }\int_{ B_{(1-d_\ep)s_\ep}(x_\ep)}\int_{\Om_\ep\setminus B_{(1-d_\ep)s_\ep}(x_\ep)}\ln\left(\frac{s_\ep}{|x-y|}\right)\zeta_\ep(x)\zeta_\ep(y) dy dx\nonumber\\
			&+\frac{1}{2\pi }\int_{ \Om_\ep\setminus B_{(1-d_\ep)s_\ep}(x_\ep)}\int_{\Om_\ep\setminus  B_{(1-d_\ep)s_\ep}(x_\ep)}\ln\left(\frac{s_\ep}{|x-y|}\right)\zeta_\ep(x)\zeta_\ep(y) dy dx\nonumber\\
			=&\frac{1}{2\pi }\int_{ B_{(1-d_\ep)s_\ep}(x_\ep)}\int_{ B_{(1-d_\ep)s_\ep}(x_\ep)}\ln\left(\frac{s_\ep}{|x-y|}\right)\zeta_\ep(x)\zeta_\ep(y) dy dx+O\left(\frac{(d_\ep+\ep^2|\ln\ep|)|\Om_\ep\setminus  B_{(1-d_\ep)s_\ep}(x_\ep)|}{\ep^2}\right)\nonumber\\
			=&\frac{1}{2\pi \ep^4 }\int_{ B_{(1-d_\ep)s_\ep}(x_\ep)}\int_{ B_{(1-d_\ep)s_\ep}(x_\ep)}\ln\left(\frac{s_\ep}{|x-y|}\right)\left(U_{\ep, x_\ep,  a_{\ep}}(x)-a_\ep\ln\frac{1}{\ep} + \mathcal F(x)+\phi_\ep(x)+O(\ep^2 |\ln\ep| )\right)_+^p\nonumber\\
			&\quad \times \left(U_{\ep, x_\ep, a_{\ep}}(y)-a_\ep\ln\frac{1}{\ep} + \mathcal F(y)+\phi_\ep(y)+O(\ep^2 |\ln\ep| )\right)_+^p  dy dx\nonumber\\
			&+O\left( d_\ep(d_\ep+\ep^2|\ln\ep|)  \right)\nonumber\\
			=&\frac{1}{2\pi \ep^4 }\int_{ B_{(1-d_\ep)s_\ep}(x_\ep)}\int_{ B_{(1-d_\ep)s_\ep}(x_\ep)}\ln\left(\frac{s_\ep}{|x-y|}\right)\left(U_{\ep, x_\ep, a_{\ep}}(x)-a_\ep\ln\frac{1}{\ep}  \right)_+^p  \left(U_{\ep, x_\ep,  a_{\ep}}(y)-a_\ep\ln\frac{1}{\ep}  \right)_+^p  dy dx\nonumber\\
			&+\frac{1}{ \pi \ep^4 }\int_{ B_{(1-d_\ep)s_\ep}(x_\ep)}\int_{ B_{(1-d_\ep)s_\ep}(x_\ep)}\ln\left(\frac{s_\ep}{|x-y|}\right)\left(U_{\ep, x_\ep,  a_{\ep}}(x)-a_\ep\ln\frac{1}{\ep}  \right)_+^p \nonumber \\
			&\quad \times\left(\mathcal F(y)+\phi_\ep^{o}(y)+\bar \phi_\ep+O(\ep^2 |\ln\ep| )\right)  dx dy\nonumber\\
			&+O\left((\|\phi_\ep\|_{L^\infty}+\|\mathcal F\|_{L^\infty})^{2}+ d_\ep(d_\ep+\ep^2|\ln\ep|)  \right)\nonumber\\
			=&\frac{1}{2\pi \ep^4 }\int_{ B_{s_\ep}(x_\ep)}\int_{ B_{ s_\ep}(x_\ep)}\ln\left(\frac{s_\ep}{|x-y|}\right)\left(U_{\ep, x_\ep,  a_{\ep}}(x)-a_\ep\ln\frac{1}{\ep}  \right)_+^p  \left(U_{\ep, x_\ep,  a_{\ep}}(y)-a_\ep\ln\frac{1}{\ep}  \right)_+^p  dy dx\nonumber\\
			&+\frac{1}{ \pi \ep^4 }\int_{ B_{(1-d_\ep)s_\ep}(x_\ep)}\int_{ B_{(1-d_\ep)s_\ep}(x_\ep)}\ln\left(\frac{s_\ep}{|x-y|}\right)\left(U_{\ep, x_\ep, a_{\ep}}(x)-a_\ep\ln\frac{1}{\ep}  \right)_+^p   \left(\bar \phi_\ep+O(\ep^2 |\ln\ep| )\right)  dx dy\nonumber\\
			&+O\left((\|\phi_\ep\|_{L^\infty}+\|\mathcal F\|_{L^\infty}+\ep^2|\ln\ep|)^{2}+ d_\ep(d_\ep+\ep^2|\ln\ep|)  \right)\nonumber\\
			=&\frac{s_\ep^4}{2\pi \ep^4 }x_{\ep,1}^{-\frac{4p}{p-1}}\left(\frac{\ep}{s_\ep}\right)^{\frac{4p}{p-1}}\int_{ B_{1}(0)}\int_{ B_{ 1}(0)}\ln\left(\frac{1}{|x-y|}\right)\left(U(x) \right)_+^p  \left(U(y)\right)_+^p  dy dx\nonumber\\
			&+O\left(\|\bar{\phi}_\ep\|_{L^\infty}+\ep^2|\ln\ep|+(\|\phi_\ep\|_{L^\infty}+\|\mathcal F\|_{L^\infty})^{2}+ d_\ep(d_\ep+\ep^2|\ln\ep|)  \right)\nonumber\\
			=&x_{\ep,1}^{-\frac{4p}{p-1}}\left(\frac{\ep}{s_\ep}\right)^{\frac{4}{p-1}}\int_{ B_{1}(0)}  \left(U(x) \right)_+^{p+1} dx\nonumber\\
			&+O\left(\|\bar{\phi}_\ep\|_{L^\infty}+\ep^2|\ln\ep|+(\|\phi_\ep\|_{L^\infty}+\|\mathcal F\|_{L^\infty})^{2}+ d_\ep^2  \right)\nonumber\\
			 =&x_{\ep,1}^{-\frac{4p}{p-1}}\left(\frac{\ep}{s_\ep}\right)^{\frac{4}{p-1}}\frac{(p+1)\Lambda_p^2}{8\pi}+O\left(\|\bar{\phi}_\ep\|_{L^\infty}+\ep^2|\ln\ep|+(\|\phi_\ep\|_{L^\infty}+\|\mathcal F\|_{L^\infty})^{2}   \right)\nonumber
		\end{align}
	Similarly, using the odd symmetry, it is easy to see
	\begin{equation}\label{C-9}
		\begin{split}
			&\frac{x_{\ep,1}^2}{4\pi }\int_{\mathbb R^2_+}\int_{\mathbb R^2_+}\ln\left(\frac{s_\ep}{|x-y|}\right)(x_1-x_{\ep,1})\zeta_\ep(x)\zeta_\ep(y) dx dy\\=&O\left(\ep^3|\ln\ep|+\ep(\|\phi_\ep\|_{L^\infty}+\|\mathcal F\|_{L^\infty})+ \ep d_\ep(d_\ep+\ep^2|\ln\ep|)  \right).
		\end{split}
	\end{equation}
	Thus, we arrive at
	\begin{equation}\label{C-10}
		\begin{split}
			I_2=&-\frac{(p+1)\Lambda_p^2}{16\pi x_{\ep,1}}x_{\ep,1}^{-\frac{4}{p-1}}\left(\frac{\ep}{s_\ep}\right)^{\frac{4}{p-1}}\\
			&+O\left(\|\bar{\phi}_\ep\|_{L^\infty}+\ep^2|\ln\ep|+(\|\phi_\ep\|_{L^\infty}+\|\mathcal F\|_{L^\infty})^{2}+\ep(\|\phi_\ep\|_{L^\infty}+\|\mathcal F\|_{L^\infty})   \right).
		\end{split}
	\end{equation}
	Now we turn to $I_3$. By   virtue of symmetries, we  obtain
	\begin{align}\label{C-11}
			&I_3=\frac{x_{\ep,1}^2}{2\pi }\int_{\mathbb{R}_+^2}\int_{\mathbb{R}_+^2}\bigg(\big(x_{\ep,1}^{1/2}+\frac{1}{2x_{\ep,1}^{1/2}} (x_1-x_{\ep,1})-\frac{1}{4 x_{\ep,1}^{3/2}}(x_1-x_{\ep,1})^2+O(\ep^3)\big)\\
			&\quad\times \big(x_{\ep,1}^{3/2}+\frac{3x_{\ep,1}^{1/2}}{2}(y_1-x_{\ep,1})+\frac{3}{4 x_{\ep,1}^{1/2}}(y_1-x_{\ep,1})^2+O(\ep^3)\big)-x_{\ep,1}^2\bigg) \frac{x_1-y_1}{|x-y|^2}\zeta_\ep(y)\zeta_\ep(x)dy d  x\nonumber\\
			&=\frac{x_{\ep,1}^3}{2\pi }\int_{\mathbb{R}_+^2}\int_{\mathbb{R}_+^2}(y_1-x_{\ep,1})\frac{x_1-y_1}{|x-y|^2}\zeta_\ep(y)\zeta_\ep(x)dy d  x\nonumber\\
			&\quad+\frac{x_{\ep,1}^2}{2\pi }\int_{\mathbb{R}_+^2}\int_{\mathbb{R}_+^2}\left(\frac{3}{4}(x_1-x_{\ep,1})(y_1-x_{\ep,1})+\frac{1}{2}(y_1-x_{\ep,1})^2\right) \frac{x_1-y_1}{|x-y|^2}\zeta_\ep(y)\zeta_\ep(x)dy d  x+O(\ep^2)\nonumber\\
			&=\frac{x_{\ep,1}^3}{4\pi }\int_{\mathbb{R}_+^2}\int_{\mathbb{R}_+^2}(y_1-x_{ 1})\frac{x_1-y_1}{|x-y|^2}\left(U_{\ep, x_\ep,  a_{\ep}}(y)-a_\ep\ln\frac{1}{\ep}  \right)_+^p \left(U_{\ep, x_\ep,  a_{\ep}}(x)-a_\ep\ln\frac{1}{\ep}  \right)_+^p dy d  x\nonumber\\
			&\quad+O\left(\|\bar{\phi}_\ep\|_{L^\infty}+\ep^2|\ln\ep|+(\|\phi_\ep\|_{L^\infty}+\|\mathcal F\|_{L^\infty})^{2}+\ep(\|\phi_\ep\|_{L^\infty}+\|\mathcal F\|_{L^\infty})+ d_\ep^2  \right)\nonumber\\
			&=\frac{x_{\ep,1}^3}{8\pi }\int_{\mathbb{R}_+^2}\int_{\mathbb{R}_+^2} \left(U_{\ep, x_\ep,  a_{\ep}}(y)-a_\ep\ln\frac{1}{\ep}  \right)_+^p \left(U_{\ep, x_\ep,  a_{\ep}}(x)-a_\ep\ln\frac{1}{\ep}  \right)_+^p dy d  x\nonumber\\
			&\quad+O\left(\|\bar{\phi}_\ep\|_{L^\infty}+\ep^2|\ln\ep|+(\|\phi_\ep\|_{L^\infty}+\|\mathcal F\|_{L^\infty})^{2}+\ep(\|\phi_\ep\|_{L^\infty}+\|\mathcal F\|_{L^\infty})+ d_\ep^2  \right)\nonumber\\
			&=\frac{\Lambda_p^2}{8\pi x_{\ep,1}}x_{\ep,1}^{-\frac{4}{p-1}}\left(\frac{\ep}{s_\ep}\right)^{\frac{4}{p-1}}+O\left(\|\bar{\phi}_\ep\|_{L^\infty}+\ep^2|\ln\ep|+(\|\phi_\ep\|_{L^\infty}+\|\mathcal F\|_{L^\infty})^{2}+\ep(\|\phi_\ep\|_{L^\infty}+\|\mathcal F\|_{L^\infty})   \right).\nonumber
	\end{align}
	For the last term $I_{4}$, the computations are easier since it is regular. We have
	\begin{align}\label{C-12}
			I_{4}=&-\frac{x_{\ep,1}^2}{4\pi\varepsilon^4}\int_{\mathbb{R}_+^2} \int_{\mathbb{R}_+^2}\partial_{x_1}\left(\frac{x_{\ep,1}^2}{2\pi}\ln\frac{1}{| x- {\bar y}|} \quad+\frac{x_1^{1/2}y_1^{3/2}}{4\pi}\left(\ln(x_1 y_1)+2\ln 8-4+  O(|\rho\ln \rho|)\right)\right)\nonumber\\
			&\quad\quad\times\zeta_\ep(y)\zeta_\ep(x)dy d  x\nonumber\\
			=&-\frac{\Lambda_p^2}{4\pi x_{\ep,1}}\left(\ln(8x_{\ep,1})-2\right)x_{\ep,1}^{-\frac{4}{p-1}}\left(\frac{\ep}{s_\ep}\right)^{\frac{4}{p-1}}\nonumber\\
			&+O\left(\|\bar{\phi}_\ep\|_{L^\infty}+\ep^2|\ln\ep|+(\|\phi_\ep\|_{L^\infty}+\|\mathcal F\|_{L^\infty})^{2}+\ep(\|\phi_\ep\|_{L^\infty}+\|\mathcal F\|_{L^\infty})   \right).
	\end{align}
	
	Using this odd symmetries, we can provide an identity about $\kappa$.
	\begin{align}\label{D-6}
			\kappa&= \frac{\Lambda_p}{x_{\ep,1}}x_{\ep,1}^{-\frac{2}{p-1}}\left(\frac{\ep}{s_\ep}\right)^{\frac{2}{p-1}} \\
			&+O\left(\|\bar{\phi}_\ep\|_{L^\infty}+\ep^2|\ln\ep|+(\|\phi_\ep\|_{L^\infty}+\|\mathcal F\|_{L^\infty})^{2}+\ep(\|\phi_\ep\|_{L^\infty}+\|\mathcal F\|_{L^\infty})  \right).\nonumber
	\end{align}
	Indeed, by \eqref{2-19}, \eqref{2-30}, \eqref{2-31}, \eqref{2-32},  \eqref{B-2} and the odd symmetry, we have
	\begin{small}
		\begin{align*}
				& \kappa=\int_{\mathbb{R}_+^2} x_1 \zeta_\ep(x)d x\\
				=&\ep^{-2}\int_{\Om_\ep} x_1 \left(U_{\ep, x_\ep,  a_{\ep}}(x)-a_\ep\ln\frac{1}{\ep} + \mathcal F(x)+\phi_\ep(x)+O(\ep^2 |\ln\ep| )\right)_+^p d x\\
				=&\ep^{-2}\left(\int_{B_{(1-d_\ep)s_\ep}(x_\ep)}+\int_{\Om_\ep\setminus B_{(1-d_\ep)s_\ep}(x_\ep)}\right)\left(U_{\ep, x_\ep,  a_{\ep}}(x)-a_\ep\ln\frac{1}{\ep} + \mathcal F(x)+\phi_\ep(x)+O(\ep^2 |\ln\ep| )\right)_+^p d x\\
				=&\ep^{-2}\int_{B_{(1-d_\ep)s_\ep}(x_\ep)} x_{\ep,1} \left(U_{\ep, x_\ep,  a_{\ep}}(x)-a_\ep\ln\frac{1}{\ep} \right)_+^p dx+\ep^{-2}\int_{B_{(1-d_\ep)s_\ep}(x_\ep)} (x_1-x_{\ep,1}) \left(U_{\ep, x_\ep,  a_{\ep}}(x)-a_\ep\ln\frac{1}{\ep} \right)_+^pdx\\
				+&\ep^{-2}\int_{B_{(1-d_\ep)s_\ep}(x_\ep)} x_{\ep,1} \left(\left(U_{\ep, x_\ep,  a_{\ep}}(x)-a_\ep\ln\frac{1}{\ep} + \mathcal F(x)+\phi_\ep(x)+O(\ep^2 |\ln\ep| )\right)_+^p- \left(U_{\ep, x_\ep,  a_{\ep}}(x)-a_\ep\ln\frac{1}{\ep} \right)_+^p \right)dx\\
				+&\ep^{-2}\int_{B_{(1-d_\ep)s_\ep}(x_\ep)} (x_1-x_{\ep,1})\left(\left(U_{\ep, x_\ep,  a_{\ep}}(x)-a_\ep\ln\frac{1}{\ep} + \mathcal F(x)+\phi_\ep(x)+O(\ep^2 |\ln\ep| )\right)_+^p- \left(U_{\ep, x_\ep,  a_{\ep}}(x)-a_\ep\ln\frac{1}{\ep} \right)_+^p\right)dx\\
				+&O(\ep^{-2}d_\ep^p d_\ep s_\ep^2 )\\
				=&\frac{\Lambda_p}{x_{\ep,1}}x_{\ep,1}^{-\frac{2}{p-1}}\left(\frac{\ep}{s_\ep}\right)^{\frac{2}{p-1}}+\frac{p x_{\ep,1}}{\ep^2}\int_{B_{(1-d_\ep)s_\ep}(x_\ep)}\left(U_{\ep, x_\ep,  a_{\ep}}(x)-a_\ep\ln\frac{1}{\ep} \right)_+^{p-1}  \left(\mathcal F(x)+\phi_\ep^{o}(x)+\bar \phi_\ep(x)+O(\ep^2 |\ln\ep| )\right) dx\\
				+&O\left((\|\mathcal F\|_{L^\infty(B_{s_\ep}(x_\ep))}+\|\phi_\ep\|_{L^\infty(B_{s_\ep}(x_\ep))}+O(\ep^2 |\ln\ep|))^2+\ep(\|\mathcal F\|_{L^\infty(B_{s_\ep}(x_\ep))}+\|\phi_\ep\|_{L^\infty(B_{s_\ep}(x_\ep))}+O(\ep^2 |\ln\ep|)) +d_\ep^{p+1}\right)\\
				=&\frac{\Lambda_p}{x_{\ep,1}}x_{\ep,1}^{-\frac{2}{p-1}}\left(\frac{\ep}{s_\ep}\right)^{\frac{2}{p-1}} +O\left(\|\bar{\phi}_\ep\|_{L^\infty}+\ep^2|\ln\ep|+(\|\phi_\ep\|_{L^\infty}+\|\mathcal F\|_{L^\infty})^{2}+\ep(\|\phi_\ep\|_{L^\infty}+\|\mathcal F\|_{L^\infty})  \right).
			\end{align*}
	\end{small}

Then the desired estimate for $-\frac{x_{\ep,1}^2}{\varepsilon^2}\int_{B_\delta(x_\ep)} \left(\psi_\ep(x)-\frac{W}{2}x_1^2\ln\frac{1}{\ep}-\mu_\ep\right)_+^p\partial_1\psi_{2,\varepsilon}(  x)   d x$ follows by a combination of \eqref{C-7} and \eqref{C-10}--\eqref{D-6}. The proof is thus finished.
\end{proof}

From the Pohozaev identity \eqref{2-27}, \eqref{C-6} and Lemmas \ref{lemC-2} and    \ref{lemC-3}, we obtain a relation of $\kappa$, $W$, $s_\ep$ and $x_{\ep,1}$, which has been used to derive Kelvin--Hicks formula in Section \ref{sec2} several times. We summarize this result as follows.
\begin{lemma}\label{lemC-4}
	It holds
	\begin{align*}
			&Wx_{\ep,1}\ln\frac{1}{\varepsilon}-\frac{\kappa}{4\pi}\ln\frac{8x_{\ep,1}}{s_\ep}+\frac{(1-p)\kappa}{16\pi}\\
			=&O\left(\|\bar{\phi}_\ep\|_{L^\infty}+\ep^2|\ln\ep|+(\|\phi_\ep\|_{L^\infty}+\|\mathcal F\|_{L^\infty})^{2}+\ep(\|\phi_\ep\|_{L^\infty}+\|\mathcal F\|_{L^\infty})  \right).
		\end{align*}
\end{lemma}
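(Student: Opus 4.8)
\textbf{Proof proposal for Lemma \ref{lemC-4}.}

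The plan is to read off the identity directly from the local Pohozaev identity \eqref{2-27} by substituting the three estimates already established in this appendix. First I would rewrite \eqref{2-27} in the schematic form ``(boundary terms) $=$ (interior term with $\partial_1\psi_{2,\ep}$) $+$ (interior term with $Wx_1\ln\frac1\ep$).'' For the left-hand side I invoke Lemma \ref{lemC-2}, which gives the value $\frac{x_{\ep,1}\kappa^2}{4\pi}$ up to an error of order $O\big(\ep(\|\phi_\ep\|_{L^\infty}+\|\mathcal F\|_{L^\infty})+\ep^2\big)$, which is absorbed into the error term claimed in the lemma. For the second interior term I use \eqref{C-6}, which is an exact consequence of the circulation constraint \eqref{2-3} and equals $\kappa W x_{\ep,1}^2\ln\frac1\ep$. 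For the first interior term I use Lemma \ref{lemC-3}, which yields $-\frac{x_{\ep,1}\kappa^2}{4\pi}\big(\ln\frac{8x_{\ep,1}}{s_\ep}+\frac{p-5}{4}\big)$ up to the same admissible error.

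Next I would combine these three substitutions. Collecting terms, \eqref{2-27} becomes
\begin{equation*}
	\frac{x_{\ep,1}\kappa^2}{4\pi}= -\frac{x_{\ep,1}\kappa^2}{4\pi}\left(\ln\frac{8x_{\ep,1}}{s_\ep}+\frac{p-5}{4}\right)+\kappa W x_{\ep,1}^2\ln\frac1\ep+O(\text{err}),
\end{equation*}
where $O(\text{err})$ is exactly $O\big(\|\bar\phi_\ep\|_{L^\infty}+\ep^2|\ln\ep|+(\|\phi_\ep\|_{L^\infty}+\|\mathcal F\|_{L^\infty})^2+\ep(\|\phi_\ep\|_{L^\infty}+\|\mathcal F\|_{L^\infty})\big)$, i.e.\ the maximum of the error orders appearing in Lemmas \ref{lemC-2} and \ref{lemC-3}. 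Dividing both sides by $\kappa x_{\ep,1}$ (legitimate since $x_{\ep,1}\to x_{0,1}>0$ and $\kappa>0$ are bounded away from zero) and rearranging, I get
\begin{equation*}
	W x_{\ep,1}\ln\frac1\ep = \frac{\kappa}{4\pi}\left(1+\ln\frac{8x_{\ep,1}}{s_\ep}+\frac{p-5}{4}\right)+O(\text{err}) = \frac{\kappa}{4\pi}\ln\frac{8x_{\ep,1}}{s_\ep}+\frac{\kappa}{4\pi}\cdot\frac{p-1}{4}+O(\text{err}),
\end{equation*}
using $1+\frac{p-5}{4}=\frac{p-1}{4}$. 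Moving everything to one side produces precisely the claimed relation $Wx_{\ep,1}\ln\frac1\varepsilon-\frac{\kappa}{4\pi}\ln\frac{8x_{\ep,1}}{s_\ep}+\frac{(1-p)\kappa}{16\pi}=O(\text{err})$.

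Since all the hard analytic work is contained in Lemmas \ref{lemC-2} and \ref{lemC-3} (whose proofs are already given, resting on the decomposition \eqref{2-24}--\eqref{2-25}, the far-field expansion of $\psi_{1,\ep}$ in Lemma \ref{lemC-1}, the free-boundary parametrization of Lemma \ref{lemB-2}, and the odd-symmetry cancellations), the present lemma is purely bookkeeping: substitute, collect constants, and simplify the arithmetic $1+\frac{p-5}{4}=\frac{p-1}{4}$. The only point requiring a word of care is that the error terms in \eqref{C-6} and in Lemma \ref{lemC-2} are genuinely dominated by those in Lemma \ref{lemC-3} (indeed \eqref{C-6} is error-free and Lemma \ref{lemC-2}'s error $O(\ep(\cdots)+\ep^2)$ is a subset of the stated bound), so no new error order is introduced. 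I do not foresee a real obstacle here; if anything, the ``hard part'' is merely ensuring the sign and the coefficient $\frac{p-5}{4}\mapsto\frac{p-1}{4}$ after adding the boundary contribution $+1$ are transcribed correctly.
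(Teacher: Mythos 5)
Your proposal is correct and is exactly the paper's argument: the paper states Lemma \ref{lemC-4} as the direct combination of the Pohozaev identity \eqref{2-27} with Lemma \ref{lemC-2}, \eqref{C-6} and Lemma \ref{lemC-3}, followed by division by $\kappa x_{\ep,1}$. Your arithmetic $1+\frac{p-5}{4}=\frac{p-1}{4}$ and the observation that the errors from Lemma \ref{lemC-2} are absorbed into those of Lemma \ref{lemC-3} are both right.
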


\section{An existence theorem for the linearized operator}\label{appD}
In this appendix, 	we consider the problem
\begin{equation}\label{D-1}
	\begin{cases}
		-\Delta \phi-pU_+^{p-1}\phi =h- b_hU_+^{p-1}\frac{\partial U}{\partial x_1}\quad \text{in}\ \  \mathbb{R}^2\\
		\int_{\mathbb{R}^2} \phi(x) U_+^{p-1}(x)\frac{\partial U}{\partial x_1}(x) dx =0\\
		|\phi(x)|\to 0,\quad \text{as}\  \  |x|\to+\infty,
	\end{cases}
\end{equation}
where $b_h$ is the constant defined by
\begin{equation}\label{D-2}
	b_h:=\frac{\int_{\mathbb{R}^2} h(x)  \frac{\partial U}{\partial x_1}(x) dx}{\int_{\mathbb{R}^2}  U_+^{p-1}(x)\left(\frac{\partial U}{\partial x_1}(x)\right)^2 dx}.
\end{equation}
For a function $f$ and $\nu\in [0,1]$, we define the weight norm
\begin{equation}\label{D-3}
	\|f\|_\nu:=\sup_{x\in \mathbb{R}^2} (|x|+1)^\nu |f(x)|.
\end{equation}

We have the estimate
\begin{lemma}\label{lemD-1}
	Suppose that $\mathrm{supp}(h)\subset B_2(0)$,  $\|h\|_ {W^{-1,q}(B_2(0))}<+\infty$ for some $q\in(2,+\infty]$ and  $h(x_1,x_2)$ is odd in $x_1$ and even in $x_2$. Let $(\phi, b)$ be a solution of \eqref{D-1} and \eqref{D-2} such that $\phi(x_1,x_2)$ is odd in $x_1$ and even in $x_2$.
	Then, it holds
	$$\|\phi\|_1+\|\nabla \phi\|_{L^q(B_2(0))}+|b|\leq C \|h\|_ {W^{-1,q}(B_4(0))},$$
	for some constant $C>0$.
\end{lemma}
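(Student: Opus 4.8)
\textbf{Proof strategy for Lemma \ref{lemD-1}.}
The plan is to argue by contradiction using a normalization and the non-degeneracy Lemma \ref{lem2-8}. First I would dispose of $|b|$: pairing the equation \eqref{D-1} against $\frac{\partial U}{\partial x_1}$ and using that $-\Delta \frac{\partial U}{\partial x_1}-pU_+^{p-1}\frac{\partial U}{\partial x_1}=0$ (so the left-hand side integrates to zero against $\frac{\partial U}{\partial x_1}$, after justifying the integration by parts with the decay of $\phi$ and the exponential-type decay of $\nabla U$), one recovers exactly the definition \eqref{D-2} of $b_h$, whence $|b|\le C\|h\|_{W^{-1,q}(B_4(0))}$ directly from the Cauchy--Schwarz estimate of \eqref{D-2} (note $\frac{\partial U}{\partial x_1}$ is supported, up to harmonic tail, where $U_+^{p-1}\ne 0$, i.e. in $B_1(0)$). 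So it remains to bound $\|\phi\|_1+\|\nabla\phi\|_{L^q(B_2(0))}$.

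\textbf{The contradiction argument.}
Suppose the estimate fails; then there exist $h_n$ with $\mathrm{supp}(h_n)\subset B_2(0)$, solutions $(\phi_n,b_n)$ with the stated parities, and
\begin{equation*}
	\|\phi_n\|_1+\|\nabla\phi_n\|_{L^q(B_2(0))}=1,\qquad \|h_n\|_{W^{-1,q}(B_4(0))}\to 0.
\end{equation*}
By the $b$-estimate just proved, $b_n\to 0$. Since $U_+^{p-1}$ is bounded and compactly supported and $h_n\to 0$ in $W^{-1,q}$, elliptic regularity (interior $W^{1,q}$ estimates for $-\Delta$ plus Sobolev embedding $W^{1,q}\hookrightarrow C^{\alpha}_{\mathrm{loc}}$ for $q>2$) shows $\phi_n$ is bounded in $C^{\alpha}_{\mathrm{loc}}(\mathbb{R}^2)$, so along a subsequence $\phi_n\to\phi_\infty$ in $C_{\mathrm{loc}}(\mathbb{R}^2)$, with $\phi_\infty$ odd in $x_1$, even in $x_2$, and solving $-\Delta\phi_\infty-pU_+^{p-1}\phi_\infty=0$ in $\mathbb{R}^2$. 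Here I must make sure $\phi_\infty\in L^\infty\cap C$: the weighted bound $\|\phi_n\|_1=\sup(|x|+1)|\phi_n|\le 1$ passes to the limit, giving $\|\phi_\infty\|_1\le 1$ and in particular $|\phi_\infty(x)|\to 0$ at infinity. Lemma \ref{lem2-8} then forces $\phi_\infty\in\mathrm{span}\{\partial_{x_1}U,\partial_{x_2}U\}$; the parity (odd in $x_1$, even in $x_2$) kills the $\partial_{x_2}U$ component, and the orthogonality condition $\int \phi_\infty U_+^{p-1}\partial_{x_1}U=0$ (which survives the limit since everything is supported in $B_1(0)$) kills the $\partial_{x_1}U$ component. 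Hence $\phi_\infty\equiv 0$.

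\textbf{Upgrading to the normalized norms and closing.}
It remains to show $\phi_n\to 0$ in the normalization norm, contradicting $\|\phi_n\|_1+\|\nabla\phi_n\|_{L^q(B_2(0))}=1$. For the gradient part: on $B_4(0)$ apply the $W^{1,q}$ estimate $\|\nabla\phi_n\|_{L^q(B_2(0))}\le C(\|h_n\|_{W^{-1,q}(B_4(0))}+\|pU_+^{p-1}\phi_n\|_{W^{-1,q}(B_4(0))}+|b_n|\|U_+^{p-1}\partial_{x_1}U\|_{W^{-1,q}(B_4(0))})$, and since $\phi_n\to 0$ uniformly on $\overline{B_4(0)}$ while $U_+^{p-1}$ is bounded with compact support, the middle term is $o(1)$; so $\|\nabla\phi_n\|_{L^q(B_2(0))}\to 0$. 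For the weighted sup norm $\|\phi_n\|_1$: on $B_R(0)$ for any fixed large $R$, $\phi_n\to 0$ uniformly by the $C_{\mathrm{loc}}$ convergence, so $\sup_{B_R}(|x|+1)|\phi_n|\to 0$; outside $B_R$, $\phi_n$ is harmonic (since $U_+^{p-1}\equiv 0$ there and, for $R\ge 4$, $h_n\equiv 0$ and the $b_n$-term vanishes), so by the maximum principle together with the known decay $|\phi_n(x)|\to 0$ as $|x|\to\infty$, one controls $(|x|+1)|\phi_n(x)|$ on $|x|\ge R$ by (a constant times) $R\sup_{\partial B_R}|\phi_n|=o(1)$ once $R$ is fixed — here the sharp tool is the two-dimensional barrier comparison using $\ln|x|$ behaviour of the harmonic extension, which is exactly the structure present in \eqref{2-15}. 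Choosing $R$ large and then $n$ large gives $\|\phi_n\|_1\to 0$, the desired contradiction. I expect the main technical obstacle to be the last point: controlling the weighted $L^\infty$ (i.e. $\|\cdot\|_1$) norm in the exterior region in two dimensions, where harmonic functions need not decay and one must carefully exploit the prescribed decay at infinity together with a logarithmic barrier; the interior estimates and the use of non-degeneracy are comparatively routine.
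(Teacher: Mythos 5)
Your proposal is correct and follows essentially the same route as the paper: bound $|b|$ directly from \eqref{D-2}, then run a contradiction/compactness argument, pass to a limit solving the homogeneous linearized equation, and kill the limit with Lemma \ref{lem2-8} plus the parity and orthogonality conditions; the gradient bound on $B_2(0)$ is then recovered by testing the equation, exactly as in the paper. The one place where you genuinely diverge is the upgrade of the weighted sup norm $\|\phi_n\|_1$ in the exterior region. The paper handles this with the Newtonian-potential representation: since the right-hand side is odd in $y_1$, its integral over $B_2(0)$ vanishes, so one may replace $\ln\frac{1}{|x-y|}$ by $\ln\frac{|x|}{|x-y|}$ and Taylor-expand to read off $\phi_n(x)=o_n(1/|x|)$ directly. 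Your alternative — harmonicity of $\phi_n$ outside $B_2(0)$ plus the prescribed decay at infinity — also works, but the tool is not a ``logarithmic barrier'' (no multiple of $\ln|x|$ or of $R/|x|$ is harmonic in $\mathbb{R}^2$, so a literal barrier comparison only reproduces the bound $|\phi_n|\le \sup_{\partial B_R}|\phi_n|$, which does not give the $1/|x|$ weight). The clean way to finish your version is a Kelvin transform: $\tilde\phi_n(y):=\phi_n(y/|y|^2)$ is harmonic and bounded on $B_{1/R}(0)\setminus\{0\}$, extends across the origin with $\tilde\phi_n(0)=0$, and the interior gradient estimate gives $|\tilde\phi_n(y)|\le CR\,\sup_{\partial B_R}|\phi_n|\,|y|$, i.e. $|\phi_n(x)|\le CR\,o_n(1)/|x|$ for $|x|\ge 2R$; equivalently one can expand $\phi_n$ in the exterior in powers $r^{-k}$. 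With that substitution your argument closes, and it buys a small amount of robustness (it does not use the mean-zero property explicitly, only the assumed decay at infinity), at the cost of being less explicit than the paper's one-line expansion of the Green kernel.
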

\begin{proof}
	By the definition of $b$, it is easy to see $|b|\leq C \|h\|_ {W^{-1,q}(B_2(0))}$. Suppose that there exist $h_n$ and $(\phi_n, b_n)$ such that the following conditions hold
	\begin{itemize}
		\item[i).] $h_n$ and $(\phi_n, b_n)$ satisfy \eqref{D-1} and \eqref{D-2},
		\item[ii).] $h_n(x_1,x_2)$ and $\phi_n$ are odd in $x_1$ and even in $x_2$, more over $\mathrm{supp}(h_n)\subset B_2(0)$,
		\item[iii).] $\|\phi_n\|_1+\|\nabla \phi_n\|_{L^q(B_2(0))}=1$, \  \   $\|h_n\|_ {W^{-1,q}(B_4(0))}\leq \frac{1}{n}$.
	\end{itemize}
	Since $supp(h_n)\subset B_2(0)$ and  $\|h_n\|_ {W^{-1,q}(B_4(0))}\leq \frac{1}{n}$, one can verify that  $\|h_n\|_ {W^{-1,q}(B_L(0))}\leq \frac{C}{n}$ for some $C$ independent of $L$ and any $L>0$. Thus, we infer from the elliptic estimate that $\|\phi_n\|_ {W^{ 1,q}(B_L(0))}\leq C$ for some $C$ independent of $L$ and hence $\|\phi_n\|_ {C^\alpha(B_L(0))}\leq C$ by Sobolev embedding. Therefore, we may assume that $\phi_n\to \phi$ uniformly in any compact set for some $\phi\in L^\infty(\mathbb{R}^2)\cap C (\mathbb{R}^2)$. Moreover, since $\|h_n\|_ {W^{-1,q}(B_L(0))}\leq \frac{C}{n}$ and $|b_n| \leq C \|h_n\|_ {W^{-1,q}(B_4(0))}$, we find $\phi$ is a solution to
	$$-\Delta \phi-pU_+^{p-1}\phi =0.$$
	So, by the non-degeneracy Lemma \ref{lem2-8}, we find
	$$\phi=c_1 \frac{\partial U}{\partial x_1}+ c_2 \frac{\partial U}{\partial x_2}.$$
	By the even symmetry of $\phi$ in $x_2$, we have $c_2=0$. By the condition $\int_{\mathbb{R}^2} \phi(x) U_+^{p-1}(x)\frac{\partial U}{\partial x_1}(x) dx =\lim_{n\to+\infty}\int_{\mathbb{R}^2} \phi_n(x) U_+^{p-1}(x)\frac{\partial U}{\partial x_1}(x) dx =0$, we obtain $c_1=0$. That is, $\phi\equiv 0$ and therefore $\|\phi_n\|_{L^\infty(B_4(0))}=o_n(1)$. By the integral equation and the odd symmetry, for $|x|>4$, we derive
	\begin{align*}
			\phi_n(x)&=\frac{1}{2\pi} \int_{B_2(0)} \ln \frac{1}{|x-y|} \left(pU_+^{p-1}(y)\phi_n(y) +h_n(y)- b_nU_+^{p-1}(y)\frac{\partial U}{\partial x_1}(y)\right) dy\\
			&=\frac{1}{2\pi} \int_{B_2(0)} \ln \frac{|x|}{|x-y|} \left(pU_+^{p-1}(y)\phi_n(y) +h_n(y)- b_nU_+^{p-1}(y)\frac{\partial U}{\partial x_1}(y)\right) dy\\
			&=\frac{1}{4\pi} \int_{B_2(0)} \left(\frac{2x\cdot y}{|x|^2}-\frac{|y|^2}{|x|^2}\right) \left(pU_+^{p-1}(y)\phi_n(y) +h_n(y)- b_nU_+^{p-1}(y)\frac{\partial U}{\partial x_1}(y)\right) dy\\
			&=o_n\left(\frac{1}{|x|}\right),
		\end{align*}
	which implies $\|\phi_n\|_1=o_n(1)$.  For $\varphi\in W_0^{1,q'}(B_2(0))$,
		\begin{align*}
				\int_{\mathbb{R}^2} \nabla \phi_n \cdot \nabla \varphi =\int_{ B_2(0)} \left(pU_+^{p-1} \phi_n  +h_n - b_nU_+^{p-1} \frac{\partial U}{\partial x_1} \right) \varphi=o_n(\|\varphi\|_{W^{1,q'}(B_2(0))})=o_n(\|\nabla\varphi\|_{L^{ q'}(B_2(0))}),
			\end{align*}
	which shows $\|\nabla \phi_n\|_{L^q(B_2(0))}=o_n(1)$.
	Therefore, we obtain $\|\phi_n\|_1+\|\nabla \phi_n\|_{L^q(B_2(0))}=o_n(1)$. This is a contradiction and hence we complete the proof.
\end{proof}

With the a priori estimate Lemma \ref{lemD-1}, we can prove the following existence theorem by the Fredholm alternative.
\begin{theorem}\label{thmD-2}
	Suppose that $supp(h)\subset B_2(0)$,  $\|h\|_ {W^{-1,q}(B_2(0))}<+\infty$ for some $q\in(2,+\infty]$ and  $h(x_1,x_2)$ is odd in $x_1$ and even in $x_2$. Then, there exists a uniqueness solution $\phi$ to \eqref{D-1}. In addition, the following estimate holds
	$$\|\phi\|_1+\|\nabla \phi\|_{L^q(B_2(0))}\leq C \|h\|_ {W^{-1,q}(B_4(0))},$$
	for some constant $C>0$.
\end{theorem}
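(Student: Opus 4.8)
The statement to prove is Theorem~\ref{thmD-2}: existence, uniqueness, and the quantitative bound for the solution $\phi$ of the linearized system \eqref{D-1}--\eqref{D-2} with $h$ supported in $B_2(0)$ and odd-in-$x_1$, even-in-$x_2$. The plan is to work in the symmetry class $X$ of functions on $\mathbb R^2$ that are odd in $x_1$, even in $x_2$, and tend to $0$ at infinity, and to set up \eqref{D-1} as a compact perturbation of the Laplacian so that the Fredholm alternative applies; the a priori estimate of Lemma~\ref{lemD-1} then rules out a nontrivial kernel and simultaneously delivers the norm bound.

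\textbf{Step 1: Reformulate as a fixed-point/Fredholm problem.} First I would rewrite \eqref{D-1} in integral form using the Newtonian potential in $\mathbb R^2$: a solution $\phi\in X$ decaying at infinity satisfies
\begin{equation*}
	\phi = \mathcal N\!\left(pU_+^{p-1}\phi\right) + \mathcal N\!\left(h - b_h U_+^{p-1}\tfrac{\partial U}{\partial x_1}\right),
\end{equation*}
where $\mathcal N g(x) = \frac{1}{2\pi}\int_{\mathbb R^2}\ln\frac{1}{|x-y|}g(y)\,dy$ and $b_h$ is the fixed constant \eqref{D-2}; note the right-hand side forcing is orthogonal to $\tfrac{\partial U}{\partial x_1}$ in the weighted inner product by the very choice of $b_h$, and it lies in the odd/even class because $\tfrac{\partial U}{\partial x_1}$ does and $h$ does by hypothesis. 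Since $U_+^{p-1}$ is bounded and compactly supported in $\overline{B_1(0)}$, the operator $T\phi := \mathcal N(pU_+^{p-1}\phi)$ is compact on the Banach space $X$ equipped with the norm $\|\phi\|_1 + \|\nabla\phi\|_{L^q(B_2(0))}$ (compactness follows from interior elliptic regularity on $B_4(0)$ plus the Arzelà--Ascoli / Rellich theorems, and from the decay estimate $\mathcal N(\text{compactly supported }L^1)(x)=O(1/|x|)$ obtained by the same Taylor expansion of $\ln|x-y|$ used in the proof of Lemma~\ref{lemD-1}). Thus $I-T$ is Fredholm of index zero on $X$.

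\textbf{Step 2: Trivial kernel via the a priori estimate.} To invoke the Fredholm alternative I must show $\mathrm{ker}(I-T)=\{0\}$ in $X$. If $\phi\in X$ solves $\phi=T\phi$, then $\phi$ solves $-\Delta\phi - pU_+^{p-1}\phi = 0$; this is exactly the situation covered by Lemma~\ref{lemD-1} with $h\equiv 0$ (so $b_h=0$), which gives $\|\phi\|_1+\|\nabla\phi\|_{L^q(B_2(0))}\le C\cdot 0 = 0$, hence $\phi\equiv 0$. (Alternatively one can argue directly: by the non-degeneracy Lemma~\ref{lem2-8}, $\phi\in\mathrm{span}\{\partial_{x_1}U,\partial_{x_2}U\}$; evenness in $x_2$ kills the $\partial_{x_2}U$ component, the orthogonality condition in \eqref{D-1} kills the $\partial_{x_1}U$ component, and decay is consistent — but routing through Lemma~\ref{lemD-1} is cleanest since that lemma is already available.) Therefore $I-T$ is invertible on $X$, which yields a unique $\phi\in X$ solving the integral equation, and elliptic regularity promotes it to a genuine solution of \eqref{D-1}.

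\textbf{Step 3: The quantitative bound.} Finally, the estimate $\|\phi\|_1+\|\nabla\phi\|_{L^q(B_2(0))}\le C\|h\|_{W^{-1,q}(B_4(0))}$ is precisely the content of Lemma~\ref{lemD-1} applied to the solution $\phi$ just constructed, since $\phi$ and $h$ lie in the required symmetry class and $\mathrm{supp}(h)\subset B_2(0)$; the bound $|b_h|\le C\|h\|_{W^{-1,q}(B_2(0))}$ from the definition \eqref{D-2} is absorbed into the same constant. \textbf{The main obstacle} I anticipate is not any single hard inequality but rather making the functional-analytic setup clean: choosing the right Banach space so that (i) $T$ is genuinely compact — this needs the weighted $\|\cdot\|_1$-decay built in, not just an $L^\infty$ bound, so that $\mathcal N$ maps into the space and gains compactness at infinity as well as in the interior — and (ii) the forcing term $\mathcal N(h - b_h U_+^{p-1}\partial_{x_1}U)$ actually belongs to that space, which again reduces to the $O(1/|x|)$ decay computation already performed in Lemma~\ref{lemD-1}. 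Once the space is fixed correctly, the Fredholm alternative plus Lemma~\ref{lemD-1} close the argument with essentially no further work.
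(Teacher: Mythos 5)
Your proposal is correct and follows essentially the same route as the paper: integral reformulation via the Newtonian potential, compactness of $\phi\mapsto(-\Delta)^{-1}(pU_+^{p-1}\phi)$ on a symmetry-and-decay-adapted Banach space, the Fredholm alternative with the trivial kernel supplied by Lemma \ref{lemD-1}, and the final bound read off from that same lemma. The only (immaterial) difference is the choice of norm on $X$ — the paper uses the weighted sup norm $\|\cdot\|_{1/2}$ while you propose $\|\phi\|_1+\|\nabla\phi\|_{L^q(B_2(0))}$ — and both choices support the compactness and decay arguments in the same way.
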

\begin{proof}
	Define the weighted space
	\begin{equation*}
		\begin{split}X:=&\left\{\phi\in L^\infty(\mathbb{R}^2) \bigg|  \|\phi \|_{\frac{1}{2}}<+\infty, \int_{\mathbb{R}^2} \phi(x) U_+^{p-1}(x)\frac{\partial U}{\partial x_1}(x) dx =0, \right. \\ &\qquad \qquad\qquad  \ \ \phi(-x_1,x_2)=-\phi(x_1,x_2),  \phi( x_1,-x_2)=\phi(x_1,x_2),\ \forall\ (x_1,x_2)\in \mathbb{R}^2\bigg\}.\end{split}
	\end{equation*}
	One see that $\phi$ has the integral representation by \eqref{D-1}
	\begin{equation}\label{D-4}
		\phi=(-\Delta)^{-1}(pU_+^{p-1}\phi)+(-\Delta)^{-1}\left(h- bU_+^{p-1}\frac{\partial U}{\partial x_1}\right),
	\end{equation}
	where $(-\Delta)^{-1} f:=\frac{1}{2\pi} \int_{\mathbb{R}^2} \ln \frac{1}{|x-y|} f(y)dy$ for a function $f$. Using the odd symmetry, it can be seen that $(-\Delta)^{-1}(pU_+^{p-1}\phi), (-\Delta)^{-1}\left(h- bU_+^{p-1}\frac{\partial U}{\partial x_1}\right)\in X$ and  $\|\phi\|_1<\infty$. In addition, by elliptic estimates, $\|\phi\|_1<\infty$ and Arzela-Ascoli's theorem, we see that $(-\Delta)^{-1}(pU_+^{p-1} (\cdot)):X\to X$ is a compact operator. By Lemma \ref{lemD-1}, the homogeneous equation $phi=(-\Delta)^{-1}(pU_+^{p-1}\phi)$ has only trivial solution in $X$. Therefore, the equation \eqref{D-4} has a unique solution for every $h$ satisfying the hypotheses of Theorem \ref{thmD-2} by the Fredholm alternative. The estimate  $\|\phi\|_1+\|\nabla \phi\|_{L^q(B_2(0))}\leq C \|h\|_ {W^{-1,q}(B_4(0))}$ is a consequence of Lemma \ref{lemD-1}.
\end{proof}

If we take $h_\ep(x):=pU_+^{p-1}(x)\mathcal F(s_\ep x+x_\ep)$ where $\mathcal F$ is defined by \eqref{B-1}, then one can see $\|h_\ep\|_ {W^{-1,q}(B_4s(0))}=O(\ep)$ and hence the constant $b_\ep:=b_{h_\ep}$ defined by \eqref{D-2} is also of   $O(\ep)$, which is not small enough. Fortunately, by careful calculations, the estimate of $b_\ep$ can be improved. The key observation is that the function $\mathcal F(x)$ is odd in $x_1-x_{\ep,1}$ in the sense that $$\mathcal F((-x_1,x_2)+x_\ep)=-\mathcal F((x_1,x_2)+x_\ep),$$ and we can \emph{eliminate this term by such odd symmetry}. We also divide the error function into a form of ``odd function + higher order term". Suppose that $\phi_\ep$ takes the following form
\begin{equation}\label{D-5}
	\phi_\ep=\phi_\ep^{o}+\bar \phi_\ep,
\end{equation}
where $\phi_\ep^{o}$ is a function  odd in $x_1-x_{\ep,1}$ in the sense that $$\phi_\ep^{o}((-x_1,x_2)+x_\ep)=-\phi_\ep^{o}((x_1,x_2)+x_\ep).$$

We   give a better estimate of $b_\ep$.
\begin{lemma}\label{lemD-3}
	Suppose $h_\ep(x)=pU_+^{p-1}(x)\mathcal F(s_\ep x+x_\ep)$. Then, for $$b_\ep:=\frac{\int_{\mathbb{R}^2} h_\ep(x)  \frac{\partial U}{\partial x_1}(x) dx}{\int_{\mathbb{R}^2}  U_+^{p-1}(x)\left(\frac{\partial U}{\partial x_1}(x)\right)^2 dx},$$ the following estimate holds
	\begin{equation}\label{D-7}
		|b_\ep|=O\left(\ep|\ln\ep|(\|\bar{\phi}_\ep\|_{L^\infty}+\ep^2|\ln\ep|+ \|\phi_\ep\|_{L^\infty}^2+\ep \|\phi_\ep\|_{L^\infty} ) \right)=O(\ep^2|\ln\ep|).
	\end{equation}
\end{lemma}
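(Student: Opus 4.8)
\textbf{Proof proposal for Lemma \ref{lemD-3}.}

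The plan is to compute $b_\ep$ directly from its defining formula \eqref{D-2} and exploit the odd symmetry of $\mathcal F$ in the variable $x_1-x_{\ep,1}$, which, after the rescaling $x\mapsto s_\ep x + x_\ep$, becomes odd symmetry in $x_1$. The denominator $\int_{\mathbb{R}^2} U_+^{p-1}(x)\left(\frac{\partial U}{\partial x_1}(x)\right)^2 dx$ is a fixed positive constant independent of $\ep$, so the entire task is to estimate the numerator $\int_{\mathbb{R}^2} h_\ep(x)\frac{\partial U}{\partial x_1}(x)\,dx = p\int_{\mathbb{R}^2} U_+^{p-1}(x)\,\mathcal F(s_\ep x+x_\ep)\,\frac{\partial U}{\partial x_1}(x)\,dx$. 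First I would observe that $U$ is radial, hence $U_+^{p-1}(x)$ is even in $x_1$ and $\frac{\partial U}{\partial x_1}(x)$ is odd in $x_1$; therefore the integrand $U_+^{p-1}(x)\,\frac{\partial U}{\partial x_1}(x)$ is odd in $x_1$. Since $\mathcal F(s_\ep x + x_\ep)$ is also odd in $x_1$ (by the stated property $\mathcal F((-x_1,x_2)+x_\ep)=-\mathcal F((x_1,x_2)+x_\ep)$, rescaled), the full integrand is \emph{even} in $x_1$. Thus the leading odd part of $\mathcal F$ contributes to the integral rather than vanishing; the point is instead that a naive bound gives $O(\ep)$, and we must extract an extra factor.

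The key is that $\mathcal F$ is not exactly what enters the linearized equation: the true quantity is the error $\phi_\ep$ appearing in the expansion \eqref{C-1}, and by \eqref{2-33} the second approximation $\phi_\ep^o$ is designed so that $-\Delta\phi_\ep^o - pU_+^{p-1}\phi_\ep^o$ absorbs the term $pU_+^{p-1}\mathcal F$ up to the projection onto $\frac{\partial U}{\partial x_1}$ and up to $\bar\phi_\ep$. Concretely, I would pair the equation for $\phi_\ep^o$ (after rescaling) against $\frac{\partial U}{\partial x_1}$, use that $\frac{\partial U}{\partial x_1}$ lies in the kernel of $-\Delta - pU_+^{p-1}$ (Lemma \ref{lem2-8}), integrate by parts, and read off that $b_\ep \int_{\mathbb{R}^2} U_+^{p-1}\left(\frac{\partial U}{\partial x_1}\right)^2 = \int_{\mathbb{R}^2} pU_+^{p-1}\mathcal F(s_\ep\cdot + x_\ep)\frac{\partial U}{\partial x_1}$ exactly. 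So the lemma reduces to showing this last integral is $O(\ep|\ln\ep|(\|\bar\phi_\ep\|_{L^\infty}+\ep^2|\ln\ep|+\|\phi_\ep\|_{L^\infty}^2+\ep\|\phi_\ep\|_{L^\infty}))$. To get this, I would use the identity \eqref{D-6} for $\kappa$ together with the definition \eqref{B-1} of $\mathcal F$: the explicit terms in $\mathcal F$ are (i) a multiple of $(x_1-x_{\ep,1})\left[\tfrac{1}{2x_{\ep,1}}U_{\ep,x_\ep,a_\ep} - Wx_{\ep,1}\ln\tfrac1\ep + \cdots\right]$ and (ii) the convolution term $\tfrac{3x_{\ep,1}}{4\pi\ep^2}\int (y_1-x_{\ep,1})\ln\tfrac{s_\ep}{|x-y|}(U_{\ep,x_\ep,a_\ep}-a_\ep\ln\tfrac1\ep)_+^p\,dy$. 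When multiplied by $U_+^{p-1}\frac{\partial U}{\partial x_1}$ and integrated, the coefficient in the bracket of (i) collapses — after rescaling it is $\frac{\ep}{s_\ep}$-bounded times the combination $Wx_{\ep,1}\ln\tfrac1\ep - \tfrac{\kappa}{4\pi}(\ln\tfrac{8x_{\ep,1}}{s_\ep} + \tfrac{p-1}{4})$, which is $O(\ep^2|\ln\ep|)$ by Corollary \ref{cor2-7} (or \ref{cor2-12}) — while the remaining pieces either vanish by an odd-in-$x_2$ symmetry or are handled by integration by parts using the identities \eqref{2-14}. This is where the factor $\ep|\ln\ep|$ multiplying $\ep^2|\ln\ep|$, i.e. $O(\ep^3|\ln\ep|^2) = o(\ep^2|\ln\ep|)$, emerges, and the other error contributions $\|\bar\phi_\ep\|_{L^\infty}$, $\|\phi_\ep\|_{L^\infty}^2$, $\ep\|\phi_\ep\|_{L^\infty}$ are picked up from the remainder in \eqref{D-6} when estimating how closely the explicit form of $\mathcal F$ matches the full nonlinearity.

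The main obstacle I anticipate is the bookkeeping in the second step: carefully pairing the $\phi_\ep^o$-equation against $\frac{\partial U}{\partial x_1}$ requires knowing that all boundary/decay terms vanish (which follows from the decay $\|\phi_\ep^o\|_1 = O(\ep)$ in \eqref{2-33} and the exponential-type decay of $\frac{\partial U}{\partial x_1}$ beyond $|x|=1$), and then one must track exactly which terms in $\mathcal F$ survive the symmetrization and which are absorbed into the $O(\ep^2|\ln\ep|)$ remainder — in particular one must verify that the ``dangerous'' $O(\ep)$ piece of $\mathcal F$ is precisely the one killed by Corollary \ref{cor2-7}. Once that cancellation is identified, the final bound \eqref{D-7} follows by combining the estimates, and the concluding equality $O(\ep|\ln\ep|(\cdots)) = O(\ep^2|\ln\ep|)$ uses Proposition \ref{prop2-10} (or \ref{prop2-11}), which gives $\|\phi_\ep\|_{L^\infty} = O(\ep)$ and $\|\bar\phi_\ep\|_{L^\infty} = O(\ep^2|\ln\ep|)$.
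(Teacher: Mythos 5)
Your proposal is correct and follows essentially the same route as the paper: the numerator of $b_\ep$ is estimated by inserting the explicit form \eqref{B-1} of $\mathcal F$, integrating by parts so that the identities \eqref{2-14} and \eqref{D-6} can be applied, and recognizing that the surviving $O(\ep|\ln\ep|)$ contribution is exactly $s_\ep$ times the Kelvin--Hicks combination $Wx_{\ep,1}\ln\frac1\ep-\frac{\kappa}{4\pi}\bigl(\ln\frac{8x_{\ep,1}}{s_\ep}+\frac{p-1}{4}\bigr)$, which Lemma \ref{lemC-4} (Corollary \ref{cor2-7}) controls. Your preliminary step of pairing the $\phi_\ep^o$-equation against $\frac{\partial U}{\partial x_1}$ merely re-derives the definition \eqref{D-2} and can be omitted, and for the final reduction to $O(\ep^2|\ln\ep|)$ you should cite Proposition \ref{prop2-10} together with the crude bound $\|\bar\phi_\ep\|_{L^\infty}\le\|\phi_\ep\|_{L^\infty}+\|\phi_\ep^o\|_{L^\infty}=O(\ep)$ rather than Proposition \ref{prop2-11}, which would be circular at this stage.
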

\begin{proof}
	Recall the definition of $\mathcal F$
	\begin{align*}
			\mathcal F(x)&=(x_1-x_{\ep,1})\left[\frac{1}{2 x_{\ep,1}} U_{\ep, x_\ep,  a_{\ep}}(x)-Wx_{\ep,1}\ln\frac{1}{\ep}+\frac{x_{\ep,1}}{4\pi }\left(\ln (8 x_{\ep,1})-1\right) x_{\ep,1}^{-\frac{2p}{p-1}} \left(\frac{\ep}{s_\ep}\right)^{\frac{2}{p-1}} \Lambda_p\right]\\
			&+\frac{3x_{\ep,1}}{4\pi \ep^2}\int_{\mathbb{R}^2_+} (y_1-x_{\ep,1})\ln \frac{s_\ep}{|x-y|}\left(U_{\ep, x_\ep, a_{\ep}}(y)- a_{\ep}\ln \frac{1}{\ep}\right)_+^p dy.
		\end{align*}
	We split the integral into four terms.
		\begin{align*}
				&\int_{\mathbb{R}^2} h_\ep(x)  \frac{\partial U}{\partial x_1}(x) dx\\
				=&\frac{x_{\ep,1}^2 s_\ep p}{\ep^2}\int_{\mathbb{R}^2} \mathcal F(x)\left(U_{\ep, x_\ep, a_{\ep}}(x)-a_\ep\ln\frac{1}{\ep}  \right)_+^{p-1}\partial_{x_1} U_{\ep, x_\ep,  a_{\ep}}(x)dx\\
				=& -\frac{x_{\ep,1}^2 s_\ep  }{\ep^2}\int_{\mathbb{R}^2} \partial_{x_1}\mathcal F(x)\left(U_{\ep, x_\ep, a_{\ep}}(x)-a_\ep\ln\frac{1}{\ep}  \right)_+^{p }  dx\\
				=& -\frac{x_{\ep,1}^2 s_\ep  }{\ep^2}\int_{\mathbb{R}^2}  \frac{1}{2x_{\ep,1}}\left(U_{\ep, x_\ep, a_{\ep}}(x)-a_\ep\ln\frac{1}{\ep}  \right)_+^{p+1}  dx\\
				&-\frac{x_{\ep,1}^2 s_\ep  }{\ep^2}\int_{\mathbb{R}^2}\left(U_{\ep, x_\ep,  a_{\ep}}(x)-a_\ep\ln\frac{1}{\ep}  \right)_+^{p } \left(\frac{a_\ep\ln\frac{1}{\ep} }{2 x_{\ep,1}}  -Wx_{\ep,1}\ln\frac{1}{\ep}+\frac{x_{\ep,1}}{4\pi }\left(\ln (8 x_{\ep,1})-1\right) x_{\ep,1}^{-\frac{2p}{p-1}} \left(\frac{\ep}{s_\ep}\right)^{\frac{2}{p-1}} \Lambda_p\right)\\
				&-\frac{x_{\ep,1}^2 s_\ep  }{\ep^2}\int_{\mathbb{R}^2}\frac{(x_1-x_{\ep,1}) }{2 x_{\ep,1}}\left(U_{\ep, x_\ep, a_{\ep}}(x)-a_\ep\ln\frac{1}{\ep}  \right)_+^{p }\partial_{x_1} U_{\ep, x_\ep, a_{\ep}}(x)dx\\
				&-\frac{x_{\ep,1}^2 s_\ep  }{\ep^2}\frac{3x_{\ep,1}}{4\pi \ep^2}\int_{\mathbb{R}^2}\int_{\mathbb{R}^2_+} (y_1-x_{\ep,1})  \frac{y_1-x_1}{|x-y|^2}\left(U_{\ep, x_\ep, a_{\ep}}(x)-a_\ep\ln\frac{1}{\ep}  \right)_+^{p }\left(U_{\ep, x_\ep,a_{\ep}}(y)- a_{\ep}\ln \frac{1}{\ep}\right)_+^p dy dx\\
				=:&- x_{\ep,1}^2 s_\ep  (J_1+J_2+J_3+J_4).
			\end{align*}
	We estimate $J_1-J_4$ separately. For $J_1$, by the identity \eqref{2-14}, the definition \eqref{2-19} and \eqref{D-6}, we have	
	\begin{align*}
			J_1&=\frac{1}{\ep^2}\int_{\mathbb{R}^2}  \frac{1}{2x_{\ep,1}}\left(U_{\ep, x_\ep,  a_{\ep}}(x)-a_\ep\ln\frac{1}{\ep}  \right)_+^{p+1}  dx=\frac{1}{2x_{\ep,1}} x_{\ep,1}^{-\frac{2(p+1)}{p-1}}\left(\frac{\ep}{s_\ep}\right)^{ \frac{2(p+1)}{p-1}-2}\int_{\mathbb{R}^2}  U_+^{p+1} dx\\
			&=\frac{(p+1)x_{\ep,1}^{-1}}{16\pi}\kappa^2+O\left(\|\bar{\phi}_\ep\|_{L^\infty}+\ep^2|\ln\ep|+(\|\phi_\ep\|_{L^\infty}+\|\mathcal F\|_{L^\infty})^{2}+\ep(\|\phi_\ep\|_{L^\infty}+\|\mathcal F\|_{L^\infty})   \right).
		\end{align*}
	By integration by part, \eqref{2-14} and \eqref{D-6}, we calculate
		\begin{align*}
				J_2&=\frac{1}{\ep^2}\int_{\mathbb{R}^2}\left(U_{\ep, x_\ep,  a_{\ep}}(x)-a_\ep\ln\frac{1}{\ep}  \right)_+^{p } \left(\frac{a_\ep\ln\frac{1}{\ep} }{2 x_{\ep,1}}  -Wx_{\ep,1}\ln\frac{1}{\ep}+\frac{x_{\ep,1}}{4\pi }\left(\ln (8 x_{\ep,1})-1\right) x_{\ep,1}^{-\frac{2p}{p-1}} \left(\frac{\ep}{s_\ep}\right)^{\frac{2}{p-1}} \Lambda_p\right)\\
				&=  \left(\frac{a_\ep\ln\frac{1}{\ep} }{2 x_{\ep,1}}  -Wx_{\ep,1}\ln\frac{1}{\ep}+\frac{x_{\ep,1}}{4\pi }\left(\ln (8 x_{\ep,1})-1\right) x_{\ep,1}^{-\frac{2p}{p-1}} \left(\frac{\ep}{s_\ep}\right)^{\frac{2}{p-1}} \Lambda_p\right)x_{\ep,1}^{-\frac{2p}{p-1}} \left(\frac{\ep}{s_\ep}\right)^{\frac{2}{p-1}} \Lambda_p\\
				&=\left(\frac{\kappa }{4 \pi} \ln\frac{1}{s_\ep} -Wx_{\ep,1}\ln\frac{1}{\ep}+\frac{\kappa}{4\pi }\left(\ln (8 x_{\ep,1})-1\right) x_{\ep,1}^{-\frac{2p}{p-1}} \left(\frac{\ep}{s_\ep}\right)^{\frac{2}{p-1}} \Lambda_p\right)x_{\ep,1}^{-1}\kappa\\
				&\quad+O\left(|\ln\ep|(\|\bar{\phi}_\ep\|_{L^\infty}+\ep^2|\ln\ep|+(\|\phi_\ep\|_{L^\infty}+\|\mathcal F\|_{L^\infty})^{2}+\ep(\|\phi_\ep\|_{L^\infty}+\|\mathcal F\|_{L^\infty}))   \right).
			\end{align*}
	Similarly, for $J_3$, we derive by integration by part that 	
	\begin{align*}
			J_3&=\frac{1}{\ep^2}\int_{\mathbb{R}^2}\frac{(x_1-x_{\ep,1}) }{2 x_{\ep,1}}\left(U_{\ep, x_\ep,  a_{\ep}}(x)-a_\ep\ln\frac{1}{\ep}  \right)_+^{p }\partial_{x_1} U_{\ep, x_\ep, a_{\ep}}(x)dx\\
			&=-\frac{1}{\ep^2}\frac{1 }{2(p+1) x_{\ep,1}}\int_{\mathbb{R}^2}\left(U_{\ep, x_\ep, a_{\ep}}(x)-a_\ep\ln\frac{1}{\ep}  \right)_+^{p +1}dx\\
			&=-\frac{x_{\ep,1}^{-1}}{16\pi} \kappa^2+O\left(\|\bar{\phi}_\ep\|_{L^\infty}+\ep^2|\ln\ep|+(\|\phi_\ep\|_{L^\infty}+\|\mathcal F\|_{L^\infty})^{2}+\ep(\|\phi_\ep\|_{L^\infty}+\|\mathcal F\|_{L^\infty})   \right).
		\end{align*}
	As for $J_4$, we apply the even symmetries to deduce that
	\begin{align*}
			J_4&=\frac{3x_{\ep,1}}{4\pi \ep^4}\int_{\mathbb{R}^2}\int_{\mathbb{R}^2_+} (y_1-x_{\ep,1})  \frac{y_1-x_1}{|x-y|^2}\left(U_{\ep, x_\ep,  a_{\ep}}(x)-a_\ep\ln\frac{1}{\ep}  \right)_+^{p }\left(U_{\ep, x_\ep, a_\ep}(y)- a_\ep\ln \frac{1}{\ep}\right)_+^p dy dx\\
			&=\frac{3x_{\ep,1}}{8\pi \ep^4}\int_{\mathbb{R}^2}\int_{\mathbb{R}^2_+}   \frac{(y_1-x_1)^2}{|x-y|^2}\left(U_{\ep, x_\ep,  a_{\ep}}(x)-a_\ep\ln\frac{1}{\ep}  \right)_+^{p }\left(U_{\ep, x_\ep, a_\ep}(y)- a_\ep\ln \frac{1}{\ep}\right)_+^p dy dx\\
			&=\frac{3x_{\ep,1}}{16\pi \ep^4}\int_{\mathbb{R}^2}\int_{\mathbb{R}^2_+}  \left(U_{\ep, x_\ep, a_{\ep}}(x)-a_\ep\ln\frac{1}{\ep}  \right)_+^{p }\left(U_{\ep, x_\ep, a_\ep}(y)- a_\ep\ln \frac{1}{\ep}\right)_+^p dy dx\\
			&=\frac{3x_{\ep,1}^{-1}}{16\pi} \kappa^2+O\left(\|\bar{\phi}_\ep\|_{L^\infty}+\ep^2|\ln\ep|+(\|\phi_\ep\|_{L^\infty}+\|\mathcal F\|_{L^\infty})^{2}+\ep(\|\phi_\ep\|_{L^\infty}+\|\mathcal F\|_{L^\infty})   \right).
		\end{align*}
	Summarizing the above calculation and noticing that $ \|\mathcal F\|_{L^\infty(B_{2s_\ep}(x_\ep))}=O(\ep)$ by \eqref{B-3}, we obtain the desired estimate \eqref{D-7} and finish the proof of this lemma.
\end{proof}

\end{document}